\def\eqref#1{(\ref{#1})}
\def\1{\bm{1}}
\DeclareMathAlphabet{\mathsfit}{\encodingdefault}{\sfdefault}{m}{sl}
\SetMathAlphabet{\mathsfit}{bold}{\encodingdefault}{\sfdefault}{bx}{n}
\newcommand{\R}{\mathbb{R}}
\DeclareMathOperator*{\argmax}{arg\,max}
\newcites{APX}{References}
\theoremstyle{plain}
\newtheorem{theorem}{Theorem}[section]
\newtheorem{lemma}[theorem]{Lemma}
\newtheorem{corollary}[theorem]{Corollary}
\newtheorem{example}[theorem]{Example}
\theoremstyle{definition}
\newtheorem{definition}[theorem]{Definition}
\newtheorem{assumption}[theorem]{Assumption}
\newcommand{\prth}[1]{\left(#1\right)}
\newcommand{\set}[1]{\left\{ #1 \right\}}
\newcommand{\brac}[1]{\left[ #1 \right]}
\newcommand{\abs}[1]{\left \vert #1 \right\vert}
\newcommand{\mc}{\mathcal}
\newcommand{\mbb}{\mathbb}
\newcommand{\hp}{{\hat p}}
\newcommand{\prob}[1]{\mathbb{P}\left(#1\right)}
\newcommand{\reprob}[1]{\mathbb{P}_*\left(#1\right)}
\newcommand{\expect}[1]{\mathbb{E}\left[#1\right]}
\newcommand{\reexpect}[1]{\mathbb{E}_*\left[#1\right]}
\newcommand{\Dkl}{D_{\operatorname{KL}}}
\newcommand{\move}{$\mathsf{MoVE}$\xspace}
\newcommand{\rove}{$\mathsf{ROVE}$\xspace}
\newcommand{\roves}{$\mathsf{ROVEs}$\xspace}
\title{Subsampled Ensemble Can Improve Generalization Tail Exponentially}
\author{%
  Huajie Qian\\
  DAMO Academy\\
  Alibaba Group\\
  Bellevue, WA 98004 \\
  \texttt{h.qian@alibaba-inc.com} \\
  \And
    Donghao Ying\\
  IEOR Department\\
  UC Berkeley\\
  Berkeley, CA 94720 \\
  \texttt{donghaoy@berkeley.edu} \\
  \And
    Henry Lam\\
  IEOR Department\\
  Columbia University\\
  New York, NY 10027 \\
  \texttt{henry.lam@columbia.edu} \\
  \And
  Wotao Yin\\
  DAMO Academy\\
  Alibaba Group\\
  Bellevue, WA 98004 \\
  \texttt{wotao.yin@alibaba-inc.com} \\
}
\begin{document}

\maketitle

\begin{abstract}
Ensemble learning is a popular technique to improve the accuracy of machine learning models. It traditionally hinges on the rationale that aggregating multiple weak models can lead to better models with lower variance and hence higher stability, especially for discontinuous base learners. In this paper, we provide a new perspective on ensembling. By selecting the most frequently generated model from the base learner when repeatedly applied to subsamples, we can attain exponentially decaying tails for the excess risk, even if the base learner suffers from slow (i.e., polynomial) decay rates. This tail enhancement power of ensembling applies to base learners that have reasonable predictive power to begin with and is stronger than variance reduction in the sense of exhibiting rate improvement. We demonstrate how our ensemble methods can substantially improve out-of-sample performances in a range of numerical examples involving heavy-tailed data or intrinsically slow rates.
\end{abstract}

\section{Introduction}\label{sec: introduction}
Ensemble learning \citep{dietterich2000ensemble,zhou2012ensemble} is a class of methods designed to improve the accuracy of machine learning models by combining multiple models, known as ``base learners'', through aggregation techniques such as averaging or majority voting.
In the existing literature, ensemble methods—most notably bagging \citep{breiman1996bagging} and boosting \citep{freund1996experiments}—are primarily justified based on their ability to reduce bias and variance or improve model stability.
This justification has been shown to be particularly relevant for certain U-statistics \citep{buja2006observations} and models with hard-thresholding rules, such as decision trees \citep{breiman2001random,drucker1995boosting}.

In contrast to this traditional understanding, we present a novel perspective on ensemble learning, demonstrating its capability to achieve a significantly stronger effect than variance reduction: By suitably selecting the best base learners trained on random subsamples, ensembling leads to exponentially decaying excess risk tails.
Specifically, for general stochastic optimization problems that suffer from a slow (polynomial) decay in excess risk tails, ensembling can reduce these tails to an exponential decay rate, a substantial improvement beyond the constant-factor gains typically exhibited by variance reduction.

To detail our contribution, we consider the generic stochastic optimization problem
\begin{equation}\label{opt}
\min_{\theta\in\Theta}L(\theta):=\expect{l(\theta,z)},
\end{equation}
where $\theta\in \Theta$ is the decision variable, $z\in\mathcal{Z}$ is a random variable governed by some probability distribution, and $l(\cdot, \cdot)$ is the cost function. 
A dataset of $n$ i.i.d. samples $\{z_1, \ldots, z_n\}$ is drawn from the underlying distribution of $z$.
In the context of machine learning, $\theta$ corresponds to the model parameters, $\{z_1, \ldots, z_n\}$ represents the training data, $l$ denotes the loss function, and $L$ is the population-level expected loss.
More generally, \eqref{opt} also encompasses data-driven decision-making problems, i.e., the integration of data on $z$ into a downstream optimization task with overall cost function $l$ and prescriptive decision $\theta$. 
These problems are increasingly prevalent in various industrial applications \citep{kamble2020achieving, bertsimas2023data, ghosal2024unifying}.
For example, in supply chain network design, $\theta$ may represent the decision to open processing facilities, $z$ the uncertain supply and demand, and $l$ the total cost of processing and transportation.

Given the data, a learning algorithm can be used to map the data to an element in $\Theta$, yielding a trained model or decision. This encompasses a variety of methods, including machine learning training algorithms and data-driven approaches such as sample average approximation (SAA) \citep{shapiro2021lectures} and distributionally robust optimization (DRO) \citep{mohajerin2018data} in stochastic optimization. The theoretical framework and methodology proposed in this paper work for all learning algorithms that meet the formal performance criterion in our theorems.

\textbf{Main Results at a High Level.$\quad$}
Let $\hat{\theta}$ be the output of a learning algorithm. We characterize its generalization performance through the tail probability bound on the excess risk $L(\hat{\theta}) - \min_{\theta \in \Theta} L(\theta)$, i.e., $\mathbb{P}(L(\hat{\theta})>\min_{\theta\in\Theta}L(\theta)+\delta)$ for some fixed $\delta>0$, where the probability is over both the data and training randomness.
A \textit{polynomially decaying generalization tail} refers to:
\begin{equation}\label{eq: polynomial decay}  \mbb{P}\Big(L(\hat{\theta})>\min_{\theta\in\Theta}L(\theta)+\delta\Big)\geq C_1n^{-\alpha},
\end{equation}
for some $\alpha>0$ and $C_1$ that depends on $\delta$. 
Such bounds are common under heavy-tailed data distributions \citep{kavnkova2015thin,jiang2020rates,jiang2021complexity} due to slow concentration, which frequently arises in machine learning applications such as large language models \citep{jalalzai2020heavy,zhang2020adaptive,cutkosky2021high}, finance \citep{mainik2015portfolio,gilli2006application}, and physics \citep{fortin2015applications,michel2007analysis}.
This can be illustrated with a simple linear program (LP):
\begin{example}[LP with a polynomial tail]\label{ex: linear_program}
Consider the stochastic LP $\min_{\theta\in [0,1]}\mathbb{E}[z \theta]$, i.e., $l(\theta, z)=z\theta$ and $\Theta=[0,1]$ in \eqref{opt}, and its SAA solution $\hat \theta\in \mathrm{argmin}_{\theta\in [0,1]}\sum_{i=1}^nz_i/n\cdot \theta$. 
Assume $z$ has a non-zero density everywhere and is symmetric with respect to its mean $\expect{z}=1$ (hence $L(\theta)=\theta$). 
Then we have $\mbb{P}(\hat \theta=1)\geq \mbb{P}(\sum_{i=1}^nz_i/n<0)\geq \mbb{P}(\sum_{i=1}^{n-1}z_i\leq n-1\text{ and } z_n<1-n)=\mbb{P}(\sum_{i=1}^{n-1}z_i\leq n-1)\mbb{P}( z_n<1-n)$, where the last equality uses the independence of $z_i$'s. By the symmetry of $z$, we have $\mbb{P}(\sum_{i=1}^{n-1}z_i\leq n-1)= 1/2$, thus for $\delta\in (0,1)$ the tail
\begin{equation}\label{eq: lower bound for linear program example}
\mbb{P}\Big(L(\hat \theta)>\min_{\theta\in[0,1]}L(\theta)+\delta\Big)\geq \mbb{P}(\hat \theta=1)\geq \frac{1}{2} \mbb{P}( z<1-n).
\end{equation}
If $z$ has a polynomial tail, e.g., $\mbb{P}(z<1-n)=\Omega(n^{-\alpha})$ for some $\alpha>0$ where $\Omega(\cdot)$ contains some multiplicative constant, the generalization tail bound \eqref{eq: polynomial decay} applies. 
Appendix \ref{app: linear_regression} provides another example from linear regression.
\end{example}


As the key contribution of our work, we propose ensemble methods that significantly improve these bounds, achieving an \textit{exponentially decaying generalization tail}: 
\begin{equation}\label{eq: exponential decay}
\mbb{P}\Big(L(\hat{\theta})>\min_{\theta\in\Theta}L(\theta)+\delta\Big)\leq C_2\gamma^{n/k},
\end{equation}
where $k$ is the subsample size and $\gamma<1$ depends on $k, \delta$ with $\gamma\to 0$ as $k\to\infty$.
By appropriately choosing $k$ at a slower rate in $n$, the decay becomes exponential.
This exponential acceleration is fundamentally different from the well-known variance reduction benefit of ensembling in two perspectives. 
First, variance reduction refers to the smaller variability in predictions from models trained on independent data sets, thus has a more direct impact on the expected regret than the tail decay rate. 
Second, variance reduction typically yields a constant-factor improvement (e.g., \citep{buhlmann2002analyzing} report a reduction factor of $3$), whereas we obtain an order-of-magnitude improvement.

\textbf{Main Intuition.$\quad$}
Consider first the discrete space $\Theta$. 
Our ensemble method employs a majority-vote mechanism at the model level: The learning algorithm is repeatedly run on subsamples to generate multiple models, and the model appearing most frequently is selected as the output. This resembles the majority vote in ensemble methods for classification but the voting is on models instead of classes.
This process effectively estimates the mode of the sampling distribution of the learned model by subsampling, and thus is less susceptible to extreme data and training randomness that incurs the slow tail decay in \eqref{eq: polynomial decay}. This mode estimation can be formalized via a surrogate optimization problem over the same decision space $\Theta$ as \eqref{opt} that maximizes the probability of being output by the learning algorithm. The probability objective, as the expected value of a random indicator function, is uniformly bounded even if the original objective is heavy-tailed and hence admits exponential decay in the tail.
Consequently, base learners with high-quality mode models receive an exponential enhancement in their tail behavior.
To illustrate the main idea using Example \ref{ex: linear_program}, although $\mbb{P}(\hat \theta=1)$ can be substantial in a heavy-tailed setting, it holds that $\mbb{P}(\hat \theta=0)>\mbb{P}(\hat \theta=1)$, and thus the mode of $\hat \theta$, i.e., $0$, recovers the optimal solution.


For general problems with possibly continuous decision spaces, we replace the majority vote with a voting mechanism based on the likelihood of being $\epsilon$-optimal among all models when evaluated on random subsamples. 
This avoids the degeneracy of using a majority vote for continuous problems while retaining similar (in fact, even stronger) theoretical guarantees. 
For both discrete and continuous problems, our method fundamentally improves the tail behavior from \eqref{eq: polynomial decay} to \eqref{eq: exponential decay}.

\textbf{Organization.$\quad$}
The rest of the paper is organized as follows. Section \ref{section: procedures} presents our methods and their finite-sample bounds. Section \ref{section: experiments} presents experimental results, Section \ref{sec: related work} discusses related work, and Section \ref{sec: conclusion} concludes the paper. 
Technical proofs and additional experiments can be found in the appendix.
\section{Methodology and Theoretical Guarantees}\label{section: procedures}
We consider the generic learning algorithm in the form of
\begin{equation*}
\mathcal{A}(z_1,\ldots,z_n;\omega):\mathcal{Z}^n\times \mathbf{\Omega} \to \Theta
\end{equation*}
that takes in the training data $(z_1,\ldots,z_n)\in \mathcal{Z}^n$ and outputs a model possibly under some algorithmic randomness $\omega\in \mathbf{\Omega}$ that is independent of the data. Examples of $\omega$ include gradient sampling in stochastic algorithms and feature/data subsampling in random forests. For convenience, we omit $\omega$ to write $\mathcal{A}(z_1,\ldots,z_n)$ when no confusion arises.

\subsection{A Basic Procedure}
We first introduce a procedure called $\mathsf{MoVE}$ that applies to discrete solution or model space $\Theta$. $\mathsf{MoVE}$, which is formally described in Algorithm \ref{bagging majority vote: set estimator}, repeatedly draws a total of $B$ subsamples from the data without replacement, learns a model via $\mathcal{A}$ on each subsample, and finally conducts a majority vote to output the most frequently subsampled model. Tie-breaking can be done arbitrarily.

\begin{algorithm}
\caption{\textbf{M}aj\textbf{o}rity \textbf{V}ote \textbf{E}nsembling ($\mathsf{MoVE}$)}
\label{bagging majority vote: set estimator}
\begin{algorithmic}[1]
\STATE {\textbf{Input:} A base learning algorithm $\mathcal{A}$, $n$ i.i.d. observations $\mathbf{z}_{1:n}=(z_1,\ldots,z_n)$, subsample size $k<n$, and ensemble size $B$.
}


\FOR{$b=1$ to $B$ }
\STATE {Randomly sample $\mathbf{z}_k^b=(z_1^b,\ldots,z_k^b)$ uniformly from $\mathbf{z}_{1:n}$ without replacement, and obtain $\hat{\theta}_k^b=\mathcal{A}(z_1^b,\ldots,z_k^b)$.
}
\ENDFOR


\STATE {\textbf{Output:} $\hat{\theta}_n\in\argmax_{\theta\in\Theta}\sum_{b=1}^B\mathbbm{1}(\theta=\hat{\theta}^b_k)$.}
\end{algorithmic}
\end{algorithm}

To understand $\mathsf{MoVE}$ from the lens of mode estimation, we consider an optimization associated with the base learner $\mathcal{A}$:
\begin{equation}\label{prob:maximizing selection probability}
    \max_{\theta\in\Theta}\ p_k(\theta):=\prob{\theta= \mathcal{A}(z_1,\ldots,z_k)},
\end{equation}
which maximizes the probability of a model being output by the base learner on $k$ i.i.d. observations. Here the probability $\mathbb P$ is with respect to both the training data and the algorithmic randomness. If $B=\infty$, $\mathsf{MoVE}$ essentially maximizes an empirical approximation of \eqref{prob:maximizing selection probability}, i.e.
\begin{equation}\label{prob:maximizing selection probability empirical}
    \max_{\theta\in\Theta}\ \reprob{\theta= \mathcal{A}(z^*_1,\ldots,z^*_k)},
\end{equation}
where $\prth{z^*_1,\ldots,z^*_k}$ is a uniform random subsample from $\prth{z_1,\ldots,z_n}$, and $\mathbb P_*$ denotes the probability with respect to the algorithmic randomness and the subsampling randomness conditioned on $\prth{z_1,\ldots,z_n}$. With a finite $B<\infty$, extra Monte Carlo noises are introduced, leading to the following maximization problem
\begin{equation}\label{prob:maximizing selection probability monte carlo}
    \max_{\theta\in\Theta}\ \frac{1}{B}\sum_{b=1}^B\mathbbm{1}(\theta=\mathcal{A}(z_1^b,\ldots,z_k^b)),
\end{equation}
which gives exactly the output of $\mathsf{MoVE}$. In other words, $\mathsf{MoVE}$ is a \emph{bootstrap approximation} to the solution of \eqref{prob:maximizing selection probability}. 
The following result materializes the intuition explained in Section \ref{sec: introduction} on the conversion of the original potentially heavy-tailed problem \eqref{opt} into a probability maximization \eqref{prob:maximizing selection probability monte carlo} that possesses exponential bounds.

\begin{theorem}[Informal bound for Algorithm \ref{bagging majority vote: set estimator}]\label{thm: general majority vote}
Consider discrete decision space $\Theta$. Let $\Theta^{\delta}:=\left\{\theta\in\Theta:L(\theta)\leq \min_{\theta'\in\Theta}L(\theta')+\delta\right\}$ be the set of $\delta$-optimal models and
\begin{equation*}
    \eta_{k,\delta}:=\max_{\theta\in\Theta}p_k(\theta) - \max_{\theta\in\Theta/\Theta^{\delta}}p_k(\theta),
\end{equation*}
where $p_k(\theta)$ is defined in \eqref{prob:maximizing selection probability} and $\max_{\theta\in\Theta\backslash\Theta^{\delta}}p_k(\theta)$ evaluates to $0$ if $\Theta\backslash\Theta^{\delta}$ is empty. Then, for every $k\leq n$ and $\delta\geq 0$ such that $\eta_{k,\delta}>0$, the solution output by \move satisfies that
\begin{equation}\label{finite-sample bound for general bagging solution_informal}
\begin{aligned}
\mbb{P}(L(\hat{\theta}_n) > \min_{\theta\in\Theta}L(\theta)+\delta) \leq \abs{\Theta}\Big[4\exp(-\Omega(n/k))+\exp(-\Omega(B))\Big],
\end{aligned}
\end{equation}
where $\abs{\Theta}$ denotes the cardinality of $\Theta$, and $\Omega(\cdot)$ contains multiplicative coefficients that depend on $\max_{\theta\in\Theta}p_k(\theta)$ and $\eta_{k,\delta}$. If $\eta_{k,\delta}>4/5$, \eqref{finite-sample bound for general bagging solution_informal} is further bounded by
\begin{equation}\label{eq: general finite-sample bound for large gap_informal}
\abs{\Theta}\left(3\min\left\{e^{-2/5}, C_1\max\set{1-\max_{\theta\in\Theta}p_k(\theta),\max_{\theta\in\Theta/\Theta^{\delta}}p_k(\theta)}\right\}^{\frac{n}{C_2k}} + e^{-\frac{B}{C_3}}\right),
\end{equation}
where $C_1,C_2,C_3>0$ are universal constants. 
\end{theorem}

The formal statement is deferred to Theorem \ref{thm: general majority vote_formal} in Appendix \ref{app: proof_theorem1_proposition1}. Theorem \ref{thm: general majority vote} states that the excess risk tail of $\mathsf{MoVE}$ decays exponentially in the ratio $n/k$ and ensemble size $B$. The bound \eqref{finite-sample bound for general bagging solution_informal} consists of two terms: 
The term $\exp(-\Omega(n/k))$ arises from the bootstrap approximation of \eqref{prob:maximizing selection probability} with \eqref{prob:maximizing selection probability empirical}, whereas the term $\exp(-\Omega(B))$ quantifies the Monte Carlo error in approximating \eqref{prob:maximizing selection probability empirical} with a finite $B$. The multiplier $\abs{\Theta}$ in the bound is avoidable, e.g., via a space reduction as in our next algorithm.

The quantity $\eta_{k,\delta}$ plays two roles. First, it quantifies how suboptimality in the surrogate problem \eqref{prob:maximizing selection probability} propagates to the original problem \eqref{opt} in that every $\eta_{k,\delta}$-optimal solution for \eqref{prob:maximizing selection probability} is $\delta$-optimal for \eqref{opt}. Second, $\eta_{k,\delta}>0$ simply means that the mode solution is $\delta$-optimal and hence $\eta_{k,\delta}$ directly quantifies the concentration of the base learner on near-optimal solutions. Therefore, a large $\eta_{k,\delta}$ signals the situation where the base learner already generalizes well. 
In this case, \eqref{finite-sample bound for general bagging solution_informal} reduces to \eqref{eq: general finite-sample bound for large gap_informal}. \eqref{eq: general finite-sample bound for large gap_informal} suggests that our approach does not hurt the performance of an already high-performing base learner as its generalization power is inherited through the $\max\set{1-\max_{\theta\in\Theta}p_k(\theta),\max_{\theta\in\Theta/\Theta^{\delta}}p_k(\theta)}$ term in the bound. See Appendix \ref{sec: additional technical results} for a more detailed discussion.


Theorem \ref{thm: general majority vote} also hints at the choice of hyperparameters $k$ and $B$. As long as $\eta_{k,\delta}>0$, our bound decays exponentially fast, and in this regime the bound \eqref{finite-sample bound for general bagging solution_informal} suggests that a smaller $k$ (consequently a larger ratio $n/k$) leads to thinner tails. However, like other subsampling-based ensemble methods (e.g., subagging \cite{buhlmann2002analyzing}), reducing the subsample size $k$ also enlarges the model bias. In experiments, we set $k=\max(10,n/200)$ for a balance between tail and bias performance. Regarding the choice of $B$, we observe from \eqref{finite-sample bound for general bagging solution_informal} that using a $B=\mc{O}(n/k)$ is sufficient to control the Monte Carlo error to a similar magnitude as the statistical error.

Applying Theorem \ref{thm: general majority vote} to Example \ref{ex: linear_program} gives an exponential tail as opposed to the slow decay in \eqref{eq: lower bound for linear program example}.
\begin{corollary}[Enhanced tail for Example \ref{ex: linear_program}]\label{cor: application of move to linear program example}
Consider the stochastic LP in Example \ref{ex: linear_program} and denote $q_k:=\mbb{P}(\sum_{i=1}^kz_i>0)$. We have $q_k>1/2$ by the symmetry of $z$. If \move is applied to Example \ref{ex: linear_program} with $\mathcal{A}$ being the SAA, we have $\max_{\theta\in\Theta}p_k(\theta)=q_k$, $\max_{\theta\in\Theta/\Theta^{\delta}}p_k(\theta)=1-q_k$ whenever $\delta<1$, and $\abs{\Theta}=2$. Consequently, $\eta_{k,\delta}=2q_k-1>0$ for every $k>0$ and $\delta<1$, ensuring the tail upper bound \eqref{finite-sample bound for general bagging solution_informal} holds. If $q_k>0.9$, we also have the tail upper bound $6\min\big\{e^{-2/5}, C_1(1-q_k)\big\}^{\frac{n}{C_2k}} + 2e^{-\frac{B}{C_3}}$ from \eqref{eq: general finite-sample bound for large gap_informal}.
\end{corollary}
The proof of Corollary \ref{cor: application of move to linear program example} can be found in Appendix \ref{sec: proof of linear program example}.

\subsection{A More General Procedure}\label{subsec:tow phase framework}
We next present a more general procedure called $\mathsf{ROVE}$ that applies to continuous space where the simple majority vote in \move can lead to degeneracy, i.e., all learned models appear exactly once in the pool. Moreover, this general procedure relaxes the dependence on $|\Theta|$ in the bound \eqref{finite-sample bound for general bagging solution_informal}.

\begin{algorithm}[!ht]
\caption{\textbf{R}etrieval and $\epsilon$-\textbf{O}ptimality \textbf{V}ote \textbf{E}nsembling ($\mathsf{ROVE}$ / $\mathsf{ROVEs}$)}
\label{bagging majority vote: two phase}
\begin{algorithmic}
\STATE {\textbf{Input:} A base learning algorithm $\mathcal{A}$, $n$ i.i.d. observations $\mathbf{z}_{1:n}=(z_1,\ldots,z_n)$, subsample size $k_1,k_2<n$ (if no split) or $n/2$ (if split), ensemble sizes $B_1$ and $B_2$.}

\vspace{1ex}

\STATE {\textbf{Phase I: Model Candidate Retrieval}}

\FOR{$b=1$ to $B_1$ }
\STATE {Randomly sample $\mathbf{z}_{k_1}^b=(z_1^b,\ldots,z_{k_1}^b)$ uniformly from $\mathbf{z}_{1:n}$ (if no split) or $\mathbf{z}_{1:\lfloor\frac{n}{2}\rfloor}$ (if split) without replacement, and obtain $\hat{\theta}^b_{k_1}=\mathcal{A}(z_1^b,\ldots,z_{k_1}^b)$.
}
\ENDFOR

\STATE {Let $\mathcal{S}:=\{\hat{\theta}^b_{k_1}:b=1,\ldots,B_1\}$ be the set of all retrieved models.}

\vspace{1ex}

\STATE {\textbf{Phase II: $\epsilon$-Optimality Vote}}
\STATE {Choose $\epsilon\geq 0$ using the data $\mathbf{z}_{1:n}$ (if no split) or $\mathbf{z}_{1:\lfloor\frac{n}{2}\rfloor}$ (if split).}
\FOR{$b=1$ to $B_2$ }
\STATE {Randomly sample $\mathbf{z}_{k_2}^b=(z_1^b,\ldots,z_{k_2}^b)$ uniformly from $\mathbf{z}_{1:n}$ (if no split) or $\mathbf{z}_{\lfloor\frac{n}{2}\rfloor+1:n}$ (if split) without replacement, and calculate
$$\widehat{\Theta}^{\epsilon,b}_{k_2}:=\left\{\theta\in \mathcal{S}: \frac{1}{k_2}\sum_{i=1}^{k_2}l(\theta,z_i^b)\leq \min_{\theta'\in  \mathcal{S}}\frac{1}{k_2}\sum_{i=1}^{k_2}l(\theta',z_i^b)+\epsilon\right\}.$$
}
\ENDFOR


\STATE {\textbf{Output:} $\hat{\theta}_n\in\argmax_{\theta\in \mathcal{S}}\sum_{b=1}^{B_2}\mathbbm{1}(\theta\in \widehat{\Theta}^{\epsilon,b}_{k_2})$.}
\end{algorithmic}
\end{algorithm}

$\mathsf{ROVE}$, displayed in Algorithm \ref{bagging majority vote: two phase}, proceeds initially the same as $\mathsf{MoVE}$ in repeatedly subsampling data and training the model using $\mathcal{A}$. However, in the aggregation step, instead of using a simple majority vote, $\mathsf{ROVE}$ outputs, among all the trained models, the one that has the highest likelihood of being $\epsilon$-optimal. This $\epsilon$-optimality avoids the degeneracy of the majority vote.
Moreover, since we have restricted our output to the collection of retrieved models, the corresponding likelihood maximization is readily doable by direct enumeration. In addition, it helps reduce competition for votes among best models, as each subsample can now vote for multiple candidates, ensuring a high vote count for each of the top models even when there are many of them. This makes \rove more effective than \move in the case of multiple (near) optima as our experiments will show. We have the following theoretical guarantees for Algorithm \ref{bagging majority vote: two phase}.

\begin{theorem}[Informal bound for Algorithm \ref{bagging majority vote: two phase}]\label{thm: finite-sample bound for multiple predictions two phase splitting}
Let $\mathcal{E}_{k,\delta}:=\mathbb P(L(\mathcal{A}(z_1,\ldots,z_k))>\min_{\theta\in\Theta}L(\theta)+\delta)$ be the excess risk tail of $\mathcal{A}$.
Consider Algorithm \ref{bagging majority vote: two phase} with data splitting, i.e., $\mathsf{ROVEs}$.
Let $T_k(\cdot):=\mathbb P(\sup_{\theta\in\Theta}\lvert (1/k)\sum_{i=1}^kl(\theta,z_i)-L(\theta)\rvert> \cdot)$ be the tail function of the maximum deviation of the empirical objective estimate.
Then, for every $\delta > 0$, under mild conditions on $\epsilon$ and $T_{k_2}(\cdot)$, it holds that 
\begin{equation}\label{eq: finite-sample bound for data splitting two phase under large pmax_informal}
\begin{aligned}
\mbb{P}\Big(L(\hat{\theta}_n) > \min_{\theta\in\Theta}L(\theta)+2\delta\Big)
&\leq B_1 \Big[3\exp(-\Omega(n/k_2)) + \exp(-\Omega(B_2))\Big] \\
&\quad+ \exp(-\Omega(n/k_1)) + \exp(-\Omega(B_1)),
\end{aligned}
\end{equation}
where $\Omega(\cdot)$ contains multiplicative coefficients depending on $T_{k_2}(\cdot)$, $\epsilon$, $\delta$ and $\mathcal{E}_{k_1,\delta}$.

Consider Algorithm \ref{bagging majority vote: two phase} without data splitting, i.e., $\mathsf{ROVE}$, and discrete space $\Theta$. Assume $\lim_{k\to\infty}T_k(\delta)= 0$ for all $\delta>0$. Then, for every fixed $\delta>0$, under mild conditions, it holds that $\lim_{n\to\infty}\mathbb P(L(\hat{\theta}_n) > \min_{\theta\in\Theta}L(\theta)+2\delta)\to 0$.
\end{theorem}

The formal statement can be found in Appendix \ref{sec: theories for multiple predictions}.
Theorem \ref{thm: finite-sample bound for multiple predictions two phase splitting} provides an exponential excess risk tail, regardless of discrete or continuous space. 
The terms in the square bracket of \eqref{eq: finite-sample bound for data splitting two phase under large pmax_informal} are inherited from the bound \eqref{eq: general finite-sample bound for large gap_informal} for $\mathsf{MoVE}$ with the majority vote replaced by $\epsilon$-optimality vote. 
In particular, the multiplier $\abs{\Theta}$ in \eqref{eq: general finite-sample bound for large gap_informal} is now replaced by $B_1$, the number of retrieved models from Phase I. 
The last two terms in \eqref{eq: finite-sample bound for data splitting two phase under large pmax_informal} bound the performance sacrifice due to the restriction to the retrieved models.

$\mathsf{ROVE}$ may be carried out with the data split between the two phases, where it is referred to as $\mathsf{ROVEs}$. Data splitting makes the procedure theoretically more tractable by avoiding inter-dependency between the phases but sacrifices some statistical power by halving the data size. Empirically we find it more effective not to split data.

The optimality threshold $\epsilon$ is allowed to be chosen in a data-driven way and the main goal guiding this choice is to distinguish models of different qualities. In other words, $\epsilon$ should be chosen to create enough variability in the likelihood of being $\epsilon$-optimal across models. In our experiments, we find it a good strategy to choose an $\epsilon$ that leads to a maximum likelihood around $1/2$.

\textbf{Technical Novelty.$\quad$}
Our main theoretical results, Theorems \ref{thm: general majority vote} and \ref{thm: finite-sample bound for multiple predictions two phase splitting}, are derived using several novel techniques. First, we develop a sharper concentration result for U-statistics with binary kernels, improving upon standard Bernstein-type inequalities (e.g., \cite{arcones1995bernstein,peel2010empirical}). This refinement ensures the correct order of the bound, particularly \eqref{eq: general finite-sample bound for large gap_informal}, which captures the convergence of both the bootstrap approximation and the base learner, offering insights into the robustness of our methods for fast-converging base learners. Second, we perform a sensitivity analysis on the regret for the original problem \eqref{opt} relative to the surrogate optimization \eqref{prob:maximizing selection probability}, translating the superior generalization in the surrogate problem into accelerated convergence for the original. Finally, to establish asymptotic consistency for Algorithm \ref{bagging majority vote: two phase} without data splitting, we develop a uniform law of large numbers (LLN) for the class of events of being $\epsilon$-optimal, using direct analysis of the second moment of the maximum deviation. Uniform LLNs are particularly challenging here because, unlike fixed function classes in standard settings, this function class dynamically changes with the subsample size $k_2$ as $n \to \infty$.

\section{Numerical Experiments}\label{section: experiments}
In this section, we numerically test Algorithm \ref{bagging majority vote: set estimator} (\move), Algorithm \ref{bagging majority vote: two phase} with (\roves) and without (\rove) data splitting in training machine learning models and solving stochastic programs. 
Due to space constraints, additional experimental results are provided in Appendix \ref{app: additional_experiment}. In particular, a comprehensive hyperparameter profiling of our algorithms is performed in Appendix \ref{subsec: exp_profile} to find empirically well-performing configurations for general use.
Unless specified otherwise, all our experiments use the recommended configuration summarized at the end of Appendix \ref{subsec: exp_profile}. 
All experiments are conducted on a personal computer, and Gurobi Optimizer is required for certain experiments on stochastic programs.
The code is available at: \href{https://github.com/mickeyhqian/VoteEnsemble}{https://github.com/mickeyhqian/VoteEnsemble}.

\subsection{Neural Networks and Trees for Regression}\label{subsec:neural network experiment}
\textbf{Setup.$\quad$} We consider regression problems with multilayer perceptrons (MLPs) and decision trees. 
Note that classification models are prevalently trained using the cross-entropy loss that is inherently less prone to heavy-tailed noises thanks to the presence of the logarithm.
For neural networks, the base learner splits the data into training (70\%) and validation (30\%), and uses Adam to minimize the mean squared error (MSE), with early stopping triggered when the validation improvement falls below 3\% between epochs. The architecture of the MLPs is provided in Appendix \ref{app: network architecture}. For trees, the base learner is a single regression decision tree with the MSE as the splitting criterion. Besides the base learner, we also compare with three popular tree ensembles, Random Forests (RF) \cite{breiman2001random,geurts2006extremely}, Gradient Boosted Decision Trees (GBDT) \cite{friedman2001greedy,friedman2002stochastic}, and XGBoost (XGB) \cite{chen2016xgboost}. RF are constructed with the same number of trees as our methods for a fair comparison, whereas GBDT and XGB are run with early stopping to avoid overfitting. \move is not included in this comparison as it's applicable to discrete problems only.

\textbf{Synthetic Data.$\quad$} Input-output pairs \((X, Y)\) are generated as $Y = (1/50)\cdot \sum_{j=1}^{50} \log(X_j + 1) + \varepsilon$, where each \(X_j\) is drawn independently from \(\mathrm{Unif}(0, 2 + 198(j-1)/49)\), and the noise \(\varepsilon\) is independent of \(X\) with zero mean. We consider both standard Gaussian noise and Pareto noise \(\varepsilon = \varepsilon_1 - \varepsilon_2\), where each \(\varepsilon_i \sim \mathrm{Pareto}(2.1)\). The out-of-sample performance is estimated on a common test set of one million samples. Each algorithm is repeatedly applied to $200$ independently generated datasets to assess the average and tail performance.

\textbf{Real Data.$\quad$} We use three datasets from the UCI Machine Learning Repository \citep{blake1998uci}: \textit{Bike Sharing} \citep{bike_sharing_275}, \textit{Superconductivity} \citep{superconductivty_data_464}, and \textit{Gas Turbine Emission} \citep{gas_turbine_co_and_nox_emission_data_set_551}. The data is standardized (zero mean, unit variance). To evaluate the tail probabilities of out-of-sample costs, we permute each dataset $100$ times, and each time use the first half for training and the second for testing. Results for three other datasets can be found in Figure \ref{fig: MLP L=4 public data plots appendix} in Appendix \ref{app: additional_experiment}.

\begin{figure*}[h]
\centering
\hspace{-13pt}
\begin{subfigure}{0.34\textwidth}
\includegraphics[width = \linewidth]{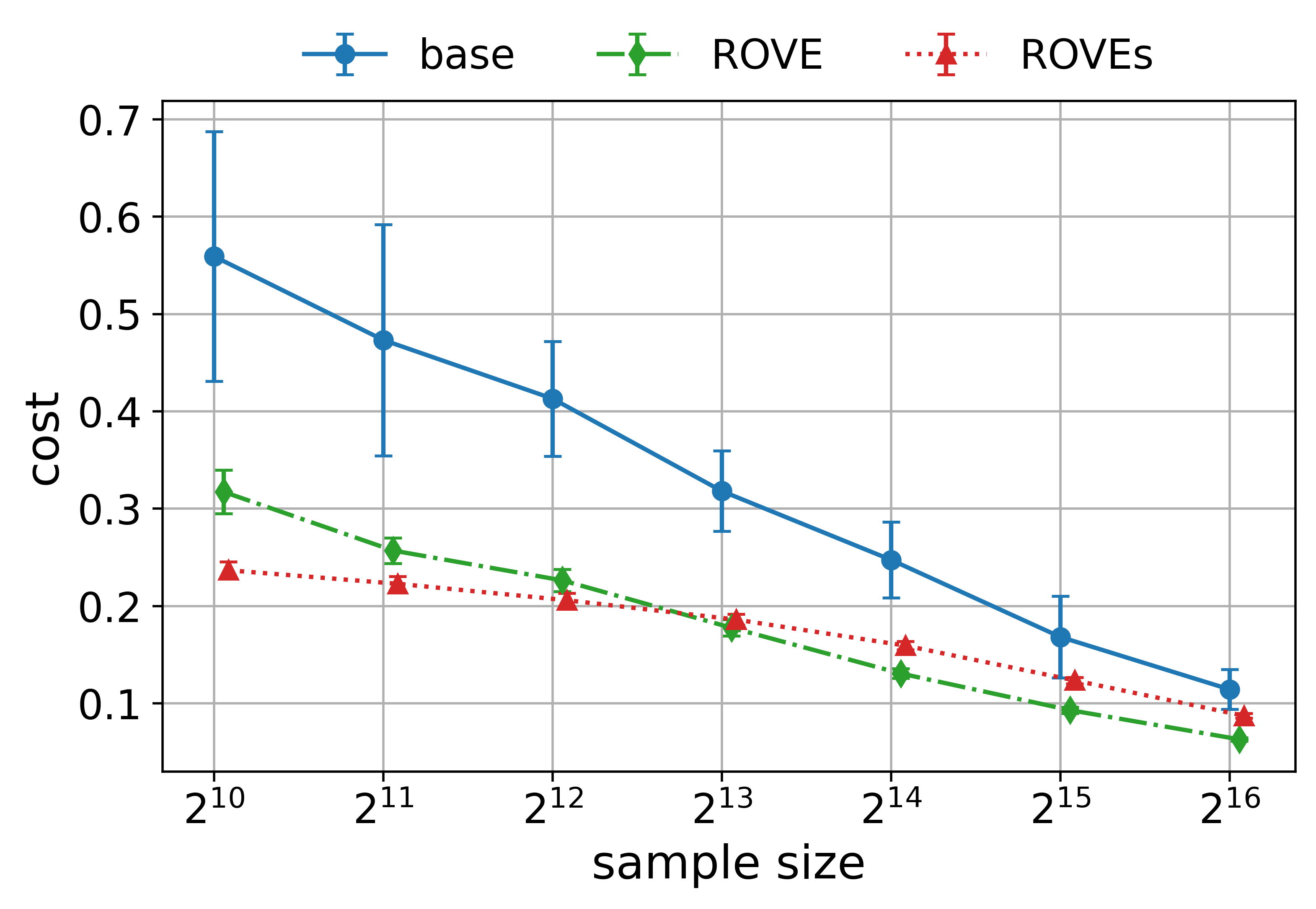}
\caption{Pareto noise, $H=4$.\label{subfig: MLP avg d=50 L=4}}
\end{subfigure}
\hspace{-5pt}
\begin{subfigure}{0.34\textwidth}
\includegraphics[width = \linewidth]{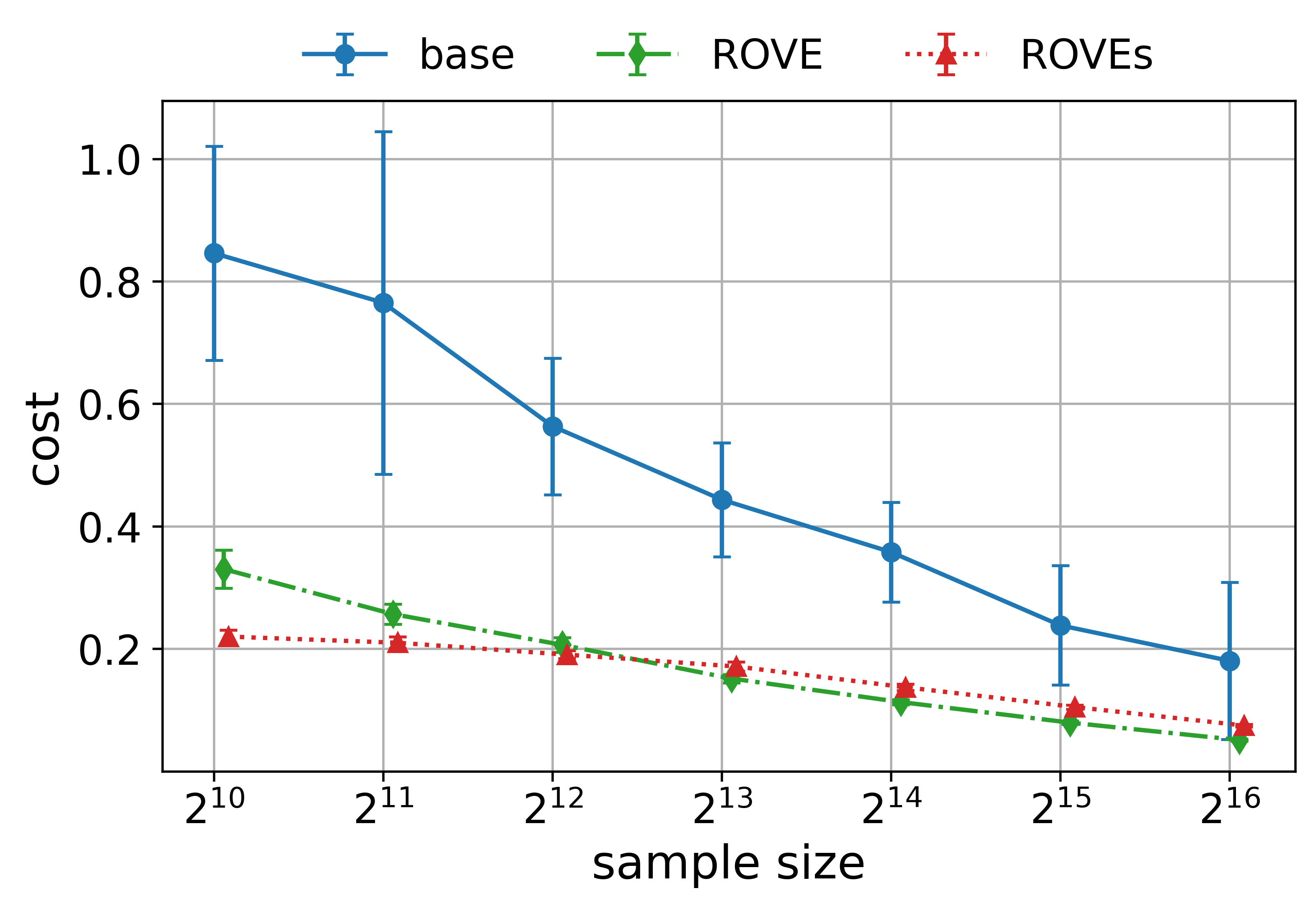}
\caption{Pareto noise, $H=8$.\label{subfig: MLP avg d=50 L=8}}
\end{subfigure}
\hspace{-5pt}
\begin{subfigure}{0.34\textwidth}
\includegraphics[width = \linewidth]{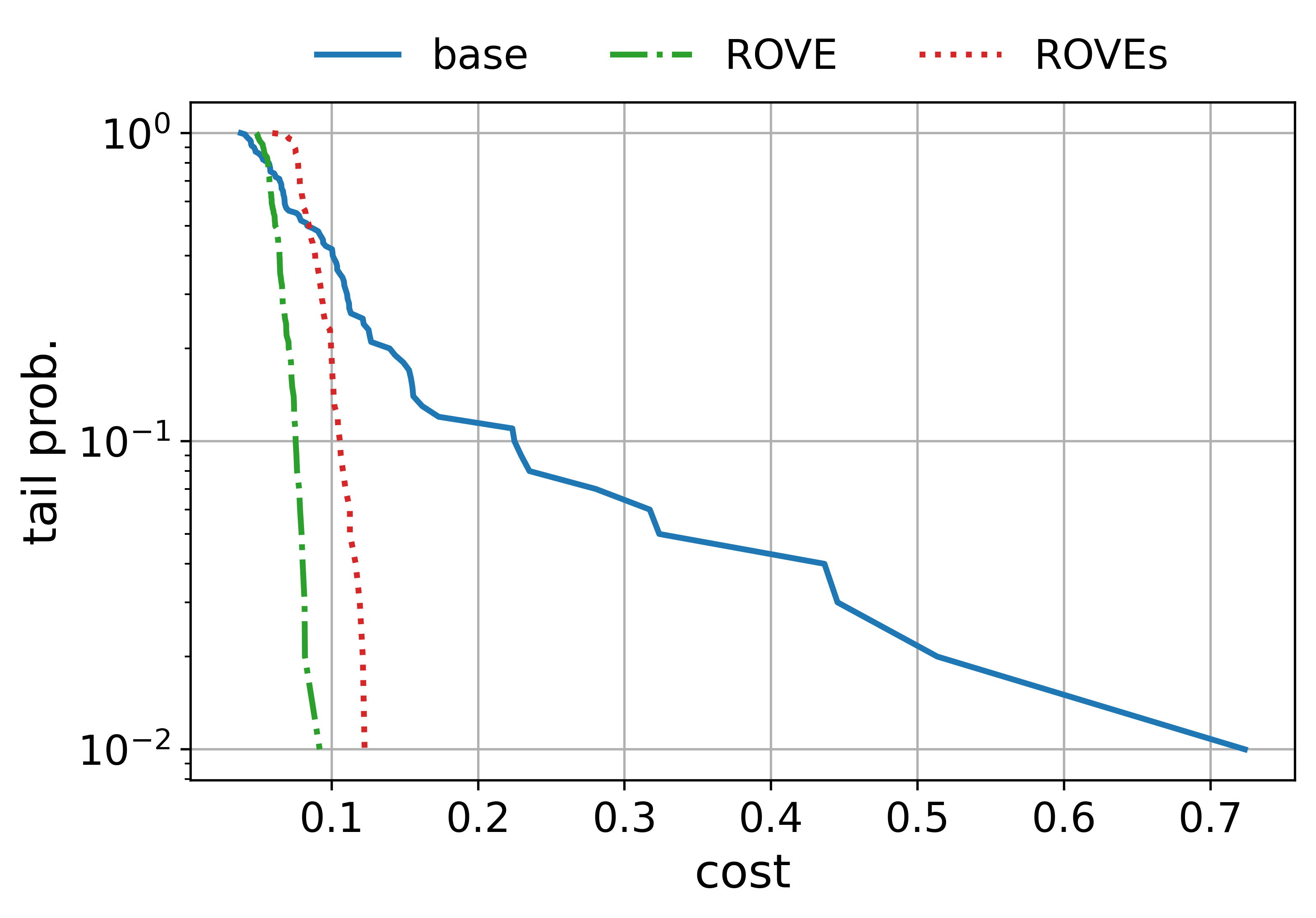}
\caption{Pareto noise, $H=4,n=2^{16}$.\label{subfig: MLP tail d=50 L=4 n=2^16}}
\end{subfigure}\\
\hspace{-13pt}
\begin{subfigure}{0.34\textwidth}
\includegraphics[width = \linewidth]{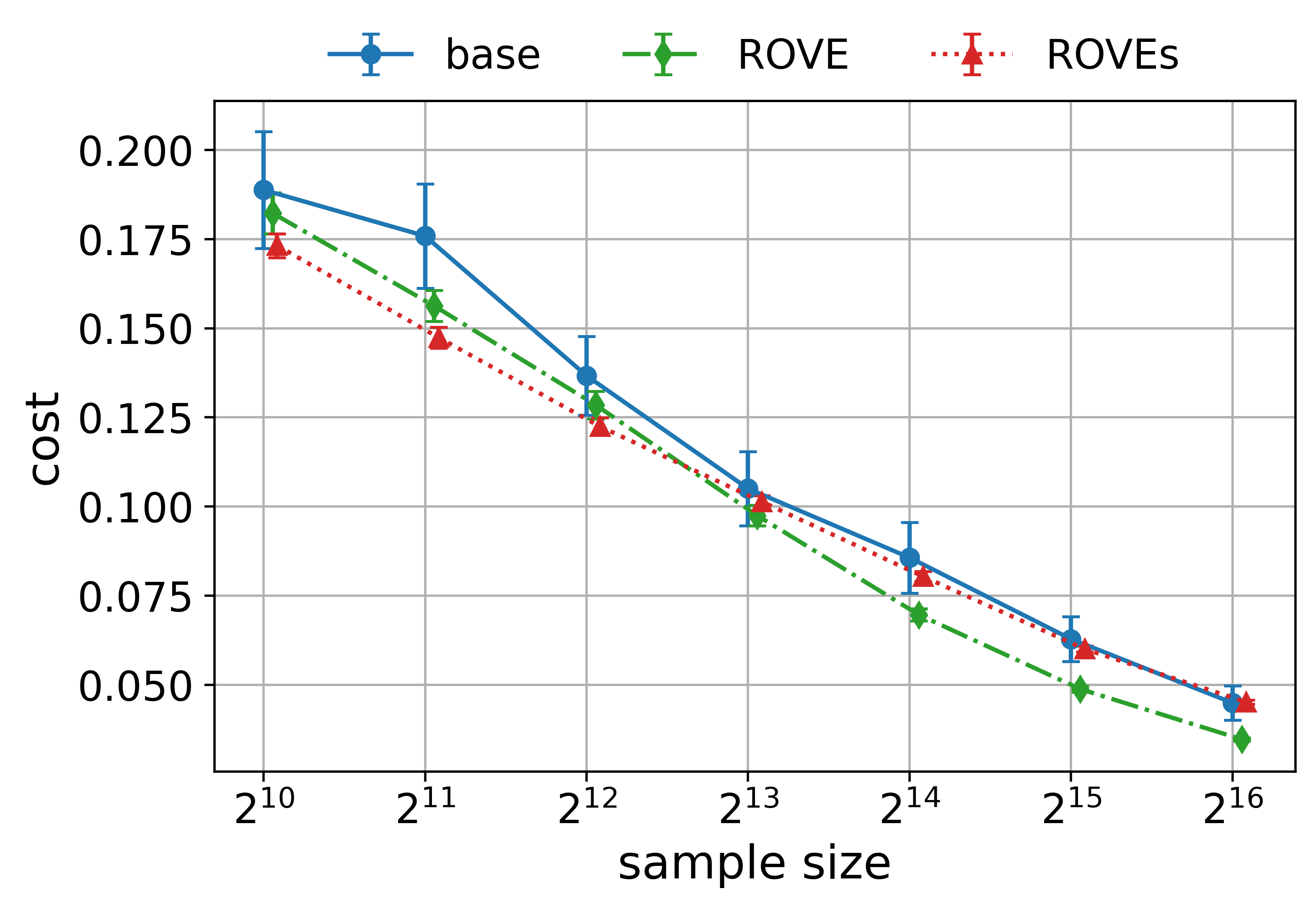}
\caption{Gaussian noise, $H=4$.\label{subfig: MLP avg d=50 L=4 light}}
\end{subfigure}
\hspace{-5pt}
\begin{subfigure}{0.34\textwidth}
\includegraphics[width = \linewidth]{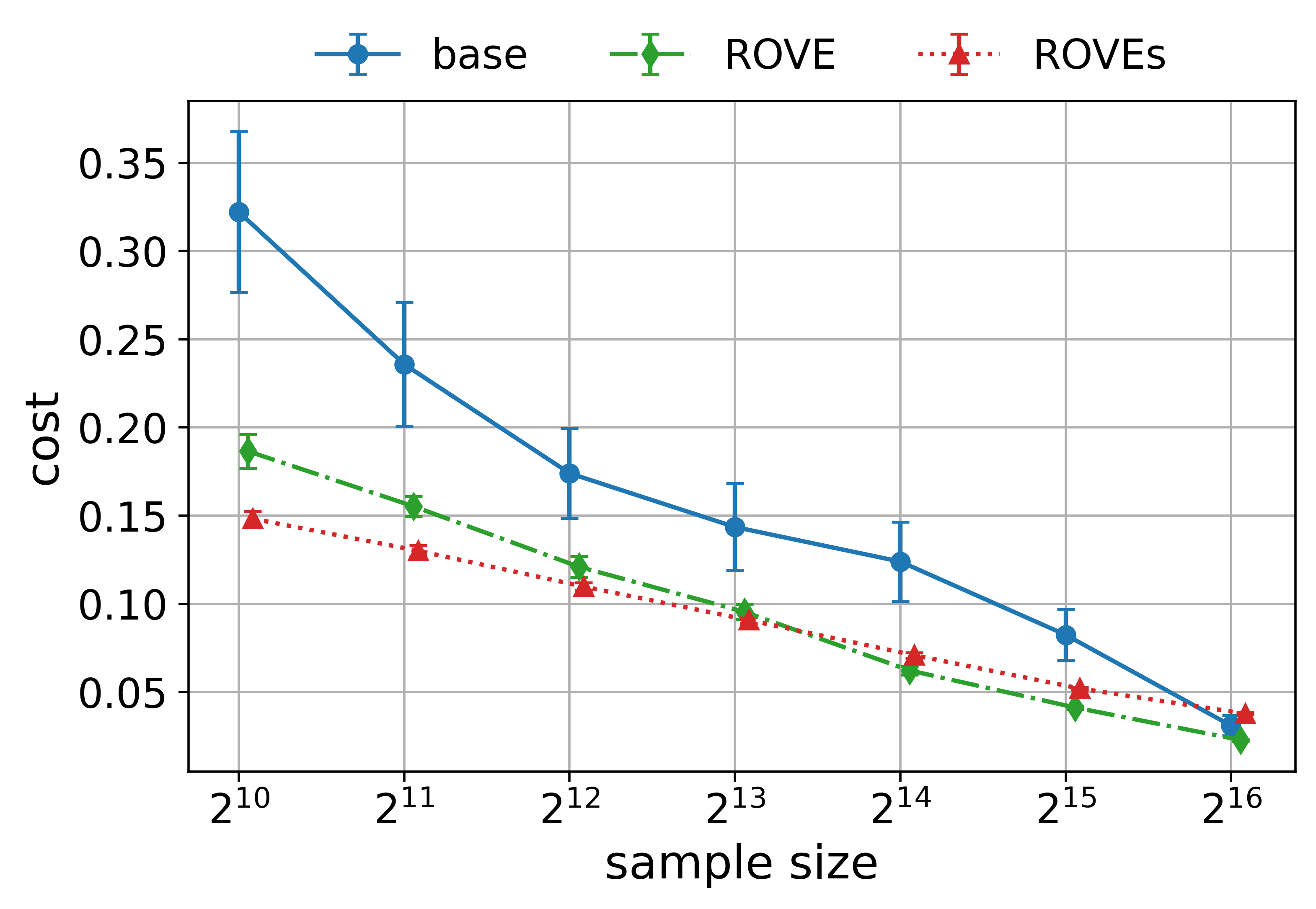}
\caption{Gaussian noise, $H=8$.\label{subfig: MLP avg d=50 L=8 light}}
\end{subfigure}
\hspace{-5pt}
\begin{subfigure}{0.34\textwidth}
\includegraphics[width = \linewidth]{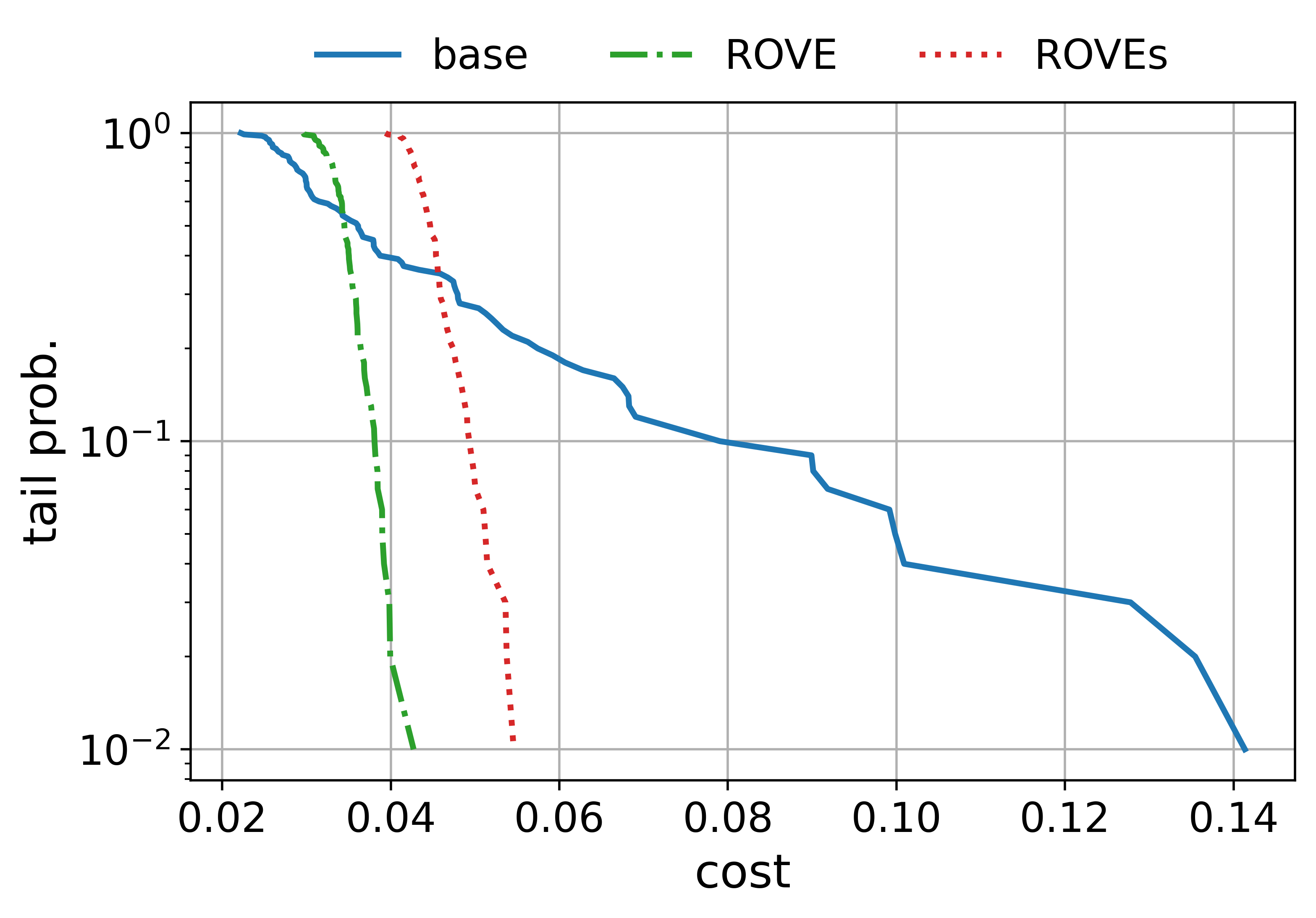}
\caption{Gaussian noise, $H=4,n=2^{16}$.\label{subfig: MLP tail d=50 L=4 n=2^16 light}}
\end{subfigure}
\caption{Results of neural networks. (a)(b)(d)(e): Expected out-of-sample costs (MSE) with $95\%$ confidence intervals under different noise distributions and varying numbers of hidden layers ($H$). (c) and (f): Tail probabilities of out-of-sample costs.
\label{fig: alg_comparison_MLP}}
\end{figure*}

\begin{figure*}[h]
\centering
\hspace{-8pt}
\begin{subfigure}{0.33\textwidth}
\includegraphics[width = \linewidth]{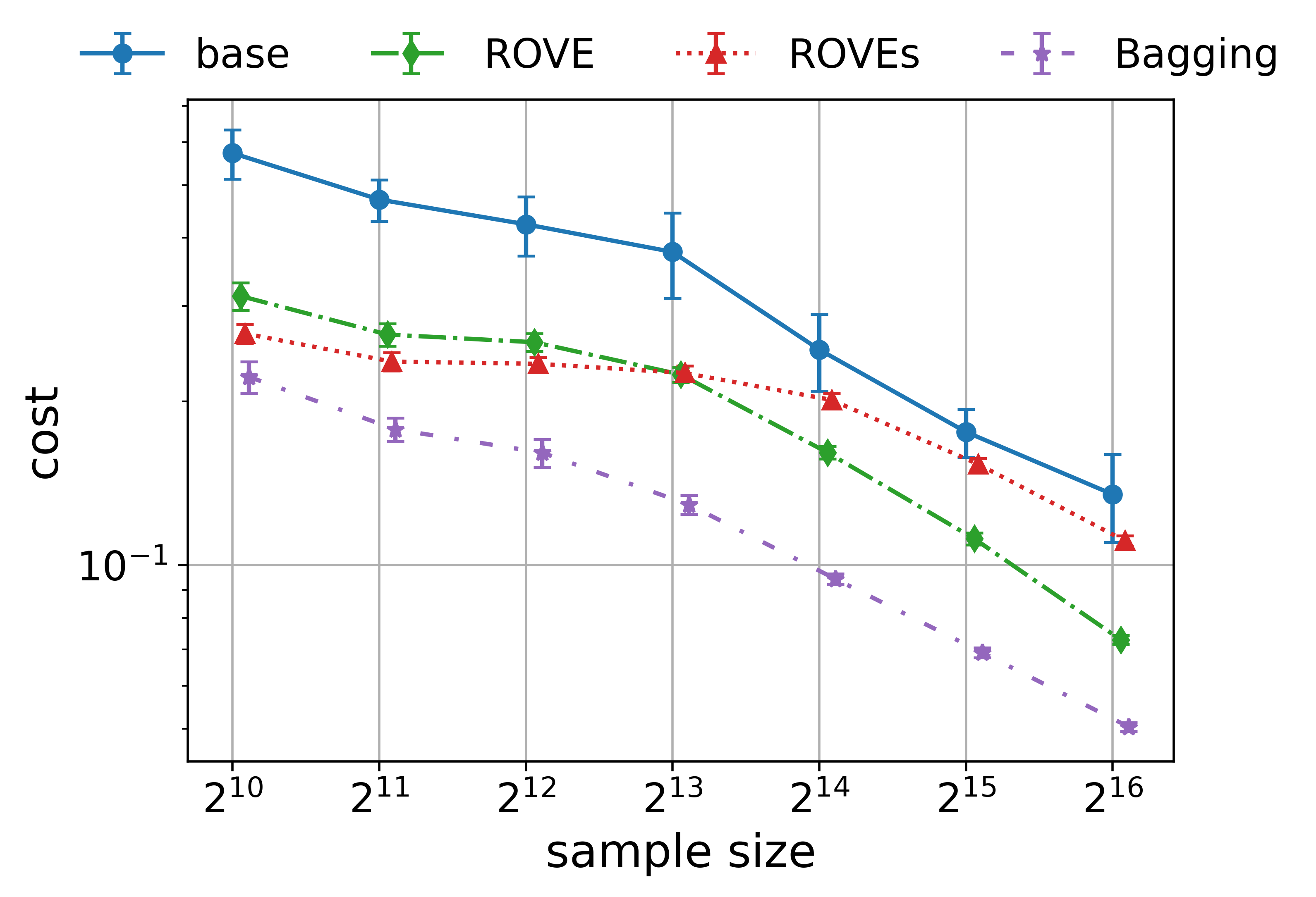}
\caption{Pareto shape $=2.1$.\label{subfig: comparison with bagging pareto2.1}}
\end{subfigure}
\begin{subfigure}{0.33\textwidth}
\includegraphics[width = \linewidth]{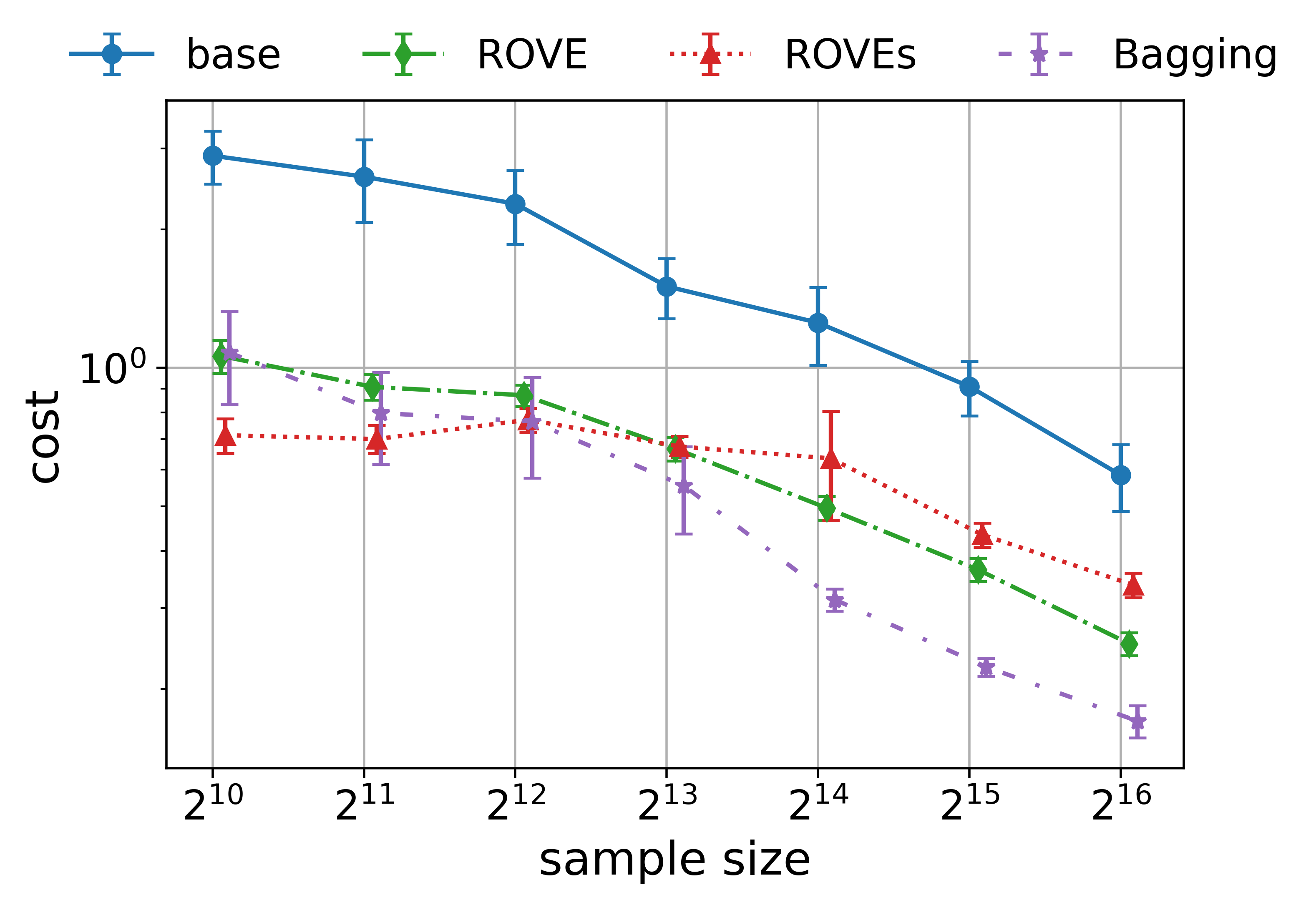}
\caption{Pareto shape $=1.5$.\label{subfig: comparison with bagging pareto1.5}}
\end{subfigure}
\begin{subfigure}{0.33\textwidth}
\includegraphics[width = \linewidth]{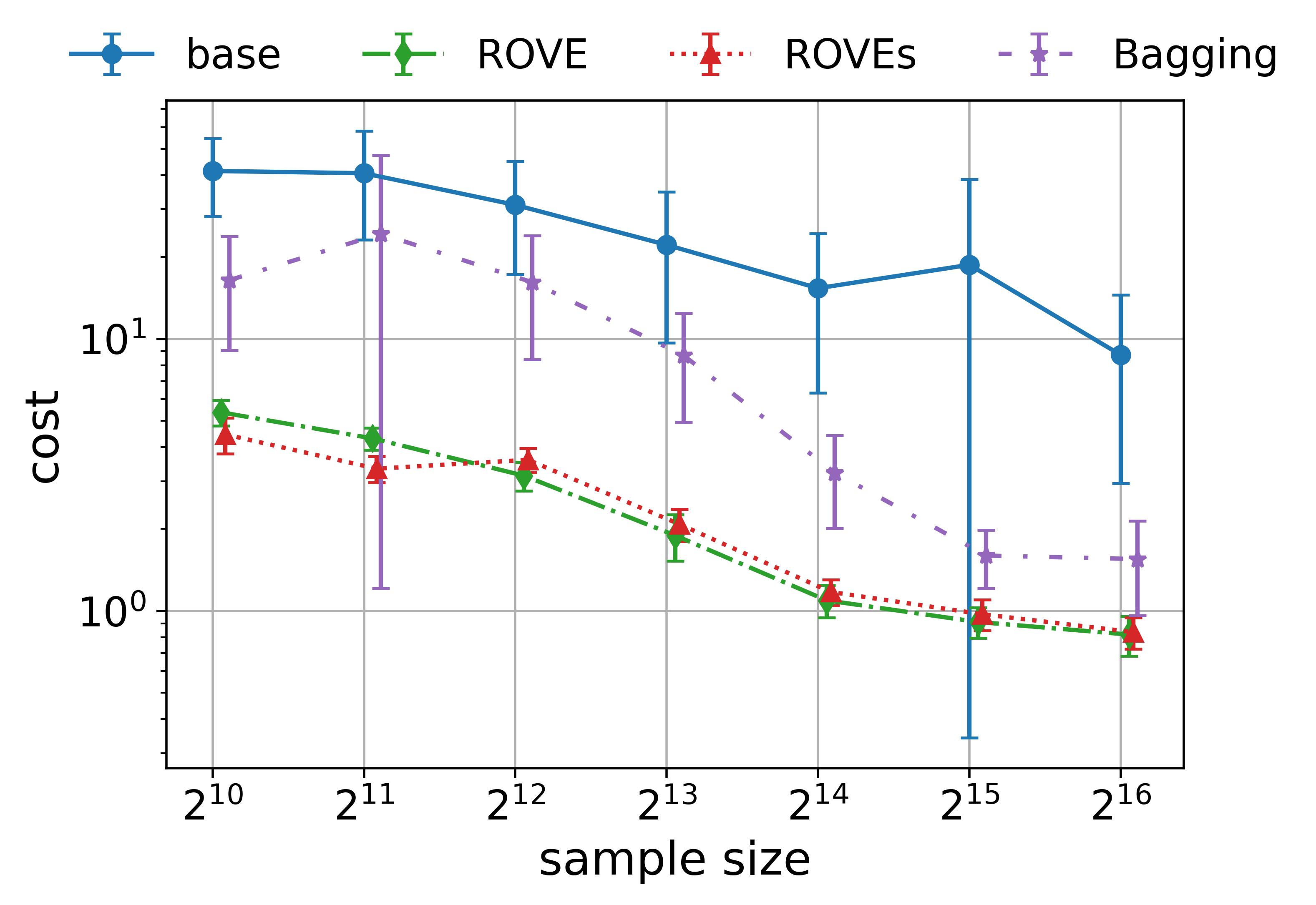}
\caption{Pareto shape $=1.1$.\label{subfig: comparison with bagging pareto1.1}}
\end{subfigure}\\
\hspace{-8pt}
\begin{subfigure}{0.33\textwidth}
\includegraphics[width = \linewidth]{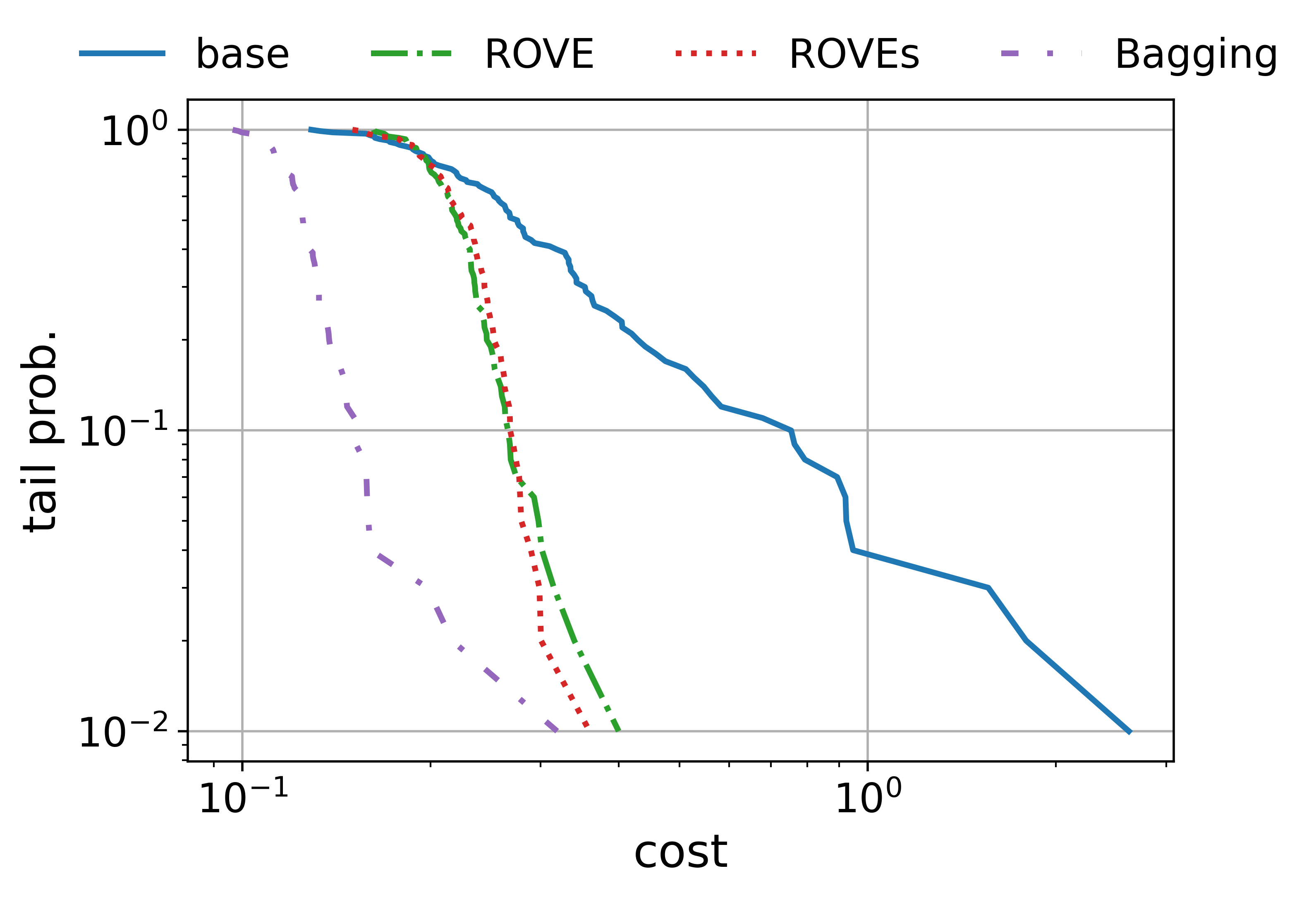}
\caption{Pareto shape $=2.1,n=2^{13}$.\label{subfig: tail comparison with bagging pareto2.1}}
\end{subfigure}
\begin{subfigure}{0.33\textwidth}
\includegraphics[width = \linewidth]{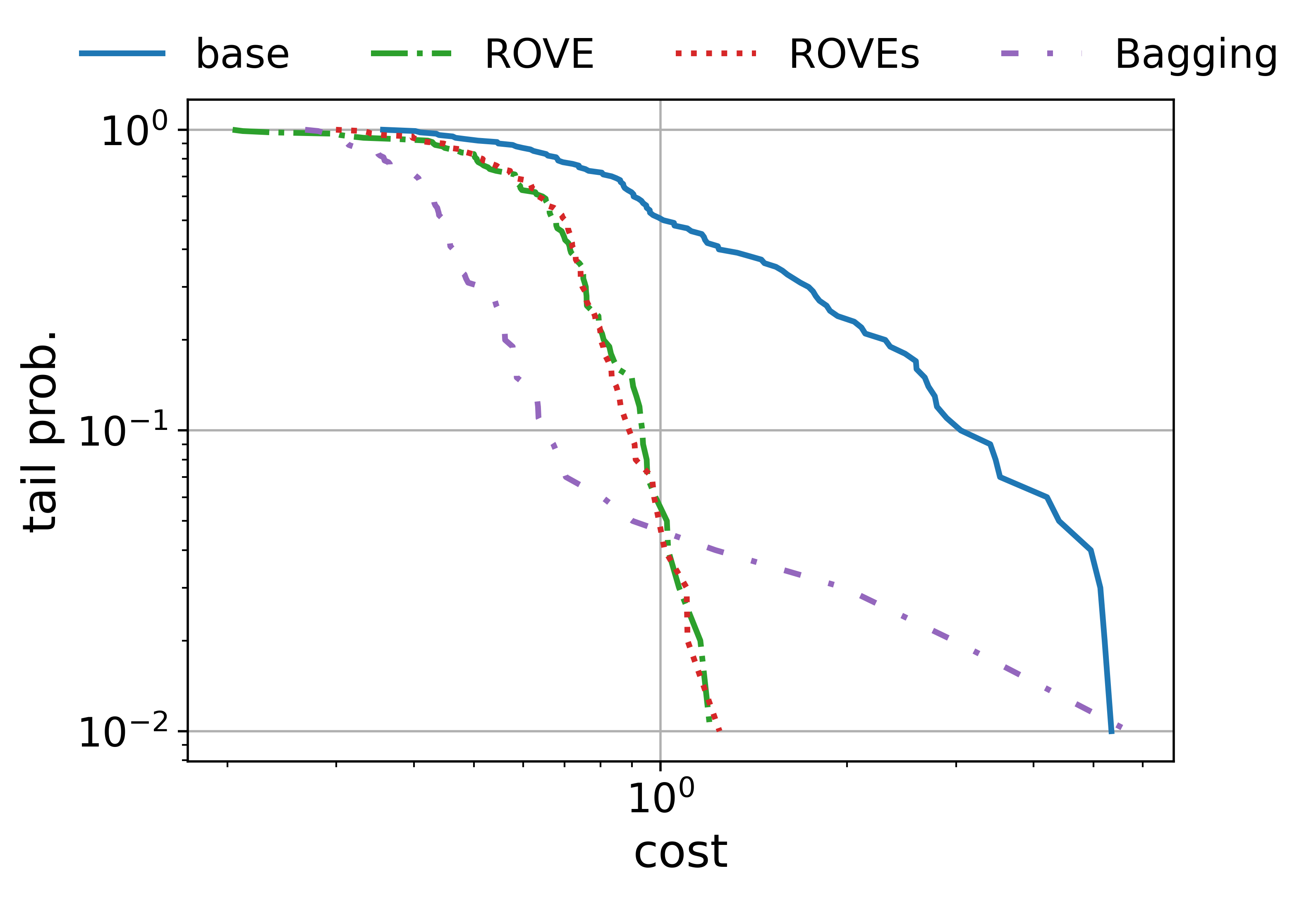}
\caption{Pareto shape $=1.5,n=2^{13}$.\label{subfig: tail comparison with bagging pareto1.5}}
\end{subfigure}
\begin{subfigure}{0.33\textwidth}
\includegraphics[width = \linewidth]{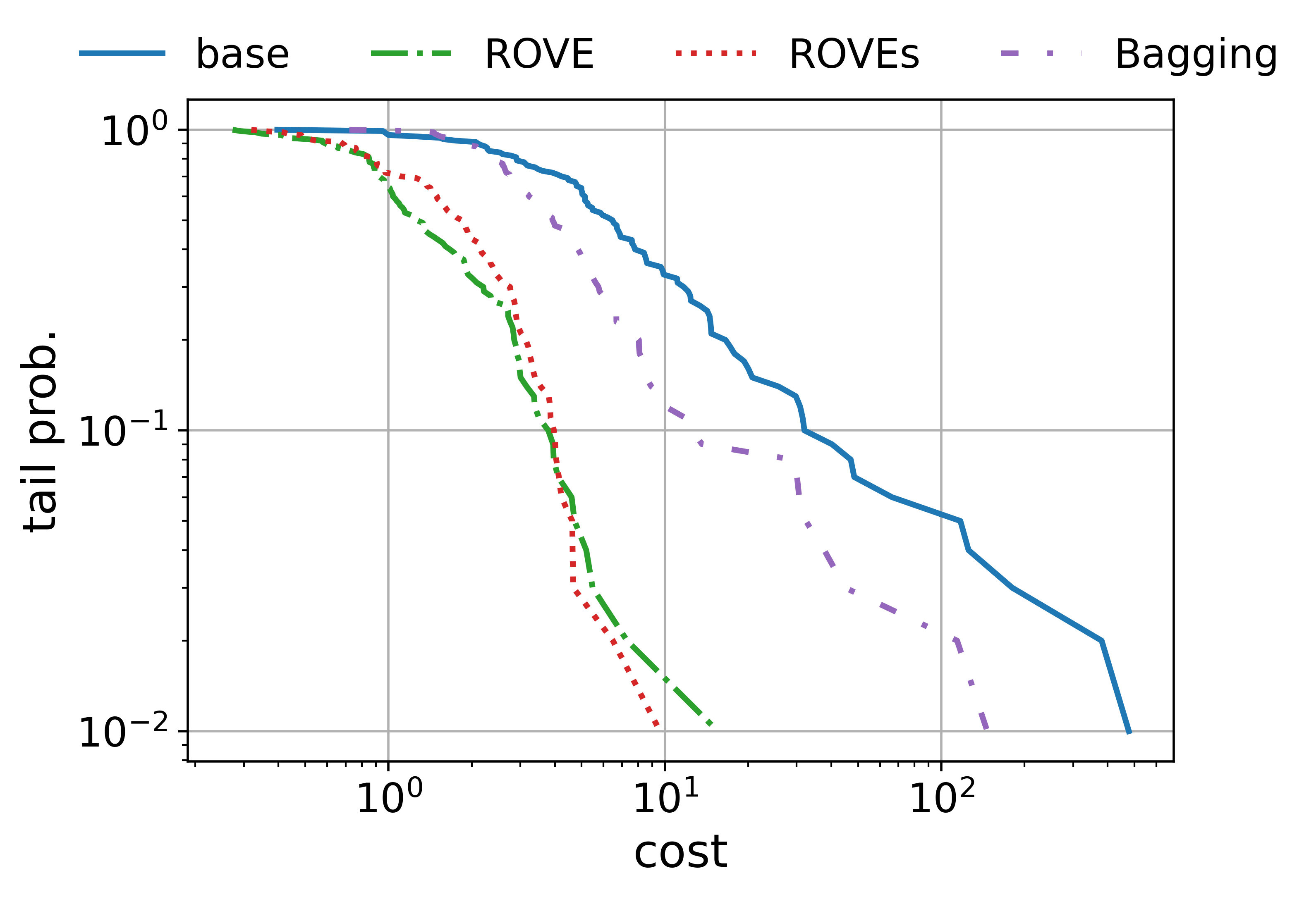}
\caption{Pareto shape $=1.1,n=2^{13}$.\label{subfig: tail comparison with bagging pareto1.1}}
\end{subfigure}
\caption{Comparison with bagging in terms of expected out-of-sample costs (MSE) with $95\%$ confidence intervals (a-c) or tail probabilities (d-f) under varying degrees of tail heaviness. Hyperparameters: $k_1 = \max(30, n/2), k_2 = \max(30, n/1000), B_1 = 50, B_2 = 1000$. \label{fig: MLP L=4 comparison with bagging}}
\end{figure*}

\begin{figure*}[h]
\centering
\hspace{-12pt}
\begin{subfigure}{0.33\textwidth}
\includegraphics[width = \linewidth]{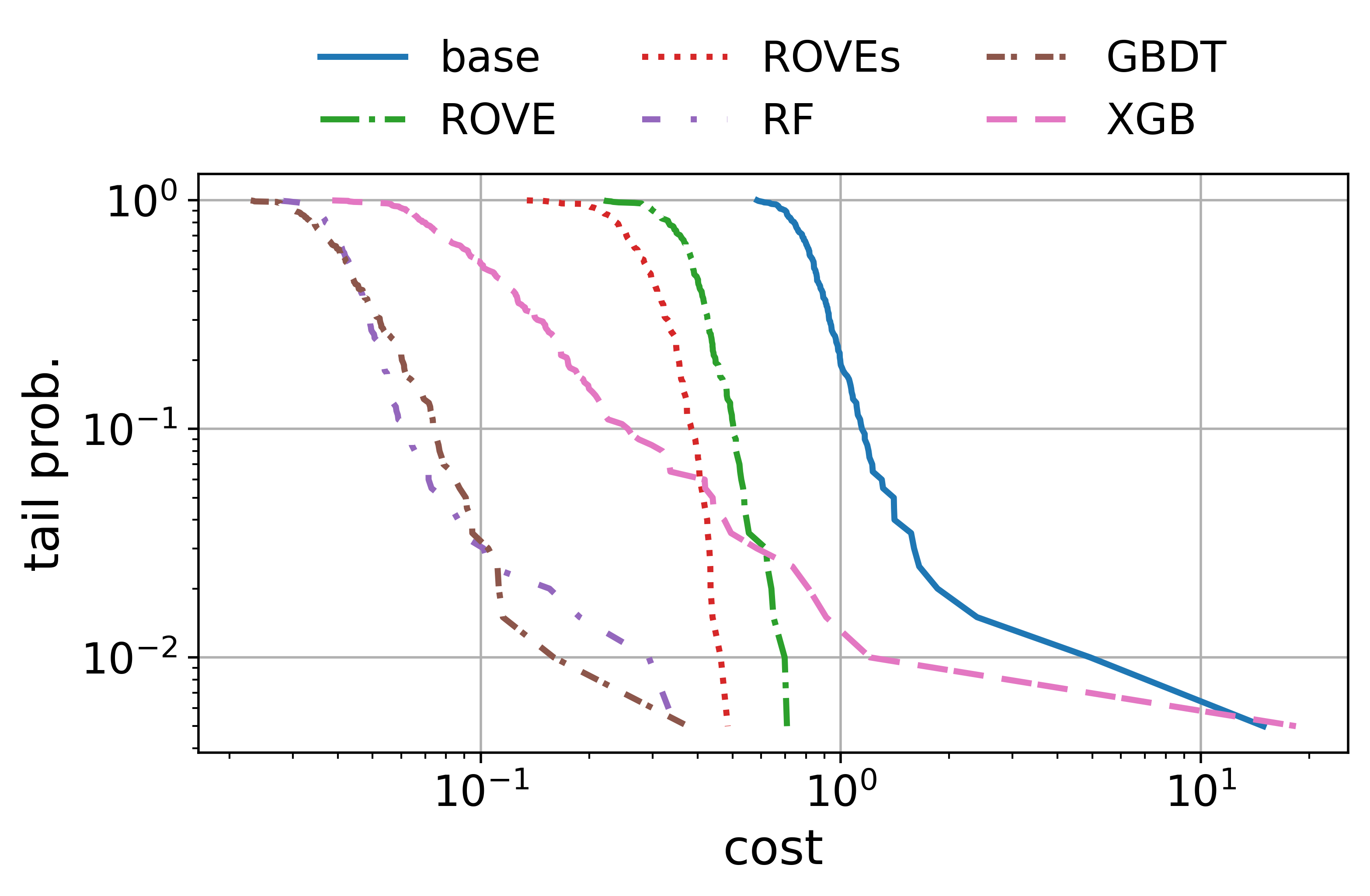}
\caption{Pareto shape = $2.5,n=2^{11}$.\label{subfig: tree methods tail shape 2.5}}
\end{subfigure}
\hspace{-6pt}
\begin{subfigure}{0.33\textwidth}
\includegraphics[width = \linewidth]{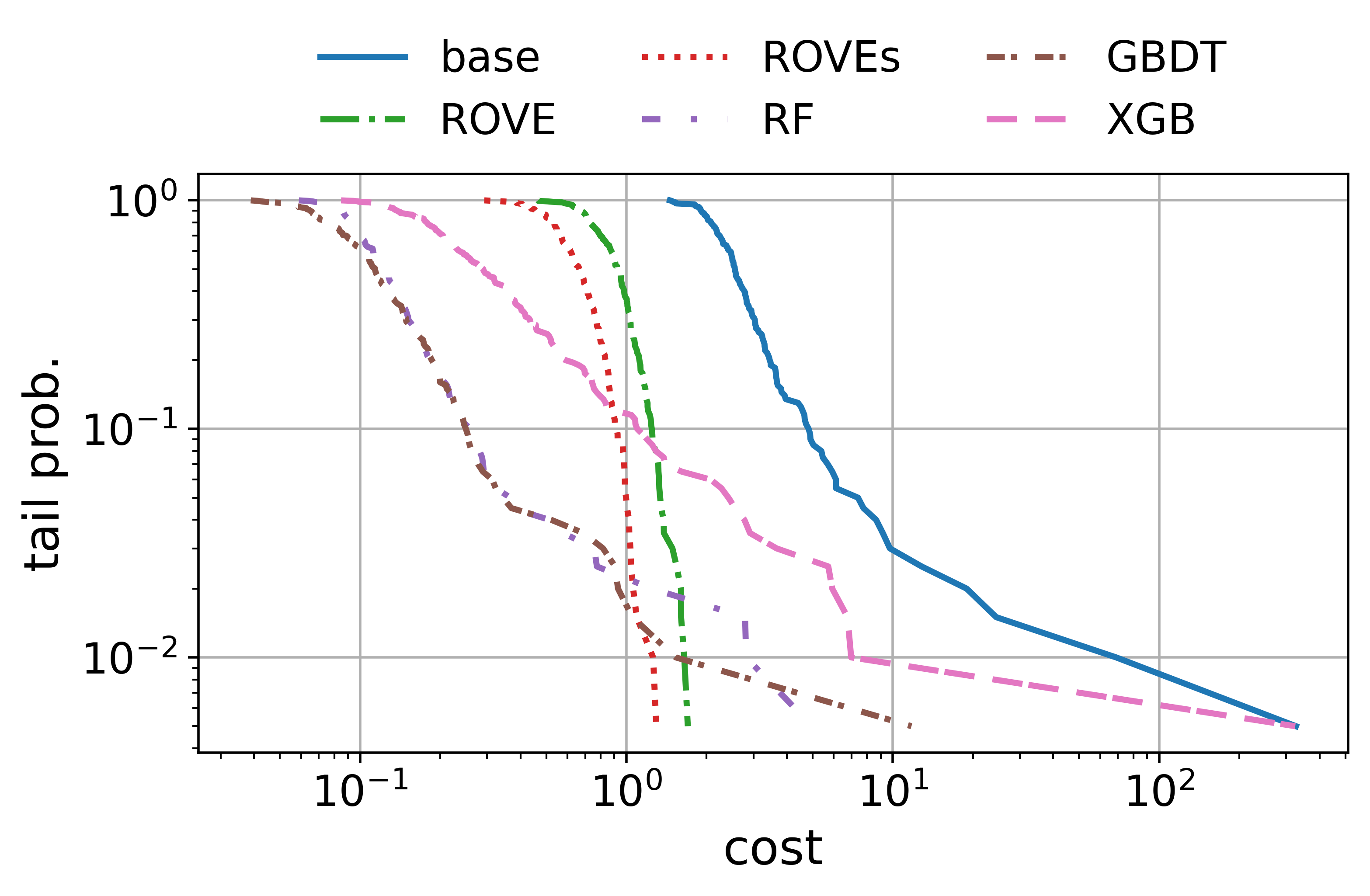}
\caption{Pareto shape = $2.0,n=2^{11}$.\label{subfig: tree methods tail shape 2.0}}
\end{subfigure}
\hspace{-6pt}
\begin{subfigure}{0.33\textwidth}
\includegraphics[width = \linewidth]{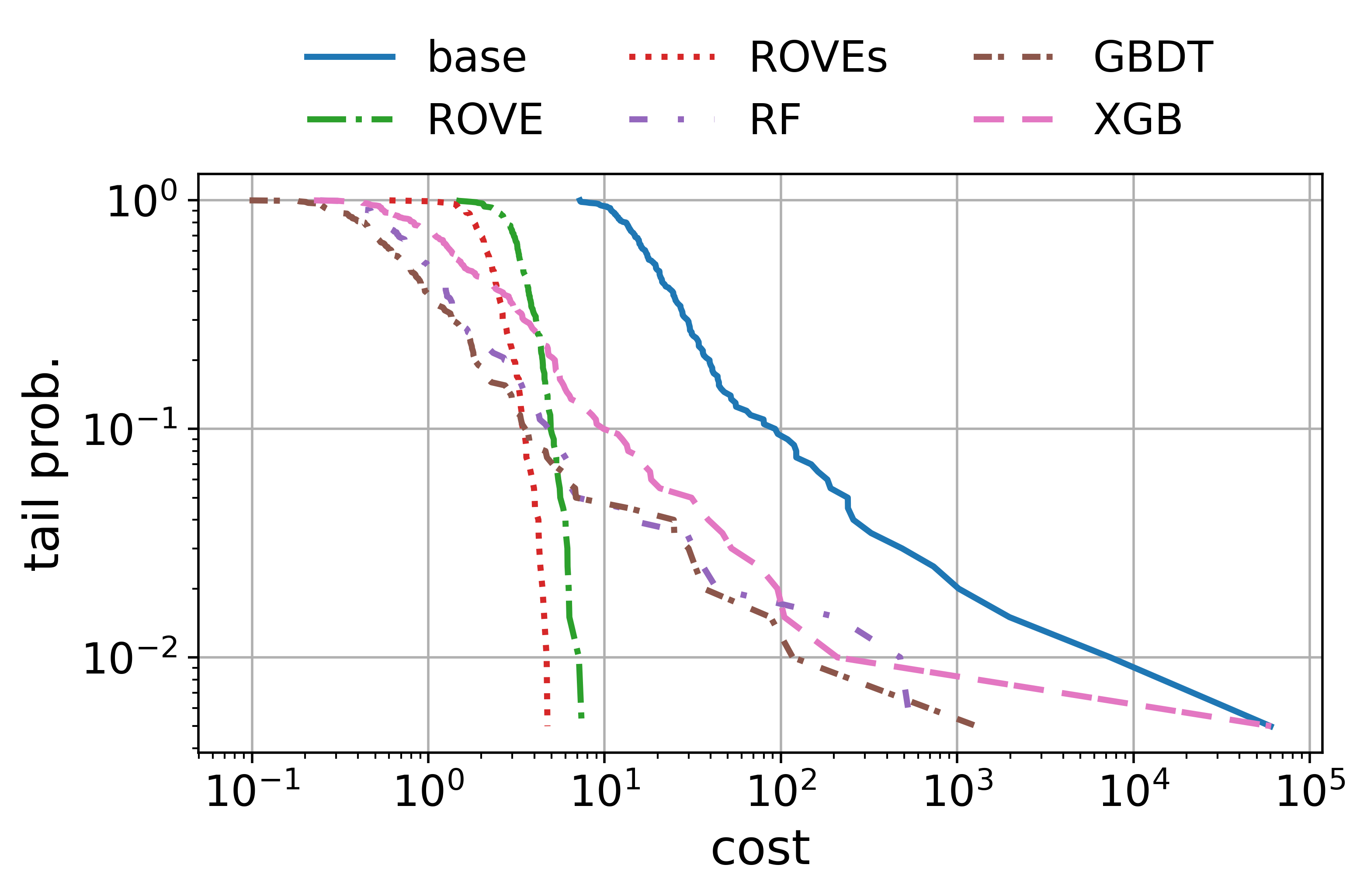}
\caption{Pareto shape = $1.5,n=2^{11}$.\label{subfig: tree methods tail shape 1.5}}
\end{subfigure}
\caption{Results of decision trees in terms of tail probabilities of out-of-sample costs (MSE). Hyperparameters: $k_1=\max(30,n/10),k_2=\max(30,n/200),B_1=50,B_2=200$.\label{fig: tree methods comparison main paper}}
\end{figure*}

\begin{figure*}[h]
\centering
\hspace{-12pt}
\begin{subfigure}{0.33\textwidth}
\includegraphics[width = \linewidth]{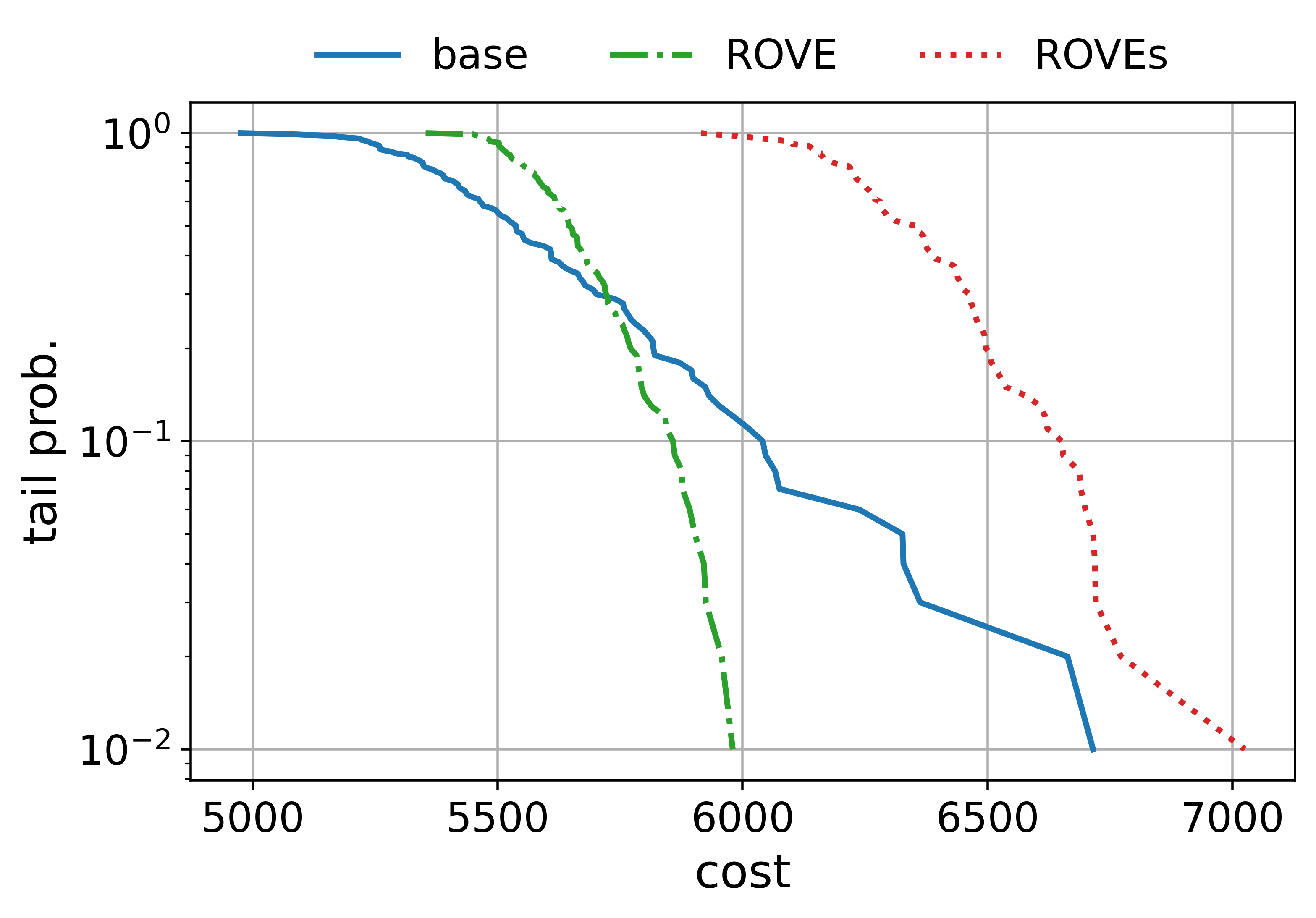}
\caption{\textit{Bike Sharing}.\label{subfig: bike sharing tail}}
\end{subfigure}
\hspace{-6pt}
\begin{subfigure}{0.33\textwidth}
\includegraphics[width = \linewidth]{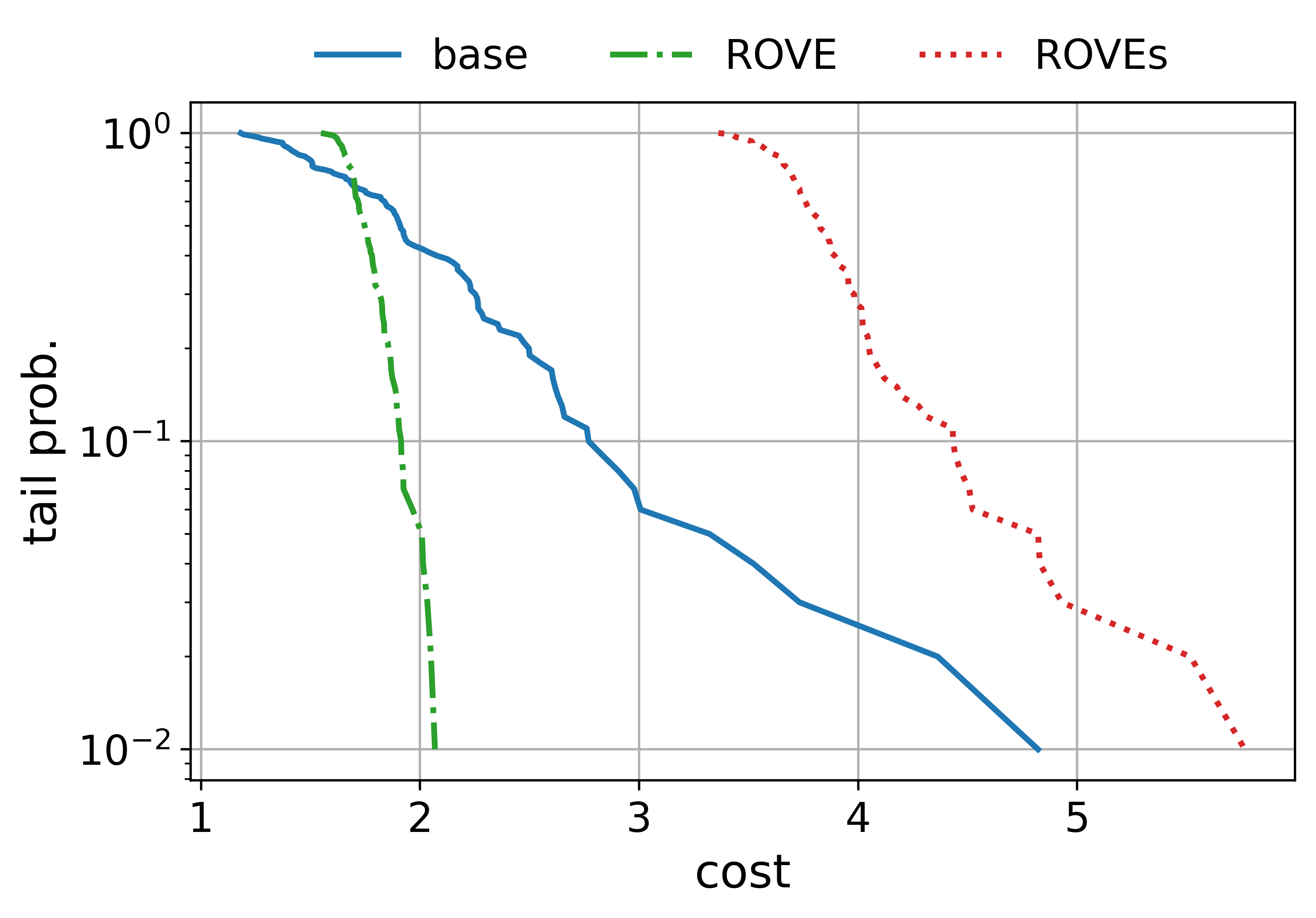}
\caption{\textit{Gas Turbine Emission}.\label{subfig: gas turbine tail}}
\end{subfigure}
\hspace{-6pt}
\begin{subfigure}{0.33\textwidth}
\includegraphics[width = \linewidth]{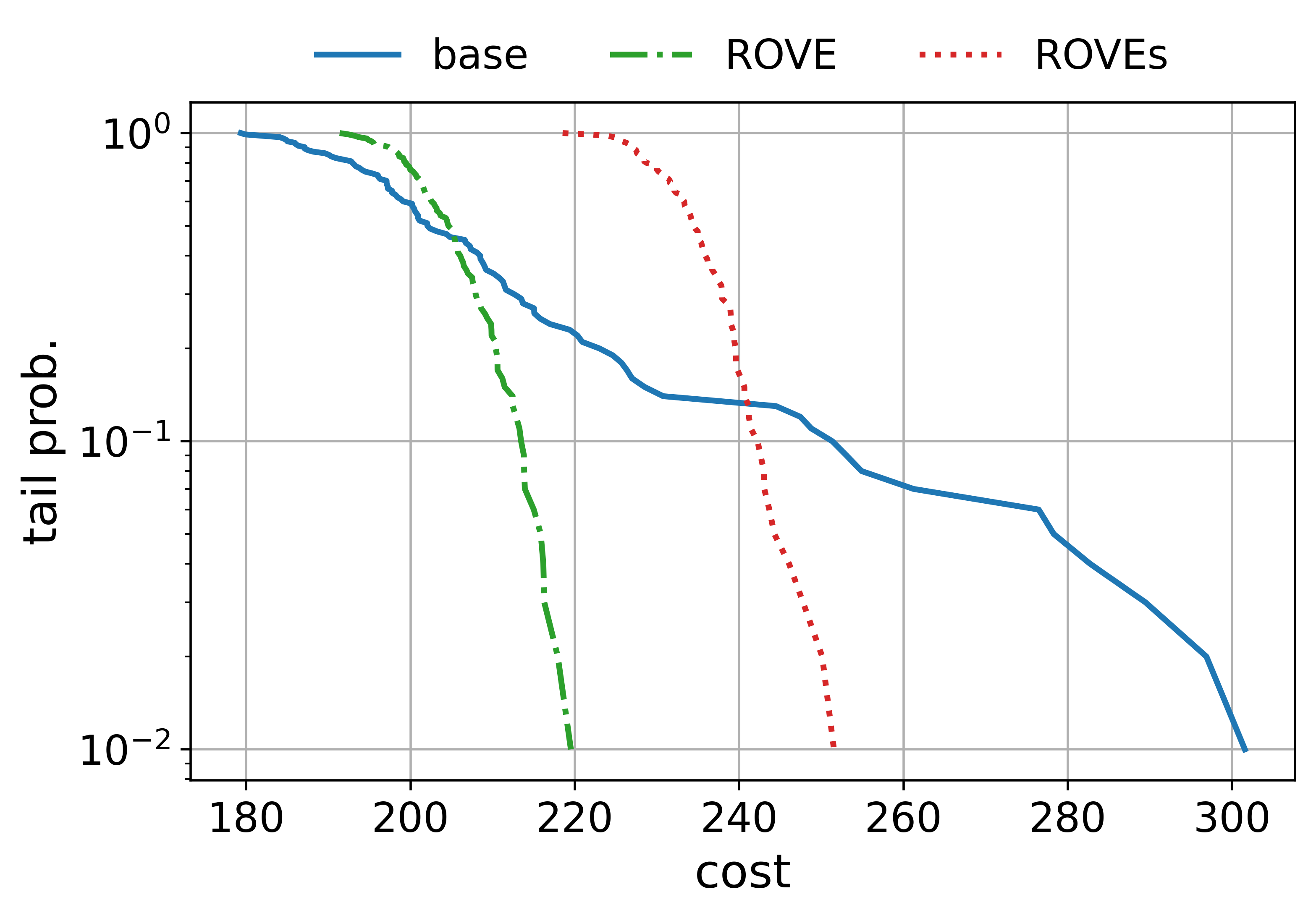}
\caption{\textit{Superconductivity}.\label{subfig: superconductivity tail}}
\end{subfigure}
\caption{Results of neural networks with $4$ hidden layers on three real datasets, in terms of tail probabilities of out-of-sample costs (MSE).\label{fig: MLP L=4 public data plots main paper}}
\end{figure*}

\textbf{Results.$\quad$}
As shown in Figure \ref{fig: alg_comparison_MLP}, in heavy-tailed noise settings (Figures 1a–1c), both \rove and \roves significantly outperform the base algorithm in terms of both expected out-of-sample MSE and tail performance under all sample sizes $n$. Notably, the performance improvement becomes more pronounced with deeper networks (\(H=8\)), indicating that the benefits of \rove and \roves are more apparent in models with higher expressiveness and lower bias.

In light-tailed settings (Figures 1d–1f), \rove and \roves show comparable expected out-of-sample performance to the base when \(H = 4\), but outperform the base as \(H\) increases. Additionally, \rove and \roves outperform the base in tail probabilities even when \(H = 4\). This indicates that \rove and \roves provide better generalization as the model complexity grows even for light-tailed problems. Similar results for MLPs with $2$ and $6$ hidden layers can be found in Appendix \ref{app: additional_figures}, where results on least squares regression and Ridge regression are also provided.

Figure \ref{fig: MLP L=4 comparison with bagging} shows a comparison with bagging that resembles our method most closely among existing ensemble methods as both involve repeated training on randomly drawn subsamples. We implement bagging, or subagging \citep{buhlmann2002analyzing} to be precise, on the MLP with $H=4$ hidden layers by averaging the predictions of the repeatedly trained MLPs. The same subsample size and ensemble size are used for our methods and bagging to ensure a fair comparison. Whether bagging or our method wins depends on the tail heaviness: \rove and \roves exhibit relatively inferior test performance when the noise has a shape of $2.1$, but outperform bagging as the tail of the noise gets heavier towards a shape of $1.1$.

Figure \ref{fig: tree methods comparison main paper} demonstrates a similar pattern for tree base learners: \rove and \roves outperform the base learner in all cases, and also outperform RF, GBDT, and XGB especially in high-end tails when the noise gets heavy-tailed with a Pareto shape of $1.5$. For not so heavy-tailed cases, RF, GBDT, and XGB may perform better. 

On real datasets (Figure \ref{fig: MLP L=4 public data plots main paper}), \rove exhibits much lighter tails compared to the base on three datasets, and similar tail behavior on the other three. \roves, however, underperforms the base in these real-world scenarios, potentially due to the data split that compromises its statistical power.

\subsection{Stochastic Programs}\label{subsec: alg_comparison}
\textbf{Setup.$\quad$}
We consider four discrete stochastic programs: resource allocation, supply chain network design, maximum weight matching, and stochastic linear programming, and continuous mean-variance portfolio optimization. All problems are designed to possess heavy-tailed uncertainties. The base learner for all the problems is the SAA. Details of the problems are deferred to Appendix \ref{app: prob_intro} and results with DRO being the base learner are provided in Appendix \ref{app: additional_figures} Figure \ref{fig: bagging_dro}.

\textbf{Results.$\quad$}
Figure \ref{fig: alg_comparison_SAA} shows that our ensemble methods significantly outperform the base algorithm in all cases except for the linear program case (Figure \ref{subfig: saa_comparison_LP}). Notably, in the linear program case, \rove and \roves still outperform the base, demonstrating their robustness, while \move performs slightly worse than the base under small sample sizes. Comparing \rove and \roves, \rove consistently exhibits superior performance than \roves in all cases.

When there is a unique optimal solution, \move and \rove perform similarly, both generally better than \roves, as seen in Figures \ref{subfig: saa_comparison_sskp}-\ref{subfig: saa_comparison_matching}. However, in cases with multiple optima (Figure \ref{subfig: saa_comparison_LP}), the performance of \move deteriorates while \rove and \roves stay strong. This is in accordance with our discussion on the advantage of $\epsilon$-optimality vote in Section \ref{subsec:tow phase framework}. 
Additional results in Appendix \ref{app: additional_figures} shall further explain that optima multiplicity weakens the base learner for \move in the sense of decreasing the $\eta_{k,\delta}$ and hence inflating the tail bound in Theorem \ref{thm: general majority vote}.

The running time comparison in Figure \ref{subfig: runtime_network} shows that, despite requiring multiple runs on subsamples, our methods do not necessarily incur a higher computation cost compared to running base learner on the full sample, and can even be advantageous under large sample sizes. 
This is because, in problems like DRO \citep{ben2013robust, mohajerin2018data} and two-stage stochastic program, sample-based optimization often has a problem size that grows at least linearly with the sample size and induces a superlinearly growing computation cost. 
Subsampled optimizations, as our approach, are smaller and more manageable. In general, the computation efficiency of our method is ensured by the fact that no more than $\mc{O}(n/k)$ subsamples are needed as suggested by the theory and that training on subsamples can be easily parallelized.

\textbf{Recommended Method.$\quad$}
Among the three proposed ensemble methods, \rove is the preferred choice over \move and \roves for general use as it's applicable to both discrete and continuous problems and consistently delivers superior performance across all scenarios.

\begin{figure*}[h]
\centering
\hspace{-13pt}
\begin{subfigure}{0.34\textwidth}
\includegraphics[width = \linewidth]{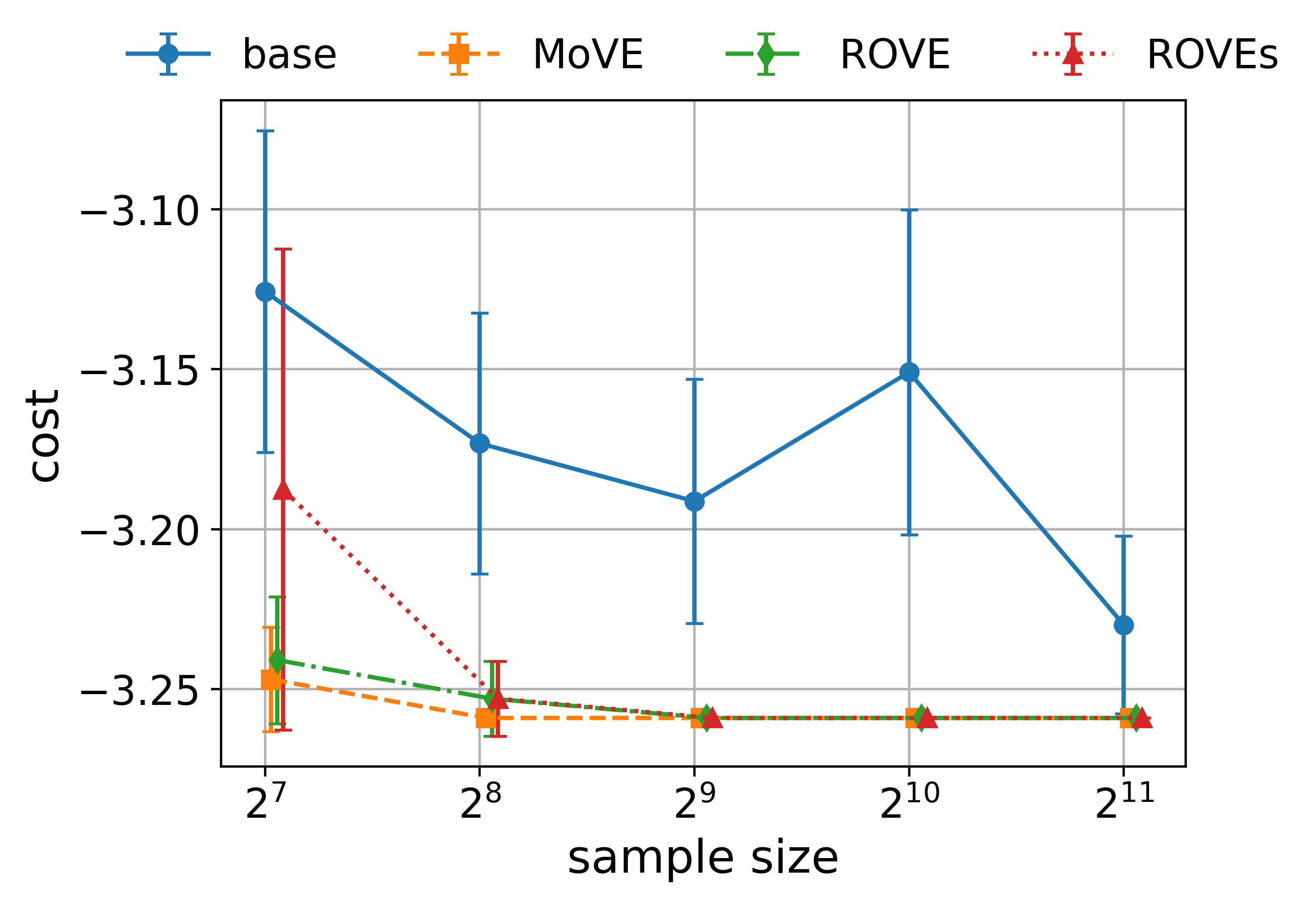}
\caption{Resource allocation.\label{subfig: saa_comparison_sskp}}
\end{subfigure}%
\hspace{-5pt}
\begin{subfigure}{0.338\textwidth}
\includegraphics[width = \linewidth]{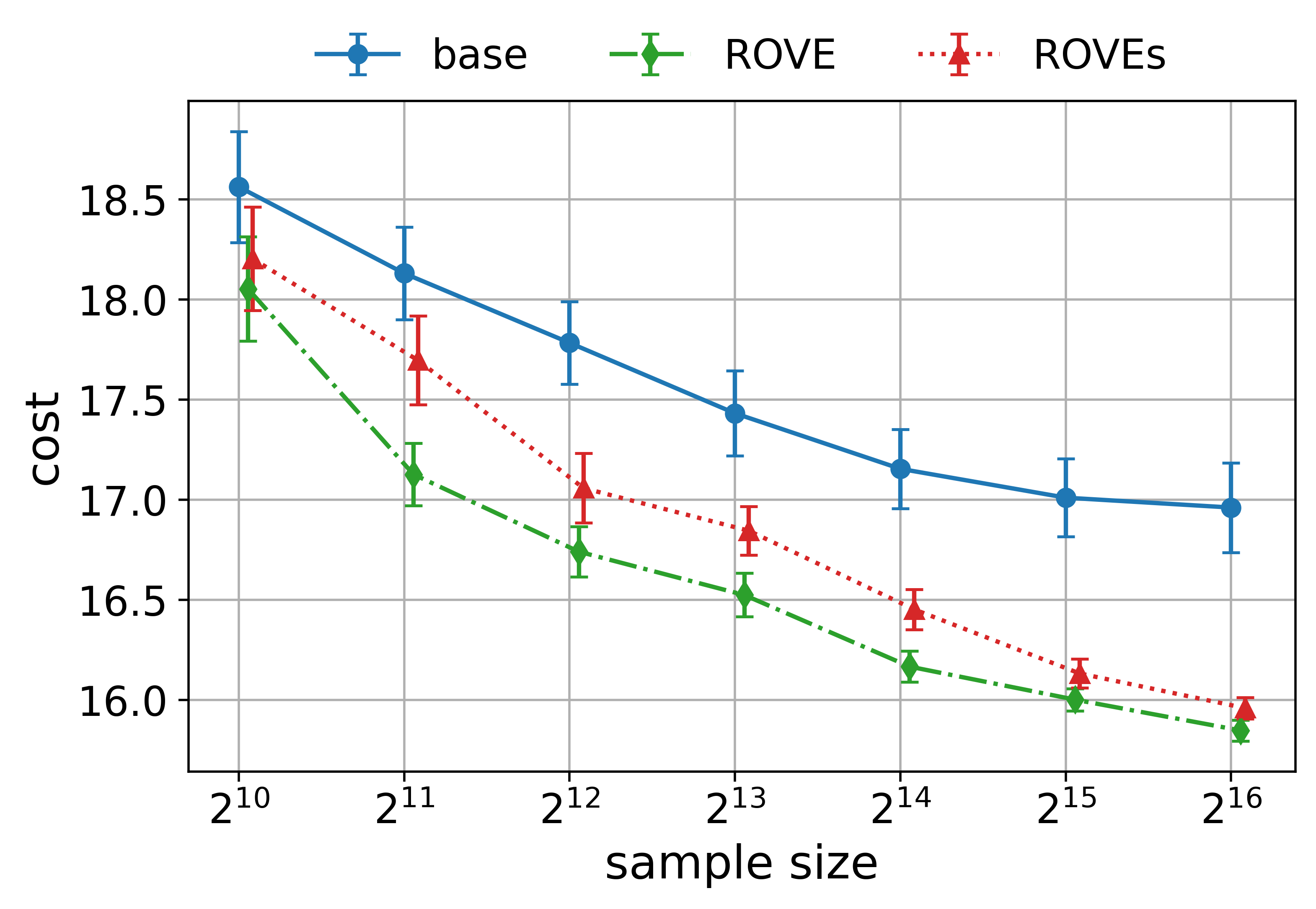}
\caption{Portfolio optimization.\label{subfig: saa_comparison_portfolio}}
\end{subfigure}
\hspace{-5pt}
\begin{subfigure}{0.345\textwidth}
\includegraphics[width = \linewidth]{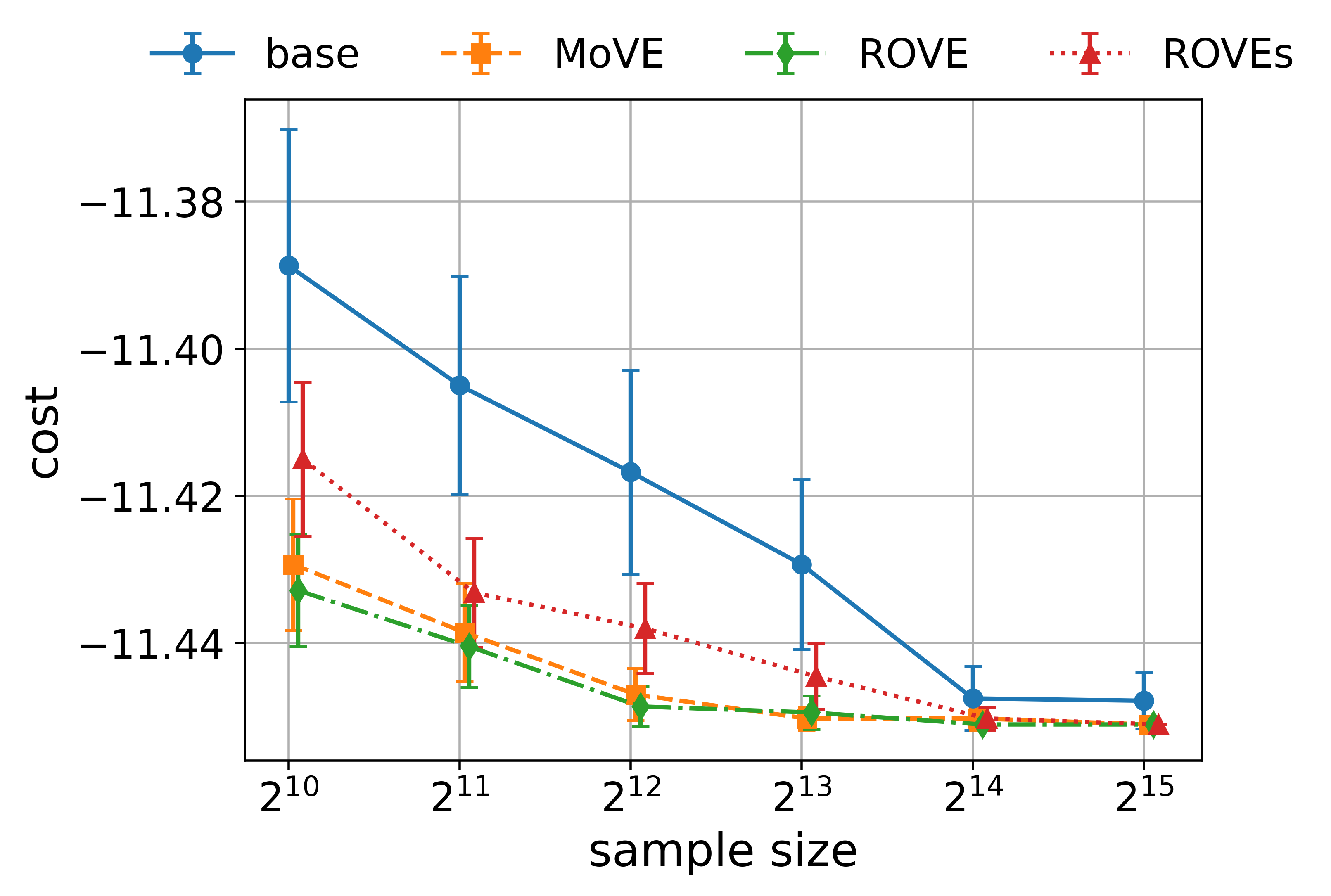}
\caption{Maximum weight matching.\label{subfig: saa_comparison_matching}}
\end{subfigure}\\
\hspace{-13pt}
\begin{subfigure}{0.34\textwidth}
\includegraphics[width = \linewidth]{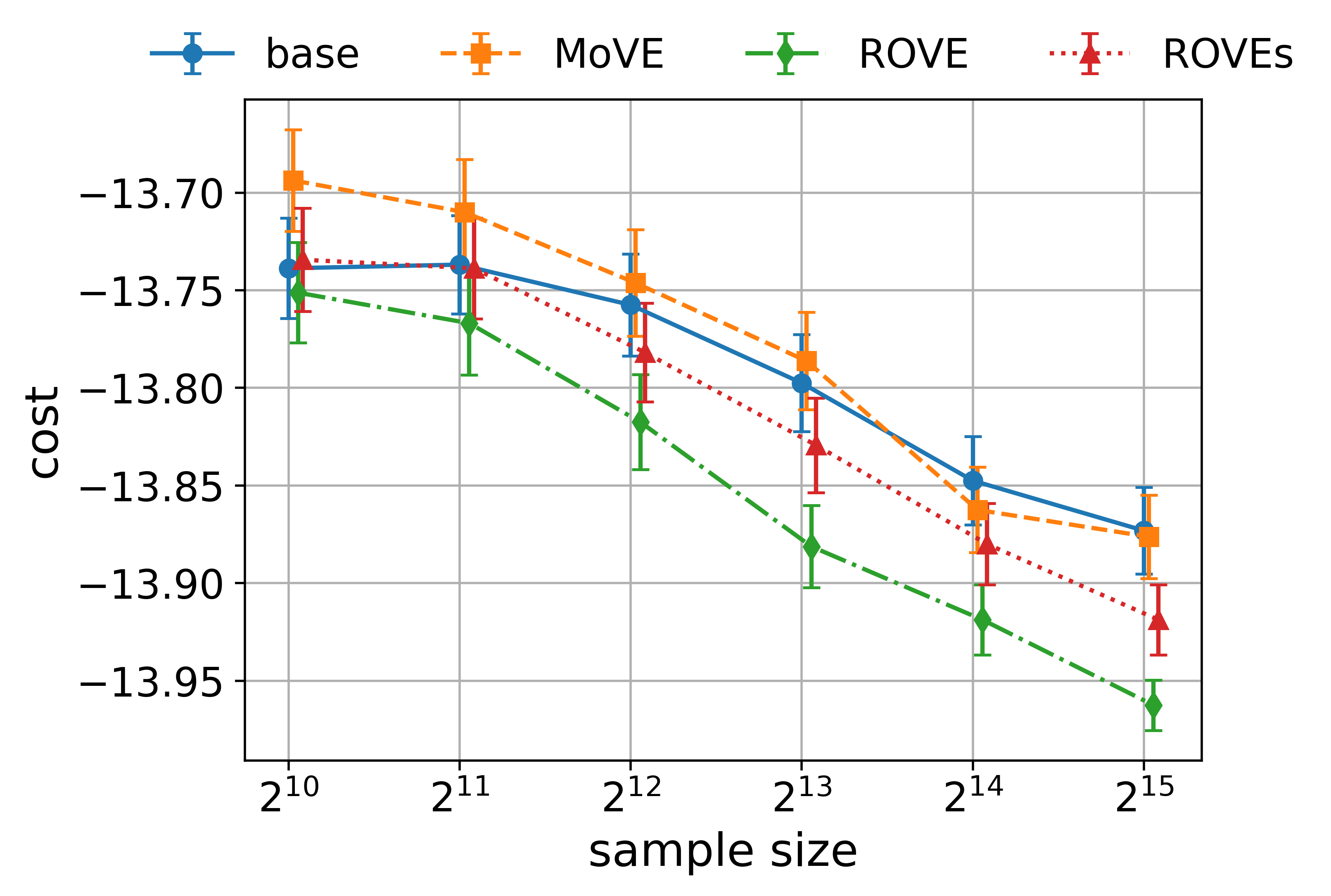}
\caption{Linear program (multiple optima).\label{subfig: saa_comparison_LP}}
\end{subfigure}
\hspace{-5pt}
\begin{subfigure}{0.33\textwidth}
\includegraphics[width = \linewidth]{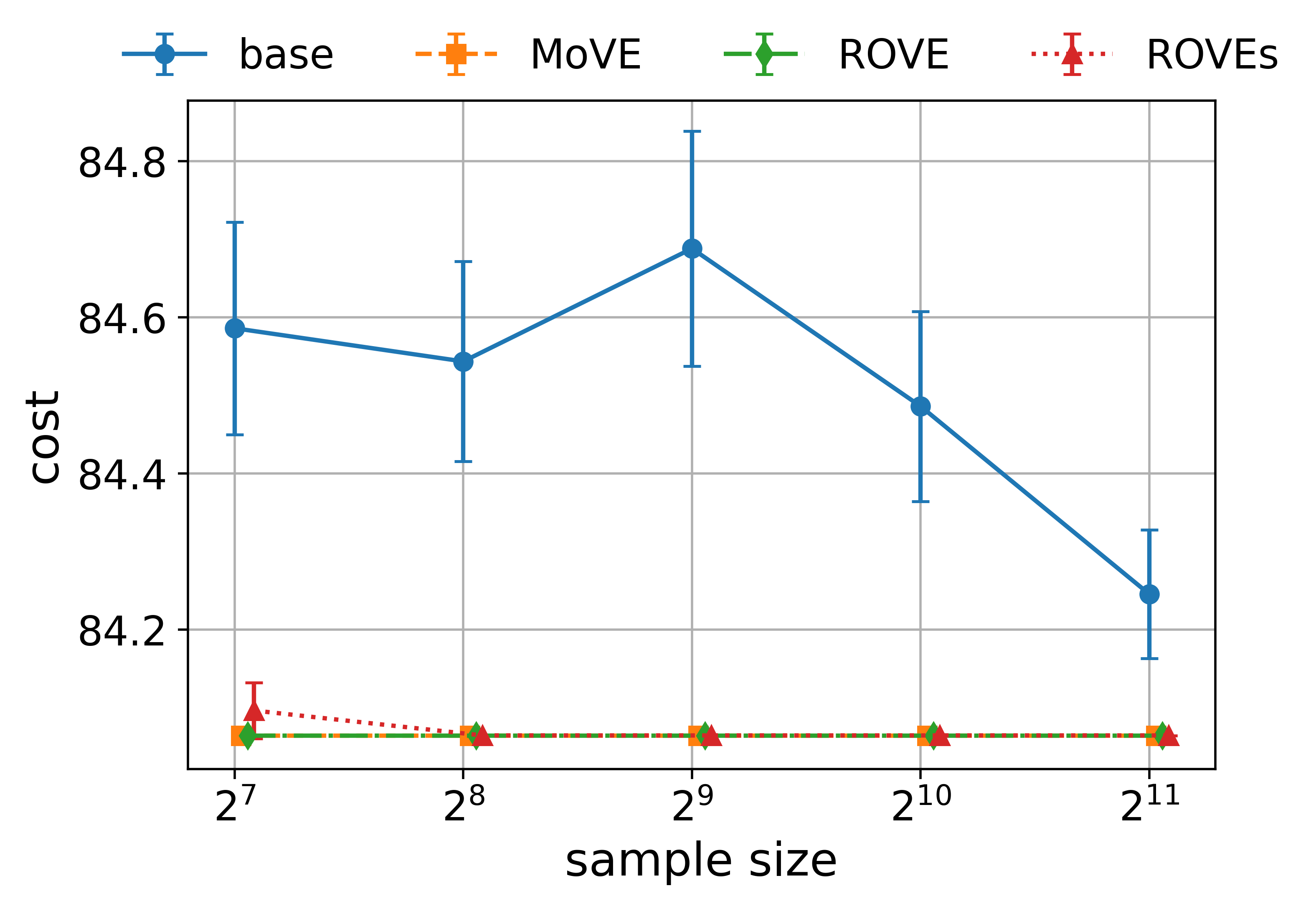}
\caption{Network design.\label{subfig: saa_comparison_network}}
\end{subfigure}%
\hspace{-5pt}
\begin{subfigure}{0.335\textwidth}
\includegraphics[width = \linewidth]{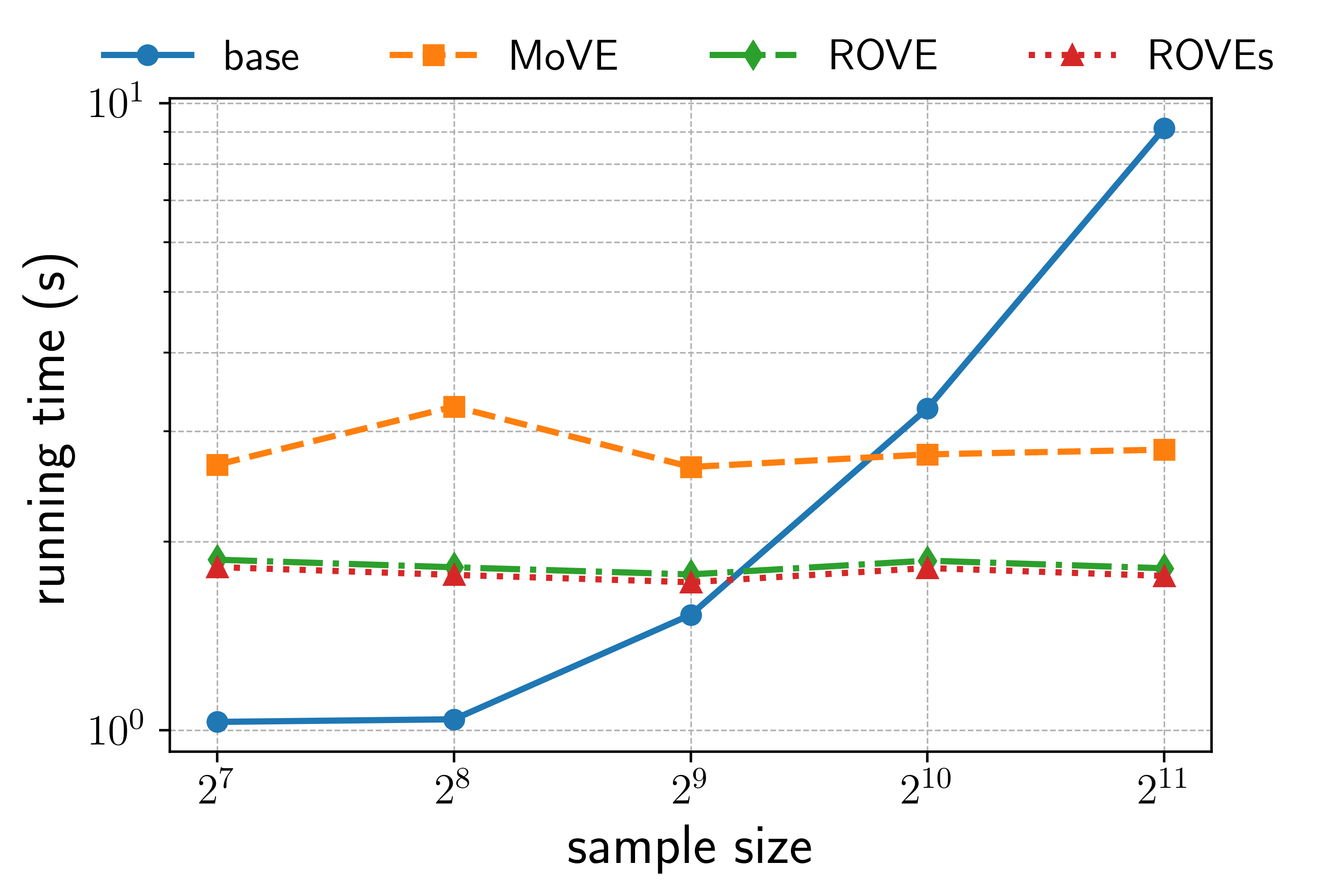}
\caption{Network design running time.\label{subfig: runtime_network}}
\end{subfigure}
\caption{Results for stochastic programs. (a)-(e): Expected out-of-sample costs with $95\%$ confidence intervals. 
(f): Running time comparison in the network design problem. 
\label{fig: alg_comparison_SAA}}
\end{figure*}

\section{Related Work}\label{sec: related work}
This work is closely connected to various topics in optimization and machine learning, and we only review the most relevant ones. 

\textbf{Ensemble Learning.$\quad$}
Ensemble learning \citep{dietterich2000ensemble,zhou2012ensemble,sagi2018ensemble} improves model performance by combining multiple weak learners into strong ones. Popular ensemble methods include bagging \citep{breiman1996bagging}, boosting \citep{freund1996experiments} and stacking \citep{wolpert1992stacked,dvzeroski2004combining}. Bagging enhances stability by training models on different bootstrap samples and combining their predictions through majority voting or averaging, effectively reducing variance, especially for unstable learners like decision trees that underpin random forests \citep{breiman2001random}. Subagging \citep{buhlmann2002analyzing} is a variant of bagging that constructs the ensemble from subsamples in place of bootstrap samples. Boosting is a sequential process where each subsequent model corrects its predecessors' errors, reducing both bias and variance \citep{ibragimov2019minimal,ghosal2020boosting}. Prominent boosting methods include AdaBoost \citep{freund2003efficient}, Stochastic Gradient Boosting (SGB) \citep{friedman2001greedy,friedman2002stochastic}, and Extreme Gradient Boosting (XGB) \citep{friedman2000additive} which differ in their approaches to weighting training data and hypotheses. Instead of using simple aggregation like weighted averaging or majority voting, stacking trains a model to combine base predictions to further improve performance. A key procedural difference of our approach from these methods is that we perform majority voting at the model rather than prediction level to select a single best model from the ensemble. That is, our approach outputs models in the same space as the base learner, whereas existing ensemble methods yield aggregated models outside the base space. This also means a constant inference cost for our output model with respect to the ensemble size, as opposed to linearly growing costs seen in existing ensemble methods. Methodologically, our approach operates by accelerating excess risk tail convergence in lieu of bias/variance reduction, and hence is particularly effective in settings with heavy-tailed noise.

\textbf{Optimization and Learning with Heavy Tails.$\quad$}
Optimization with heavy-tailed noises has garnered significant attention due to its relevance in traditional fields such as portfolio management \citep{mainik2015portfolio} and scheduling \citep{im2015stochastic}, as well as emerging domains like large language models \citep{brown2020language,achiam2023gpt}. Tail bounds of most existing algorithms are guaranteed to decay exponentially under sub-Gaussian or uniformly bounded costs but deteriorate to a slow polynomial decay under heavy-tailedness \citep{kavnkova2015thin,jiang2020rates,jiang2021complexity,oliveira2023sample}. 
For SAA or ERM, faster rates are possible under the small-ball \citep{mendelson2018learning,mendelson2015learning,roy2021empirical} or Bernstein's condition \citep{dinh2016fast} on the function class, while our approach is free from such conditions. 
Considerable effort has been made to mitigate the adverse effects of heavy-tailedness with robust procedures among which the geometric median \citep{minsker2015geometric}, or more generally, median-of-means (MOM) \citep{lugosi2019mean,lugosi2019sub} approach is most similar to ours. 
The basic idea there is to estimate a true mean by dividing the data into disjoint subsamples, computing an estimate on each, and then taking the median. \cite{lecue2019learning,lugosi2019risk,lecue2020robust,kwon2021mom} use MOM in estimating the expected cost and establish exponential tail bounds for the mean squared loss and convex function classes. \cite{hsu2016loss,hsu2014heavy} apply MOM directly on the solution level for continuous problems and require strong convexity from the cost to establish generalization bounds. Besides MOM, another approach estimates the expected cost via truncation \citep{catoni2012challenging} and allows heavy tails for linear regression \citep{audibert2011robust,zhang2018ell_1} or problems with uniformly bounded function classes \citep{brownlees2015empirical}, but is computationally intractable due to the truncation and thus more of theoretical interest. In contrast, our ensemble approach is a meta algorithm that provides exponential tails as long as the base learning algorithm possesses reasonble predictive performance as characterized in our Theorem \ref{thm: general majority vote}. Relatedly, various techniques such as gradient clipping \citep{cutkosky2021high,gorbunov2020stochastic} and MOM \citep{puchkin2024breaking} have been adopted in stochastic gradient descent (SGD) algorithms to handle heavy-tailed noises, but their focus is the faster convergence of SGD rather than generalization.

\textbf{Bagging for Stochastic Optimization.$\quad$}
Bagging has been adopted in stochastic optimization for various purposes. The most relevant line of works \cite{biggs2023constrained,perakis2021umotem,wang2021optimizing,biggs2023tightness} study mixed integer reformulations for stochastic optimization with bagging approximated objectives such as random forests and ensembles of neural networks with the ReLU activation. These works focus on computational tractability instead of generalization performance. 
\cite{anderson2020can} empirically evaluates several statistical techniques including bagging against the plain SAA and finds bagging advantageous for portfolio optimization problems. 
\cite{birge2023uses} investigates a batch mean approach for continuous optimization that creates subsamples by dividing the data set into non-overlapping batches instead of resampling and aggregates SAA solutions on the subsamples via averaging, which is empirically demonstrated to reduce solution errors for constrained and high-dimensional problems. Another related batch of works \cite{lam2018assessing,lam2018bounding,chen2024distributions,chen2023software,eichhorn2007stochastic} concern the use of bagging for constructing confidence bounds for generalization errors of data-driven solutions, but they do not attempt to improve generalization. Related to bagging, bootstrap has been utilized to quantify algorithmic uncertainties for randomized algorithms such as randomized least-squares algorithms \cite{lopes2018error}, randomized Newton methods \cite{chen2020estimating}, and stochastic gradient descent \cite{fang2018online,zhong2023online}, which is orthogonal to our focus on generalization performance.

\textbf{Machine Learning for Optimization.$\quad$}
Learning to optimize (L2O) studies the use of machine learning in accelerating existing or discovering novel optimization algorithms. Much effort has been in training models via supervised or reinforcement learning to make critical algorithmic decisions such as cut selection (e.g., \cite{deza2023machine,tang2020reinforcement}), search strategies (e.g., \cite{khalil2016learning,he2014learning,scavuzzo2022learning}), scaling \cite{berthold2021learning}, and primal heuristics \cite{shen2021learning} in mixed-integer optimization, or even directly generate high-quality solutions (e.g., neural combinatorial optimization pioneered by \cite{bello2016neural}). See \cite{chen2022learning,chen2024learning,bengio2021machine,zhang2023survey} for comprehensive surveys on L2O. This line of research is orthogonal to our goal, and L2O techniques can work as part of or directly serve as the base learning algorithm within our framework.

\section{Conclusion and Limitations}\label{sec: conclusion}
This paper introduces a novel ensemble technique that significantly improves generalization by estimating the mode of the sampling distribution of the base learner via subsampling. In particular, our approach converts polynomially decaying generalization tails into exponential decay, thus providing order-of-magnitude improvements as opposed to constant factor improvements exhibited by variance reduction. Extensive numerical experiments in both machine learning and stochastic programming validate its effectiveness, especially for scenarios with heavy-tailed data and slow convergence rates. This work underscores the powerful potential of our new ensemble approach across a broad range of machine learning applications.

Regarding limitation, our method may increase model bias like other subsampling-based techniques such as subagging \citep{buhlmann2002analyzing}, making it best suited for applications with relatively low bias, e.g., when the base learner is sufficiently expressive. Moreover, the tail guarantee of our method requires the mode of the sampling distribution of the base learner to be a reasonably good model.

\bibliography{neurips_2025}
\bibliographystyle{plain}

\newpage
\appendix

\begin{center}
\Large\textbf{Supplemental Materials}
\end{center}

The appendices are organized as follows.
Appendix \ref{sec: additional technical results} presents additional technical discussion for Theorem \ref{thm: general majority vote}.
Next, in Appendix \ref{app: proofs}, we document the proofs of the main theoretical results in our paper.
Specifically, we introduce some preliminary definitions and lemmas in Appendix \ref{app: preliminaries}.
Then, the formal statement and the proof of Theorem \ref{thm: general majority vote} can be found in Appendix \ref{app: proof_theorem1_proposition1}. The proof of Corollary \ref{cor: application of move to linear program example} is in Appendix \ref{sec: proof of linear program example}. The formal statement and the proof of Theorem \ref{thm: finite-sample bound for multiple predictions two phase splitting} can be found in Appendix \ref{sec: theories for multiple predictions}.
To improve clarity, we defer the proofs for all technical lemmas to Appendix \ref{sec: proof of technical lemmas}.
In Appendix \ref{app: linear_regression}, we provide another motivating example that supplements Example \ref{ex: linear_program}.
Finally, we provide additional numerical experiments in Appendix \ref{app: additional_experiment}.

\section{Implications of Theorem \ref{thm: general majority vote} for Strong Base Learners}\label{sec: additional technical results}
We provide a brief discussion of Theorem \ref{thm: general majority vote} applied to fast convergent base learners. 
Based on Theorem \ref{thm: general majority vote}, the way $\max_{\theta\in\Theta}p_k(\theta)$ and $\max_{\theta\in\Theta/\Theta^{\delta}}p_k(\theta)$ enter into \eqref{eq: general finite-sample bound for large gap_informal} reflects how the generalization performance of the base learning algorithm is inherited by our framework. To explain, large $\max_{\theta\in\Theta}p_k(\theta)$ and small $\max_{\theta\in\Theta/\Theta^{\delta}}p_k(\theta)$ correspond to better generalization of the base learning algorithm. This can be exploited by the bound \eqref{eq: general finite-sample bound for large gap_informal} with the presence of $\max\set{1-\max_{\theta\in\Theta}p_k(\theta),\;\max_{\theta\in\Theta/\Theta^{\delta}}p_k(\theta)}$, which is captured with our sharper concentration of U-statistics with binary kernels. In particular, for base learning algorithms with fast generalization convergence, say $1-\max_{\theta\in\Theta}p_k(\theta)=\mc{O}(e^{-k})$ and $\max_{\theta\in\Theta/\Theta^{\delta}}p_k(\theta)=\mc{O}(e^{-k})$ for simplicity, we have $C_1\max\set{1-\max_{\theta\in\Theta}p_k(\theta),\;\max_{\theta\in\Theta/\Theta^{\delta}}p_k(\theta)}=\mc{O}(e^{-k})$ and hence the first term in \eqref{eq: general finite-sample bound for large gap_informal} becomes $\mc{O}(e^{-n})$ which matches the error of the base learning algorithm applied directly to the full data set.

\section{Proofs for Main Theoretical Results}\label{app: proofs}
\subsection{Preliminaries}\label{app: preliminaries}
An important tool in the development of our theories is the U-statistic that naturally arises in subsampling without replacement. We first present the definition of U-statistic below and its concentration properties in Lemma \ref{lem:MGF dominance}.
The proof of Lemma \ref{lem:MGF dominance} can be found in Appendix \ref{subsec: proof of lemma lem:MGF dominance}.

\begin{definition}\label{def: U-statistic}
    Given the i.i.d. data set $\{z_1,\ldots,z_n\}\subset \mathcal{Z}$ and a (not necessarily symmetric) kernel of order $k\leq n$ is a function $\kappa:\mathcal{Z}^k\to \R$ such that $\expect{\abs{\kappa(z_1,\ldots,z_k)}}<\infty$, the U-statistic associated with the kernel $\kappa$ is 
    \begin{equation*}
        U(z_1,\ldots,z_n)=\frac{1}{n(n-1)\cdots (n-k+1)}\sum_{1\leq i_1,i_2,\cdots,i_k\leq n\ \mathrm{s.t.}\ i_s\neq i_t\ \forall 1\leq s<t\leq k}\kappa(z_{i_1},\ldots,z_{i_k}).
    \end{equation*}
\end{definition}


\begin{lemma}[MGF dominance of U-statistics from \cite{hoeffding1963probability}]\label{lem:MGF dominance}
    For any integer $0<k\leq n$ and any kernel $\kappa(z_1,\ldots,z_k)$, let $U(z_1,\ldots,z_n)$ be the corresponding U-statistic defined in Definition \ref{def: U-statistic}, and
    \begin{equation}\label{sectioning approximation}
        \bar{\kappa}(z_1,\ldots,z_n)=\frac{1}{\lfloor n/k \rfloor}\sum_{i=1}^{\lfloor n/k \rfloor}\kappa(z_{k(i-1)+1},\ldots,z_{ki})
    \end{equation}
    be the average of the kernel across the first $\lfloor n/k \rfloor k$ data.
    Then, for every $t\in \R$, it holds that
    \begin{equation*}
        \expect{\exp(tU)}\leq \expect{\exp(t\bar{\kappa})}.
    \end{equation*}
\end{lemma}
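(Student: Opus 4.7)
\textbf{Proof proposal for Lemma \ref{lem:MGF dominance}.} The plan is to express the U-statistic $U$ as an average over the symmetric group of copies of $\bar{\kappa}$ applied to permuted data, then apply Jensen's inequality together with the i.i.d.\ assumption. This is the classical argument of Hoeffding, and the hardest step will be verifying the representation carefully; once that is in hand, the rest is two lines.

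First I would establish the identity
\begin{equation*}
U(\xi_1,\ldots,\xi_n)=\frac{1}{n!}\sum_{\sigma\in S_n}\bar{\kappa}(\xi_{\sigma(1)},\ldots,\xi_{\sigma(n)}),
\end{equation*}
where $S_n$ is the group of permutations of $\{1,\ldots,n\}$. Writing $m=\lfloor n/k\rfloor$, the right-hand side equals
\begin{equation*}
\frac{1}{m\cdot n!}\sum_{\sigma\in S_n}\sum_{i=1}^{m}\kappa(\xi_{\sigma(k(i-1)+1)},\ldots,\xi_{\sigma(ki)}).
\end{equation*}
For each fixed $i$, as $\sigma$ ranges over $S_n$ the ordered tuple $(\sigma(k(i-1)+1),\ldots,\sigma(ki))$ hits each ordered $k$-tuple of distinct indices in $\{1,\ldots,n\}$ exactly $(n-k)!$ times. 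Since $\kappa$ is permutation-invariant, grouping these $k!$ orderings per subset and summing yields $(n-k)!\,k!\sum_{1\leq i_1<\cdots<i_k\leq n}\kappa(\xi_{i_1},\ldots,\xi_{i_k})$; dividing by $m\cdot n!$ and using that the inner sum over $i$ contributes a factor $m$ recovers exactly \eqref{eq: u_statistics}. The key combinatorial point here is a double-counting argument tied to the symmetry of $\kappa$, which I expect to be the only real obstacle; the indexing can be made tidy by appealing to the fact that under a uniform random permutation the first $k$ indices are a uniform random size-$k$ subset.

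With the representation in hand, convexity of $z\mapsto e^{tz}$ and Jensen's inequality give
\begin{equation*}
\exp(tU)=\exp\!\left(\frac{1}{n!}\sum_{\sigma\in S_n} t\,\bar{\kappa}(\xi_{\sigma(1)},\ldots,\xi_{\sigma(n)})\right)\leq \frac{1}{n!}\sum_{\sigma\in S_n}\exp\!\bigl(t\,\bar{\kappa}(\xi_{\sigma(1)},\ldots,\xi_{\sigma(n)})\bigr).
\end{equation*}
Taking expectations and using that $(\xi_1,\ldots,\xi_n)$ is i.i.d., so $(\xi_{\sigma(1)},\ldots,\xi_{\sigma(n)})\stackrel{d}{=}(\xi_1,\ldots,\xi_n)$ for every $\sigma$, each of the $n!$ summands has expectation $\mathbb{E}[\exp(t\bar{\kappa})]$, and dividing by $n!$ yields $\mathbb{E}[\exp(tU)]\leq \mathbb{E}[\exp(t\bar{\kappa})]$, as desired. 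The integrability assumption $\mathbb{E}|\kappa|<\infty$ in the definition plus the finiteness of the sum guarantees that all exchanges of expectation with finite sums are harmless; if $\mathbb{E}[\exp(t\bar{\kappa})]=\infty$ the bound is trivial.
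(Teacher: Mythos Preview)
Your proposal is correct and follows essentially the same route as the paper's own proof: represent $U$ as the average over all permutations of $\bar{\kappa}$ applied to permuted data, apply Jensen's inequality to the convex exponential, then use exchangeability of the i.i.d.\ sample so that each summand has the same expectation. The paper simply asserts the permutation identity ``by symmetry'' where you give the combinatorial verification, but the argument is otherwise identical.
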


Next, we present our sharper concentration bound for U-statistics with binary kernels.
The proof of Lemma \ref{lem:chernoff bound for p_k} can be found in Appendix \ref{subsec: proof of lemma lem:chernoff bound for p_k}.
\begin{lemma}[Concentration bound for U-statistics with binary kernels]\label{lem:chernoff bound for p_k}
Let $\kappa(z_1,\dots,z_k;\omega)$ be a $\{0,1\}$-valued kernel of order $k\leq n$ that possibly depends on additional randomness $\omega$ that is independent of the data $\set{z_1,\ldots,z_n}$, $\kappa^*(z_1,\dots,z_k):=\expect{\kappa(z_1,\dots,z_k;\omega)\vert z_1,\dots,z_k}$, and $U(z_1,\dots,z_n)$ be the U-statistic associated with $\kappa^*$. Then, it holds that
\begin{equation*}
\begin{aligned}
&\prob{U-\expect{\kappa} \geq \epsilon}\leq \exp\prth{-\dfrac{n}{2k} \cdot D_{\operatorname{KL}}\prth{\expect{\kappa}+\epsilon \| \expect{\kappa}}},\\
&\prob{U-\expect{\kappa} \leq -\epsilon}\leq \exp\prth{-\dfrac{n}{2k} \cdot D_{\operatorname{KL}}\prth{\expect{\kappa}-\epsilon \| \expect{\kappa}}},
\end{aligned}
\end{equation*}
where $D_{\operatorname{KL}}(p\| q):= p\ln \frac{p}{q} + (1-p)\ln\frac{1-p}{1-q}$ is the KL-divergence between two Bernoulli random variables with parameters $p$ and $q$, respectively.
\end{lemma}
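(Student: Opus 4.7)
The plan is to combine the MGF dominance of Lemma \ref{lem:MGF dominance} with the classical Chernoff bound for averages of i.i.d.\ Bernoulli variables. The key observation is that because $\kappa$ takes values in $\{0,1\}$, the block-averaged kernel
\[
\bar{\kappa}(\xi_1,\ldots,\xi_n)=\frac{1}{\lfloor n/k\rfloor}\sum_{i=1}^{\lfloor n/k\rfloor}\kappa(\xi_{k(i-1)+1},\ldots,\xi_{ki})
\]
is exactly an average of $m:=\lfloor n/k\rfloor$ i.i.d.\ Bernoulli random variables with parameter $p:=\expect{\kappa}$.

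\textbf{Main steps.} First I would apply Markov's inequality to $\exp(tU)$ with $t>0$ to obtain
\[
\prob{U-p\geq \epsilon}\leq e^{-t(p+\epsilon)}\,\expect{e^{tU}},
\]
and then invoke Lemma \ref{lem:MGF dominance} to replace $\expect{e^{tU}}$ by $\expect{e^{t\bar{\kappa}}}$. Since $\bar{\kappa}$ is an average of $m$ i.i.d.\ Bernoulli$(p)$ variables, its MGF factors as $(pe^{t/m}+1-p)^m$, and optimizing the Chernoff bound over $t>0$ yields the standard binary-KL rate
\[
\prob{\bar{\kappa}\geq p+\epsilon}\leq \exp\bigl(-m\,D_{\operatorname{KL}}(p+\epsilon\|p)\bigr).
\]
The same optimized bound then automatically applies to $U$ via the MGF dominance. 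For the lower tail, I would repeat the argument with $t<0$, using that Lemma \ref{lem:MGF dominance} holds for all $t\in\mathbb{R}$, to obtain $\prob{U\leq p-\epsilon}\leq \exp\bigl(-m\,D_{\operatorname{KL}}(p-\epsilon\|p)\bigr)$.

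\textbf{Finishing up.} To match the $n/(2k)$ exponent in the statement, I would verify that $\lfloor n/k\rfloor\geq n/(2k)$ in all regimes: when $k\leq n<2k$ this is trivial since $\lfloor n/k\rfloor=1\geq n/(2k)$, and when $n\geq 2k$ it follows from $\lfloor n/k\rfloor\geq n/k-1\geq n/(2k)$. Since $D_{\operatorname{KL}}(\cdot\|p)\geq 0$, replacing $m$ by the smaller $n/(2k)$ only weakens the bound, giving exactly the stated inequalities.

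\textbf{Main obstacle.} There is no serious obstacle; the proof is essentially a clean two-line reduction once Lemma \ref{lem:MGF dominance} is in hand. The only mildly subtle point is recognizing that for $\{0,1\}$-valued kernels the block average $\bar\kappa$ is literally an empirical mean of Bernoullis, which is what unlocks the sharper binary-KL rate instead of a looser sub-Gaussian or Bernstein-type surrogate—and this sharpness is precisely what is needed later to make the bounds in Theorem \ref{thm: general majority vote} inherit the generalization performance of fast-converging base learners.
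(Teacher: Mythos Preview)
Your proposal is correct and follows essentially the same approach as the paper: Markov's inequality, MGF dominance from Lemma~\ref{lem:MGF dominance}, explicit Bernoulli MGF for the block average, Chernoff optimization yielding the binary KL rate, and the relaxation $\lfloor n/k\rfloor\geq n/(2k)$. The only cosmetic difference is that for the lower tail the paper applies the upper-tail bound to the flipped kernel $1-\kappa$ and invokes $D_{\operatorname{KL}}(p\|q)=D_{\operatorname{KL}}(1-p\|1-q)$, whereas you redo the Chernoff argument with $t<0$; both are equivalent and standard.
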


Below, Lemma \ref{lemma: DKL} gives lower bounds for KL divergences which help analyze the bounds in Lemma \ref{lem:chernoff bound for p_k}.
The proof of Lemma \ref{lemma: DKL} is deferred to Appendix \ref{subsec: proof of lemma lemma: DKL}.
\begin{lemma}\label{lemma: DKL}
Let $D_{\operatorname{KL}}(p\| q):= p\ln \frac{p}{q} + (1-p)\ln\frac{1-p}{1-q}$ be the KL-divergence between two Bernoulli random variables with parameters $p$ and $q$, respectively.
Then, it holds that
\begin{equation}\label{KL lower bound: ratio}
    \Dkl(p\|q)\geq p\ln \frac{p}{q} + q-p.
\end{equation}
If $p\in[\gamma,1-\gamma]$ for some $\gamma\in(0,\frac{1}{2}]$, it also holds that
\begin{equation}\label{KL lower bound: p close to 1/2}
\Dkl(p\|q)\geq -\ln \prth{2(q(1-q))^{\gamma}}.
\end{equation}
\end{lemma}


To incorporate all the proposed algorithms in a unified theoretical framework, we consider a set-valued mapping
\begin{equation}\label{eq: set-valued learning algorithm}
    \mathbb{A}(z_1,\ldots,z_k;\omega):\mathcal{Z}^k\times \mathbf{\Omega}\to 2^{\Theta},
\end{equation}
where $\omega\in \mathbf{\Omega}$ denotes algorithmic randomness that is independent of the data $\set{z_1,\ldots,z_k}\in \mathcal{Z}^k$. Each of our proposed algorithms attempts to solve the probability-maximization problem
\begin{equation}\label{prob:maximizing event probability empirical 2}
    \max_{\theta\in\Theta}\ \hat{p}_k(\theta):=\reprob{\theta\in \mathbb{A}(z^*_1,\ldots,z^*_k;\omega)},
\end{equation}
for a certain choice of $\mathbb{A}$, where $\set{z^*_1,\ldots,z^*_k}$ is subsampled from the i.i.d. data $\set{z_1,\ldots,z_n}$ uniformly without replacement, and $\mathbb P_*$ denotes the probability with respect to the algorithmic randomness $\omega$ and the subsampling randomness conditioned on the data. Note that this problem is an empirical approximation of the problem
\begin{equation}\label{prob:maximizing event probability 2}
    \max_{\theta\in\Theta}\ p_k(\theta):=\prob{\theta\in \mathbb{A}(z_1,\ldots,z_k;\omega)}.
\end{equation}
The problem actually solved with a finite number of subsamples is
\begin{equation}\label{prob:maximizing event probability finite B}
    \max_{\theta\in\Theta}\ \bar{p}_k(\theta):=\frac{1}{B}\sum_{b=1}^B\mathbbm{1}(\theta\in \mathbb{A}(z_1^b,\ldots,z_k^b;\omega_b)).
\end{equation}

Specifically, Algorithm \ref{bagging majority vote: set estimator} uses
\begin{equation}\label{indicator for bagging with single prediction}
    \mathbb{A}(z^*_1,\ldots,z^*_k;\omega)=\set{\mathcal{A}(z^*_1,\ldots,z^*_k;\omega)},
\end{equation}
where $\mathcal{A}$ denotes the base learning algorithm, and Algorithm \ref{bagging majority vote: two phase} uses
\begin{equation}\label{indicator for bagging with multiple predictions}
    \mathbb{A}(z^*_1,\ldots,z^*_{k_2};\omega)=\set{\theta\in \mathcal{S}:\frac{1}{k_2}\sum_{i=1}^{k_2}l(\theta,z^*_i)\leq \min_{\theta'\in\mathcal{S}}\frac{1}{k_2}\sum_{i=1}^{k_2}l(\theta',z^*_i)+\epsilon},
\end{equation}
conditioned on the solution set $\mathcal{S}$ retrieved in Phase I. Note that no algorithmic randomness is involved in \eqref{indicator for bagging with multiple predictions} once the set $\mathcal{S}$ is given. 
Now, we introduce the following definitions.
\begin{definition}\label{def: S_delta_hat}
For any $\delta\in[0,1]$, let
\begin{equation}\label{nearly optima set}
    \mathcal{P}^{\delta}_k:=\{\theta\in\Theta:p_k(\theta)\geq \max_{\theta'\in\Theta}p_k(\theta')-\delta\}
\end{equation}
be the set of $\delta$-optimal solutions of problem \eqref{prob:maximizing event probability 2}. Let
\begin{equation*}
    \theta_k^{\max}\in \argmax_{\theta\in\Theta}p_k(\theta)
\end{equation*}
be a solution with maximum probability that is chosen in a unique manner if there are multiple such solutions. Let
\begin{equation}\label{nearly optima set estimated}
    \widehat{\mathcal{P}}^{\delta}_k:=\{\theta\in\Theta:\hat{p}_k(\theta)\geq \hat{p}_k(\theta_k^{\max})-\delta\}
\end{equation}
be the set of $\delta$-optimal solutions relative to $\theta_k^{\max}$ for problem \eqref{prob:maximizing event probability empirical 2}.
\end{definition}

\begin{definition}\label{def: nearly optimal sets and convergence probabilities}
Let
\begin{equation}\label{true nearly optima set}
    \Theta^{\delta}:=\left\{\theta\in\Theta:L(\theta)\leq \min_{\theta'\in\Theta}L(\theta')+\delta\right\}
\end{equation}
be the set of $\delta$-optimal solutions of problem \eqref{opt}. In particular, $\Theta^0$ represents the set of optimal solutions. Let
\begin{equation}\label{SAA nearly optima set}
    \widehat{\Theta}_k^{\delta}:=\left\{\theta\in\Theta:\frac{1}{k}\sum_{i=1}^kl(\theta,z_i)\leq \min_{\theta'\in\Theta}\frac{1}{k}\sum_{i=1}^kl(\theta',z_i)+\delta\right\}
\end{equation}
be the set of $\delta$-optimal solutions of the SAA with i.i.d. data $(z_1,\ldots,z_k)$.
\end{definition}

\subsection{Proof of Theorem \ref{thm: general majority vote}}\label{app: proof_theorem1_proposition1}
 \begin{theorem}[Formal finite-sample bound for Algorithm \ref{bagging majority vote: set estimator}]\label{thm: general majority vote_formal}
    Consider discrete decision space $\Theta$.
    Let $p_k^{\max}:=\max_{\theta\in\Theta}p_k(\theta)$, where $p_k(\theta)$ is defined in \eqref{prob:maximizing selection probability},
and
\begin{equation}
    \eta_{k,\delta}:=p_k^{\max} - \max_{\theta\in\Theta/\Theta^{\delta}}p_k(\theta),
\end{equation}
where $\max_{\theta\in\Theta\backslash\Theta^{\delta}}p_k(\theta)$ evaluates to $0$ if $\Theta\backslash\Theta^{\delta}$ is empty. Then, for every $k\leq n$ and $\delta\geq 0$ such that $\eta_{k,\delta}>0$, the solution output by \move satisfies that
{\small\begin{equation}\label{finite-sample bound for general bagging solution}
\begin{aligned}
    &\ \prob{L(\hat{\theta}_n) > \min_{\theta\in\Theta}L(\theta)+\delta}\\
    \leq & \abs{\Theta}
    \Bigg[\exp\prth{-\dfrac{n}{2k} \cdot D_{\operatorname{KL}}\prth{p_k^{\max}-\dfrac{3\eta_{k,\delta}}{4} \Big\| p_k^{\max}-\eta_{k,\delta}}}+2\exp\prth{-\dfrac{n}{2k} \cdot D_{\operatorname{KL}}\prth{p_k^{\max}-\dfrac{\eta_{k,\delta}}{4} \Big\| p_k^{\max}}}\\
    &\qquad +\exp\prth{-\dfrac{B}{24}\cdot \dfrac{\eta_{k,\delta}^2}{\min\set{p_k^{\max},1-p_k^{\max}}+3\eta_{k,\delta}/4}}\\
    &\qquad +\mathbbm{1}\prth{p_k^{\max}+\dfrac{\eta_{k,\delta}}{4}\leq 1}\cdot\exp\Big(-\dfrac{n}{2k} \cdot D_{\operatorname{KL}}\prth{p_k^{\max}+\dfrac{\eta_{k,\delta}}{4} \Big\| p_k^{\max}}-\dfrac{B}{24}\cdot \dfrac{\eta_{k,\delta}^2}{1-p_k^{\max}+\eta_{k,\delta}/4}\Big)\Bigg].
\end{aligned}
\end{equation}}
In particular, if $\eta_{k,\delta}>4/5$, \eqref{finite-sample bound for general bagging solution} is further bounded by
\begin{equation}\label{eq: general finite-sample bound for large gap}
\abs{\Theta}\prth{3\min\set{e^{-2/5}, C_1\max\set{1-p_k^{\max},\;\max_{\theta\in\Theta/\Theta^{\delta}}p_k(\theta)}}^{\frac{n}{C_2k}} + e^{-B/C_3}},
\end{equation}
where $C_1,C_2,C_3>0$ are universal constants, $\abs{\Theta}$ denotes the cardinality of $\Theta$, and $D_{\operatorname{KL}}(p\| q):= p\ln \frac{p}{q} + (1-p)\ln\frac{1-p}{1-q}$ is the Kullback–Leibler divergence between two Bernoulli distributions with means $p$ and $q$.
\end{theorem}

We consider Algorithm \ref{bagging majority vote: set estimator 2}, a generalization of Algorithm \ref{bagging majority vote: set estimator} applied to the set-valued learning algorithm $\mathbb{A}$ in \eqref{eq: set-valued learning algorithm}.
This framework recovers Algorithm \ref{bagging majority vote: set estimator} as a special case under condition \eqref{indicator for bagging with single prediction}.
Again, we omit the algorithmic randomness $\omega$ in $\mathbb{A}$ for convenience.
For Algorithm \ref{bagging majority vote: set estimator 2}, we derive the following finite-sample guarantee.

\begin{algorithm}
\caption{Majority Vote Ensembling for Set-Valued Learning Algorithms}
\label{bagging majority vote: set estimator 2}
\begin{algorithmic}[1]
\STATE {Input: A set-valued learning algorithm $\mathbb{A}$, $n$ i.i.d. observations $\mathbf{z}_{1:n}=(z_1,\ldots,z_n)$, positive integers $k<n$, and ensemble size $B$.}


\FOR{$b=1$ to $B$ }
\STATE {Randomly sample $\mathbf{z}_k^b=(z_1^b,\ldots,z_k^b)$ uniformly from $\mathbf{z}_{1:n}$ without replacement, and obtain $\Theta_k^b=\mathbb{A}(z_1^b,\ldots,z_k^b)$
}
\ENDFOR


\STATE {Output $\hat{\theta}_n\in\argmax_{\theta\in\Theta}\sum_{b=1}^B\mathbbm{1}(\theta\in \Theta_k^b)$.}
\end{algorithmic}
\end{algorithm}

\begin{theorem}[Finite-sample bound for Algorithm \ref{bagging majority vote: set estimator 2}]\label{thm: general majority vote 2}
    Consider discrete decision space $\Theta$. 
    Let $p_k^{\max}:=\max_{\theta\in\Theta}p_k(\theta)$, where $p_k(\theta)$ is defined in \eqref{prob:maximizing event probability 2}. 
For any $\delta > 0$, denote
    \begin{equation}\label{SAA prob gap 2}
        \bar{\eta}_{k,\delta}:=p_k^{\max} - \max_{\theta\in\Theta\backslash\Theta^{\delta}}p_k(\theta),
    \end{equation}
    where $\max_{\theta\in\Theta\backslash\Theta^{\delta}}p_k(\theta)$ evaluates to $0$ if $\Theta\backslash\Theta^{\delta}$ is empty. Then, for every $k\leq n$ and $\delta\geq0$ such that $\bar{\eta}_{k,\delta}>0$, the solution output by Algorithm \ref{bagging majority vote: set estimator 2} satisfies that
    \begin{equation}\label{finite-sample bound for general bagging solution 2}
    \begin{aligned}
        &\prob{L(\hat{\theta}_n) > \min_{\theta\in\Theta}L(\theta)+\delta}\\
        \leq & \abs{\Theta}
        \Bigg[\exp\prth{-\dfrac{n}{2k} \cdot D_{\operatorname{KL}}\prth{p_k^{\max}-\dfrac{3\eta}{4} \Big\| p_k^{\max}-\eta}}+2\exp\prth{-\dfrac{n}{2k} \cdot D_{\operatorname{KL}}\prth{p_k^{\max}-\dfrac{\eta}{4} \Big\| p_k^{\max}}}\\
        &\hspace{5ex} +\exp\prth{-\dfrac{B}{24}\cdot \dfrac{\eta^2}{\min\set{p_k^{\max},1-p_k^{\max}}+3\eta/4}}\\
        &\hspace{5ex} +\mathbbm{1}\prth{p_k^{\max}+\dfrac{\eta}{4}\leq 1}\cdot\exp\Big(-\dfrac{n}{2k} \cdot D_{\operatorname{KL}}\prth{p_k^{\max}+\dfrac{\eta}{4} \Big\| p_k^{\max}}-\dfrac{B}{24}\cdot \dfrac{\eta^2}{1-p_k^{\max}+\eta/4}\Big)\Bigg]
    \end{aligned}
    \end{equation}
    for every $\eta\in(0,\bar{\eta}_{k,\delta}]$. In particular, if $\bar{\eta}_{k,\delta}>4/5$, \eqref{finite-sample bound for general bagging solution 2} is further bounded by
    \begin{equation}\label{eq: general finite-sample bound for large gap 2}
    \abs{\Theta}\prth{3\min\set{e^{-2/5}, C_1\max\set{1-p_k^{\max},\;\max_{\theta\in\Theta\backslash\Theta^{\delta}}p_k(\theta)}}^{\frac{n}{C_2k}} + \exp\prth{-\frac{B}{C_3}}},
    \end{equation}
    where $C_1,C_2,C_3>0$ are universal constants, and $D_{\operatorname{KL}}(p\| q):= p\ln \frac{p}{q} + (1-p)\ln\frac{1-p}{1-q}$ is the Kullback–Leibler divergence between two Bernoulli distributions with means $p$ and $q$.
\end{theorem}

\begin{proof}[Proof of Theorem \ref{thm: general majority vote 2}]
We first prove excess risk tail bounds for the problem \eqref{prob:maximizing event probability 2}, split into two lemmas, Lemmas \ref{lem:optima set approximation} and \ref{lem:optimality conditioned on data} below.
The proofs for these two lemmas can be found in Appendix \ref{subsec: proof of lemma lem:optima set approximation} and Appendix \ref{subsec: proof of lemma lem:optimality conditioned on data}, respectively.
\begin{lemma}\label{lem:optima set approximation}
Consider discrete decision space $\Theta$. Recall from Definition \ref{def: S_delta_hat} that $p_k^{\max}=p_k(\theta_k^{\max})$ holds for $\theta_k^{\max}$. For every $0\leq\epsilon\leq \delta\leq p_k^{\max}$, it holds that
\begin{equation*}
\begin{aligned}
\prob{\widehat{\mathcal{P}}^{\epsilon}_k\not\subseteq\mathcal{P}^{\delta}_k}
\leq  \abs{\Theta} 
&\left[\exp\prth{-\dfrac{n}{2k} \cdot D_{\operatorname{KL}}\prth{p_k^{\max}-\dfrac{\delta+\epsilon}{2} \Big\| p_k^{\max}-\delta}}\right. \\
+ &\left.\exp\prth{-\dfrac{n}{2k} \cdot D_{\operatorname{KL}}\prth{p_k^{\max}-\dfrac{\delta-\epsilon}{2} \Big\| p_k^{\max}}}\right].
\end{aligned}
\end{equation*}
\end{lemma}

\begin{lemma}\label{lem:optimality conditioned on data}
    Consider discrete decision space $\Theta$. For every $\epsilon\in[0,1]$ it holds for the solution output by Algorithm \ref{bagging majority vote: set estimator 2} that
    \begin{equation*}
        \reprob{\hat{\theta}_n\notin \widehat{\mathcal{P}}^{\epsilon}_k}\leq \abs{\Theta}\cdot \exp\prth{-\dfrac{B}{6}\cdot \dfrac{\epsilon^2}{\min\set{\hat{p}_k(\theta_k^{\max}),1-\hat{p}_k(\theta_k^{\max})}+\epsilon}},
    \end{equation*}
    where $\abs{\cdot}$ denotes the cardinality of a set and $\mathbb P_*$ denotes the probability with respect to both the resampling randomness conditioned on the observations and the algorithmic randomness.
\end{lemma}

We are now ready for the proof of Theorem \ref{thm: general majority vote 2}. We first note that, if $\bar{\eta}_{k,\delta}>0$,
    it follows from Definition \ref{def: S_delta_hat} that
    \begin{equation*}
        \mathcal{P}_k^{\eta} \subseteq\Theta^{\delta}\text{\ for any\ }\eta\in (0,\;\bar{\eta}_{k,\delta}).
    \end{equation*}
    Therefore, for any $\eta\in (0,\;\bar{\eta}_{k,\delta})$, we can write that
    \begin{equation}\label{eq: xnk bag upperbound}
    \begin{aligned}
    \prob{\hat{\theta}_n\notin \Theta^{\delta}}\leq  \prob{\hat{\theta}_n\notin \mathcal{P}^{\eta}_k} &\leq& \prob{\left\{\hat{\theta}_n\notin \widehat{\mathcal{P}}^{\eta/2}_k\right\}\cup \left\{ \widehat{\mathcal{P}}^{\eta/2}_k\not\subseteq\mathcal{P}^{\eta}_k \right\}}\\
    &\leq& \prob{\hat{\theta}_n\notin \widehat{\mathcal{P}}^{\eta/2}_k}+\prob{ \widehat{\mathcal{P}}^{\eta/2}_k\not\subseteq\mathcal{P}^{\eta}_k }.
    \end{aligned}
    \end{equation}
    We first evaluate the second probability on the right-hand side of \eqref{eq: xnk bag upperbound}.
    Lemma \ref{lem:optima set approximation} gives that
    
    \begin{equation}\label{eq: Sk eta/2 notsubset Sketa}
    \begin{aligned}
\prob{\widehat{\mathcal{P}}^{\eta/2}_k\not\subseteq\mathcal{P}^{\eta}_k}
    \leq  \abs{\Theta} 
    \Bigg[&\exp\prth{-\dfrac{n}{2k} \cdot D_{\operatorname{KL}}\prth{p_k^{\max}-\dfrac{3\eta}{4} \Big\| p_k^{\max}-\eta}}\\ &\qquad+\exp\prth{-\dfrac{n}{2k} \cdot D_{\operatorname{KL}}\prth{p_k^{\max}-\dfrac{\eta}{4} \Big\| p_k^{\max}}}\Bigg].
    \end{aligned}
    \end{equation} 
    Next, by applying Lemma \ref{lem:optimality conditioned on data} with $\epsilon=\eta/2$, we can bound the first probability on the right-hand side of \eqref{eq: xnk bag upperbound} as 
    \begin{equation}\label{solution missing prob}
        \prob{\hat{\theta}_n\notin \widehat{\mathcal{P}}^{\eta/2}_k}\leq \abs{\Theta}\cdot \expect{\exp\prth{-\dfrac{B}{24}\cdot \dfrac{\eta^2}{\min\set{\hat{p}_k(\theta_k^{\max}),1-\hat{p}_k(\theta_k^{\max})}+\eta/2}}}.
    \end{equation}
    Conditioned on the value of $\hat{p}_k(\theta_k^{\max})$, we can further upper-bound the right-hand side of \eqref{solution missing prob} as follows
    \begin{eqnarray*}
        &&\expect{\exp\prth{-\dfrac{B}{24}\cdot \dfrac{\eta^2}{\min\set{\hat{p}_k(\theta_k^{\max}), 1-\hat{p}_k(\theta_k^{\max})}+\eta/2}}}\\
        &\leq &\prob{\hat{p}_k(\theta_k^{\max})\leq p_k^{\max}-\frac{\eta}{4}}\cdot \exp\prth{-\dfrac{B}{24}\cdot \dfrac{\eta^2}{p_k^{\max}+\eta/4}}+\\
        &&\prob{\abs{\hat{p}_k(\theta_k^{\max})-p_k^{\max}}<\frac{\eta}{4}}\cdot \exp\prth{-\dfrac{B}{24}\cdot \dfrac{\eta^2}{\min\set{p_k^{\max},1-p_k^{\max}}+3\eta/4}} +\\
        &&\prob{\hat{p}_k(\theta_k^{\max})\geq p_k^{\max}+\frac{\eta}{4}}\cdot \exp\prth{-\dfrac{B}{24}\cdot \dfrac{\eta^2}{1-p_k^{\max}+\eta/4}}\\
        &\leq &\prob{\hat{p}_k(\theta_k^{\max})\leq p_k^{\max}-\frac{\eta}{4}}+ \exp\prth{-\dfrac{B}{24}\cdot \dfrac{\eta^2}{\min\set{p_k^{\max},1-p_k^{\max}}+3\eta/4}}+\\
        &&\prob{\hat{p}_k(\theta_k^{\max})\geq p_k^{\max}+\frac{\eta}{4}}\cdot \exp\prth{-\dfrac{B}{24}\cdot \dfrac{\eta^2}{1-p_k^{\max}+\eta/4}}\\
        &\overset{(i)}{\leq} &\exp\prth{-\dfrac{n}{2k} \cdot D_{\operatorname{KL}}\prth{p_k^{\max}-\dfrac{\eta}{4} \Big\| p_k^{\max}}}+\\
        &&\exp\prth{-\dfrac{B}{24}\cdot \dfrac{\eta^2}{\min\set{p_k^{\max},1-p_k^{\max}}+3\eta/4}}+\\
        && \mathbbm{1}\prth{p_k^{\max}+\dfrac{\eta}{4}\leq 1}\cdot\exp\prth{-\dfrac{n}{2k} \cdot D_{\operatorname{KL}}\prth{p_k^{\max}+\dfrac{\eta}{4} \Big\| p_k^{\max}}}\cdot \exp\prth{-\dfrac{B}{24}\cdot \dfrac{\eta^2}{1-p_k^{\max}+\eta/4}},
    \end{eqnarray*}
    where inequality $(i)$ results from applying Lemma \ref{lem:chernoff bound for p_k} with $\hat{p}_k(\theta_k^{\max})$, the U-statistic estimate for $p_k^{\max}$.
    Together, the above equations imply that
    \begin{eqnarray*}
        && \prob{\hat{\theta}_n\notin \Theta^{\delta}}\\
        &\leq & \abs{\Theta}
        \Bigg[\exp\prth{-\dfrac{n}{2k} \cdot D_{\operatorname{KL}}\prth{p_k^{\max}-\dfrac{3\eta}{4} \Big\| p_k^{\max}-\eta}}\\
        &&+2\exp\prth{-\dfrac{n}{2k} \cdot D_{\operatorname{KL}}\prth{p_k^{\max}-\dfrac{\eta}{4} \Big\| p_k^{\max}}}\\
        &&+\exp\prth{-\dfrac{B}{24}\cdot \dfrac{\eta^2}{\min\set{p_k^{\max},1-p_k^{\max}}+3\eta/4}}\\
        &&+\mathbbm{1}\prth{p_k^{\max}+\dfrac{\eta}{4}\leq 1}\cdot\exp\prth{-\dfrac{n}{2k} \cdot D_{\operatorname{KL}}\prth{p_k^{\max}+\dfrac{\eta}{4} \Big\| p_k^{\max}}-\dfrac{B}{24}\cdot \dfrac{\eta^2}{1-p_k^{\max}+\eta/4}}\Bigg].
    \end{eqnarray*}
    Since the above probability bound is left-continuous in $\eta$ and $\eta$ can be arbitrarily chosen from $(0,\;\bar{\eta}_{k,\delta})$, the validity of the case $\eta=\bar{\eta}_{k,\delta}$ follows from pushing $\eta$ to the limit $\bar{\eta}_{k,\delta}$. This gives \eqref{finite-sample bound for general bagging solution 2}.

    To simplify the bound in the case $\bar{\eta}_{k,\delta}>4/5$. Consider the bound \eqref{finite-sample bound for general bagging solution 2} with $\eta=\bar{\eta}_{k,\delta}$. Since $p_k^{\max}\geq \bar{\eta}_{k,\delta}$ by the definition of $\bar{\eta}_{k,\delta}$, it must hold that $p_k^{\max}+\bar{\eta}_{k,\delta}/4>4/5+1/5=1$, therefore the last term in the finite-sample bound \eqref{finite-sample bound for general bagging solution 2} vanishes. To simplify the first two terms in the finite-sample bound, we note that
\begin{equation*}
\begin{aligned}
    p_k^{\max}-\dfrac{3\bar{\eta}_{k,\delta}}{4}&\leq 1-\dfrac{3}{4}\cdot \dfrac{4}{5}=\dfrac{2}{5},\\
    p_k^{\max}-\dfrac{3\bar{\eta}_{k,\delta}}{4}&\geq \bar{\eta}_{k,\delta}-\dfrac{3\bar{\eta}_{k,\delta}}{4} \geq \dfrac{1}{5},\\
    p_k^{\max}-\dfrac{\bar{\eta}_{k,\delta}}{4}&\leq 1-\dfrac{1}{4}\cdot \dfrac{4}{5}=\dfrac{4}{5},\\
    p_k^{\max}-\dfrac{\bar{\eta}_{k,\delta}}{4}&\geq \bar{\eta}_{k,\delta}-\dfrac{\bar{\eta}_{k,\delta}}{4} \geq \dfrac{3}{5},\\
\end{aligned}
\end{equation*}
and that $p_k^{\max}-\bar{\eta}_{k,\delta}\leq 1-\bar{\eta}_{k,\delta}\leq 1/5$, therefore by the bound \eqref{KL lower bound: p close to 1/2} from Lemma \ref{lemma: DKL}, we can bound the first two terms as
\begin{eqnarray*}
     &&\exp\prth{-\dfrac{n}{2k} \cdot D_{\operatorname{KL}}\prth{p_k^{\max}-\dfrac{3\bar{\eta}_{k,\delta}}{4} \Big\| p_k^{\max}-\bar{\eta}_{k,\delta}}}\\
     &\leq& \exp\prth{\dfrac{n}{2k}\ln \prth{2((p_k^{\max}-\bar{\eta}_{k,\delta})(1-p_k^{\max}+\bar{\eta}_{k,\delta}))^{1/5}}}\\
     &=&  \prth{2((p_k^{\max}-\bar{\eta}_{k,\delta})(1-p_k^{\max}+\bar{\eta}_{k,\delta}))^{1/5}}^{n/(2k)}\\
     &\leq& \prth{2(p_k^{\max}-\bar{\eta}_{k,\delta})^{1/5}}^{n/(2k)}\\
     &=& \prth{2^5(p_k^{\max}-\bar{\eta}_{k,\delta})}^{n/(10k)},
\end{eqnarray*}
and similarly
\begin{eqnarray*}
     \exp\prth{-\dfrac{n}{2k} \cdot D_{\operatorname{KL}}\prth{p_k^{\max}-\dfrac{\bar{\eta}_{k,\delta}}{4} \Big\| p_k^{\max}}}
     &\leq& \exp\prth{\dfrac{n}{2k}\ln \prth{2(p_k^{\max}(1-p_k^{\max}))^{1/5}}}\\
     &=&  \prth{2(p_k^{\max}(1-p_k^{\max}))^{1/5}}^{n/(2k)}\\
     &\leq& \prth{2(1-p_k^{\max})^{1/5}}^{n/(2k)}\\
     &=& \prth{2^5(1-p_k^{\max})}^{n/(10k)}.
\end{eqnarray*}
On the other hand, by Lemma \ref{lemma: DKL} both $D_{\operatorname{KL}}\prth{p_k^{\max}-3\bar{\eta}_{k,\delta}/4 \| p_k^{\max}-\bar{\eta}_{k,\delta}}$ and $D_{\operatorname{KL}}\prth{p_k^{\max}-\bar{\eta}_{k,\delta}/4 \| p_k^{\max}}$ are bounded below by $\bar{\eta}_{k,\delta}^2/8$, therefore
\begin{equation*}
    \exp\prth{-\dfrac{n}{2k} \cdot D_{\operatorname{KL}}\prth{p_k^{\max}-\dfrac{3\bar{\eta}_{k,\delta}}{4} \Big\| p_k^{\max}-\bar{\eta}_{k,\delta}}}\leq \exp\prth{-\dfrac{n}{2k}\cdot \dfrac{\bar{\eta}_{k,\delta}^2}{8}}\leq \exp\prth{-\dfrac{n}{25k}},
\end{equation*}
and the same holds for $\exp\prth{-n/(2k) \cdot D_{\operatorname{KL}}\prth{p_k^{\max}-\bar{\eta}_{k,\delta}/4 \| p_k^{\max}}}$. For the third term in the bound \eqref{finite-sample bound for general bagging solution 2} we have
\begin{equation*}
    \dfrac{\bar{\eta}_{k,\delta}^2}{\min\set{p_k^{\max}, 1-p_k^{\max}}+3\bar{\eta}_{k,\delta}/4}\geq \dfrac{(4/5)^2}{\min\set{1,1/5}+3/4}\geq\dfrac{16}{25},
\end{equation*}
and hence
\begin{equation*}
    \exp\prth{-\dfrac{B}{24}\cdot \dfrac{\bar{\eta}_{k,\delta}^2}{\min\set{p_k^{\max}, 1-p_k^{\max}}+3\bar{\eta}_{k,\delta}/4}}\leq \exp\prth{-\dfrac{B}{75/2}}.
\end{equation*}
The first desired bound then follows by setting $C_1,C_2,C_3$ to be the appropriate constants. This completes the proof of Theorem \ref{thm: general majority vote 2}.
\end{proof}

\begin{proof}[Proof of Theorem \ref{thm: general majority vote_formal}]
Algorithm \ref{bagging majority vote: set estimator} is a special case of Algorithm \ref{bagging majority vote: set estimator 2} with the learning algorithm \eqref{indicator for bagging with single prediction} that outputs a singleton, therefore the results of Theorem \ref{thm: general majority vote 2} automatically apply. In particular, $\eta_{k,\delta}= \bar{\eta}_{k,\delta}$ in the context of Theorem \ref{thm: general majority vote_formal}, and \eqref{finite-sample bound for general bagging solution} follows from setting $\eta$ to be $\eta_{k,\delta}$ in \eqref{finite-sample bound for general bagging solution 2}.
\end{proof}

\subsection{Proof of Corollary \ref{cor: application of move to linear program example}}\label{sec: proof of linear program example}
\begin{proof}[Proof of Corollary \ref{cor: application of move to linear program example}]
    By the continuity and symmetry of $z$ we have $q_k=\mbb{P}(\sum_{i=1}^kz_i>k) + \mbb{P}(\sum_{i=1}^kz_i\in(0,k))=1/2+\mbb{P}(\sum_{i=1}^kz_i\in(0,k))$. Since $z$ has a non-zero density everywhere, $\mbb{P}(\sum_{i=1}^kz_i\in(0,k))>0$, thus $q_k>1/2$ for every $k>0$. We note that the SAA of the linear program outputs either $0$ or $1$, therefore the space $[0,1]$ can be effectively viewed as the binary set $\{0,1\}$ and Theorem \ref{thm: general majority vote} is applicable with $\abs{\Theta}=2$. To apply Theorem \ref{thm: general majority vote}, it can be easily seen that $\max_{\theta\in\Theta}p_k(\theta)=\mbb{P}(\hat\theta=0)=q_k$ and that $\max_{\theta\in\Theta/\Theta^{\delta}}p_k(\theta)=\mbb{P}(\hat\theta=1)=1-q_k$ for $\delta<1$. This gives $\eta_{k,\delta}=2q_k-1>0$. Therefore the bound \eqref{finite-sample bound for general bagging solution_informal} holds for every $k>0$ and $\delta
    <1$. If $q_k>0.9$, we have $\eta_{k,\delta}>4/5$, and hence the bound \eqref{eq: finite-sample bound for data splitting two phase under large pmax_informal} holds. The particular form of the bound \eqref{eq: finite-sample bound for data splitting two phase under large pmax_informal} is then obtained by plugging in the values for $\max_{\theta\in\Theta}p_k(\theta)$, $\max_{\theta\in\Theta/\Theta^{\delta}}p_k(\theta)$ and $\abs{\Theta}$.
\end{proof}

\subsection{Proof of Theorem \ref{thm: finite-sample bound for multiple predictions two phase splitting}}\label{sec: theories for multiple predictions}
\begin{theorem}[Formal finite-sample bound for Algorithm \ref{bagging majority vote: two phase}]\label{thm: finite-sample bound for multiple predictions two phase splitting_formal}
    Let $\mathcal{E}_{k,\delta}:=\mathbb P(L(\mathcal{A}(z_1,\ldots,z_k))>\min_{\theta\in\Theta}L(\theta)+\delta)$ be the excess risk tail of $\mathcal{A}$.
    Consider Algorithm \ref{bagging majority vote: two phase} with data splitting, i.e., $\mathsf{ROVEs}$. Let $T_k(\cdot):=\mathbb P(\sup_{\theta\in\Theta}\lvert (1/k)\sum_{i=1}^kl(\theta,z_i)-L(\theta)\rvert> \cdot)$ be the tail function of the maximum deviation of the empirical objective estimate. For every $\delta >0$, if $\epsilon$ is chosen such that $\prob{\epsilon\in [\underline{\epsilon},\overline{\epsilon}]}=1$ for some $0< \underline{\epsilon}\leq \overline{\epsilon}<\delta$ and $T_{k_2}\prth{(\delta-\overline{\epsilon})/2}+T_{k_2}\prth{\underline{\epsilon}/2}<1/5$, then
    \begin{equation}\label{eq: finite-sample bound for data splitting two phase under large pmax}
    \begin{aligned}
        \mbb{P}\Big(L(\hat{\theta}_n) > \min_{\theta\in\Theta}L(\theta)+2\delta\Big)\leq&B_1\brac{3\min\set{e^{-2/5},C_1T_{k_2}\prth{\dfrac{\min\set{\underline{\epsilon}, \delta-\overline{\epsilon}}}{2}}}^{\frac{n}{2C_2k_2}}+e^{-{B_2}/{C_3}}}\\
        &+\min\set{e^{-(1-\mathcal{E}_{k_1,\delta})/C_4},C_5\mathcal{E}_{k_1,\delta}}^{\frac{n}{2C_6k_1}}+e^{-B_1(1-\mathcal{E}_{k_1,\delta})/C_7},
    \end{aligned}
    \end{equation}
    where $C_1,C_2,C_3$ are the same as those in Theorem \ref{thm: general majority vote_formal}, and $C_4,C_5,C_6,C_7$ are universal constants.

    Consider Algorithm \ref{bagging majority vote: two phase} without data splitting, i.e., $\mathsf{ROVE}$, and discrete space $\Theta$. Assume $\lim_{k\to\infty}T_k(\delta)= 0$ for all $\delta>0$. Then, for every fixed $\delta>0$, we have $\lim_{n\to\infty}\mathbb P(L(\hat{\theta}_n) > \min_{\theta\in\Theta}L(\theta)+2\delta)\to 0$, if $\limsup_{k\to \infty}\mathcal{E}_{k,\delta}<1$, $\prob{\epsilon>\delta/2}\to 0$, $k_1\text{ and }k_2\to\infty$, $n/k_1\text{ and }n/k_2\to\infty$, and $B_1,B_2\to\infty$ as $n\to\infty$.
\end{theorem}

We first present two lemmas to be used in the main proof. The following Lemma \ref{lem: near optimality of retrieved solution pool} characterizes the exponentially improving quality of the solution set retrieved in Phase I, where its proof can be found in Appendix \ref{subsec: proof of lemma lem: near optimality of retrieved solution pool}.
\begin{lemma}[Quality of retrieved solutions in Algorithm \ref{bagging majority vote: two phase}]\label{lem: near optimality of retrieved solution pool}
For every $k$ and $\delta\geq 0$, the set of retrieved solutions $\mathcal{S}$ from Phase I of Algorithm \ref{bagging majority vote: two phase} with $k_1=k$ and without data splitting satisfies that
    \begin{equation}\label{eq: retrieval tail bound}
        \prob{\mathcal{S}\cap \Theta^{\delta}=\emptyset}\leq \min\set{e^{-(1-\mathcal{E}_{k,\delta})/C_4},C_5\mathcal{E}_{k,\delta}}^{\frac{n}{C_6k}} + \exp\prth{-\dfrac{B_1}{C_7}(1-\mathcal{E}_{k,\delta})},
    \end{equation}
    where $C_4,C_5,C_6,C_7>0$ are universal constants. The same bound with $n$ replaced by $n/2$ holds true for Algorithm \ref{bagging majority vote: two phase} with data splitting.
\end{lemma}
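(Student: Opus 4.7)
The plan is to decompose the failure event $\{\mathcal{S}\cap\mathcal{X}^{\delta}=\emptyset\}$ along the two sources of randomness — the original data $\bm\xi_{1:n}$ and the resampling producing the $B_1$ candidates — and to reduce the data error to a concentration statement for a U-statistic with a binary kernel. Define
$$\hat p_k^{\delta}:=\reprob{\hat x_k(\xi_1^*,\ldots,\xi_k^*)\in\mathcal{X}^{\delta}}.$$
As a function of $\bm\xi_{1:n}$, this is a U-statistic of order $k$ with kernel $\kappa(\xi_1,\ldots,\xi_k)=\mathbbm{1}(\hat x_k(\xi_1,\ldots,\xi_k)\in\mathcal{X}^{\delta})$, and its expectation $p_k^{\delta}:=\prob{\hat x_k\in\mathcal{X}^{\delta}}$ satisfies $p_k^{\delta}\geq q_k^{0,\delta}$, because $\hat x_k\in\widehat{\mathcal{X}}_k^0$ always, so $\{\widehat{\mathcal{X}}_k^0\subseteq\mathcal{X}^{\delta}\}\subseteq\{\hat x_k\in\mathcal{X}^{\delta}\}$. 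Conditioning on $\bm\xi_{1:n}$, the $B_1$ subsamples are conditionally i.i.d., so $\reprob{\mathcal{S}\cap\mathcal{X}^{\delta}=\emptyset}=(1-\hat p_k^{\delta})^{B_1}$, and thresholding $\hat p_k^{\delta}$ at $q_k^{0,\delta}/2$ yields the basic decomposition
$$\prob{\mathcal{S}\cap\mathcal{X}^{\delta}=\emptyset}\leq \prob{\hat p_k^{\delta}\leq q_k^{0,\delta}/2}+\exp\prth{-B_1 q_k^{0,\delta}/2},$$
using $(1-x)^{B_1}\leq e^{-B_1 x}$ on the second term. This immediately produces the Monte Carlo term of the lemma with $C_7=2$.

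Next I would control the data-error probability by invoking Lemma \ref{lem:chernoff bound for p_k} for the U-statistic $\hat p_k^{\delta}$ with deviation $\epsilon=p_k^{\delta}-q_k^{0,\delta}/2\geq q_k^{0,\delta}/2$, obtaining
$$\prob{\hat p_k^{\delta}\leq q_k^{0,\delta}/2}\leq \exp\prth{-\dfrac{n}{2k}\cdot D_{\operatorname{KL}}\prth{q_k^{0,\delta}/2\,\|\,p_k^{\delta}}}.$$
Since $q\mapsto D_{\operatorname{KL}}(a\|q)$ is increasing for $q\geq a$, replacing $p_k^{\delta}$ by its lower bound $q_k^{0,\delta}$ reduces the task to lower-bounding the single-variable quantity $D_{\operatorname{KL}}(q/2\|q)$ with $q:=q_k^{0,\delta}$.

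Two complementary lower bounds on $D_{\operatorname{KL}}(q/2\|q)$ are then required. The ratio bound \eqref{KL lower bound: ratio} of Lemma \ref{lemma: DKL} immediately gives $D_{\operatorname{KL}}(q/2\|q)\geq (q/2)(1-\ln 2)$, which exponentiates to the alternative $e^{-q/C_4}$ raised to $n/(C_6 k)$. A direct expansion of the definition yields
$$D_{\operatorname{KL}}(q/2\|q)=-\ln 2+(1-q/2)\ln(2-q)-(1-q/2)\ln(1-q),$$
and using $\ln(2-q)\geq 0$ for $q\leq 1$ together with $(1-q/2)\geq 1/2$ produces $D_{\operatorname{KL}}(q/2\|q)\geq -\ln 2 -\tfrac{1}{2}\ln(1-q)$, which exponentiates to $(4(1-q))^{n/(4k)}$ and delivers the alternative $C_5(1-q)$ raised to $n/(C_6 k)$. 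Combining these through the $\min$ and adding the Monte Carlo term gives the bound in the lemma; the data-splitting version follows by rerunning the identical argument on $\bm\xi_{1:\lfloor n/2\rfloor}$, which merely substitutes $\lfloor n/2\rfloor$ for $n$ throughout.

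The main obstacle I anticipate is reconciling the constants so that a single pair $(C_4,C_5,C_6)$ works uniformly across $q\in(0,1]$: the second alternative $C_5(1-q)$ exceeds one for small $q$ (and is thus vacuous there), so one must observe that the first alternative $e^{-q/C_4}\leq 1$ automatically dominates in that regime, while near $q=1$ the roles reverse and $(1-q)$ is much smaller than $e^{-q/C_4}$. Matching the exponent $n/(2k)$ from the U-statistic bound with the exponent $n/(C_6 k)$ in the statement fixes $C_6$ and then $C_4$; no heavy calculation is required once this bookkeeping is in place.
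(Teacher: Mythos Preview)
Your proposal is correct and follows essentially the same route as the paper: decompose into a data-error term handled by the U-statistic concentration of Lemma~\ref{lem:chernoff bound for p_k} plus a Monte Carlo term, then extract two complementary lower bounds on the relevant KL divergence. The only cosmetic differences are that the paper thresholds at $p_{k,\delta}/e$ rather than $q_k^{0,\delta}/2$, and derives the $(1-q)$-type bound via the symmetry $D_{\operatorname{KL}}(p\|q)=D_{\operatorname{KL}}(1-p\|1-q)$ followed by \eqref{KL lower bound: ratio} instead of your direct expansion; neither change affects the argument.
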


Then, Lemma \ref{lem: gap translation for multiple predictions} gives bounds for the excess risk sensitivity $\bar{\eta}_{k,\delta}$ in the case of the set-valued learning algorithm \eqref{indicator for bagging with multiple predictions}.
The proof of Lemma \ref{lem: gap translation for multiple predictions} can be found in Appendix \ref{subsec: proof of lemma lem: gap translation for multiple predictions}.
\begin{lemma}[Bounds of $\bar{\eta}_{k,\delta}$ for the set-valued learning algorithm \eqref{indicator for bagging with multiple predictions}]\label{lem: gap translation for multiple predictions}
Consider discrete decision space $\Theta$. If the set-valued learning algorithm 
\begin{equation*}
    \mathbb{A}(z_1,\ldots,z_k;\omega):=\set{\theta\in \Theta:\frac{1}{k}\sum_{i=1}^kl(\theta,z_i)\leq \min_{\theta'\in\Theta}\frac{1}{k}\sum_{i=1}^kl(\theta',z_i)+\epsilon}
\end{equation*}
is used with $\epsilon\geq 0$, it holds that
\begin{align}
    &p_k^{\max}=\max_{\theta\in\Theta}p_k(\theta) \geq 1-T_k\prth{\dfrac{\epsilon}{2}},\label{eq: pkmax lower bound for multiple predictions}\\
    &\max_{\theta\in\Theta\backslash\Theta^{\delta}}p_k(\theta)\leq T_k\prth{\dfrac{\delta-\epsilon}{2}},\label{eq: pkmax - eta upper bound}
\end{align}
and hence
\begin{equation}
    \bar{\eta}_{k,\delta}\geq 1-T_k\prth{\dfrac{\epsilon}{2}}-T_k\prth{\dfrac{\delta-\epsilon}{2}},\label{eq: prob gap lower bound for multiple predictions}
\end{equation}
where $T_k$ is the tail probability defined in Theorem \ref{thm: finite-sample bound for multiple predictions two phase splitting}.
\end{lemma}

To prove Theorem \ref{thm: finite-sample bound for multiple predictions two phase splitting_formal}, we also introduce some notations. 
For every non-empty subset $\mathcal{W}\subseteq\Theta$, we use the following counterpart of Definition \ref{def: nearly optimal sets and convergence probabilities}. Let
\begin{equation}\label{true nearly optima set: restricted}
    \mathcal{W}^{\delta}:=\left\{\theta\in\mathcal{W}:L(\theta)\leq \min_{\theta'\in\mathcal{W}}L(\theta')+\delta\right\}
\end{equation}
be the set of $\delta$-optimal solutions in the restricted decision space $\mathcal{W}$, and
\begin{equation}\label{SAA nearly optima set: restricted}
    \widehat{\mathcal{W}}_k^{\delta}:=\left\{\theta\in\mathcal{W}:\frac{1}{k}\sum_{i=1}^kl(\theta,z_i)= \min_{\theta'\in\mathcal{W}}\frac{1}{k}\sum_{i=1}^kl(\theta',z_i)+\delta\right\}
\end{equation}
be the set of $\delta$-optimal solutions of the SAA with an i.i.d. data set of size $k$.

\begin{proof}[Proof of Theorem \ref{thm: finite-sample bound for multiple predictions two phase splitting_formal} for $\mathsf{ROVEs}$]
        Given the retrieved solution set $\mathcal{S}$ and the chosen $\epsilon$, the rest of Phase II of Algorithm \ref{bagging majority vote: two phase} exactly performs Algorithm \ref{bagging majority vote: set estimator 2} on the restricted problem $\min_{\theta\in\mathcal{S}}\expect{l(\theta,z)}$ to obtain $\hat{\theta}_n$ with the data $\bm{z}_{\lfloor n/2 \rfloor+1:n}$, the set-valued learning algorithm \eqref{indicator for bagging with multiple predictions}, the chosen $\epsilon$ value and $k=k_2,B=B_2$.
        


    To show the upper bound for the unconditional convergence probability $\prob{\hat{\theta}_n\notin \Theta^{2\delta}}$, note that
    \begin{equation*}
        \set{\mathcal{S}\cap \Theta^{\delta}\neq \emptyset}\cap \set{L(\hat{\theta}_n)\leq \min_{\theta\in\mathcal{S}} L(\theta)+\delta} \subseteq  \set{\hat{\theta}_n\in \Theta^{2\delta}},
    \end{equation*}
    and hence by union bound we can write
    \begin{equation}\label{eq: tail decomp}
        \prob{\hat{\theta}_n\notin \Theta^{2\delta}}\leq \prob{\mathcal{S}\cap \Theta^{\delta}= \emptyset}+\prob{L(\hat{\theta}_n)> \min_{\theta\in\mathcal{S}} L(\theta)+\delta}.
    \end{equation}
    $\prob{\mathcal{S}\cap \Theta^{\delta}= \emptyset}$ has a bound from Lemma \ref{lem: near optimality of retrieved solution pool}. We focus on the second probability.

    For a fixed retrieved subset $\mathcal{S}\subseteq \Theta$, define the tail of the maximum deviation on $\mathcal{S}$
    \begin{equation*}
        T_k^{\mathcal{S}}(\cdot):=\prob{\sup_{\theta\in\mathcal{S}}\abs{\frac{1}{k}\sum_{i=1}^kl(\theta,z_i)-L(\theta)}>\cdot}.
    \end{equation*}
    It is straightforward that $T_k^{\mathcal{S}}(\cdot)\leq T_k(\cdot)$ where $T_k$ is the tail of the maximum deviation over the whole space $\Theta$. Since $\prob{\epsilon\in[\underline{\epsilon},\overline{\epsilon}]}=1$, we have
    \begin{equation*}
        1-T_{k_2}^{\mathcal{S}}\prth{\dfrac{\epsilon}{2}}-T_{k_2}^{\mathcal{S}}\prth{\dfrac{\delta-\epsilon}{2}}\geq 1-T_{k_2}^{\mathcal{S}}\prth{\dfrac{\underline{\epsilon}}{2}}-T_{k_2}^{\mathcal{S}}\prth{\dfrac{\delta-\overline{\epsilon}}{2}}.
    \end{equation*}
    If $T_{k_2}\prth{(\delta-\overline{\epsilon})/2}+T_{k_2}\prth{\underline{\epsilon}/2}<1/5$, we have $T_{k_2}^{\mathcal{S}}\prth{(\delta-\overline{\epsilon})/2}+T_{k_2}^{\mathcal{S}}\prth{\underline{\epsilon}/2}<1/5$ and subsequently $1-T_{k_2}^{\mathcal{S}}\prth{(\delta-\epsilon)/2}-T_{k_2}^{\mathcal{S}}\prth{\epsilon/2}>4/5$, and hence $\bar{\eta}_{k_2,\eta}\geq 1-T_{k_2}^{\mathcal{S}}\prth{(\delta-\epsilon)/2}-T_{k_2}^{\mathcal{S}}\prth{\epsilon/2}>4/5$ by Lemma \ref{lem: gap translation for multiple predictions} for Phase II of $\mathsf{ROVEs}$ conditioned on $\mathcal{S}$ and $\epsilon$, therefore the bound \eqref{eq: general finite-sample bound for large gap 2} from Theorem \ref{thm: general majority vote 2} applies. Using the inequalities \eqref{eq: pkmax lower bound for multiple predictions} and \eqref{eq: pkmax - eta upper bound} to upper bound the $\min\set{1-p_k^{\max},p_k^{\max}-\bar{\eta}_{k,\delta}}$ term in \eqref{eq: general finite-sample bound for large gap 2} gives
    \begin{eqnarray*}
        &&\prob{L(\hat{\theta}_n)>\min_{\theta\in\mathcal{S}} L(\theta)+\delta\big\vert \mathcal{S},\epsilon}\\
        &\leq &\abs{\mathcal{S}}\prth{3\min\set{e^{-2/5}, C_1\max\set{T_{k_2}^{\mathcal{S}}\prth{\dfrac{\underline{\epsilon}}{2}},T_{k_2}^{\mathcal{S}}\prth{\dfrac{\delta-\overline{\epsilon}}{2}}}}^{\frac{n}{2C_2k_2}}+\exp\prth{-\dfrac{B_2}{C_3}}}\\
        &= &\abs{\mathcal{S}}\prth{3\min\set{e^{-2/5}, C_1T_{k_2}^{\mathcal{S}}\prth{\dfrac{\min\set{\underline{\epsilon}, \delta-\overline{\epsilon}}}{2}}}^{\frac{n}{2C_2k_2}}+\exp\prth{-\dfrac{B_2}{C_3}}}\\
        &\leq &\abs{\mathcal{S}}\prth{3\min\set{e^{-2/5}, C_1T_{k_2}\prth{\dfrac{\min\set{\underline{\epsilon}, \delta-\overline{\epsilon}}}{2}}}^{\frac{n}{2C_2k_2}}+\exp\prth{-\dfrac{B_2}{C_3}}}.
    \end{eqnarray*}
    Further relaxing $\abs{\mathcal{S}}$ to $B_1$ and taking full expectation on both sides give
    {\small
    \begin{equation*}
        \prob{L(\hat{\theta}_n)>\min_{\theta\in\mathcal{S}} L(\theta)+\delta}\leq B_1\prth{3\min\set{e^{-2/5}, C_1T_{k_2}\prth{\dfrac{\min\set{\underline{\epsilon}, \delta-\overline{\epsilon}}}{2}}}^{\frac{n}{2C_2k_2}}+\exp\prth{-\dfrac{B_2}{C_3}}}.
    \end{equation*}}
    \hspace{-3pt}This leads to the desired bound \eqref{eq: finite-sample bound for data splitting two phase under large pmax} after the above bound is plugged into \eqref{eq: tail decomp} and the bound \eqref{eq: retrieval tail bound} from Lemma \ref{lem: near optimality of retrieved solution pool} is applied with $k=k_1$.
\end{proof}

\begin{proof}[Proof of Theorem \ref{thm: finite-sample bound for multiple predictions two phase splitting_formal} for $\mathsf{ROVE}$]
    For every non-empty subset $\mathcal{W}\subseteq\Theta$ and $k_2$, we consider the indicator
    \begin{equation*}
        \mathbbm{1}_{k_2}^{\theta,\mathcal{W}, \epsilon}(z_1,\ldots,z_{k_2}):=\mathbbm{1}\prth{\frac{1}{k_2}\sum_{i=1}^{k_2}l(\theta,z_i)\leq \min_{\theta'\in\mathcal{W}}\frac{1}{k_2}\sum_{i=1}^{k_2}l(\theta',z_i)+\epsilon}\quad \text{for }\theta\in\mathcal{W},\epsilon\in[0,\delta/2],
    \end{equation*}
    which indicates whether a solution $\theta\in\mathcal{W}$ is $\epsilon$-optimal for the SAA formed by $\set{z_1,\ldots,z_{k_2}}$. Here we add $\epsilon$ and $\mathcal{W}$ to the superscript to emphasize its dependence on them. The counterparts of the solution probabilities $p_k,\hat p_k,\bar p_k$ for $\mathbbm{1}_{k_2}^{\theta,\mathcal{W}, \epsilon}$ are
    \begin{equation*}
    \begin{aligned}
        p_{k_2}^{\mathcal{W},\epsilon}(\theta)&:=\expect{\mathbbm{1}_{k_2}^{\theta,\mathcal{W},\epsilon}(z_1,\ldots,z_{k_2})},\\
        \hat p_{k_2}^{\mathcal{W},\epsilon}(\theta)&:=\reexpect{\mathbbm{1}_{k_2}^{\theta,\mathcal{W},\epsilon}(z_1^*,\ldots,z_{k_2}^*)},\\
        \bar p_{k_2}^{\mathcal{W},\epsilon}(\theta)&:=\dfrac{1}{B_2}\sum_{b=1}^{B_2}\mathbbm{1}_{k_2}^{\theta,\mathcal{W},\epsilon}(z_1^b,\ldots,z_{k_2}^b).
    \end{aligned}
    \end{equation*}
    We need to show the uniform convergence of these probabilities for $\epsilon\in[0,\delta/2]$. To do so, we define a slighted modified version of $\mathbbm{1}_{k_2}^{\theta,\mathcal{W}, \epsilon}$
    \begin{equation*}
        \mathbbm{1}_{k_2}^{\theta,\mathcal{W}, \epsilon-}(z_1,\ldots,z_{k_2}):=\mathbbm{1}\prth{\frac{1}{k_2}\sum_{i=1}^{k_2}l(\theta,z_i)< \min_{\theta'\in\mathcal{W}}\frac{1}{k_2}\sum_{i=1}^{k_2}l(\theta',z_i)+\epsilon}\quad \text{for }\theta\in\mathcal{W},\epsilon\in[0,\delta/2],
    \end{equation*}
    which indicates a strict $\epsilon$-optimal solution, and let $p_{k_2}^{\mathcal{W},\epsilon-},\hat p_{k_2}^{\mathcal{W},\epsilon-},\bar p_{k_2}^{\mathcal{W},\epsilon-}$ be the corresponding counterparts of solution probabilities. For any integer $m>1$ we construct brackets of size at most $1/m$ to cover the family of indicator functions $\{\mathbbm{1}_{k_2}^{\theta,\mathcal{W}, \epsilon}:\epsilon\in[0,\delta/2]\}$, i.e., let $m'=\lfloor p_{k_2}^{\mathcal{W},\delta/2}(\theta)m\rfloor$ and
    \begin{equation*}
    \begin{aligned}
    \epsilon_0&:=0,\\
        \epsilon_i&:=\inf\set{\epsilon\in[0,\delta/2]:p_{k_2}^{\mathcal{W},\epsilon}(\theta)\geq i/m}\quad\text{for }1\leq i\leq m',\\
        \epsilon_{m'+1}&:=\dfrac{\delta}{2},
    \end{aligned}
    \end{equation*}
    where we assume that $\epsilon_i,i=0,\ldots,m'+1$ are strictly increasing without loss of generality (otherwise we can delete duplicated values). Then for any $\epsilon\in[\epsilon_i,\epsilon_{i+1})$, we have that
    \begin{eqnarray*}
        \bar p_{k_2}^{\mathcal{W},\epsilon}(\theta)- p_{k_2}^{\mathcal{W},\epsilon}(\theta)&\leq& \bar p_{k_2}^{\mathcal{W},\epsilon_{i+1}-}(\theta)-p_{k_2}^{\mathcal{W},\epsilon_i}(\theta)\\
        &\leq& \bar p_{k_2}^{\mathcal{W},\epsilon_{i+1}-}(\theta)-p_{k_2}^{\mathcal{W},\epsilon_{i+1}-}(\theta)+p_{k_2}^{\mathcal{W},\epsilon_{i+1}-}(\theta)-p_{k_2}^{\mathcal{W},\epsilon_i}(\theta)\\
        &\leq &\bar p_{k_2}^{\mathcal{W},\epsilon_{i+1}-}(\theta)-p_{k_2}^{\mathcal{W},\epsilon_{i+1}-}(\theta)+\dfrac{1}{m}
    \end{eqnarray*}
    and that
    \begin{eqnarray*}
        \bar p_{k_2}^{\mathcal{W},\epsilon}(\theta)- p_{k_2}^{\mathcal{W},\epsilon}(\theta)&\geq& \bar p_{k_2}^{\mathcal{W},\epsilon_i}(\theta)-p_{k_2}^{\mathcal{W},\epsilon_{i+1}-}(\theta)\\
        &\geq& \bar p_{k_2}^{\mathcal{W},\epsilon_i}(\theta)-p_{k_2}^{\mathcal{W},\epsilon_i}(\theta)+p_{k_2}^{\mathcal{W},\epsilon_i}(\theta)-p_{k_2}^{\mathcal{W},\epsilon_{i+1}-}(\theta)\\
        &\geq &\bar p_{k_2}^{\mathcal{W},\epsilon_i}(\theta)-p_{k_2}^{\mathcal{W},\epsilon_i}(\theta)-\dfrac{1}{m}.
    \end{eqnarray*}
    Therefore
    {\small
    \begin{equation}
        \sup_{\epsilon\in[0,\delta/2]}\abs{\bar p_{k_2}^{\mathcal{W},\epsilon}(\theta)- p_{k_2}^{\mathcal{W},\epsilon}(\theta)}
        \leq\max_{0\leq i\leq m'+1}\max\set{\abs{\bar p_{k_2}^{\mathcal{W},\epsilon_i}(\theta)-p_{k_2}^{\mathcal{W},\epsilon_i}(\theta)},\abs{\bar p_{k_2}^{\mathcal{W},\epsilon_i-}(\theta)-p_{k_2}^{\mathcal{W},\epsilon_i-}(\theta)}}+\dfrac{1}{m}.\label{eq: bound for U-process supremum}
    \end{equation}}
    \hspace{-3pt}To show that the random variable in \eqref{eq: bound for U-process supremum} converges to $0$ in probability, we note that the U-statistic has the minimum variance among all unbiased estimators, in particular the following simple sample average estimators based on the first $\lfloor n/k_2 \rfloor \cdot k_2$ data
    \begin{equation*}
    \begin{aligned}
        \tilde p_{k_2}^{\mathcal{W},\epsilon}(\theta)&:=\dfrac{1}{\lfloor n/k_2 \rfloor}\sum_{i=1}^{\lfloor n/k_2 \rfloor}\mathbbm{1}_{k_2}^{\theta,\mathcal{W},\epsilon}(z_{k_2(i-1)+1},\ldots,z_{k_2 i}),\\
        \tilde p_{k_2}^{\mathcal{W},\epsilon-}(\theta)&:=\dfrac{1}{\lfloor n/k_2 \rfloor}\sum_{i=1}^{\lfloor n/k_2 \rfloor}\mathbbm{1}_{k_2}^{\theta,\mathcal{W},\epsilon-}(z_{k_2(i-1)+1},\ldots,z_{k_2 i}).
    \end{aligned}
    \end{equation*}
    Therefore we can write
    \begin{eqnarray*}
        &&\expect{\prth{\max_{0\leq i\leq m'+1}\max\set{\abs{\bar p_{k_2}^{\mathcal{W},\epsilon_i}(\theta)-p_{k_2}^{\mathcal{W},\epsilon_i}(\theta)},\abs{\bar p_{k_2}^{\mathcal{W},\epsilon_i-}(\theta)-p_{k_2}^{\mathcal{W},\epsilon_i-}(\theta)}}}^2}\\
        &\leq&\sum_{0\leq i\leq m'+1}\prth{\expect{\prth{\bar p_{k_2}^{\mathcal{W},\epsilon_i}(\theta)-p_{k_2}^{\mathcal{W},\epsilon_i}(\theta)}^2}+\expect{\prth{\bar p_{k_2}^{\mathcal{W},\epsilon_i-}(\theta)-p_{k_2}^{\mathcal{W},\epsilon_i-}(\theta)}^2}}\\
        &\leq& \sum_{0\leq i\leq m'+1}\prth{\expect{\prth{\bar p_{k_2}^{\mathcal{W},\epsilon_i}(\theta)-\hat p_{k_2}^{\mathcal{W},\epsilon_i}(\theta)}^2}+\expect{\prth{\hat p_{k_2}^{\mathcal{W},\epsilon_i}(\theta)-p_{k_2}^{\mathcal{W},\epsilon_i}(\theta)}^2}}+\\
        &&\sum_{0\leq i\leq m'+1}\prth{\expect{\prth{\bar p_{k_2}^{\mathcal{W},\epsilon_i-}(\theta)-\hat p_{k_2}^{\mathcal{W},\epsilon_i-}(\theta)}^2}+\expect{\prth{\hat p_{k_2}^{\mathcal{W},\epsilon_i-}(\theta)-p_{k_2}^{\mathcal{W},\epsilon_i-}(\theta)}^2}}\\
        &&\text{\ \ since $\bar p_{k_2}^{\mathcal{W},\epsilon_i}(\theta)$ and $\bar p_{k_2}^{\mathcal{W},\epsilon_i-}(\theta)$ are conditionally unbiased for $\hat p_{k_2}^{\mathcal{W},\epsilon_i}(\theta)$ and $\hat p_{k_2}^{\mathcal{W},\epsilon_i-}(\theta)$}\\
        &\leq& \sum_{0\leq i\leq m'+1}\prth{\expect{\reexpect{\prth{\bar p_{k_2}^{\mathcal{W},\epsilon_i}(\theta)-\hat p_{k_2}^{\mathcal{W},\epsilon_i}(\theta)}^2}}+\expect{\prth{\tilde p_{k_2}^{\mathcal{W},\epsilon_i}(\theta)-p_{k_2}^{\mathcal{W},\epsilon_i}(\theta)}^2}}+\\
        &&\sum_{0\leq i\leq m'+1}\prth{\expect{\reexpect{\prth{\bar p_{k_2}^{\mathcal{W},\epsilon_i-}(\theta)-\hat p_{k_2}^{\mathcal{W},\epsilon_i-}(\theta)}^2}}+\expect{\prth{\tilde p_{k_2}^{\mathcal{W},\epsilon_i-}(\theta)-p_k^{\mathcal{W},\epsilon_i-}(\theta)}^2}}\\
        &\leq& (m'+2)\prth{\dfrac{2}{B_2}+\dfrac{2}{\lfloor n/k_2 \rfloor}}\leq (m+2)\prth{\dfrac{2}{B_2}+\dfrac{4}{n/k_2}}.
    \end{eqnarray*}
    By Minkowski inequality, the supremum satisfies
    \begin{equation*}
        \expect{\sup_{\epsilon\in[0,\delta/2]}\abs{\bar p_{k_2}^{\mathcal{W},\epsilon}(\theta)- p_{k_2}^{\mathcal{W},\epsilon}(\theta)}}\leq \sqrt{(m+2)\prth{\dfrac{2}{B_2}+\dfrac{4}{n/k_2}}}+\frac{1}{m}.
    \end{equation*}
    Choosing $m$ such that $m\to\infty$, $m/B_2\to 0$ and $mk_2/n\to 0$ leads to the convergence $\sup_{\epsilon\in[0,\delta/2]}\abs{\bar p_{k_2}^{\mathcal{W},\epsilon}(\theta)- p_{k_2}^{\mathcal{W},\epsilon}(\theta)}\to 0$ in probability. Since $\Theta$ has finite cardinality and has a finite number of subsets, it also holds that
    \begin{equation}\label{uniform weak convergence of bar p}
        \sup_{\mathcal{W}\subseteq\Theta,\theta\in\mathcal{W}, \epsilon\in[0,\delta/2]}\abs{\bar p_{k_2}^{\mathcal{W},\epsilon}(\theta)- p_{k_2}^{\mathcal{W},\epsilon}(\theta)}\to 0\text{\ in probability}.
    \end{equation}

    Recall the bound \eqref{bound of sensitivity second term for one model} from the proof of Lemma \ref{lem: gap translation for multiple predictions} in Appendix \ref{subsec: proof of lemma lem: gap translation for multiple predictions}. Here we have the similar bound $\max_{\theta\in\mathcal{W}\backslash\mathcal{W}^{\delta}}p_{k_2}^{\mathcal{W},\epsilon}(\theta)\leq \prob{\widehat{\mathcal{W}}_{k_2}^{\epsilon}\not\subseteq \mathcal{W}^{\delta}}$, and hence
\begin{equation*}
\sup_{\epsilon\in[0,\delta/2]}\max_{\theta\in\mathcal{W}\backslash\mathcal{W}^{\delta}}p_k^{\mathcal{W},\epsilon}(\theta)\leq \sup_{\epsilon\in[0,\delta/2]}\prob{\widehat{\mathcal{W}}_{k_2}^{\epsilon}\not\subseteq \mathcal{W}^{\delta}}=\prob{\widehat{\mathcal{W}}_{k_2}^{\delta/2}\not\subseteq \mathcal{W}^{\delta}}.
\end{equation*}
We bound the probability $\prob{\widehat{\mathcal{W}}_{k_2}^{\delta/2}\not\subseteq \mathcal{W}^{\delta}}$ more carefully. We let
\begin{equation*}
\begin{aligned}
    \Delta_o&:=\min\set{L(\theta')-L(\theta):\theta,\theta'\in\Theta,L(\theta')>L(\theta)}>0,\\
    \hat{L}_{k_2}(\theta)&:=\frac{1}{k_2}\sum_{i=1}^{k_2}l(\theta,z_i),
\end{aligned}
\end{equation*}
and have
\begin{eqnarray*}
    &&\set{\widehat{\mathcal{W}}_{k_2}^{\delta/2}\not\subseteq \mathcal{W}^{\delta}}\\
    &\subseteq &\bigcup_{\theta,\theta'\in\mathcal{W}\text{ s.t. }L(\theta')-L(\theta)>\delta}\set{\hat{L}_{k_2}(\theta')\leq \hat{L}_{k_2}(\theta)+\dfrac{\delta}{2}}\\
    &\subseteq& \bigcup_{\theta,\theta'\in\Theta\text{ s.t. }L(\theta')-L(\theta)>\delta} \set{\hat{L}_{k_2}(\theta')-L(\theta')+L(\theta')-L(\theta)\leq \hat{L}_{k_2}(\theta)-L(\theta)+\dfrac{\delta}{2}}\\
    &\subseteq& \bigcup_{\theta,\theta'\in\Theta\text{ s.t. }L(\theta')-L(\theta)>\delta} \set{\hat{L}_{k_2}(\theta')-L(\theta')+\max\set{\Delta,\delta}\leq \hat{L}_{k_2}(\theta)-L(\theta)+\dfrac{\delta}{2}}\\
    &&\text{\ \ by the definition of $\Delta_o$}\\
    &\subseteq& \bigcup_{\theta,\theta'\in\Theta} \set{\hat{L}_{k_2}(\theta')-L(\theta')+\max\set{\Delta_o-\dfrac{\delta}{2},\dfrac{\delta}{2}}\leq \hat{L}_{k_2}(\theta)-L(\theta)}\\
    &\subseteq &\bigcup_{\theta,\theta'\in\Theta} \set{\hat{L}_{k_2}(\theta')-L(\theta')\leq-\max\set{\dfrac{\Delta_o}{2}-\dfrac{\delta}{4},\dfrac{\delta}{4}}\text{ or }\hat{L}_{k_2}(\theta)-L(\theta)\geq\max\set{\dfrac{\Delta_o}{2}-\dfrac{\delta}{4},\dfrac{\delta}{4}}}\\
    &\subseteq&\bigcup_{\theta\in\Theta} \set{\abs{\hat{L}_{k_2}(\theta)-L(\theta)}\geq \max\set{\dfrac{\Delta_o}{2}-\dfrac{\delta}{4},\dfrac{\delta}{4}}}\\
    &\subseteq&\bigcup_{\theta\in\Theta} \set{\abs{\hat{L}_{k_2}(\theta)-L(\theta)}\geq \dfrac{\Delta_o}{4}}\\
    &\subseteq&\set{\sup_{\theta\in\Theta}\abs{\hat{L}_{k_2}(\theta)-L(\theta)}\geq \dfrac{\Delta_o}{4}},
\end{eqnarray*}
where the last line holds because $\max\set{\Delta_o/2-\delta/4,\delta/4}\geq \Delta_o/4$. This gives
\begin{equation*}
\sup_{\epsilon\in[0,\delta/2]}\max_{\theta\in\mathcal{W}\backslash\mathcal{W}^{\delta}}p_{k_2}^{\mathcal{W},\epsilon}(\theta)\leq T_{k_2}\prth{\dfrac{\Delta_o}{4}}\to 0\text{\ \ as\ }k_2\to\infty.
\end{equation*}
We also have the trivial bound $\inf_{\epsilon\in[0,\delta/2]}\max_{\theta\in\mathcal{W}}p_{k_2}^{\mathcal{W},\epsilon}(\theta)=\max_{\theta\in\mathcal{W}}p_{k_2}^{\mathcal{W},0}(\theta)\geq 1/\abs{\mathcal{W}}$, where the inequality comes from the fact that $\sum_{\theta\in\mathcal{W}}p_{k_2}^{\mathcal{W},0}(\theta)\geq 1$. Now choose a $\underline{k}<\infty$ such that 
\begin{equation*}
    T_{k_2}\prth{\dfrac{\Delta_o}{4}}\leq \frac{1}{2\abs{\Theta}}\text{\ \  for all $k_2\geq \underline{k}$},
\end{equation*}
and we have for all $k_2\geq \underline{k}$ and all non-empty $\mathcal{W}\subseteq\Theta$ that
\begin{equation}
\begin{aligned}
\inf_{\epsilon\in[0,\delta/2]}\prth{\max_{\theta\in\mathcal{W}}p_{k_2}^{\mathcal{W},\epsilon}(\theta)-\max_{\theta\in\mathcal{W}\backslash\mathcal{W}^{\delta}}p_{k_2}^{\mathcal{W},\epsilon}(\theta)}&\geq \inf_{\epsilon\in[0,\delta/2]}\max_{\theta\in\mathcal{W}}p_{k_2}^{\mathcal{W},\epsilon}(\theta)-\sup_{\epsilon\in[0,\delta/2]}\max_{\theta\in\mathcal{W}\backslash\mathcal{W}^{\delta}}p_{k_2}^{\mathcal{W},\epsilon}(\theta)\\
    &\geq \dfrac{1}{\abs{\mathcal{W}}}-\frac{1}{2\abs{\Theta}}\geq \dfrac{1}{2\abs{\Theta}}.
\end{aligned}
\end{equation}
Due to the uniform convergence \eqref{uniform weak convergence of bar p}, we have
{\small
\begin{equation*}
    \min_{\mathcal{W}\subseteq\Theta}\inf_{\epsilon\in[0,\delta/2]}\prth{\max_{\theta\in\mathcal{W}}\bar p_{k_2}^{\mathcal{W},\epsilon}(\theta)-\max_{\theta\in\mathcal{W}\backslash\mathcal{W}^{\delta}}\bar p_{k_2}^{\mathcal{W},\epsilon}(\theta)}\to\min_{\mathcal{W}\subseteq\Theta}\inf_{\epsilon\in[0,\delta/2]}\prth{\max_{\theta\in\mathcal{W}} p_{k_2}^{\mathcal{W},\epsilon}(\theta)-\max_{\theta\in\mathcal{W}\backslash\mathcal{W}^{\delta}} p_{k_2}^{\mathcal{W},\epsilon}(\theta)}
\end{equation*}}
\hspace{-3pt}in probability, and hence
\begin{equation}\label{convergence of supremum probability error}
\prob{\min_{\mathcal{W}\subseteq\Theta}\inf_{\epsilon\in[0,\delta/2]}\prth{\max_{\theta\in\mathcal{W}}\bar p_{k_2}^{\mathcal{W},\epsilon}(\theta)-\max_{\theta\in\mathcal{W}\backslash\mathcal{W}^{\delta}}\bar p_{k_2}^{\mathcal{W},\epsilon}(\theta)}\leq 0}\to 0.
\end{equation}

Finally, we combine all the pieces to get
\begin{eqnarray*}
    &&\set{\hat{\theta}_n\not\in \Theta^{2\delta}}\\
    &\subseteq& \set{\mathcal{S}\cap \Theta^{\delta}=\emptyset}\cup \set{\hat{\theta}_n\not\in \mathcal{S}^{\delta}}\\
    &\subseteq& \set{\mathcal{S}\cap \Theta^{\delta}=\emptyset}\cup  \set{\max_{\theta\in\mathcal{S}}\bar p_{k_2}^{\mathcal{S},\epsilon}(\theta)-\max_{\theta\in\mathcal{S}\backslash\mathcal{S}^{\delta}}\bar p_{k_2}^{\mathcal{S},\epsilon}(\theta)\leq 0}\\
    &\subseteq& \set{\mathcal{S}\cap \Theta^{\delta}=\emptyset}\cup \set{\epsilon>\dfrac{\delta}{2}}\cup \set{\inf_{\epsilon\in[0,\delta/2]}\prth{\max_{\theta\in\mathcal{S}}\bar p_{k_2}^{\mathcal{S},\epsilon}(\theta)-\max_{\theta\in\mathcal{S}\backslash\mathcal{S}^{\delta}}\bar p_{k_2}^{\mathcal{S},\epsilon}(\theta)}\leq 0}\\
    &\subseteq& \set{\mathcal{S}\cap \Theta^{\delta}=\emptyset}\cup \set{\epsilon>\dfrac{\delta}{2}}\cup \set{\min_{\mathcal{W}\subseteq\Theta}\inf_{\epsilon\in[0,\delta/2]}\prth{\max_{\theta\in\mathcal{W}}\bar p_{k_2}^{\mathcal{W},\epsilon}(\theta)-\max_{\theta\in\mathcal{W}\backslash\mathcal{W}^{\delta}}\bar p_{k_2}^{\mathcal{W},\epsilon}(\theta)}\leq 0}.
\end{eqnarray*}
By Lemma \ref{lem: near optimality of retrieved solution pool} we have $\prob{\mathcal{S}\cap \Theta^{\delta}=\emptyset}\to 0$ under the conditions that $\limsup_{k\to\infty} \mathcal{E}_{k,\delta}<1$ and $k_1,n/k_1,B_1\to\infty$. Together with the condition $\prob{\epsilon\geq \delta/2}\to 0$ and \eqref{convergence of supremum probability error}, we conclude $\prob{\hat{\theta}_n\not\in \Theta^{2\delta}}\to 0$ by the union bound.
\end{proof}

\section{Proofs for Technical Lemmas}\label{sec: proof of technical lemmas}

\subsection{Proof of Lemma \ref{lem:MGF dominance}}\label{subsec: proof of lemma lem:MGF dominance}
By symmetry, we have that
\begin{equation*}
    U(z_1,\ldots,z_n) = \frac{1}{n!}\sum_{\mathrm{bijection}\ \pi:[n]\rightarrow [n]}\bar{\kappa}(z_{\pi(1)},\ldots,z_{\pi(n)}),
\end{equation*}
where we denote $[n]:= \{1,\ldots,n\}$.
Then, by the convexity of the exponential function and Jensen's inequality, we have that
\begin{eqnarray*}
    \expect{\exp(tU)}&=& \expect{\exp\left(t\cdot\frac{1}{n!}\sum_{\mathrm{bijection}\ \pi:[n]\rightarrow [n]}\bar{\kappa}(z_{\pi(1)},\ldots,z_{\pi(n)})\right)}\\
    &\leq& \expect{\frac{1}{n!}\sum_{\mathrm{bijection}\ \pi:[n]\rightarrow [n]}\exp\left(t\cdot\bar{\kappa}(z_{\pi(1)},\ldots,z_{\pi(n)})\right)}\\
    &=&\expect{\exp\left(t\cdot\bar{\kappa}(z_1,\ldots,z_n)\right)}.
\end{eqnarray*}
This completes the proof.
\qed

\subsection{Proof of Lemma \ref{lem:chernoff bound for p_k}}\label{subsec: proof of lemma lem:chernoff bound for p_k}
We first consider the direction $U-\expect{\kappa} \geq \epsilon$. Let
\begin{equation*}
\tilde \kappa^* := \dfrac{1}{\hat n} \sum_{i = 1}^{\hat n} \kappa^*(z_{k(i-1)+1},\ldots,z_{ki}),
\end{equation*}
and
\begin{equation*}
\tilde \kappa := \dfrac{1}{\hat n} \sum_{i = 1}^{\hat n} \kappa(z_{k(i-1)+1},\ldots,z_{ki};\omega_i),
\end{equation*}
where we use the shorthand notation $\hat n := \lfloor\frac{n}{k}\rfloor$, and $\omega_i$'s are mutually independent and also independent from $\set{z_1,\ldots,z_n}$.
Then, since $\expect{\kappa}=\expect{\kappa^*}$, for all $t > 0$ it holds that
\begin{equation}\label{eq:hat_pk_geq}
\begin{aligned}
\prob{U-\expect{\kappa} \geq \epsilon} 
&= \prob{\exp\left(tU\right) \geq \exp\prth{t\prth{\expect{\kappa}+\epsilon}}}\\
&\overset{(i)}{\leq} \exp\prth{-t\prth{\expect{\kappa}+\epsilon}}\cdot \expect{\exp\prth{tU}}\\
&\overset{(ii)}{\leq} \exp\prth{-t\prth{\expect{\kappa}+\epsilon}}\cdot \expect{\exp\prth{t\tilde \kappa^*}}\\
&\overset{(iii)}{\leq} \exp\prth{-t\prth{\expect{\kappa}+\epsilon}}\cdot \expect{\exp\prth{t\tilde \kappa}},
\end{aligned}
\end{equation}
where we apply the Markov inequality in $(i)$, step $(ii)$ is due to Lemma \ref{lem:MGF dominance}, and step $(iii)$ uses Jensen's inequality and the convexity of the exponential function.
Due to independence, $\tilde \kappa$ can be viewed as the sample average of $\hat n$ i.i.d. Bernoulli random variables, i.e., $\tilde \kappa \sim \frac{1}{\hat n}\sum_{i= 1}^{\hat n}\operatorname{Bernoulli}\prth{\expect{\kappa}}$.
Hence, we have that
\begin{equation}\label{eq:mgf_simplify_tilde}
\begin{aligned}
\expect{\exp\prth{t\tilde \kappa}} 
&= \expect{\exp\prth{\frac{t}{\hat n}\sum_{i=1}^{\hat n} \operatorname{Bernoulli}\prth{\expect{\kappa}}}} \\
&= \prth{\expect{\exp\prth{\frac{t}{\hat n} \operatorname{Bernoulli}\prth{\expect{\kappa}}}}}^{\hat n}\\
& = \brac{(1-\expect{\kappa}) +\expect{\kappa}\cdot \exp\prth{\dfrac{t}{\hat n}}}^{\hat n},
\end{aligned}
\end{equation}
where we use the moment-generating function of Bernoulli random variables in the last line.
Substituting \eqref{eq:mgf_simplify_tilde} into \eqref{eq:hat_pk_geq}, we have that
\begin{equation}\label{eq:hat_pk_geq_upperbound}
\prob{U-\expect{\kappa} \geq \epsilon}\leq \exp\prth{-t\prth{\expect{\kappa}+\epsilon}}\cdot \brac{(1-\expect{\kappa}) + \expect{\kappa}\cdot \exp\prth{\dfrac{t}{\hat n}}}^{\hat n} =: f(t).
\end{equation}
Now, we consider minimizing $f(t)$ for $t>0$. Let $g(t) = \log f(t)$, then it holds that
\begin{equation*}
g^\prime(t) = -(\expect{\kappa} + \epsilon) + \dfrac{\expect{\kappa}\cdot \exp\prth{\frac{t}{\hat n}}}{(1-\expect{\kappa}) + \expect{\kappa}\cdot \exp\prth{\frac{t}{\hat n}}}.
\end{equation*}
By setting $g^\prime(t) = 0$, it is easy to verify that the minimum point of $f(t)$, denoted by $t^\star$, satisfies that
\begin{equation}\label{eq: mgf_optimal}
\begin{aligned}
&\expect{\kappa}\cdot \exp\prth{\frac{t}{\hat n}}\cdot (1-\expect{\kappa} - \epsilon) = (1-\expect{\kappa})\cdot (\expect{\kappa}+\epsilon)\\
\Leftrightarrow\quad & \exp(t) = \brac{\dfrac{(1-\expect{\kappa})\cdot (\expect{\kappa}+\epsilon)}{\expect{\kappa}\cdot (1-\expect{\kappa}-\epsilon)}}^{\hat n}.
\end{aligned}
\end{equation}
Substituting \eqref{eq: mgf_optimal} into \eqref{eq:hat_pk_geq_upperbound} gives
\begin{eqnarray}
\notag \prob{U-\expect{\kappa} \geq \epsilon} 
&\leq&  \prth{\dfrac{1-\expect{\kappa}}{1-\expect{\kappa}-\epsilon}}^{\hat n}\cdot \brac{\dfrac{\expect{\kappa}\cdot \prth{1-\expect{\kappa}-\epsilon}}{\prth{1-\expect{\kappa}}\prth{\expect{\kappa}+\epsilon}}}^{\hat n \prth{\expect{\kappa}+\epsilon}}\\
\notag & = &\brac{\prth{\dfrac{1-\expect{\kappa}}{1-\expect{\kappa}-\epsilon}}^{1-\expect{\kappa}-\epsilon}\cdot \prth{\dfrac{\expect{\kappa}}{\expect{\kappa}+\epsilon}}^{\expect{\kappa}+\epsilon}}^{\hat n}\\
&= &\exp\prth{-\hat n \cdot D_{\operatorname{KL}}\prth{\expect{\kappa}+\epsilon \| \expect{\kappa}}}.\label{final prob bound of the > side}
\end{eqnarray}
Since $n/k\leq 2\hat n$, the first bound immediately follows from \eqref{final prob bound of the > side}.

Since $D_{\operatorname{KL}}(p\| q)=D_{\operatorname{KL}}(1-p\| 1-q)$, the bound for the reverse side $U-\expect{\kappa} \leq -\epsilon$ then follows by applying the first bound to the flipped binary kernel $1-\kappa$ and $1-U$. This completes the proof of Lemma \ref{lem:chernoff bound for p_k}.
\qed

\subsection{Proof of Lemma \ref{lemma: DKL}}\label{subsec: proof of lemma lemma: DKL}
To show \eqref{KL lower bound: ratio}, some basic calculus shows that for any fixed $q$, the function $g(p):=(1-p)\ln\frac{1-p}{1-q}$ is convex in $p$, and we have that
\begin{equation*}
    g(q) = 0,g'(q) = -1.
\end{equation*}
Therefore $g(p)\geq g(q)+g'(q)(p-q)=q-p$, which implies \eqref{KL lower bound: ratio} immediately.

The lower bound \eqref{KL lower bound: p close to 1/2} follows from
\begin{eqnarray*}
    \Dkl(p\| q )&\geq&  -p\ln q-(1-p)\ln (1-q) + \min_{p\in [\gamma, 1-\gamma]}\{p\ln p + (1-p)\ln(1-p)\}\\
    &\geq& -\gamma \ln q-\gamma \ln (1-q) -\ln 2 = -\ln (2(q(1-q))^{\gamma}).
\end{eqnarray*}
This completes the proof of Lemma \ref{lemma: DKL}.
\qed

\subsection{Proof of Lemma \ref{lem:optima set approximation}}\label{subsec: proof of lemma lem:optima set approximation}
By Definition \ref{def: S_delta_hat}, we observe the following equivalence
\begin{equation*}
\set{\widehat{\mathcal{P}}^{\epsilon}_k\not\subseteq\mathcal{P}^{\delta}_k} = 
\bigcup_{\theta\in \Theta\backslash \mathcal{P}^{\delta}_k} \set{\theta \in \widehat{\mathcal{P}}^{\epsilon}_k} = \bigcup_{\theta\in \Theta\backslash \mathcal{P}^{\delta}_k} \set{\hp_k(\theta) \geq \hp_k\prth{\theta_k^{\max}} -\epsilon}.
\end{equation*}
Hence, by the union bound, it holds that
\begin{equation*}
\prob{\widehat{\mathcal{P}}^{\epsilon}_k\not\subseteq\mathcal{P}^{\delta}_k} \leq \sum_{\theta\in \Theta\backslash \mathcal{P}^{\delta}_k} \prob{\hp_k(\theta) \geq \hp_k\prth{\theta_k^{\max}} -\epsilon}.
\end{equation*}
We further bound the probability $\prob{\set{\hp_k(\theta) \geq \hp_k\prth{\theta_k^{\max}} -\epsilon}}$ as follows
\begin{eqnarray}\label{union bound for crossing}
&&\prob{\hp_k(\theta) \geq \hp_k\prth{\theta_k^{\max}} -\epsilon}\notag\\
&\leq&\prob{\set{\hp_k(\theta)\geq p_k(\theta_k^{\max})-\dfrac{\delta+\epsilon}{2}}\cap \set{\hp_k\prth{\theta_k^{\max}}\leq p_k(\theta_k^{\max})-\dfrac{\delta-\epsilon}{2}}}\\
&\leq& \prob{\hp_k(\theta)\geq p_k(\theta_k^{\max})-\dfrac{\delta+\epsilon}{2}} +\prob{\hp_k\prth{\theta_k^{\max}}\leq p_k(\theta_k^{\max})-\dfrac{\delta-\epsilon}{2}}.\notag
\end{eqnarray}
On one hand, the first probability in \eqref{union bound for crossing} is solely determined by and increasing in $p_k(\theta)=\expect{\hp_k(\theta)}$. On the other hand, we have $p_k(\theta)<p_k\prth{\theta_k^{\max}} - \delta$ for every $\theta\in  \Theta\backslash\mathcal{P}^{\delta}_k$ by the definition of $\mathcal{P}^{\delta}_k$. Therefore we can slightly abuse the notation to write
\begin{eqnarray*}
    \prob{\hp_k(\theta) \geq \hp_k\prth{\theta_k^{\max}} -\epsilon}&\leq& \prob{\hp_k(\theta)\geq p_k(\theta_k^{\max})-\dfrac{\delta+\epsilon}{2}\Big\vert p_k(\theta)=p_k(\theta_k^{\max})-\delta}\\
    &&\hspace{2ex}+\prob{\hp_k\prth{\theta_k^{\max}}\leq p_k(\theta_k^{\max})-\dfrac{\delta-\epsilon}{2}}\\
    &\leq& \prob{\hp_k(\theta)-p_k(\theta) \geq \dfrac{\delta-\epsilon}{2}\Big\vert p_k(\theta)=p_k(\theta_k^{\max})-\delta}\\
    &&\hspace{2ex}+\prob{\hp_k\prth{\theta_k^{\max}}-p_k(\theta_k^{\max})\leq -\dfrac{\delta-\epsilon}{2}}.
\end{eqnarray*}
Note that, with $\kappa(z_1,\ldots,z_k;\omega):=\mathbf{1}\prth{\theta\in \mathbb{A}(z_1,\ldots,z_k;\omega)}$, the probability $\hp_k(\theta)$ can be viewed as a U-statistic with the kernel $\kappa^*(z_1,\ldots,z_k):=\expect{\kappa(z_1,\ldots,z_k;\omega)\vert z_1,\ldots,z_k}$.
A similar representation holds for $\hp_k\prth{\theta_k^{\max}}$ as well. Therefore, we can apply Lemma \ref{lem:chernoff bound for p_k} to conclude that
\begin{equation*}
\begin{aligned}
\prob{\widehat{\mathcal{P}}^{\epsilon}_k\not\subseteq\mathcal{P}^{\delta}_k}
&\leq \sum_{\theta\in \Theta\backslash \mathcal{P}^{\delta}_k} \prob{\hp_k(\theta) \geq \hp_k\prth{\theta_k^{\max}} -\epsilon}\\
&\leq \abs{\Theta\backslash \mathcal{P}^{\delta}_k}\left[\prob{\hp_k(\theta) - p_k(\theta) \geq \dfrac{\delta-\epsilon}{2}\Big\vert p_k(\theta)=p_k(\theta_k^{\max})-\delta}\right.\\
&\hspace{12ex}+ \left.\prob{p_k\prth{\theta_k^{\max}} - \hp_k\prth{\theta_k^{\max}}   \leq -\dfrac{\delta-\epsilon}{2}}\right]\\
&\leq \abs{\Theta} \left[\exp\prth{-\dfrac{n}{2k} \cdot D_{\operatorname{KL}}\prth{p_k\prth{\theta_k^{\max}}-\delta+\dfrac{\delta-\epsilon}{2} \Big\| p_k\prth{\theta_k^{\max}}-\delta}}\right. \\
&\hspace{1.9cm}+ \left.\exp\prth{-\dfrac{n}{2k} \cdot D_{\operatorname{KL}}\prth{p_k\prth{\theta_k^{\max}}-\dfrac{\delta-\epsilon}{2} \Big\| p_k\prth{\theta_k^{\max}}}}\right],
\end{aligned}
\end{equation*}
which completes the proof of Lemma \ref{lem:optima set approximation}.
\qed

\subsection{Proof of Lemma \ref{lem:optimality conditioned on data}}\label{subsec: proof of lemma lem:optimality conditioned on data}
We observe that $\bar p_k(\theta)$ is an conditionally unbiased estimator for $\hat p_k(\theta)$, i.e., $\reexpect{\bar p_k(\theta)} = \hp_k(\theta)$. We can express the difference between $\bar p_k(\theta)$ and $\bar p_k(\theta_k^{\max})$ as the sample average
\begin{equation*}
\bar p_k(\theta)   - \bar p_k(\theta_k^{\max}) = \dfrac{1}{B} \sum_{b= 1}^B \brac{\mathbbm{1}(\theta\in \mathbb{A}(z_1^b,\ldots,z_k^b)) - \mathbbm{1}(\theta_k^{\max}\in \mathbb{A}(z_1^b,\ldots,z_k^b))},
\end{equation*}
whose expectation is equal to $\hp_k(\theta)   - \hp_k(\theta^{\max}_k)$. We denote by
\begin{equation*}
    \mathbbm{1}_{\theta}^*:=\mathbbm{1}(\theta\in \mathbb{A}(z_1^*,\ldots,z_k^*))\text{\  for }\theta\in\Theta
\end{equation*}
for convenience, where $(z_1^*,\ldots,z_k^*)$ represents a random subsample. Then by Bernstein's inequality, we have every $t\geq 0$ that
\begin{equation}\label{eq: bernstein bound to event diff}
\begin{aligned}
    \reprob{\bar{p}_k(\theta)- \bar{p}_k(\hat{\theta}_k^{\max})-(\hat p_k(\theta)-\hat p_k(\theta_k^{\max}))\geq t}
    \leq &\exp\prth{-B\cdot \dfrac{t^2}{2\mathrm{Var}_*(\mathbbm{1}_{\theta}^* - \mathbbm{1}_{\theta_k^{\max}}^*)+4/3\cdot t}}.
\end{aligned}
\end{equation}
Since
\begin{eqnarray*}
    \mathrm{Var}_*(\mathbbm{1}_{\theta}^* - \mathbbm{1}_{\theta_k^{\max}}^*)&\leq& \reexpect{(\mathbbm{1}_{\theta}^* - \mathbbm{1}_{\theta_k^{\max}}^*)^2}\leq \hat p_k(\theta)+\hat p_k(\theta_k^{\max})\leq 2\hat p_k(\theta_k^{\max}),
\end{eqnarray*}
and
\begin{eqnarray*}
    \mathrm{Var}_*(\mathbbm{1}_{\theta}^* - \mathbbm{1}_{\theta_k^{\max}}^*)&\leq&\mathrm{Var}_*(1-\mathbbm{1}_{\theta}^*- 1+\mathbbm{1}_{\theta_k^{\max}}^*)\\
    &\leq&\reexpect{(1-\mathbbm{1}_{\theta}^* -1+ \mathbbm{1}_{\theta_k^{\max}}^*)^2}\\
    &\leq& 1-\hat p_k(\theta)+1-\hat p_k(\theta_k^{\max})\leq 2(1-\hat p_k(\theta)),
\end{eqnarray*}
we have $\mathrm{Var}_*(\mathbbm{1}_{\theta}^* - \mathbbm{1}_{\theta_k^{\max}}^*)\leq 2\min\set{\hat p_k(\theta_k^{\max}),1-\hat p_k(\theta)}$. Substituting this bound to \eqref{eq: bernstein bound to event diff} and taking $t=\hat p_k(\theta_k^{\max})-\hat p_k(\theta)$ lead to
\begin{eqnarray*}
    &&\reprob{\bar{p}_k(\theta)- \bar{p}_k(\hat{\theta}_k^{\max})\geq 0}\\
    &\leq& \exp\prth{-B\cdot \dfrac{(\hat p_k(\theta_k^{\max})-\hat p_k(\theta))^2}{4\min\set{\hat p_k(\theta_k^{\max}),1-\hat p_k(\theta)}+4/3\cdot (\hat p_k(\theta_k^{\max})-\hat p_k(\theta))}}\\
    &\leq& \exp\prth{-B\cdot \dfrac{(\hat p_k(\theta_k^{\max})-\hat p_k(\theta))^2}{4\min\set{\hat p_k(\theta_k^{\max}),1-\hat p_k(\theta_k^{\max})}+16/3\cdot (\hat p_k(\theta_k^{\max})-\hat p_k(\theta))}}\\
    &\leq& \exp\prth{-\dfrac{B}{6}\cdot \dfrac{(\hat p_k(\theta_k^{\max})-\hat p_k(\theta))^2}{\min\set{\hat p_k(\theta_k^{\max}),1-\hat p_k(\theta_k^{\max})}+\hat p_k(\theta_k^{\max})-\hat p_k(\theta)}}.
\end{eqnarray*}
Therefore, we have that
\begin{equation*}
\begin{aligned}
\mbb{P}_*\prth{\hat{\theta}_n\notin \widehat{\mathcal{P}}^{\epsilon}_k} 
&= \mbb{P}_*\prth{\bigcup_{\theta\in \Theta\backslash \widehat{\mathcal{P}}^{\epsilon}_k}\set{\bar p_k(\theta)   = \max_{\theta'\in\Theta}\bar{p}_k(\theta')} }\\
&\leq \sum_{\theta\in \Theta\backslash \widehat{\mathcal{P}}^{\epsilon}_k} \mbb{P}_*\prth{\bar p_k(\theta)   = \max_{\theta'\in\Theta}\bar{p}_k(\theta')}\\
&\leq \sum_{\theta\in \Theta\backslash \widehat{\mathcal{P}}^{\epsilon}_k} \reprob{\bar{p}_k(\theta)\geq \bar{p}_k(\theta_k^{\max})}\\
&\leq \sum_{\theta\in \Theta\backslash \widehat{\mathcal{P}}^{\epsilon}_k} \exp\prth{-\dfrac{B}{6}\cdot \dfrac{(\hat p_k(\theta_k^{\max})-\hat p_k(\theta))^2}{\min\set{\hat p_k(\theta_k^{\max}),1-\hat p_k(\theta_k^{\max})}+\hat p_k(\theta_k^{\max})-\hat p_k(\theta)}}.\\
\end{aligned}
\end{equation*}
Note that the function $x^2/(\min\set{\hat p_k(\theta_k^{\max}),1-\hat p_k(\theta_k^{\max})}+x)$ in $x\in[0,1]$ is monotonically increasing and that $\hat p_k(\theta_k^{\max})-\hat p_k(\theta)>\epsilon$ for all $\theta\in \Theta\backslash \widehat{\mathcal{P}}^{\epsilon}_k$. Therefore, we can further bound the probability as
\begin{eqnarray*}
    \mbb{P}_*\prth{\hat{\theta}_n\notin \widehat{\mathcal{P}}^{\epsilon}_k} &\leq& \abs{\Theta\backslash \widehat{\mathcal{P}}^{\epsilon}_k}\cdot \exp\prth{-\dfrac{B}{6}\cdot \dfrac{\epsilon^2}{\min\set{\hat p_k^{\max},1-\hat p_k^{\max}}+\epsilon}}.
\end{eqnarray*}
Noting that $\abs{\Theta\backslash \widehat{\mathcal{P}}^{\epsilon}_k}\leq \abs{\Theta}$ completes the proof of Lemma \ref{lem:optimality conditioned on data}.
\qed

\subsection{Proof of Lemma \ref{lem: near optimality of retrieved solution pool}}\label{subsec: proof of lemma lem: near optimality of retrieved solution pool}
     Let $(z_1^*,\ldots,z_k^*)$ be a random subsample and $\mbb{P}_*$ be the probability with respect to the subsampling randomness conditioned on the data and the algorithmic randomness. Consider the two probabilities
    \begin{equation*}
            \prob{\mathcal{A}(z_1,\ldots,z_k)\in\Theta^{\delta}},\ \reprob{\mathcal{A}(z_1^*,\ldots,z_k^*)\in\Theta^{\delta}}.
    \end{equation*}
    We have $1-\mathcal{E}_{k,\delta}=\prob{\mathcal{A}(z_1,\ldots,z_k)\in\Theta^{\delta}}$,
    and the conditional probability
    \begin{equation*}
        \prob{\mathcal{S}\cap \Theta^{\delta}=\emptyset\Big\rvert \reprob{\mathcal{A}(z_1^*,\ldots,z_k^*)\in\Theta^{\delta}}}=\prth{1-\reprob{\mathcal{A}(z_1^*,\ldots,z_k^*)\in\Theta^{\delta}}}^{B_1}.
    \end{equation*}
    Therefore we can write
    \begin{eqnarray}
        \notag\prob{\mathcal{S}\cap \Theta^{\delta}=\emptyset}&=&\expect{\prth{1-\reprob{\mathcal{A}(z_1^*,\ldots,z_k^*)\in\Theta^{\delta}}}^{B_1}}\\
        &\leq& \prob{\reprob{\mathcal{A}(z_1^*,\ldots,z_k^*)\in\Theta^{\delta}}< \frac{1-\mathcal{E}_{k,\delta}}{e}}+\prth{1-\frac{1-\mathcal{E}_{k,\delta}}{e}}^{B_1},\label{bound decomp}
    \end{eqnarray}
    where $e$ is the base of the natural logarithm. Applying Lemma \ref{lem:chernoff bound for p_k} with $\kappa(z_1,\ldots,z_k;\omega):=\mathbbm{1}\prth{\mathcal{A}(z_1,\ldots,z_k;\omega)\in \Theta^{\delta}}$ gives
    \begin{equation*}
        \prob{\reprob{\mathcal{A}(z_1^*,\ldots,z_k^*)\in\Theta^{\delta}} < \frac{1-\mathcal{E}_{k,\delta}}{e}}\leq \exp\prth{-\frac{n}{2k}\cdot\Dkl\prth{\frac{1-\mathcal{E}_{k,\delta}}{e}\Big\| 1-\mathcal{E}_{k,\delta}}}.
    \end{equation*}
    Further applying the bound \eqref{KL lower bound: ratio} from Lemma \ref{lemma: DKL} to the KL divergence on the right-hand side leads to
    \begin{equation*}
        \Dkl\prth{\frac{1-\mathcal{E}_{k,\delta}}{e}\Big\| 1-\mathcal{E}_{k,\delta}}\geq \frac{1-\mathcal{E}_{k,\delta}}{e}\ln\frac{1}{e}+1-\mathcal{E}_{k,\delta}-\frac{1-\mathcal{E}_{k,\delta}}{e}= \prth{1-\frac{2}{e}}(1-\mathcal{E}_{k,\delta}),
    \end{equation*}
    and
    \begin{eqnarray*}
        &&\Dkl\prth{\frac{1-\mathcal{E}_{k,\delta}}{e}\Big\| 1-\mathcal{E}_{k,\delta}}\\
        &=&\Dkl\prth{1-\frac{1-\mathcal{E}_{k,\delta}}{e}\Big\| \mathcal{E}_{k,\delta}}\\
        &\geq &\prth{1-\frac{1-\mathcal{E}_{k,\delta}}{e}}\ln\frac{1-(1-\mathcal{E}_{k,\delta})/e}{\mathcal{E}_{k,\delta}}-(1-\mathcal{E}_{k,\delta})+\frac{1-\mathcal{E}_{k,\delta}}{e}\text{\ \ by bound \eqref{KL lower bound: ratio}}\\
        &\geq & \prth{1-\frac{1-\mathcal{E}_{k,\delta}}{e}}\ln\prth{1-\frac{1-\mathcal{E}_{k,\delta}}{e}}- \prth{1-\frac{1}{e}}\ln \mathcal{E}_{k,\delta}-1+\frac{1}{e}\\
        &\geq &\prth{1-\frac{1}{e}}\ln\prth{1-\frac{1}{e}}- \prth{1-\frac{1}{e}}\ln \mathcal{E}_{k,\delta}-1+\frac{1}{e}\\
        &=&\prth{1-\frac{1}{e}}\ln\frac{e-1}{e^2\mathcal{E}_{k,\delta}}.
    \end{eqnarray*}
    Combining the two bounds for the KL divergence we have
$$
\begin{aligned}
&\ \prob{\reprob{\mathcal{A}(z_1^*,\ldots,z_k^*)\in\Theta^{\delta}} < \frac{1-\mathcal{E}_{k,\delta}}{e}}\\
\leq&\ \min\set{\exp\prth{-\frac{n}{2k}\cdot\prth{1-\frac{2}{e}}(1-\mathcal{E}_{k,\delta})},\prth{\frac{e^2\mathcal{E}_{k,\delta}}{e-1}}^{(1-1/e)\frac{n}{2k}}}.
\end{aligned}
$$ 
Note that the second term on the right-hand side of \eqref{bound decomp} satisfies that $\prth{1-(1-\mathcal{E}_{k,\delta})/e}^{B_1}\leq \exp\prth{-B_1(1-\mathcal{E}_{k,\delta})/e}$.
    Thus, we derive that
    \begin{equation*}
    \begin{aligned}
        &\ \prob{\mathcal{S}\cap \Theta^{\delta}=\emptyset}\\
        \leq&\ \min\set{\exp\prth{-\frac{n}{2k}\cdot\prth{1-\frac{2}{e}}(1-\mathcal{E}_{k,\delta})},\prth{\frac{e^2\mathcal{E}_{k,\delta}}{e-1}}^{(1-1/e)\frac{n}{2k}}} + \exp\prth{-\frac{B_1(1-\mathcal{E}_{k,\delta})}{e}}\\
        \leq&\ \min\set{\exp\prth{-\dfrac{1-2/e}{1-1/e}\cdot (1-\mathcal{E}_{k,\delta})},\frac{e^2\mathcal{E}_{k,\delta}}{e-1}}^{(1-1/e)\frac{n}{2k}} + \exp\prth{-\frac{B_1(1-\mathcal{E}_{k,\delta})}{e}}.
    \end{aligned}
    \end{equation*}
The conclusion then follows by setting $C_4,C_5,C_6,C_7$ to be the appropriate constants.
\qed

\subsection{Proof of Lemma \ref{lem: gap translation for multiple predictions}}\label{subsec: proof of lemma lem: gap translation for multiple predictions}
    Let $\hat{L}_k(\theta):=\frac{1}{k}\sum_{i=1}^kl(\theta,z_i)$. Let $\theta^*$ be an optimal solution of \eqref{opt}. We have
    \begin{equation*}
        \max_{\theta\in\Theta}p_k(\theta)\geq p_k(\theta^*) =\prob{\theta^*\in \widehat{\Theta}_k^{\epsilon}} \geq \prob{\Theta^0\subseteq \widehat{\Theta}_k^{\epsilon}}.
    \end{equation*}
    To bound the probability on the right hand side, we write
    \begin{eqnarray*}
        \set{\Theta^0\not\subseteq\widehat{\Theta}_k^{\epsilon}}&\subseteq& \bigcup_{\theta\in\Theta^0,\theta'\in\Theta}\set{\hat{L}_k(\theta)> \hat{L}_k(\theta')+\epsilon}\\
        &=& \bigcup_{\theta\in\Theta^0,\theta'\in\Theta}\set{\hat{L}_k(\theta)-L(\theta)> \hat{L}_k(\theta')-L(\theta')+L(\theta')-L(\theta)+\epsilon}\\
        &\subseteq& \bigcup_{\theta\in\Theta^0,\theta'\in\Theta}\set{\hat{L}_k(\theta)-L(\theta)> \hat{L}_k(\theta')-L(\theta')+\epsilon}\\
        &\subseteq& \bigcup_{\theta\in\Theta^0,\theta'\in\Theta}\set{\hat{L}_k(\theta)-L(\theta)> \dfrac{\epsilon}{2}\text{ or }\hat{L}_k(\theta')-L(\theta')<-\dfrac{\epsilon}{2}}\\
        &\subseteq& \bigcup_{\theta\in\Theta}\set{\abs{\hat{L}_k(\theta)-L(\theta)}> \dfrac{\epsilon}{2}}\\
        &=& \set{\max_{\theta\in\Theta}\abs{\hat{L}_k(\theta)-L(\theta)}> \dfrac{\epsilon}{2}},
    \end{eqnarray*}
    therefore
    \begin{equation}\label{bound of sensitivity first term}
        \max_{\theta\in\Theta}p_k(\theta)\geq \prob{\max_{\theta\in\Theta}\abs{\hat{L}_k(\theta)-L(\theta)}\leq \dfrac{\epsilon}{2}}\geq 1-T_k\prth{\dfrac{\epsilon}{2}}.
    \end{equation}
    This proves \eqref{eq: pkmax lower bound for multiple predictions}. To bound the other term $\max_{\theta\in\Theta\backslash\Theta^{\delta}}p_k(\theta)$, for any $\theta\in\Theta\backslash\Theta^{\delta}$ it holds that
    \begin{equation}\label{bound of sensitivity second term for one model}
        p_k(\theta)=\prob{\theta\in \widehat{\Theta}_k^{\epsilon}}\leq \prob{\widehat{\Theta}_k^{\epsilon}\not\subseteq \Theta^{\delta}},
    \end{equation}
    and hence $\max_{\theta\in\Theta\backslash\Theta^{\delta}}p_k(\theta)\leq \prob{\widehat{\Theta}_k^{\epsilon}\not\subseteq \Theta^{\delta}}$. To bound the latter, we have
    \begin{eqnarray*}
        \set{\widehat{\Theta}_k^{\epsilon}\not \subseteq\Theta^{\delta}}&\subseteq&\bigcup_{\theta,\theta'\in\Theta\text{ s.t. }L(\theta')-L(\theta)>\delta}\set{\hat{L}_k(\theta')\leq \hat{L}_k(\theta)+\epsilon}\\
        &=&\bigcup_{\theta,\theta'\in\Theta\text{ s.t. }L(\theta')-L(\theta)>\delta}\set{\hat{L}_k(\theta')-L(\theta')+L(\theta')-L(\theta)\leq \hat{L}_k(\theta)-L(\theta)+\epsilon}\\
        &\subseteq&\bigcup_{\theta,\theta'\in\Theta\text{ s.t. }L(\theta')-L(\theta)>\delta}\set{\hat{L}_k(\theta')-L(\theta')+\delta< \hat{L}_k(\theta)-L(\theta)+\epsilon}\\
        &\subseteq&\bigcup_{\theta,\theta'\in\Theta\text{ s.t. }L(\theta')-L(\theta)>\delta}\set{\hat{L}_k(\theta')-L(\theta')<-\dfrac{\delta-\epsilon}{2}\text{ or }\hat{L}_k(\theta)-L(\theta)>\dfrac{\delta-\epsilon}{2}}\\
        &\subseteq&\bigcup_{\theta\in\Theta}\set{\abs{\hat{L}_k(\theta)-L(\theta)}>\dfrac{\delta-\epsilon}{2}}\\
        &=&\set{\max_{\theta\in\Theta}\abs{\hat{L}_k(\theta)-L(\theta)}>\dfrac{\delta-\epsilon}{2}},
    \end{eqnarray*}
    therefore
    \begin{equation}\label{bound of sensitivity second term}
        \max_{\theta\in\Theta\backslash\Theta^{\delta}}p_k(\theta)\leq \prob{\max_{\theta\in\Theta}\abs{\hat{L}_k(\theta)-L(\theta)}> \dfrac{\delta-\epsilon}{2}}\leq T_k\prth{\dfrac{\delta-\epsilon}{2}}.
    \end{equation}
    This immediately gives \eqref{eq: pkmax - eta upper bound}. \eqref{eq: prob gap lower bound for multiple predictions} is obvious given \eqref{eq: pkmax lower bound for multiple predictions} and \eqref{eq: pkmax - eta upper bound}.
\qed

\section{Improving Tail Decay in Linear Regression}\label{app: linear_regression}
In this section, we present another example demonstrating that our algorithm is capable of turning a polynomial-tailed base learner into exponential.
We consider the linear regression problem
\begin{equation}\label{eq: linear_regression_ex}
\min_{\theta\in [-1,1]}\mathbb{E}\bigl[(x\theta - y)^2\bigr],
\end{equation}
where the data $\{(x_i,y_i)\}_{i=1}^n$ are i.i.d. samples such that $x_i \in \{-1,1\}$, and $y_i=x_i\theta^*+\epsilon_i$. 

\begin{assumption}\label{ass:poly_tail}
The unknown true coefficient of problem \eqref{eq: linear_regression_ex} is $\theta^* = 0$.
The random variables $\{\epsilon_i\}_{i=1}^n$ are i.i.d. distributed with zero mean and symmetric with respect to $0$.
The second and forth moments of $\epsilon_i$ are finite, denoted as $\sigma^2=\mathbb{E}\left[\epsilon_i^2\right]$ and $\mu_4=\mathbb{E}\left[\epsilon_i^4\right]$.
Moreover, there exist constants $C>0$ and $\alpha>0$ such that $P(\epsilon_i>t)>C(t+1)^{-\alpha}$ for all $t>0$, i.e., $\epsilon_i$ has a polynomial tail.
\end{assumption}

Under the setting described by \eqref{eq: linear_regression_ex} and Assumption \ref{ass:poly_tail}, the least-squares estimator of $\theta$ is given by
$$
\theta^{LS}_n=\mathcal{P}_{[-1,1]}\left(\frac{\sum_{i=1}^nx_iy_i}{\sum_{i=1}^nx_i^2}\right)=\mathcal{P}_{[-1,1]}\left(\frac{\sum_{i=1}^nx_i\epsilon_i}{\sum_{i=1}^nx_i^2}+\theta^*\right)=\mathcal{P}_{[-1,1]}\left(\frac{\sum_{i=1}^nx_i\epsilon_i}{n}\right),
$$
where $\mathcal{P}_{[-1,1]}(\cdot)$ denotes the projection operator onto the interval $[-1,1]$. 
Since the true coefficient $\theta^* = 0$, for any estimator $\hat \theta$ that takes values between $[-1,1]$, its excess risk is equal to $(\hat \theta)^2$.
For instance, the excess risk of $\theta^{LS}_n$ can be expressed as
$$
\prth{\theta^{LS}_n}^2=\min\left\{\left(\frac{\sum_{i=1}^nx_i\epsilon_i}{n}\right)^2,1\right\}.
$$

\begin{theorem}\label{thm: linear regression ex}
Under Assumption \ref{ass:poly_tail}, the followings hold true. 
\begin{itemize}
    \item The excess risk of the least-squares estimator $\theta^{LS}_n$ exhibits a polynomial tail in $n$. Specifically, for every $\delta\in (0,1)$ and $n>1$, it holds that 
    \begin{equation}\label{eq: poly_tail}
    \mathbb P\left((\theta^{LS}_n)^2>\delta\right)>C(n\sqrt{\delta}+1)^{-\alpha}.
    \end{equation}
    \item Under our ensemble method, the excess risk of the output estimator $\hat\theta_n$ has an exponentially decreasing tail.
\end{itemize}
\end{theorem}

\subsection{Proof of Theorem \ref{thm: linear regression ex}}
We first show the polynomial tail of excess risk for the least-squares estimator $\theta_n^{L S}$.
For $k \leq n$, let $\bar \epsilon_{k}:=1/k\cdot {\sum_{i=1}^{k}\epsilon_i}$ be the sample average of the first $k$ noise terms.
Then, it holds that $\bar \epsilon_{k+1}=\prth{k\bar \epsilon_{k}+\epsilon_{k+1}}/(k+1)$.

For every $\delta\in (0,1)$ and $n>1$, we have that
\begin{equation}\label{eq: linear_regression_polytail1}
\mathbb P\left((\theta^{LS}_n)^2>\delta\right)=\mathbb P\left(\left(\frac{\sum_{i=1}^nx_i\epsilon_i}{n}\right)^2>\delta\right)
=\mathbb P\left(\left(\frac{\sum_{i=1}^n\epsilon_i}{n}\right)^2>\delta\right)
=2\mathbb P\big(\bar \epsilon_n>\sqrt{\delta}\big),
\end{equation}
where we used the symmetry of \(\epsilon_i\) and that \(x_i\in\{-1,1\}\).
Then, using the recursive relation between $\bar \epsilon_n$ and $\bar \epsilon_{n-1}$, we can further show that
\begin{equation}\label{eq: linear_regression_polytail2}
\begin{aligned}
\mathbb P\big(\bar \epsilon_n>\sqrt{\delta}\big) 
&\geq \prob{\frac{(n-1)\bar \epsilon_{n-1}}{n}\geq 0 \text{ and }\frac{\epsilon_n}{n}>\sqrt{\delta}}\\
&= \prob{\frac{(n-1)\bar \epsilon_{n-1}}{n}\geq 0}\cdot \prob{\frac{\epsilon_n}{n}>\sqrt{\delta}}\\
&> \prob{\bar\epsilon_{n-1} \geq 0}\cdot C(n\sqrt{\delta}+1)^{-\alpha},
\end{aligned}
\end{equation}
where the second line is due to the independence between $\bar\epsilon_{n-1}$ and $\epsilon_n$ in the second line, and the last line uses Assumption \ref{ass:poly_tail}.
Since each $\epsilon_i$ is symmetric, we have $\prob{\bar\epsilon_{n-1} \geq 0} = 1/2$.
Hence, the proof of \eqref{eq: poly_tail} is completed by combining \eqref{eq: linear_regression_polytail1} and \eqref{eq: linear_regression_polytail2}.

Now, we proceed to show the exponential tail of excess risk for our ensemble method (Algorithm \ref{bagging majority vote: two phase}), where the proof is based on Theorem \ref{thm: finite-sample bound for multiple predictions two phase splitting_formal}, i.e., the formal version of Theorem \ref{thm: finite-sample bound for multiple predictions two phase splitting}.
To apply the bound \eqref{eq: finite-sample bound for data splitting two phase under large pmax} from Theorem \ref{thm: finite-sample bound for multiple predictions two phase splitting_formal}, we need to derive upper bounds for the following two quantities: the empirical process tail $T_k(\cdot)$, and the excess risk tail of the base learner, i.e., the least-squares estimator $\theta^{LS}_k$ with $k$ samples.

For any $t > 0$, we can show that the empirical process tail satisfies that
\begin{equation}\label{eq: empirical_process_tail_LR1}
\begin{aligned}
T_k(t)&=\mathbb P\prth{\sup_{\theta\in [-1,1]} \abs{\frac{1}{k}\sum_{i=1}^k(x_i\theta-y_i)^2 - \mathbb E[(x\theta-y)^2]}>t}\\
&=\mathbb P\prth{\sup_{\theta\in [-1,1]} \abs{\frac{1}{k}\sum_{i=1}^k(x_i\theta-\epsilon_i)^2 - (\theta^2 + \sigma^2)}>t}\\
&=\mathbb P\prth{\sup_{\theta\in [-1,1]} \abs{ \frac{1}{k}\sum_{i=1}^k\epsilon_i^2-\sigma^2 - 2\theta\cdot \frac{1}{k}\sum_{i=1}^k x_i\epsilon_i}>t }\\
&\leq \mathbb P\prth{\abs{-\sigma^2 + \frac{1}{k}\sum_{i=1}^k\epsilon_i^2 - \frac{2}{k}\sum_{i=1}^k x_i\epsilon_i}>t }+\mathbb P\prth{\abs{-\sigma^2 +\frac{1}{k}\sum_{i=1}^k\epsilon_i^2+ \frac{2}{k}\sum_{i=1}^k x_i\epsilon_i}>t },
\end{aligned}
\end{equation}
where the last inequality uses the union bound and the observation that the maximum of the absolute value term is achieved either at $\theta = 1$ or $\theta = -1$.
Using the symmetry of $\epsilon_i$ and the fact that $x_i\in \set{-1,1}$, we further derive from \eqref{eq: empirical_process_tail_LR1} that
\begin{equation}\label{eq: empirical_process_tail_LR2}
\begin{aligned}
T_k(t)&\leq 2\mathbb P\prth{\abs{ \frac{1}{k}\sum_{i=1}^k\epsilon_i^2-\sigma^2 - \frac{2}{k}\sum_{i=1}^k\epsilon_i}>t }\\
&\leq 2\mathbb P\prth{\abs{ \frac{1}{k}\sum_{i=1}^k\epsilon_i^2-\sigma^2} >\frac{t}{2}}+2\mathbb P\prth{ \abs{\frac{2}{k}\sum_{i=1}^k\epsilon_i}>\frac{t}{2} }\\
&\leq \frac{8\mu_4}{kt^2}+\frac{32\sigma^2}{kt^2},
\end{aligned}
\end{equation}
where we apply the union bound again in the second line and use the Markov's inequality in the last line.

Now, we assess the excess risk tail of the least-squares estimator $\theta^{LS}_k$.
For $\delta < 1$, similar as \eqref{eq: linear_regression_polytail1}, we can apply the Markov's inequality to show that
\begin{equation}\label{eq: excess_risk_tail_LR}
\mathcal E_{k,\delta} = \mathbb P\prth{(\theta^{LS}_k)^2>\delta} = \mathbb P\left(\left(\frac{\sum_{i=1}^k\epsilon_i}{k}\right)^2>\delta\right)\leq \frac{\sigma^2}{k\delta}.
\end{equation}

By instantiating \eqref{eq: finite-sample bound for data splitting two phase under large pmax} in Theorem~\ref{thm: finite-sample bound for multiple predictions two phase splitting_formal} with the tail bounds on \(T_k(t)\) and \(\mathcal{E}_{k,\delta}\) given by \eqref{eq: empirical_process_tail_LR2} and \eqref{eq: excess_risk_tail_LR}, we can finally obtain the following tail bound on the excess risk of our estimator $\hat \theta_n$:
\begin{equation}\label{eq: exponential_tail_LR}
\begin{aligned}
\mathbb P\prth{ (\hat \theta_n)^2>\delta }
&\leq B_1\prth{3\min\set{e^{-2/5},C_1\frac{32(\mu_4+4\sigma^2)}{k_2\min\set{\underline{\epsilon},\delta-\overline{\epsilon}}^2}}^{\frac{n}{2C_2k_2}}+e^{-B_2/C_3}}\\\\
&\quad\ +\min\set{ e^{-\big(1-\frac{\sigma^2}{k_1\delta}\big)\big/C_4},C_5\frac{\sigma^2}{k_1\delta}}^{\frac{n}{2C_6k_1}}+e^{-B_1\big(1-\frac{\sigma^2}{k_1\delta}\big)\big/C_7},
\end{aligned}
\end{equation}
for every $k_1,k_2 \leq n$ and $\delta\in (0,1)$ such that $\delta>\overline{\epsilon}$, $\frac{32(\mu_4+4\sigma^2)}{k_2(\delta-\overline{\epsilon})^2}+\frac{32(\mu_4+4\sigma^2)}{k_2\underline{\epsilon}^2}<\frac{1}{5}$, and $\frac{\sigma^2}{k_1\delta}<1$.
Note that these conditions guarantee that the upper bound in \eqref{eq: exponential_tail_LR} is meaningful and that $T_{k_2}((\delta-\overline{\epsilon})/2) + T_{k_2}(\underline{\epsilon}/2)<1/5$, as required by Theorem \ref{thm: finite-sample bound for multiple predictions two phase splitting}.
Therefore, we conclude that the excess risk for the output solution $\hat \theta_n$ of our ensemble method has an exponential tail, which completes the proof of Theorem \ref{thm: linear regression ex}.
\qed

\section{Additional Numerical Experiments}\label{app: additional_experiment}
This section supplements Section \ref{section: experiments}. We first provide details for the architecture of the neural networks in Section \ref{app: network architecture}, and the considered stochastic programs in Section \ref{app: prob_intro}. Section \ref{subsec: exp_profile} presents a comprehensive profiling of hyperparameters of our methods, and Section \ref{app: additional_figures} provides additional experimental results that evaluate our algorithms from various perspectives.

\subsection{MLP Architecture}\label{app: network architecture}
The input layer of our MLPs has the same number of neurons as the input dimension, and the output layer is a single neuron that gives the final prediction. All activations are ReLU. The architecture of hidden layers is as follows under different numbers of hidden layers $H$:
\begin{itemize}
    \item $H=2$: Each hidden layer has 50 neurons.
    \item $H=4$: There are 50, 300, 300, 50 neurons from the first to the fourth hidden layer.
    \item $H=6$: There are 50, 300, 500, 500 300, 50 neurons from the first to the sixth hidden layer.
    \item $H=8$: There are 50, 300, 500, 800, 800 500 300, 50 neurons from the first to the eighth hidden layer.
\end{itemize}

\subsection{Stochastic Programming Problems}\label{app: prob_intro}
\textbf{Resource Allocation \citeAPX{kleywegt2002sample}.$\quad$} 
The decision maker wants to choose a subset of $m$ projects. 
A quantity $q$ of low-cost resource is available to be allocated, and any additional resource can be obtained at an incremental unit cost $c$. 
Each project $i$ has an expected reward $r_i$.
The amount of resource required by each project $i$ is a random variable, denoted by $W_i$.
We can formulate the problem as
\begin{equation}
\max _{\theta \in\{0,1\}^m}\left\{\sum_{i=1}^m r_i \theta_i-c \mathbb{E}\left[\sum_{i=1}^m W_i \theta_i -q\right]^{+}\right\}.
\end{equation}
In the experiment, we consider the three-product scenario, i.e., $m=3$, and assume that the random variable $W_i$ follows the Pareto distribution.

\textbf{Supply Chain Network Design \citeAPX[Chapter 1.5]{shapiro2021lectures}.$\quad$}
Consider a network of suppliers, processing facilities, and customers, where the goal is to optimize the overall supply chain efficiency.
The supply chain design problem can be formulated as a two-stage stochastic optimization problem
\begin{equation}
\min_{\theta \in \{0,1\}^{\abs{P}}} \sum_{p\in P}c_p \theta_p+\mathbb{E}[Q(\theta, z)],
\end{equation}
where $P$ is the set of processing facilities, $c_p$ is the cost of opening facility $p$, and $z$ is the vector of (random) parameters, i.e., $(h, q, d, s, R, M)$ in \eqref{eq: network_second_stage}.
Function $Q(\theta, z)$ represents the total processing and transportation cost, and it is equal to the optimal objective value of the following second-stage problem:
\begin{equation}\label{eq: network_second_stage}
\begin{array}{cl}
\min_{y \geq 0, z \geq 0} & q^{\top} y+h^{\top} z \\[5pt]
\text { s.t. } & N y=0, \\[5pt]
& C y+z \geq d, \\[5pt]
& S y \leq s, \\[5pt]
& R y \leq M \theta,
\end{array}
\end{equation}
where $N, C, S$ are appropriate matrices that describe the network flow constraints.
More details about this example can be found in \citeAPX[Chapter 1.5]{shapiro2021lectures}.
In our experiment, we consider the scenario of 3 suppliers, 2 facilities, 3 consumers, and 5 products.
We choose supply $s$ and demand $d$ as random variables that follow the Pareto distribution.


\textbf{Maximum Weight Matching and Stochastic Linear Program.$\quad$}
We explore both the maximum weight matching problem and the linear program that arises from it. 
Let $G = (V,E)$ be a general graph, where each edge $e\in E$ is associated with a (possibly) random weight $w_e$. For each node $v \in V$, denote $E(v)$ as the set of edges incident to $v$.
Based on this setup, we consider the following linear program
\begin{equation}\label{eq: LP_maximum_weight_matching}
\begin{array}{lll}
\max_{\theta\in [0,1]^{\abs{E}}} & \expect{\sum_{e\in E} w_e\theta_e} & \\[5pt]
\text {subject to } & \sum_{e\in E(v)} a_e \theta_e \leq 1,\quad &\forall v\in V,
\end{array}
\end{equation}
where $a_e$ is some positive coefficient.
When $a_e = 1$ for all $e\in E$ and $\theta$ is restricted to the discrete set $\{0,1\}^{\abs{E}}$, \eqref{eq: LP_maximum_weight_matching} is equivalent to the maximum weight matching problem.
For the maximum weight matching, we consider a complete bipartite graph with 5 nodes on each side (the dimension is 25).
The weights of nine edges are Pareto distributed and the remaining are prespecified constants.
For the linear programming problem, we consider a 28-dimensional instance (the underlying graph is an 8-node complete graph), where all $w_e$ follows the Pareto distribution.

\textbf{Mean-Variance Portfolio Optimization.$\quad$}
Consider constructing a portfolio based on $m$ assets. Each asset $i$ has a rate of return $r_i$ that is random with mean $\mu_i$. The goal is to minimize the variance of the portfolio while ensuring that the expected rate of return surpasses a target level $b$.
The problem can be formulated as
\begin{equation}
\begin{array}{ll}
\min_{\theta} &  \expect{(\sum_{i=1}^m(r_i-\mu_i)\theta_i)^2} \\[5pt]
\text {subject to } & \sum_{i=1}^m \mu_i \theta_i \geq b, \\[5pt]
& \sum_{i=1}^m \theta_i=1,\\[5pt]
&\theta_i\geq 0,\quad \forall i=1,\ldots,m,
\end{array}
\end{equation}
where $\theta$ is the decision variable and each $\mu_i$ is assumed known.
In the experiment, we consider a scenario with 10 assets, i.e., $m=10$, where each rate of return $r_i$ is a linear combination of the rates of return of 100 underlying assets in the form $r_i=\Tilde{r}_{10(i-1)+1}/2 + \sum_{j=1}^{100}\Tilde{r}_j/200$. Each of these underlying assets has a Pareto rate of return $\Tilde{r}_j,j=1,\ldots,100$.

\subsection{Hyperparameter Profiling}\label{subsec: exp_profile}
We test the effect of different hyperparameters in our ensemble methods, including subsample sizes $k,k_1,k_2$, ensemble sizes $B, B_1, B_2$, and threshold $\epsilon$.
Throughout this profiling stage, we use the sample average approximation (SAA) as the base algorithm.
To profile the effect of subsample sizes and ensemble sizes, we consider the resource allocation problem.

\paragraph{Subsample Size.}
We explored scenarios where $k$ (equivalently $k_1$ and $k_2$) is both dependent on and independent of the total sample size $n$ (see Figures \ref{subfig: profile_alg1_k_2}, \ref{subfig: profile_alg1_k_1}, and \ref{subfig: profile_alg1_k_3}). 
The results suggest that a constant $k$ generally suffices, although the optimal $k$ varies by problem instance. 
For example, Figures \ref{subfig: profile_alg1_k_1} and \ref{subfig: profile_alg1_k_3} show that $k=2$ yields the best performance; increasing $k$ degrades results. 
Conversely, in Figure \ref{subfig: profile_alg1_k_2}, $k=2$ proves inadequate, with larger $k$ delivering good results.
The underlying reason is that the effective performance of \move requires $\theta^* \in \argmax_{\theta \in \Theta} p_k(\theta)$.
In the former, this is achieved with only two samples, enabling \move to identify $\theta^*$ with a subsample size of $2$.
For the latter, a higher number of samples is required to meet this condition, explaining the suboptimal performance at \(k=2\).
In Figure \ref{fig: pkx_SSKP}, we simulate $p_k(\theta)$ for the two cases, which further explains the influence of the subsample size.

\paragraph{Ensemble Size.}
In Figure \ref{fig: profiling_B}, we illustrate the performance of \move and \rove under different $B, B_1,B_2$, where we set $k=k_1=k_2=10$ and $\epsilon =0.005$.
From the figure, we find that the performance of our ensemble methods is improving in the ensemble sizes.

\paragraph{Threshold $\epsilon$.}
The optimal choice of $\epsilon$ in \rove and \roves is problem-dependent and related to the number of (near) optimal solutions.
This dependence is illustrated by the performance of \rove shown in Figures \ref{subfig: profile_alg3_epsilon_multiple} and \ref{subfig: profile_alg3_epsilon_single}.
Hence, we propose an adaptive strategy defined as follows: Let $g(\epsilon):= 1/B_2\cdot \sum_{b=1}^{B_2}\mathbbm{1}(\hat{\theta}_n(\epsilon)\in \widehat{\Theta}^{\epsilon,b}_{k_2})$, where we use $\hat{\theta}_n(\epsilon)$ to emphasize the dependency of $\hat{\theta}_n$ on $\epsilon$.
Then, we select $\epsilon^*:= \min \set{\epsilon : g(\epsilon)\geq 1/2}$.
By definition, $g(\epsilon)$ is the proportion of times that $\hat{\theta}_n(\epsilon)$ is included in the ``near optimum set'' $\widehat{\Theta}^{\epsilon,b}_{k_2}$.
The choice of $\epsilon^*$ makes it more likely for the true optimal solution to be included in the ``near optimum set'', instead of being ruled out by suboptimal solutions.
Practically, $\epsilon^*$ can be efficiently determined using a binary search as an intermediate step between Phases I and II.
To prevent data leakage, we compute $\epsilon^*$ using $\mathbf{z}_{1:\lfloor\frac{n}{2}\rfloor}$ (Phase I data) for \roves.
From Figure \ref{fig: profiling_main_paper}, we observe that this adaptive strategy exhibits decent performance for all scenarios.
Similar behaviors can also be observed for \roves in Figure \ref{fig: profiling_epsilon_alg4}.

\paragraph{Recommended Configurations.}
Based on the profiling results, we summarize the recommended configurations used in all other experiments presented in the paper (unless specified otherwise):
\vspace{-5pt}
\begin{itemize}
    \item For discrete space $\Theta$, use $k=\max(10,n/200),B=200$ for \move, and $k_1=k_2=\max(10,n/200),B_1=20,B_2=200$ for \rove and \roves.
    \item For continuous space $\Theta$, use $k_1=\max(30,n/2),k_2=\max(30,n/200),B_1=50,B_2=200$ for \rove and \roves.
    \item The $\epsilon$ in \rove and \roves is selected such that $\max_{\theta\in \mathcal{S}}(1/B_2)\sum_{b=1}^{B_2}\mathbbm{1}(\theta\in \widehat{\Theta}^{\epsilon,b}_{k_2}) \approx 1/2$.
\end{itemize}

\begin{figure}[!htbp]
\centering
\hspace{-8pt}
\begin{subfigure}{0.33\textwidth}
\includegraphics[width = \linewidth]{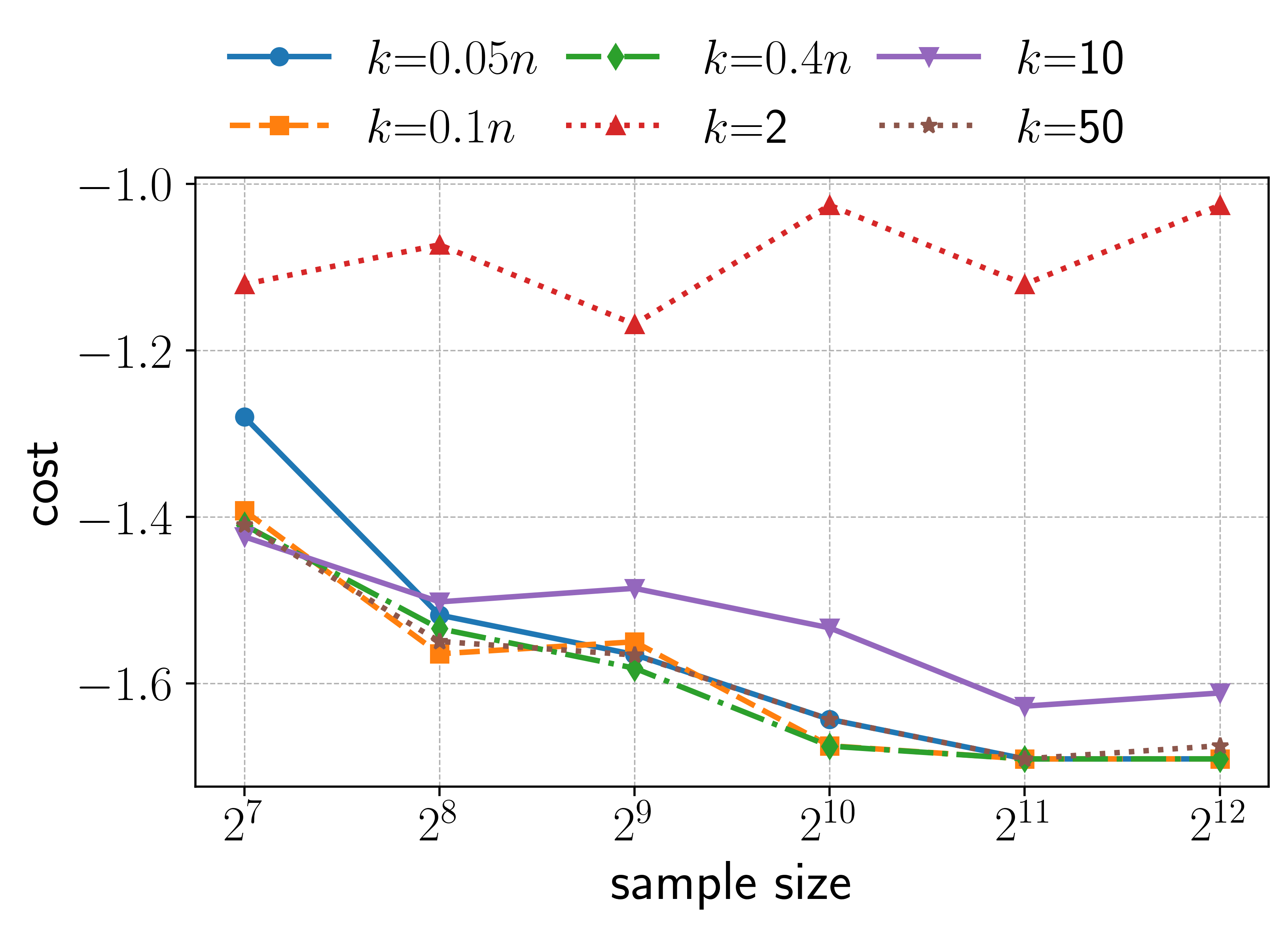}
\caption{Profiling for $k$ (\move).\label{subfig: profile_alg1_k_2}}
\end{subfigure}%
\begin{subfigure}{0.335\textwidth}
\includegraphics[width = \linewidth]{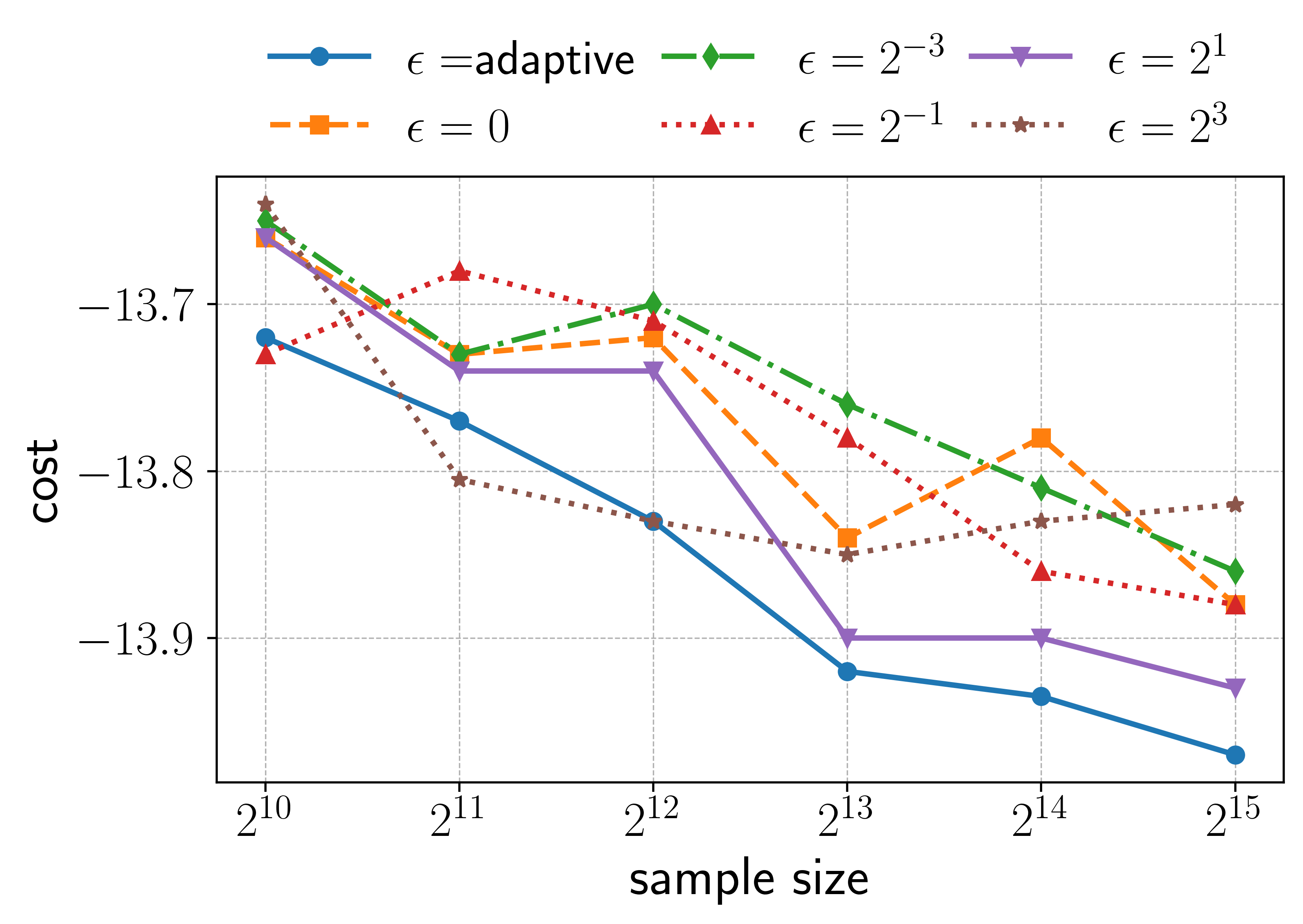}
\caption{Profiling for $\epsilon$ (multiple optima).\label{subfig: profile_alg3_epsilon_multiple}}
\end{subfigure}%
\begin{subfigure}{0.335\textwidth}
\includegraphics[width = \linewidth]{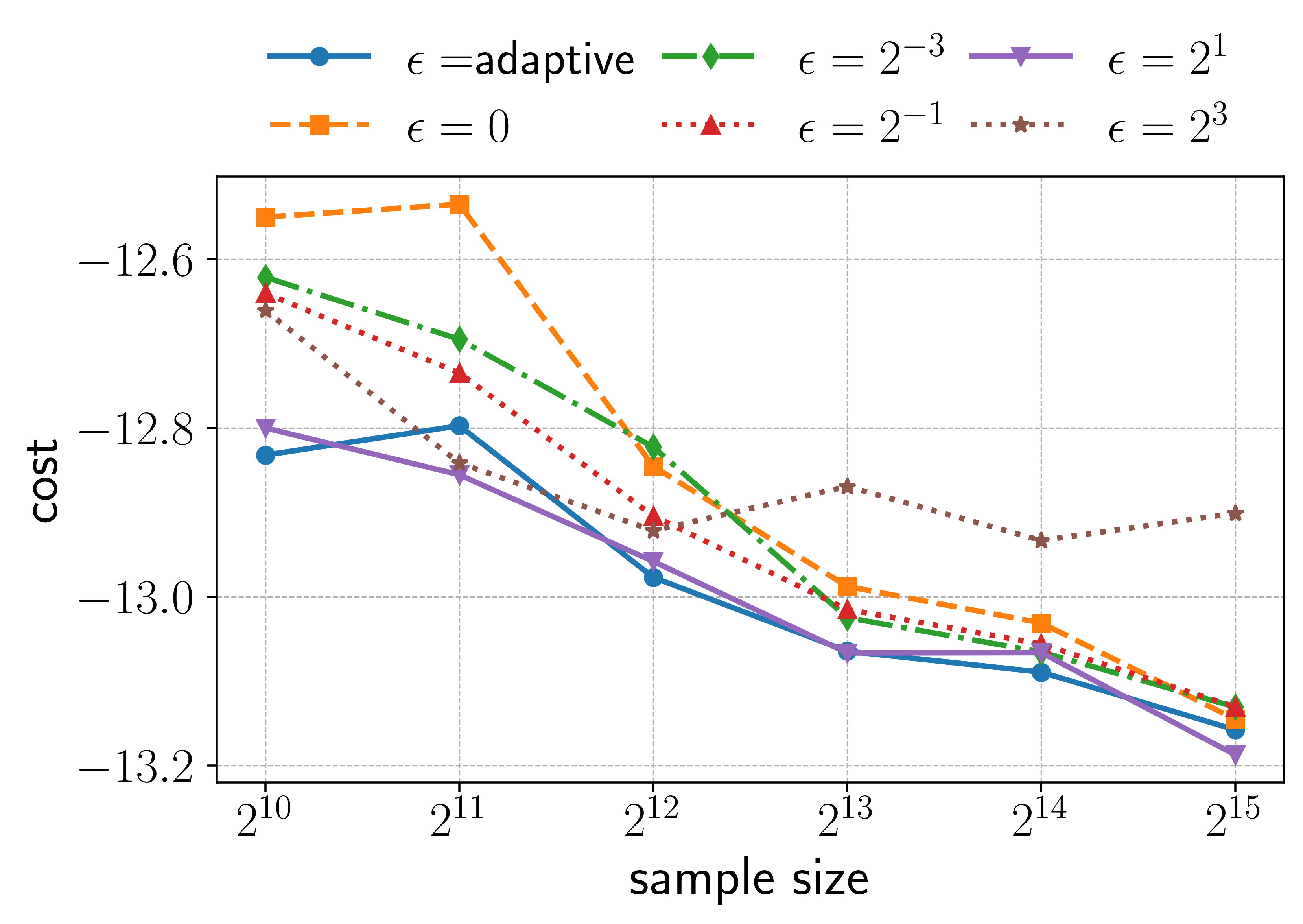}
\caption{Profiling for $\epsilon$ (unique optima).\label{subfig: profile_alg3_epsilon_single}}
\end{subfigure}
\caption{Profiling for subsample size $k$ and threshold $\epsilon$.
(a): Resource allocation problem, where $B = 200$;
(b) and (c): Linear program, where $k_1=k_2 =\max(10,0.005n)$, $B_1= 20$, and $B_2 = 200$.
\label{fig: profiling_main_paper}}
\end{figure}

\begin{figure}[!htbp]
\centering
\hspace{-12pt}
\begin{subfigure}{0.34\textwidth}
\includegraphics[width = \linewidth]{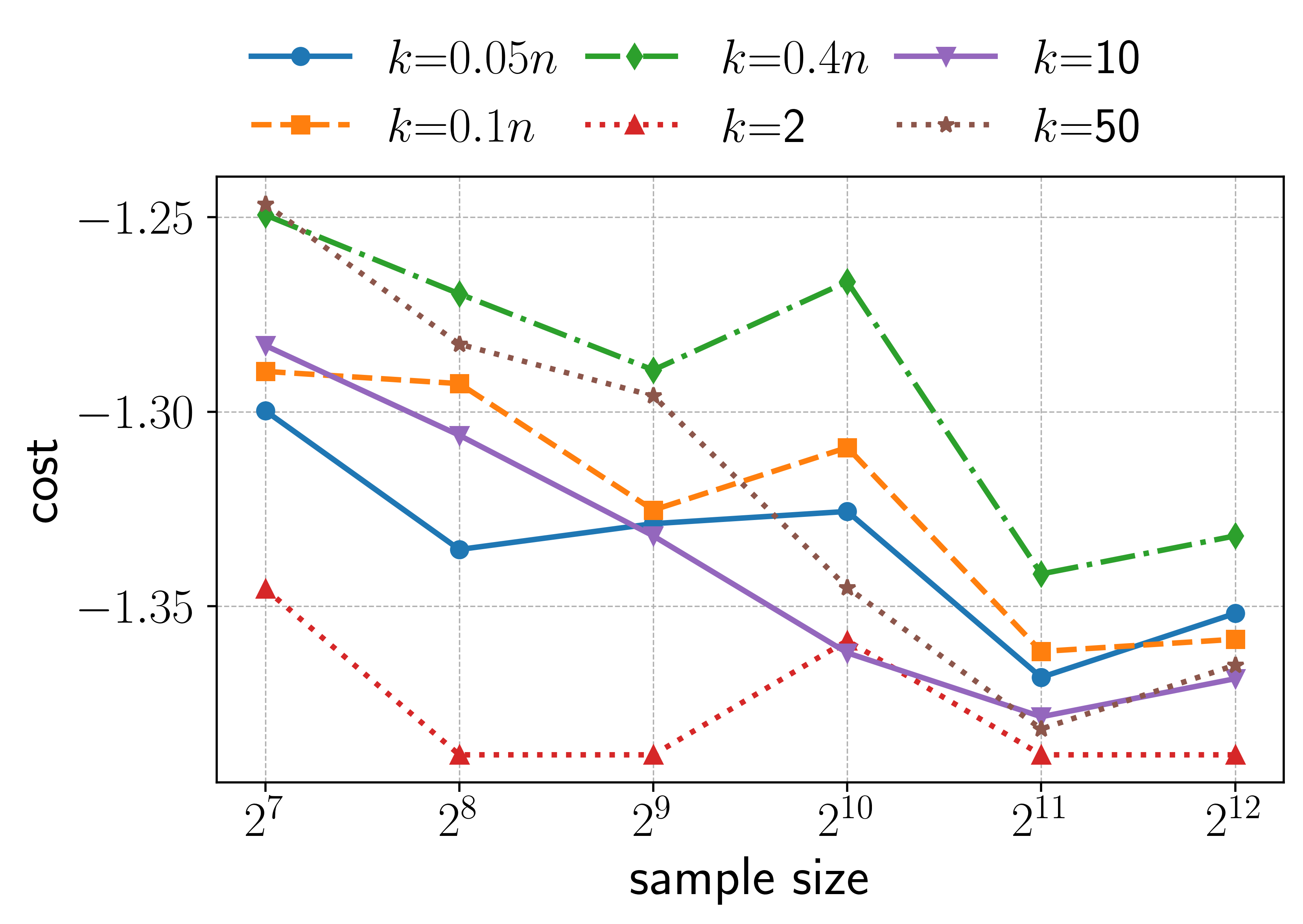}
\caption{Profiling for $k$ (instance 1).\label{subfig: profile_alg1_k_1}}
\end{subfigure}%
\begin{subfigure}{0.34\textwidth}
\includegraphics[width = \linewidth]{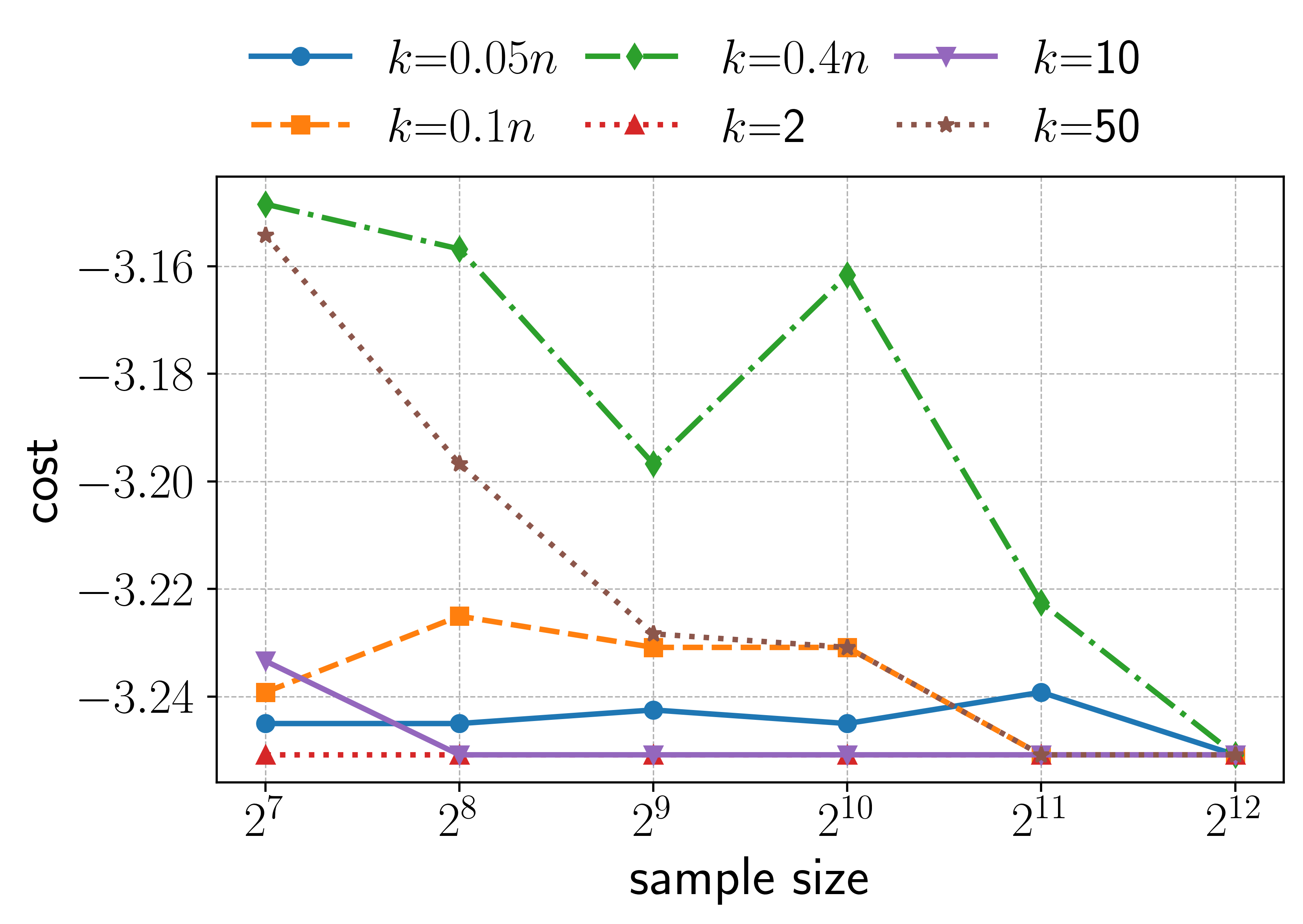}
\caption{Profiling for $k$ (instance 2).\label{subfig: profile_alg1_k_3}}
\end{subfigure}%
\begin{subfigure}{0.34\textwidth}
\includegraphics[width = \linewidth]{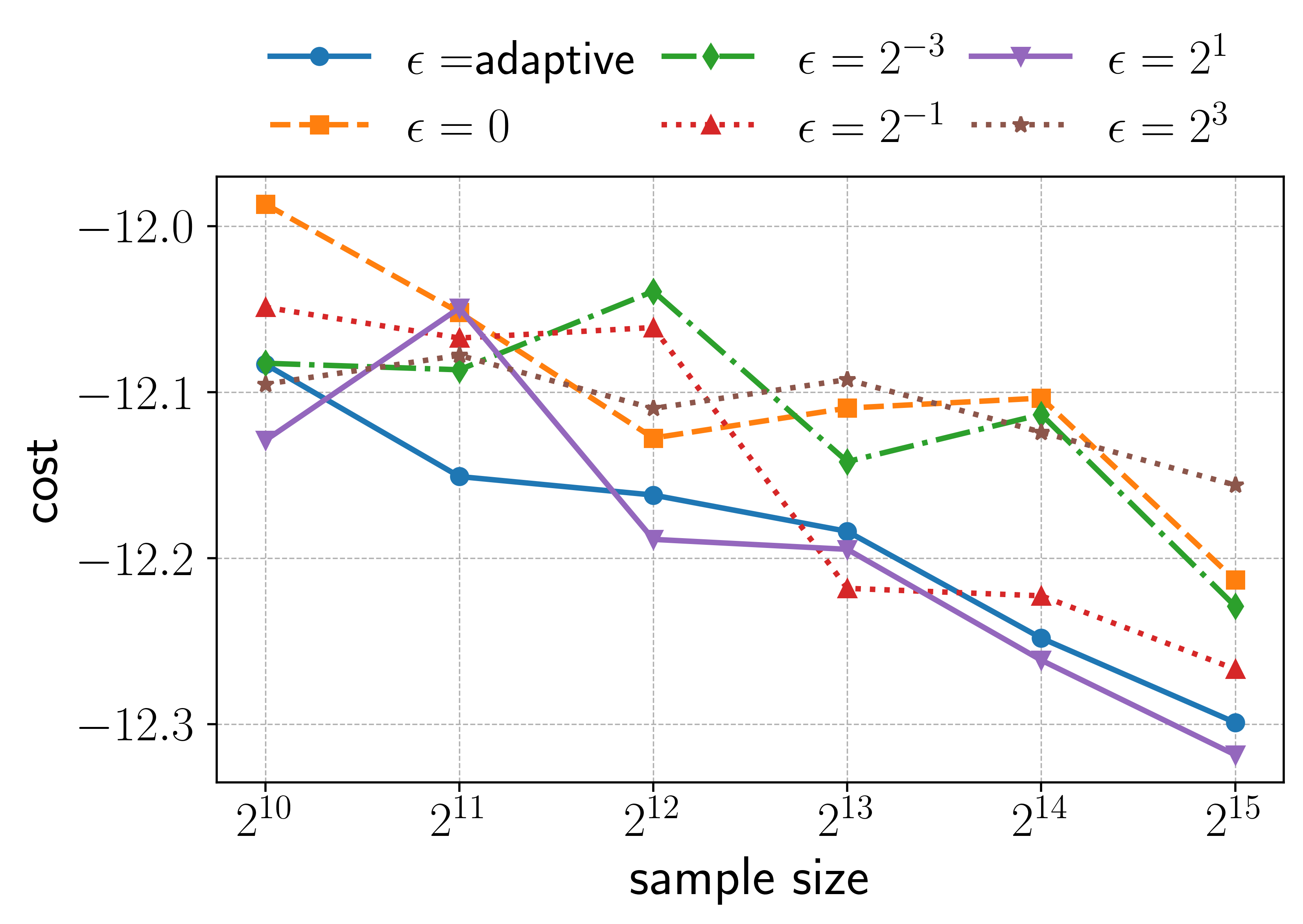}
\caption{Profiling for $\epsilon$ (near optima).\label{subfig: profile_alg3_epsilon_near}}
\end{subfigure}%
\caption{
Profiling results for subsample size $k$ and threshold $\epsilon$.
(a) and (b): Resource allocation problem using \move, where $B = 200$;
(c): Linear program with multiple near optima using \rove, where $k_1 = k_2=\max(10,0.005n)$, $B_1= 20$, and $B_2 = 200$.
\label{fig: profiling_additional}}
\end{figure}

\begin{figure}[!htbp]
\centering
\begin{subfigure}{0.4\textwidth}
\includegraphics[width = \linewidth]{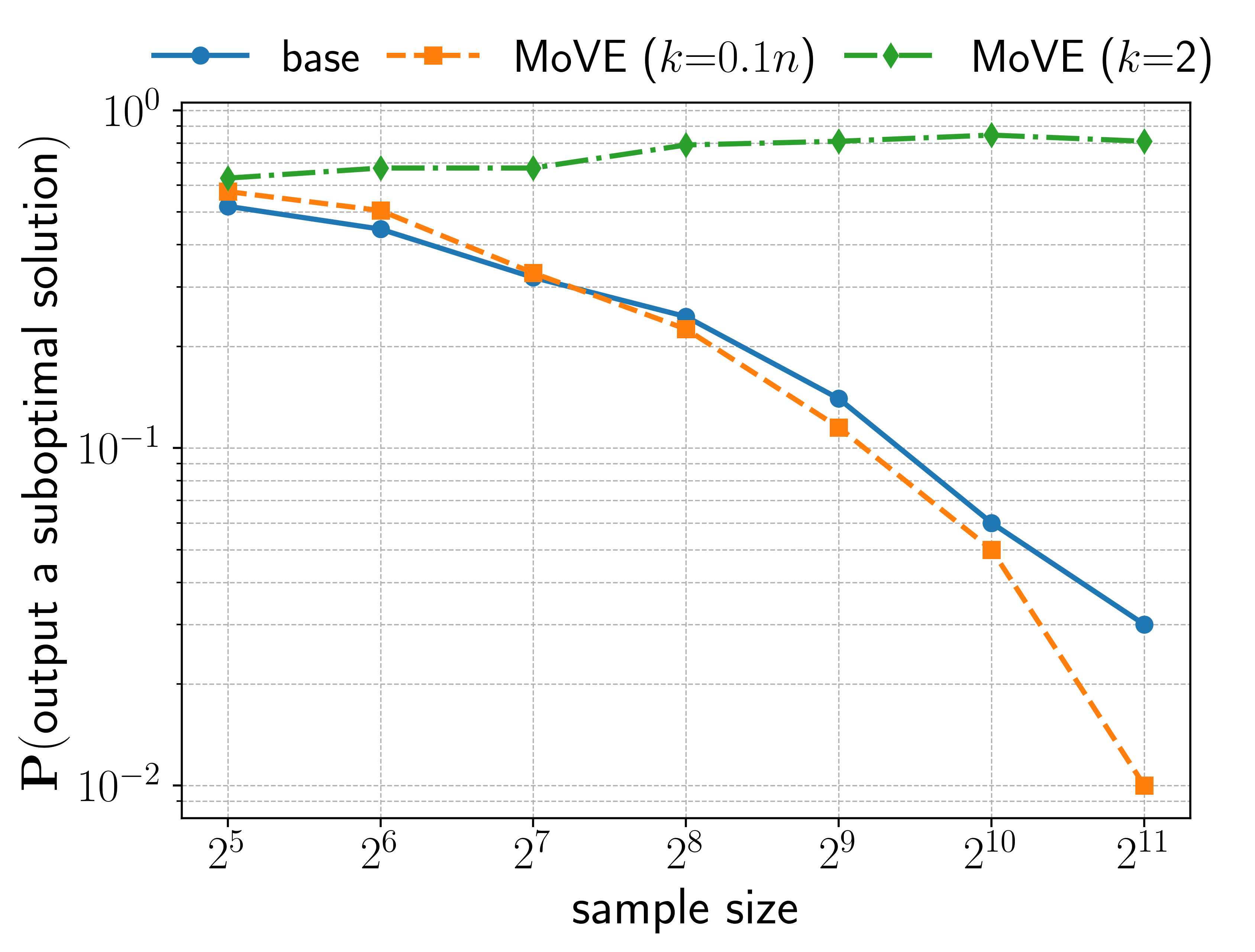}
\caption{Figure \ref{subfig: profile_alg1_k_2}: suboptimal probability.}\label{subfig: sskp_prob_suboptimal_2}
\end{subfigure}
\hspace{1cm}
\begin{subfigure}{0.4\textwidth}
\includegraphics[width = \linewidth]{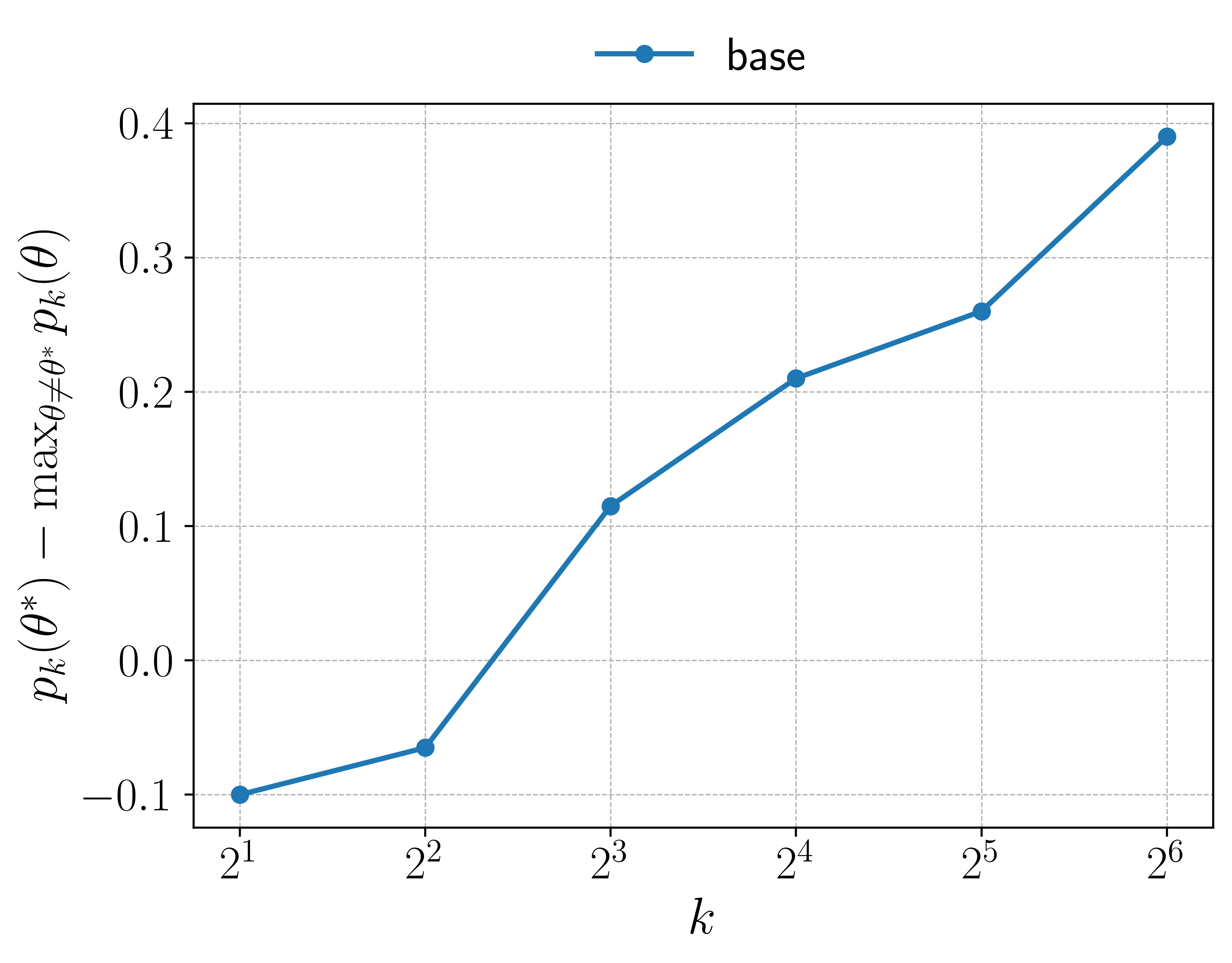}
\caption{Figure \ref{subfig: profile_alg1_k_2}: $p_k(\theta^*) - \max_{\theta\neq \theta^*}p_k(\theta)$.}\label{subfig: sskp_prob_diff_2}
\end{subfigure}
\begin{subfigure}{0.4\textwidth}
\includegraphics[width = \linewidth]{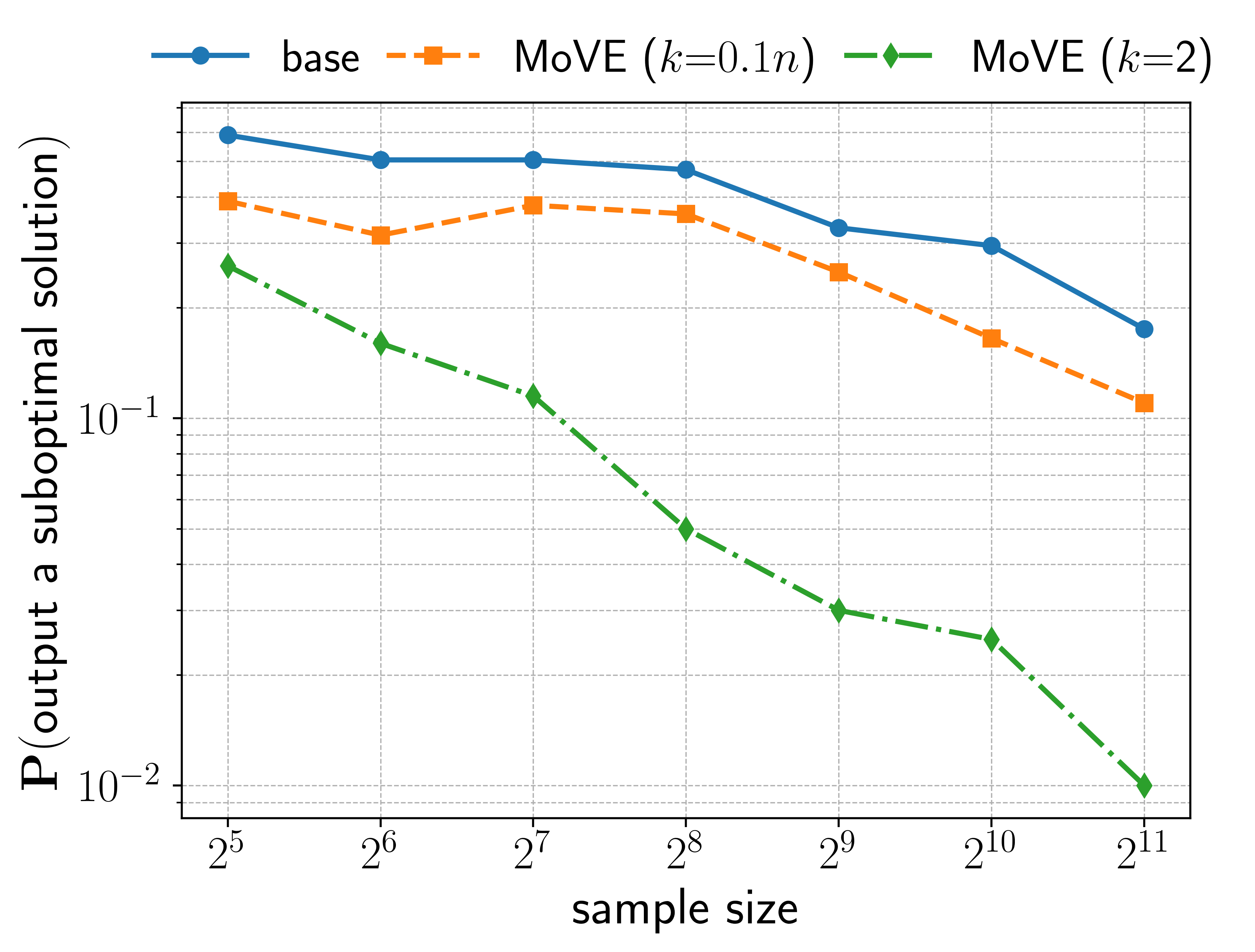}
\caption{Figure \ref{subfig: profile_alg1_k_1}: suboptimal probability.}\label{subfig: sskp_prob_suboptimal_1}
\end{subfigure}
\hspace{1cm}
\begin{subfigure}{0.4\textwidth}
\includegraphics[width = \linewidth]{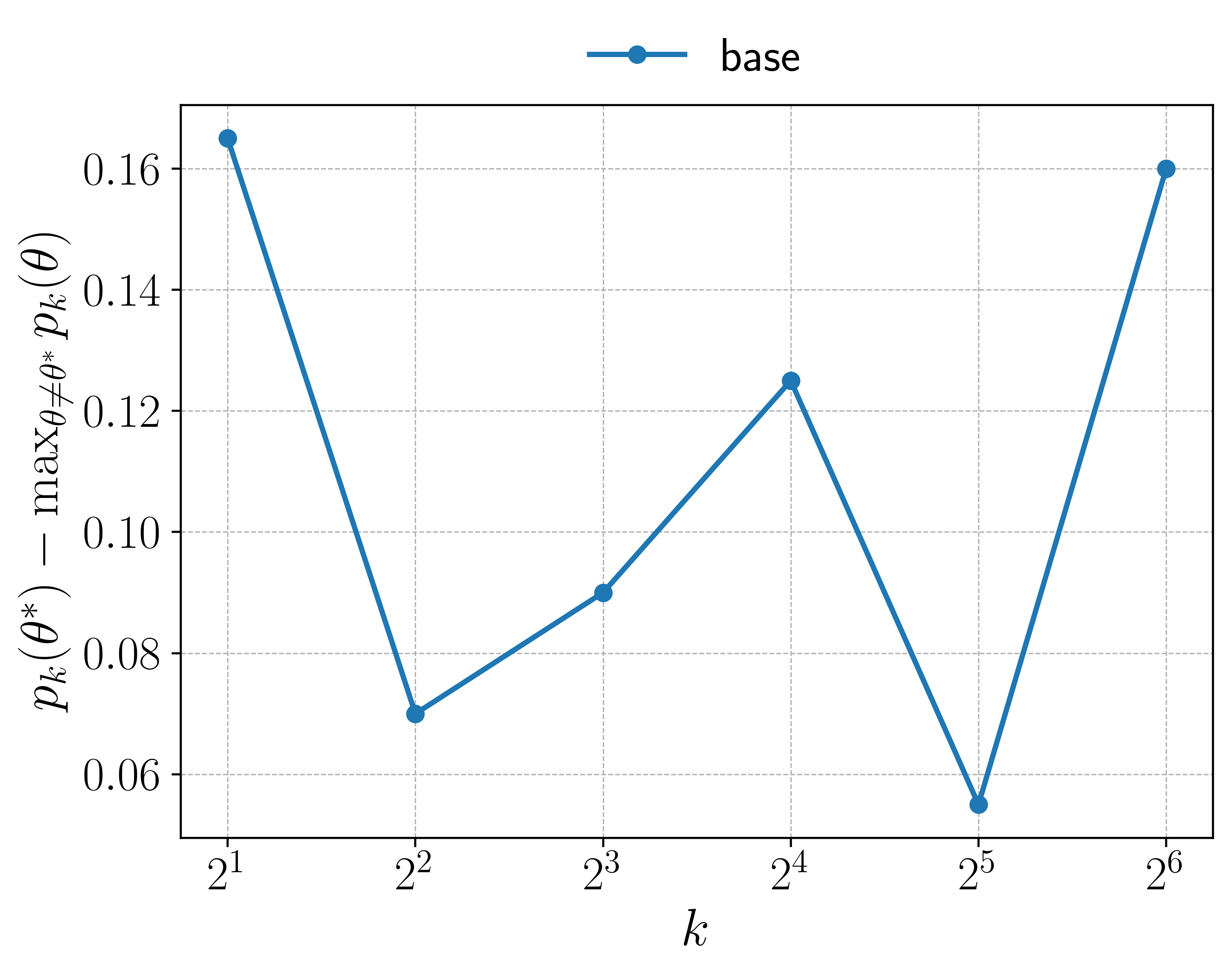}
\caption{Figure \ref{subfig: profile_alg1_k_1}: $p_k(\theta^*) - \max_{\theta\neq \theta^*}p_k(\theta)$.}\label{subfig: sskp_prob_diff_1}
\end{subfigure}
\caption{Performance of \move ($B=200$) in resource allocation, corresponding to the two instances in Figures \ref{subfig: profile_alg1_k_2} and \ref{subfig: profile_alg1_k_1}. 
Subfigures (b) and (d) explain the behaviors of \move with different subsample sizes $k$: In (b), we find that $p_k(\theta^*) - \max_{\theta\neq \theta^*}p_k(\theta)<0$ for $k\leq 4$, which results in the poor performance of \move with $k=2$ in Figure \ref{subfig: profile_alg1_k_2};
In (d), we have $p_2(\theta^*) - \max_{\theta\neq \theta^*}p_2(\theta)\approx 0.165$, thereby enabling \move to distinguish the optimal solution only using subsamples of size two, which results in the good performance of \move with $k=2$ in Figure \ref{subfig: profile_alg1_k_1}.
\label{fig: pkx_SSKP}}
\end{figure}

\begin{figure}[!htbp]
\centering
\hspace{-12pt}
\begin{subfigure}{0.34\textwidth}
\includegraphics[width = \linewidth]{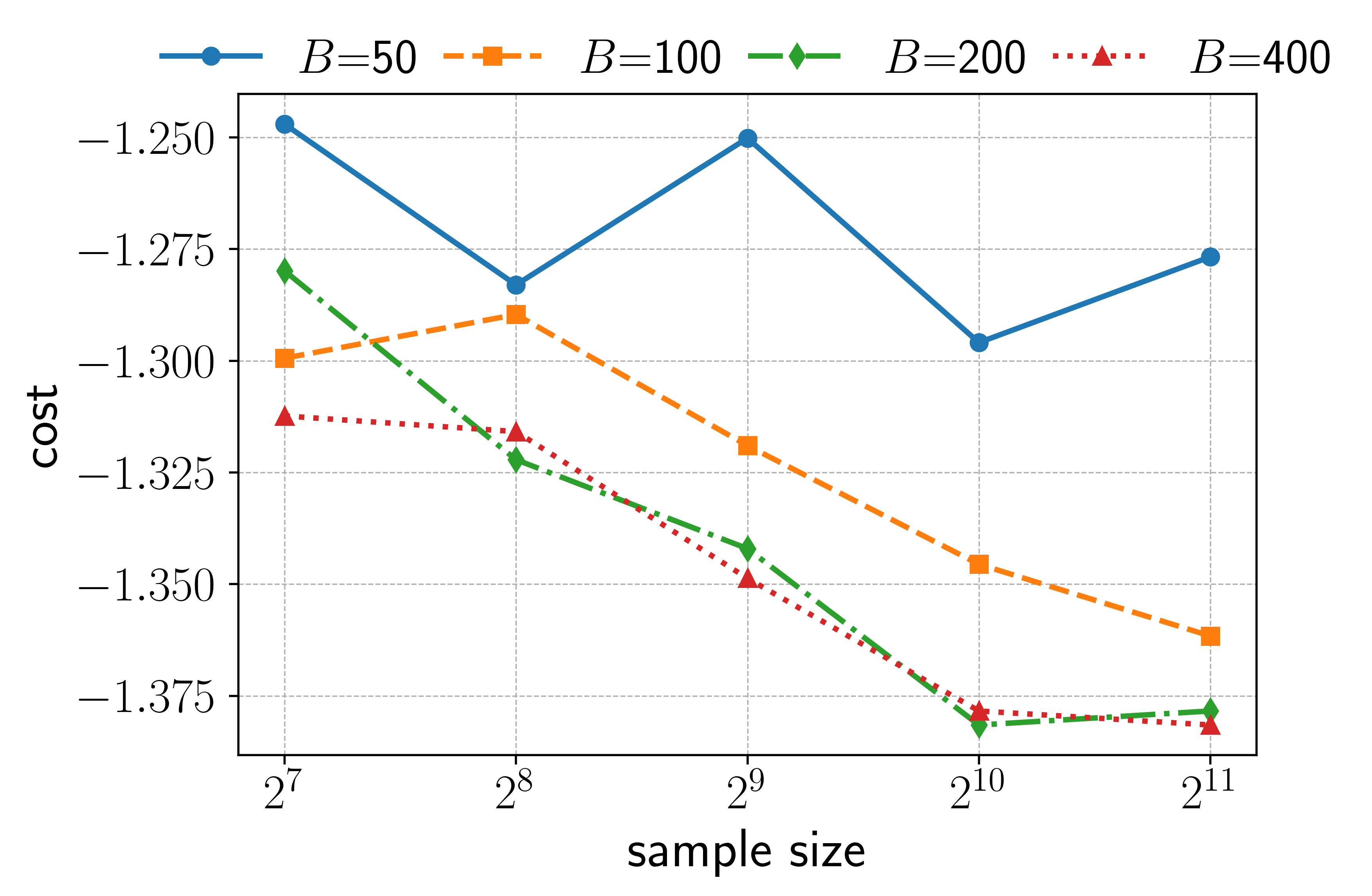}
\caption{\move.\label{subfig: profile_alg1_B}}
\end{subfigure}%
\begin{subfigure}{0.34\textwidth}
\includegraphics[width = \linewidth]{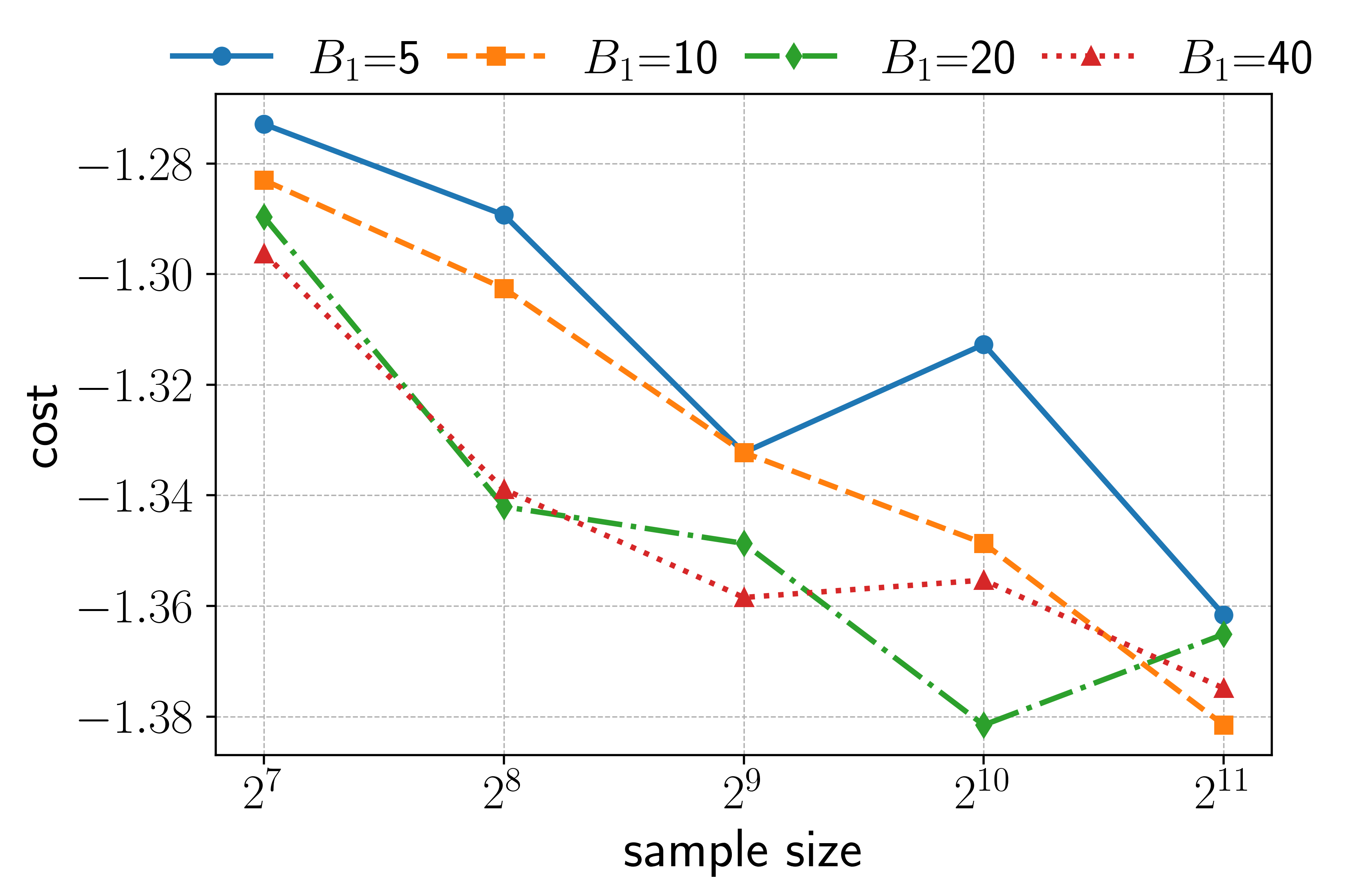}
\caption{\rove with $B_2 = 200$.\label{subfig: profile_alg3_B1}}
\end{subfigure}%
\begin{subfigure}{0.34\textwidth}
\includegraphics[width = \linewidth]{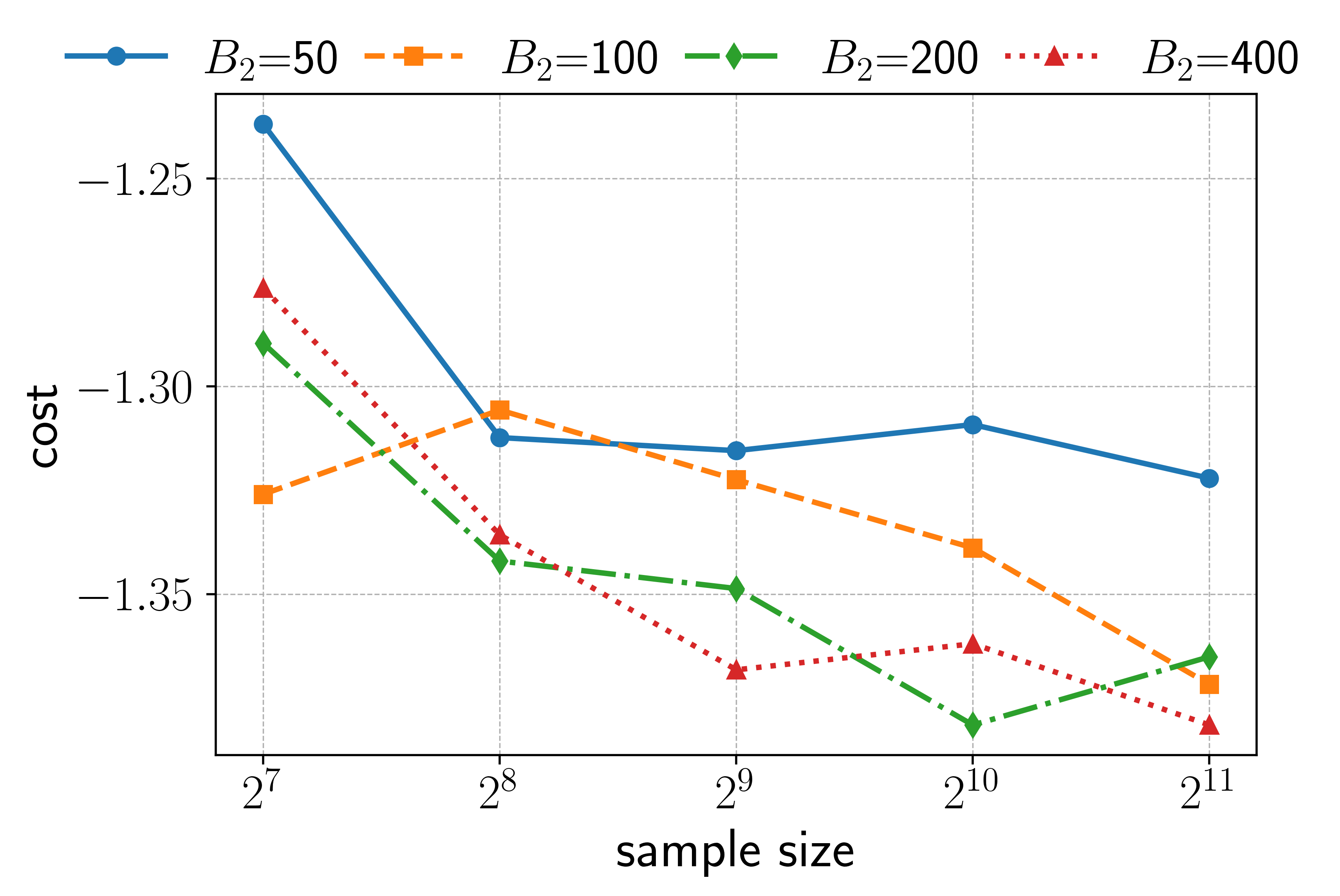}
\caption{\rove with $B_1 = 20$.\label{subfig: profile_alg3_B2}}
\end{subfigure}
\caption{Profiling for ensemble sizes $B, B_1, B_2$ in resource allocation. Subsample size is chosen as $k=k_1=k_2= 10$.
\label{fig: profiling_B}}
\end{figure}

\begin{figure}[!htbp]
\centering
\hspace{-12pt}
\begin{subfigure}{0.34\textwidth}
\includegraphics[width = \linewidth]{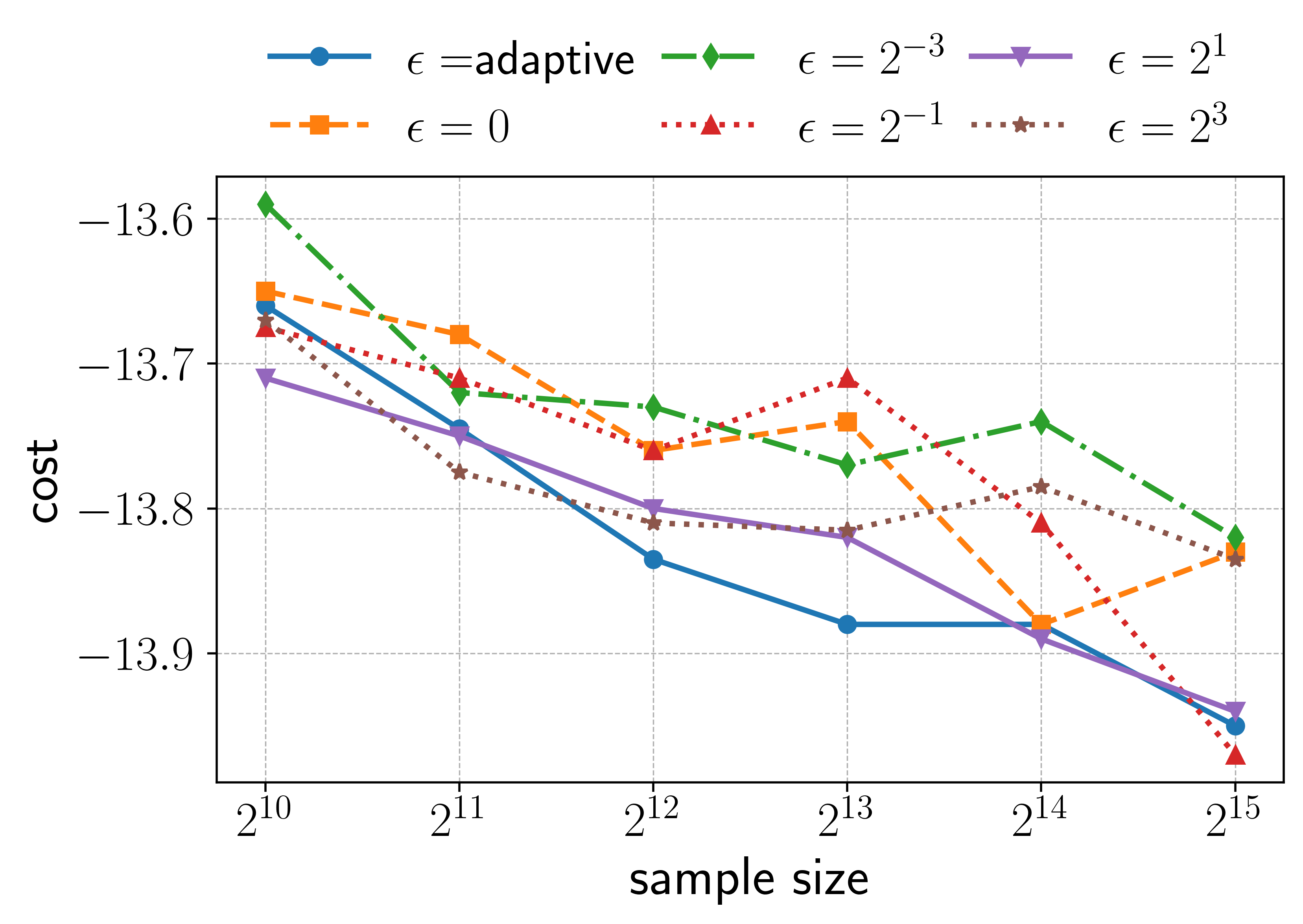}
\caption{Multiple optima.\label{subfig: profile_alg4_epsilon_multiple}}
\end{subfigure}%
\begin{subfigure}{0.34\textwidth}
\includegraphics[width = \linewidth]{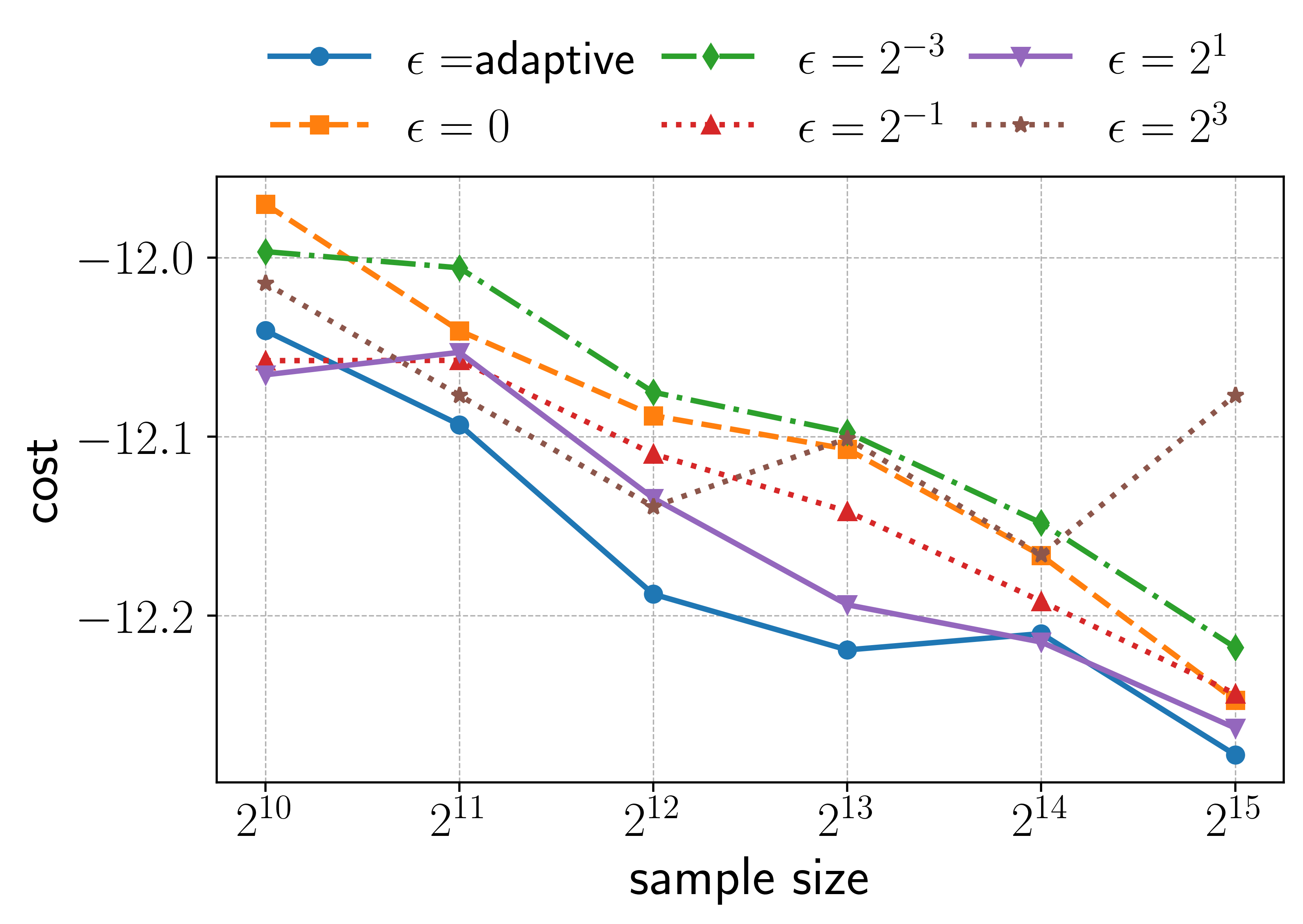}
\caption{Multiple near optimum.\label{subfig: profile_alg4_epsilon_near}}
\end{subfigure}%
\begin{subfigure}{0.34\textwidth}
\includegraphics[width = \linewidth]{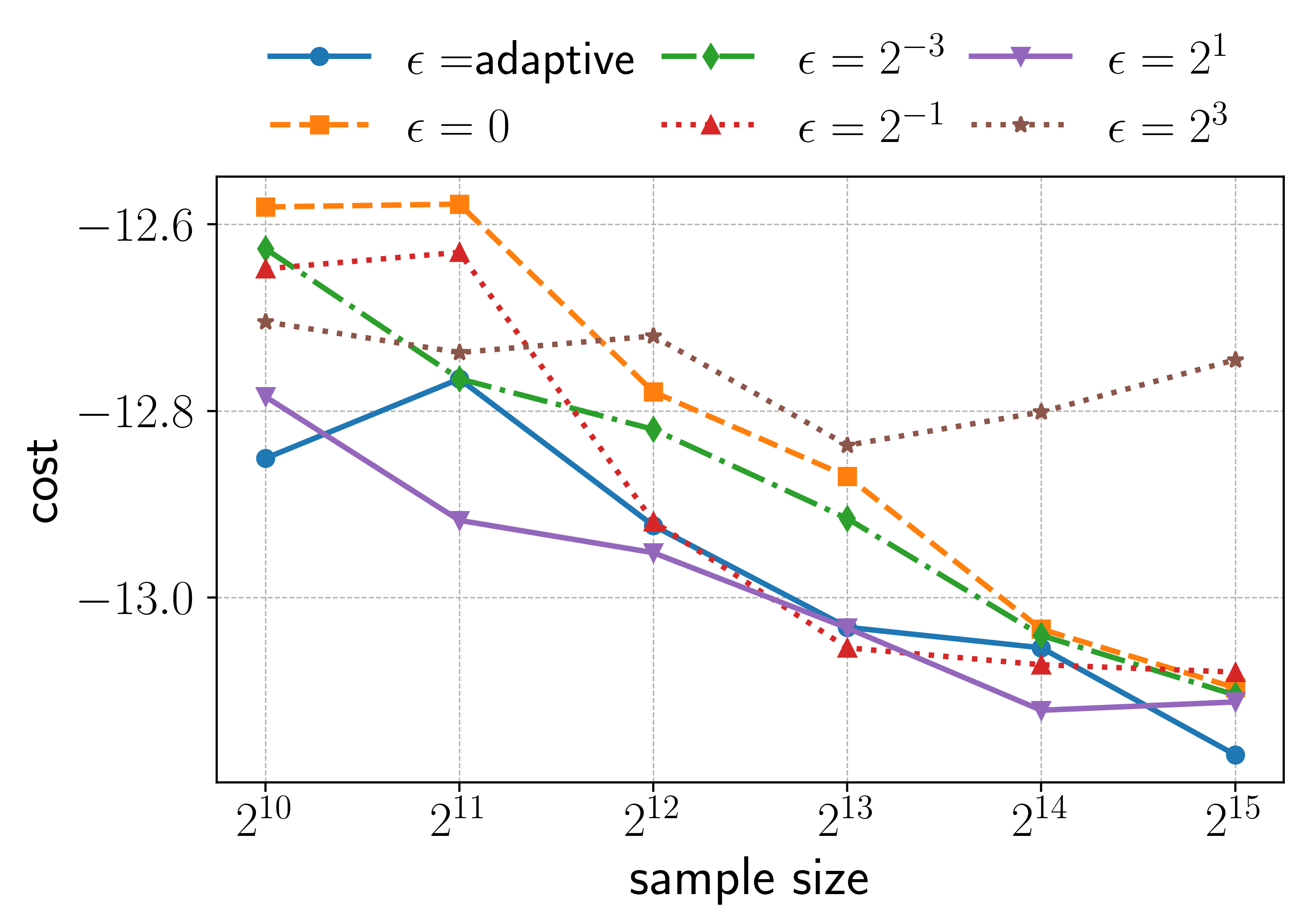}
\caption{Single optimum.\label{subfig: profile_alg4_epsilon_single}}
\end{subfigure}
\caption{Performance of \roves in three instances of linear programs under different thresholds $\epsilon$.
The setting is identical to that of Figures \ref{subfig: profile_alg3_epsilon_multiple}, \ref{subfig: profile_alg3_epsilon_single}, and \ref{subfig: profile_alg3_epsilon_near} for \rove.
Hyperparameters: $k_1=k_2=\max(10,0.005n)$, $B_1= 20$, and $B_2 = 200$. 
Compared with profiling results for \rove, we observe that the value of $\epsilon$ has similar impacts on the performance of \roves. Moreover, the proposed adaptive strategy also behaves well for \roves.
\label{fig: profiling_epsilon_alg4}}
\end{figure}

\newpage
\subsection{Additional Experimental Results}\label{app: additional_figures}
Here, we present additional figures that supplement the experiments and discussions in Section \ref{section: experiments}.
Recall that \move refers to Algorithm \ref{bagging majority vote: set estimator}, \rove refers to Algorithm \ref{bagging majority vote: two phase} without data splitting, and \roves refers to Algorithm \ref{bagging majority vote: two phase} with data splitting.
We briefly introduce each figure below and refer the reader to the figure caption for detailed discussions. Figures \ref{fig: alg_comparison_MLP appendix}-\ref{fig: bagging_dro} all follow the recommended configuration listed in Section \ref{section: experiments}.
\begin{itemize}[leftmargin=*]
    \item Figure \ref{fig: alg_comparison_MLP appendix} supplements the results in Figure \ref{fig: alg_comparison_MLP} with MLPs with $H=2,4$ hidden layers.
    \item Figure \ref{fig: tree methods comparison appendix} supplements the results in Figure \ref{fig: tree methods comparison main paper} with a different synthetic example than in Section \ref{subsec:neural network experiment}.
    \item Figure \ref{fig: MLP L=4 public data plots appendix} supplements the results in Figure \ref{fig: MLP L=4 public data plots main paper} with three other real datasets: \textit{Wine Quality} \citeAPX{wine_quality_186} \textit{Online News} \citeAPX{online_news_popularity_332}, \textit{Appliances Energy} \citeAPX{appliances_energy_prediction_374}. \rove and the base algorithm perform comparably on these three datasets, potentially because they are lighter tailed than those in Figure \ref{fig: MLP L=4 public data plots main paper}.
    \item Figure \ref{fig: MLP d=30 L=4 plots appendix} shows results for MLP regression on a slightly different example than in Section \ref{subsec:neural network experiment}.
    \item Figures \ref{fig: linear regression plots appendix} and \ref{fig: ridge regression plots appendix} show results for regression with least squares regression and Ridge regression as the base learning algorithms respectively.
    \item Figure \ref{fig: comparison_light_tail} shows results on the stochastic linear program example with light-tailed uncertainties.
    \item Figure \ref{fig: comparison_network appendix} contains additional results on the supply chain network design example for different choices of hyperparameters and a different problem instance with strong correlation between solutions.
    \item In Figure \ref{fig: bagging_dro}, we apply our ensemble methods to resource allocation and maximum weight matching using DRO with Wasserstein metric as the base algorithm. This result, together with Figure \ref{fig: alg_comparison_SAA} where the base algorithm is SAA, demonstrates that the benefit of our ensemble methods is agnostic to the underlying base algorithm.
    \item In Figure \ref{fig: eta_comparison}, we simulate the generalization sensitivity $\bar{\eta}_{k,\delta}$, defined in \eqref{SAA prob gap 2}, which explains the superior performance of \rove and \roves in the presence of multiple optimal solutions.
    \item In Figure \ref{fig: tail_heaviness}, we demonstrate how the tail heaviness of the problem affects the algorithm performance.
    The figure shows that the performance gap between \rove, \roves, and the base algorithm becomes increasingly significant as the tail of the uncertainty becomes heavier. This supports the effectiveness of \rove and \roves in handling heavy-tailed uncertainty, where the base algorithm's performance suffers. Note that here \move behaves similarly as the base learner due to optima multiplicity.
\end{itemize}

\begin{figure}[!htbp]
\centering
\hspace{-13pt}
\begin{subfigure}{0.33\textwidth}
\includegraphics[width = \linewidth]{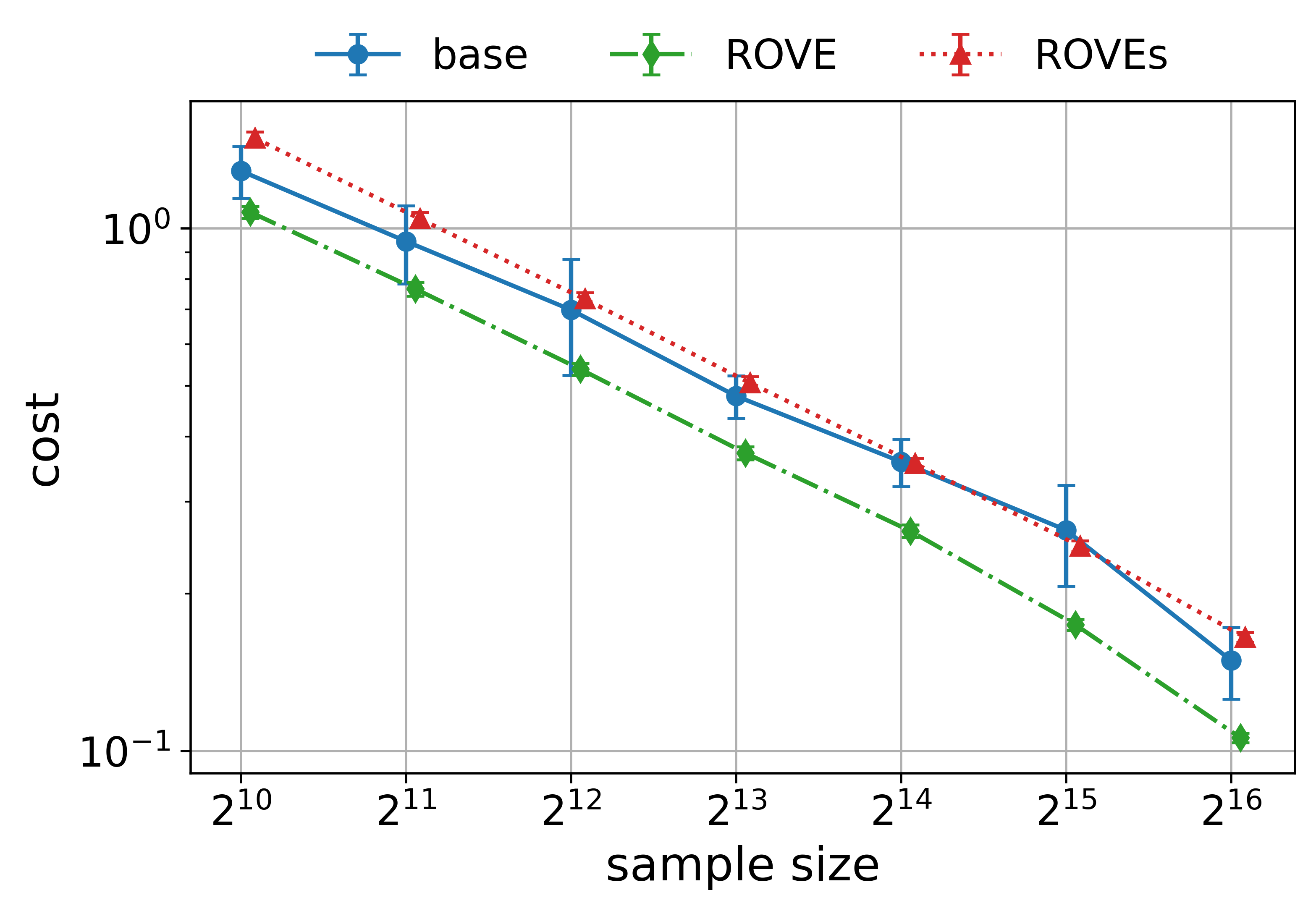}
\caption{Pareto noise, $H=2$.\label{subfig: MLP avg d=50 L=2}}
\end{subfigure}
\begin{subfigure}{0.33\textwidth}
\includegraphics[width = \linewidth]{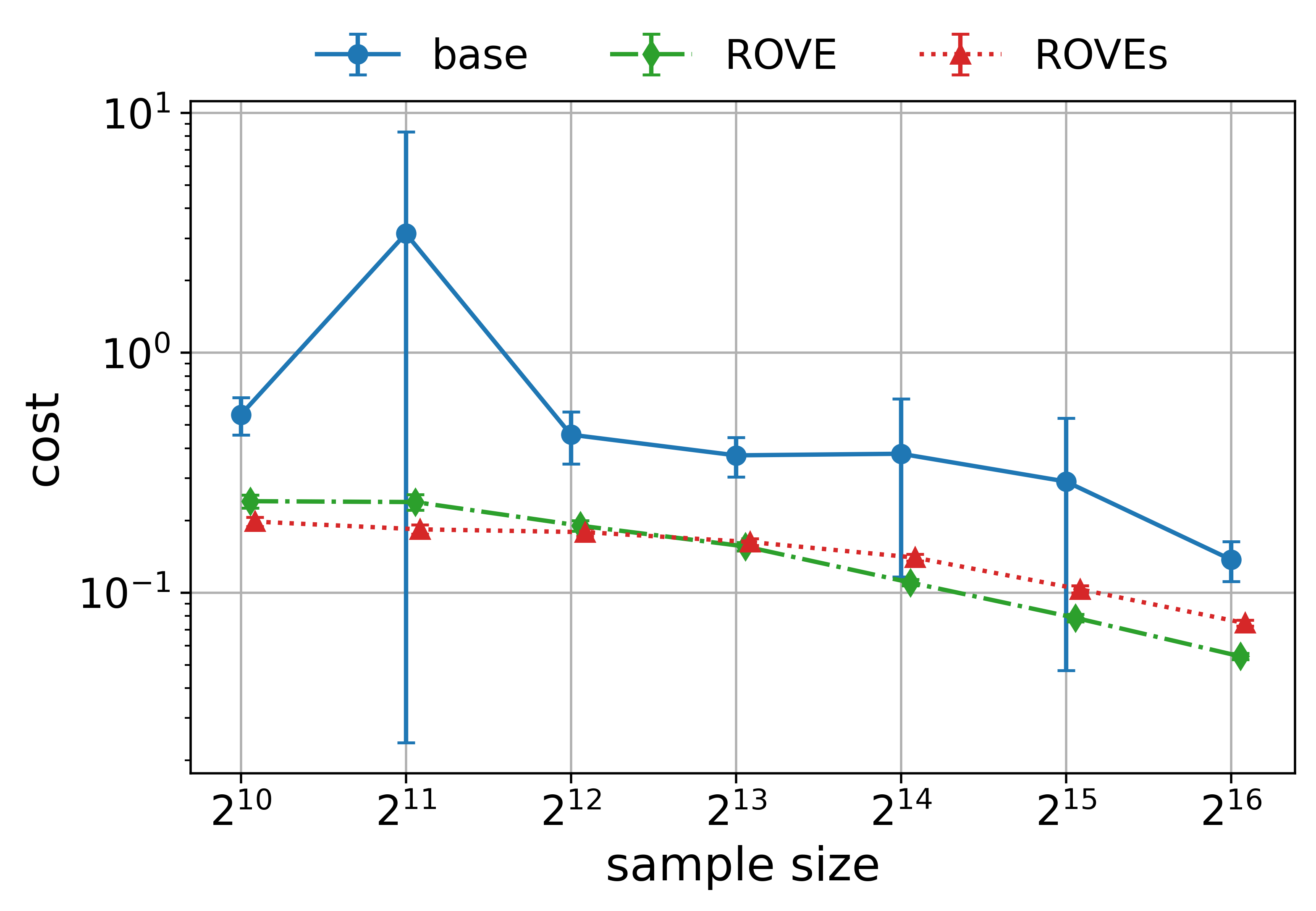}
\caption{Pareto noise, $H=6$.\label{subfig: MLP avg d=50 L=6}}
\end{subfigure}
\begin{subfigure}{0.33\textwidth}
\includegraphics[width = \linewidth]{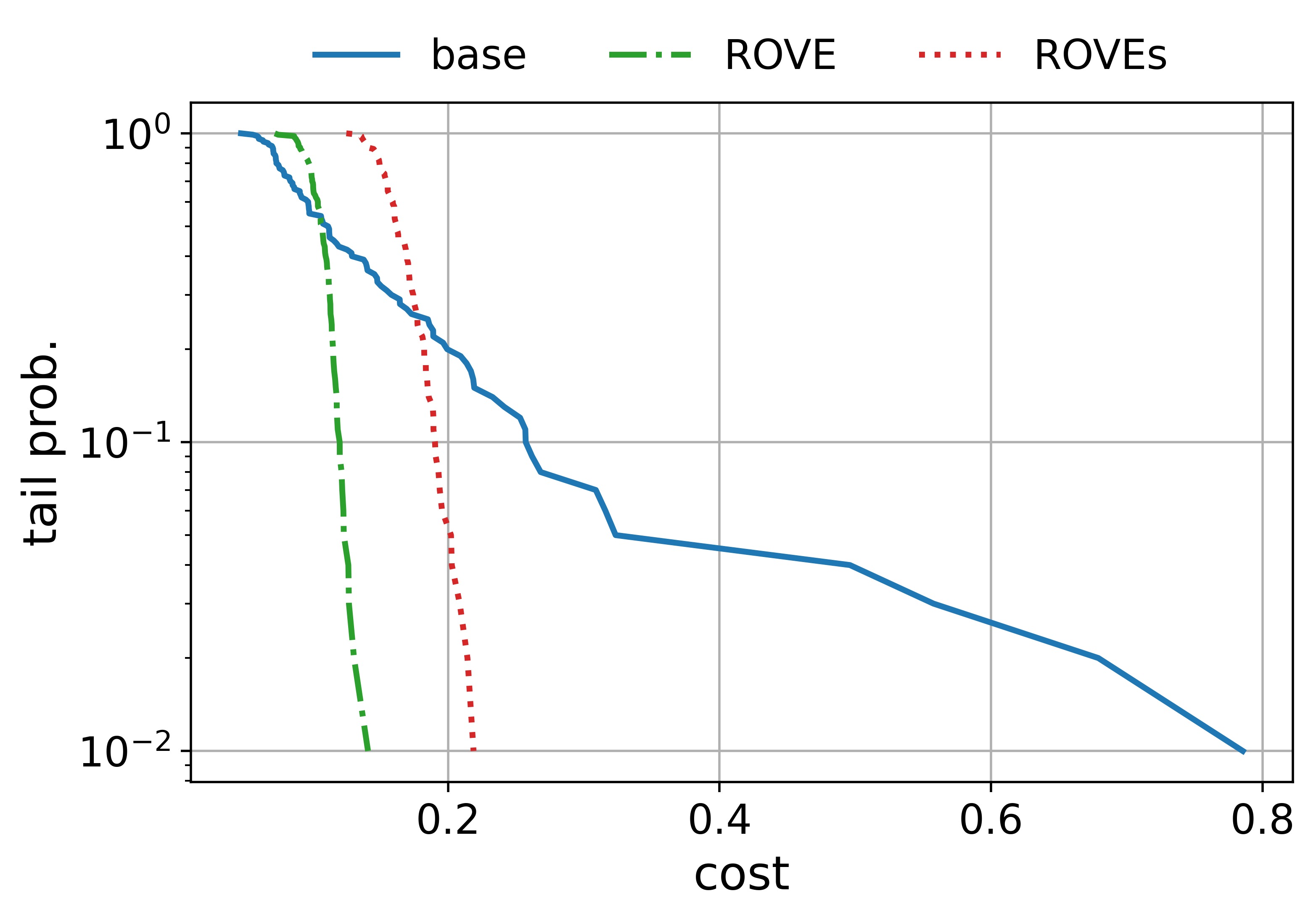}
\caption{Pareto noise, $H=2,n=2^{16}$.\label{subfig: MLP tail d=50 L=2 n=2^16}}
\end{subfigure}\\
\hspace{-13pt}
\begin{subfigure}{0.33\textwidth}
\includegraphics[width = \linewidth]{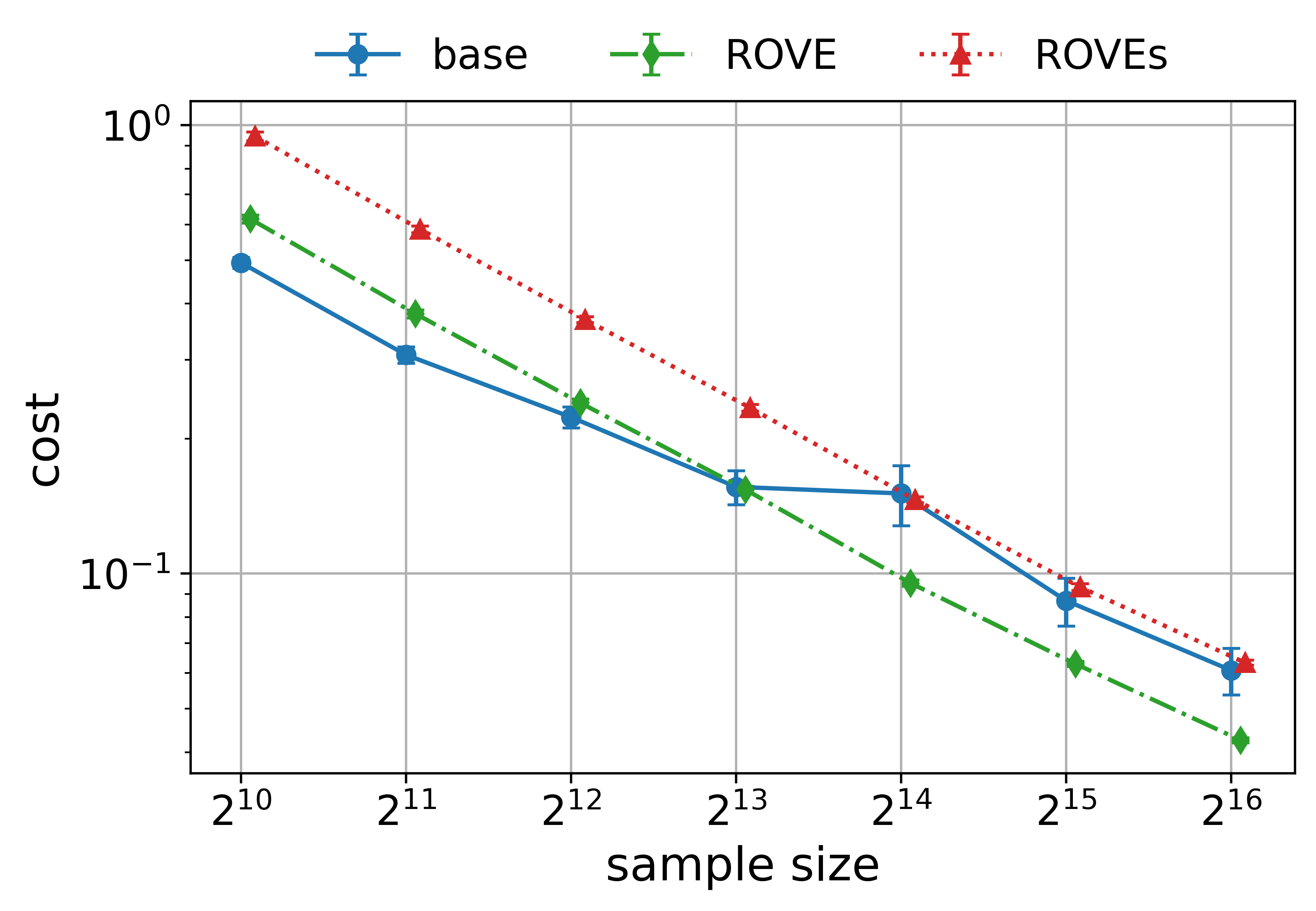}
\caption{Gaussian noise, $H=2$.\label{subfig: MLP avg d=50 L=2 light}}
\end{subfigure}
\begin{subfigure}{0.33\textwidth}
\includegraphics[width = \linewidth]{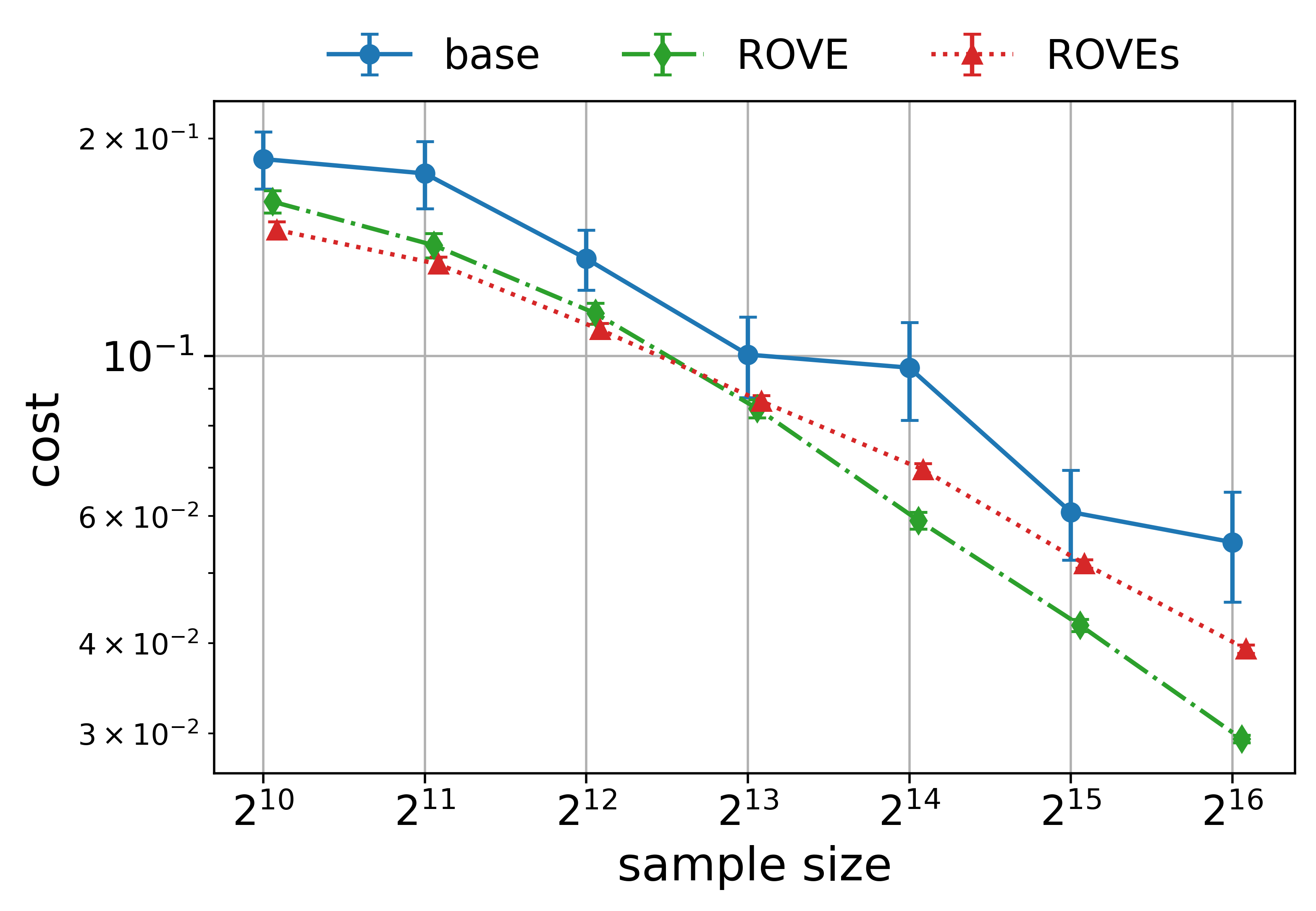}
\caption{Gaussian noise, $H=6$.\label{subfig: MLP avg d=50 L=6 light}}
\end{subfigure}
\begin{subfigure}{0.33\textwidth}
\includegraphics[width = \linewidth]{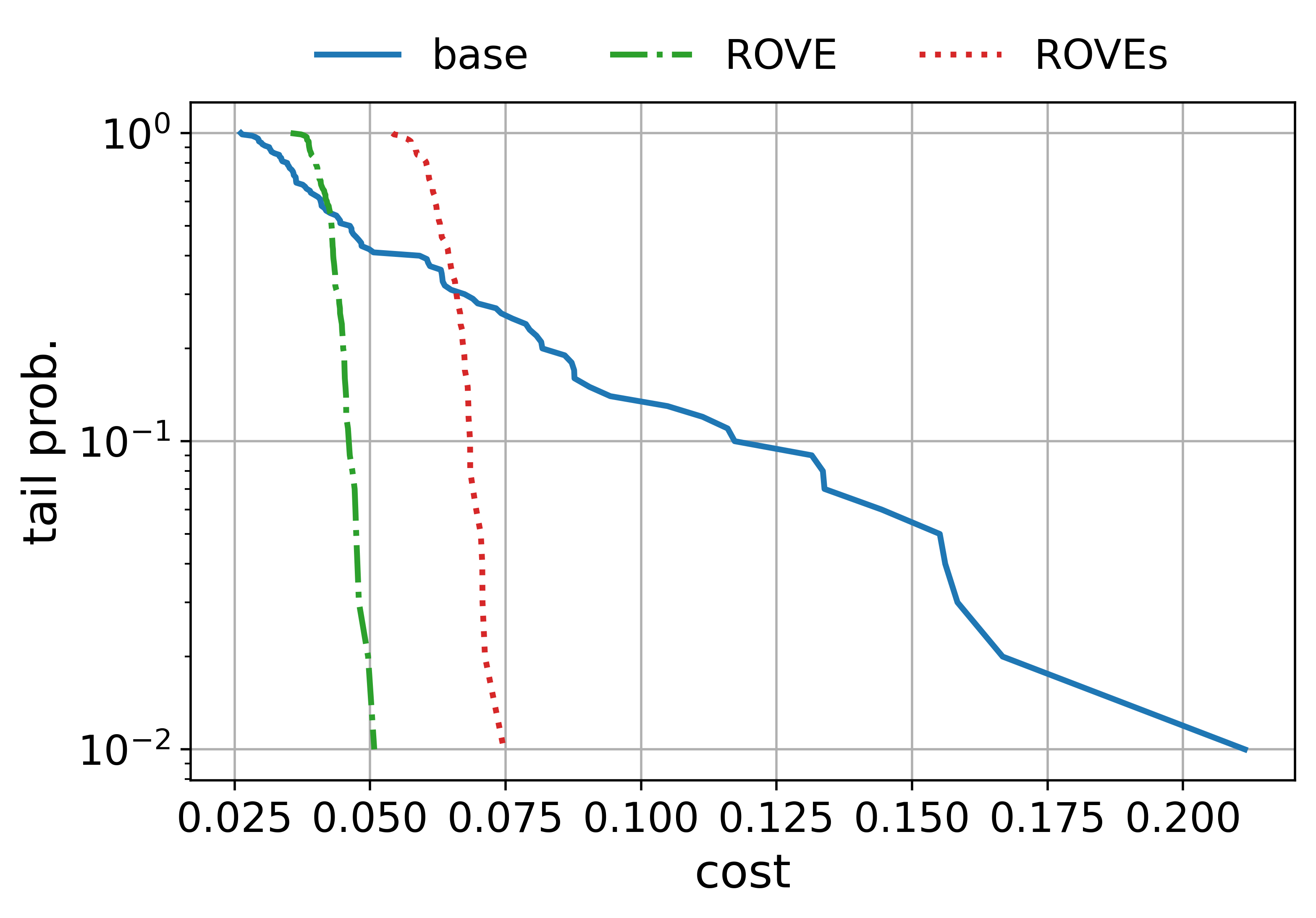}
\caption{Gaussian noise, $H=2,n=2^{16}$.\label{subfig: MLP tail d=50 L=2 n=2^16 light}}
\end{subfigure}
\caption{Results of neural networks with the same setup described in Section \ref{subsec:neural network experiment}. (a)(b)(d)(e): Expected out-of-sample costs (MSE) with $95\%$ confidence intervals under different noise distributions and varying numbers of hidden layers ($H$). (c) and (f): Tail probabilities of out-of-sample costs. In (a), \roves slightly underperforms the base learner probably due to the weak expressiveness and hence high bias of the MLP with $2$ hidden layers.
\label{fig: alg_comparison_MLP appendix}}
\end{figure}

\begin{figure*}[!htbp]
\centering
\hspace{-12pt}
\begin{subfigure}{0.49\textwidth}
\includegraphics[width = \linewidth]{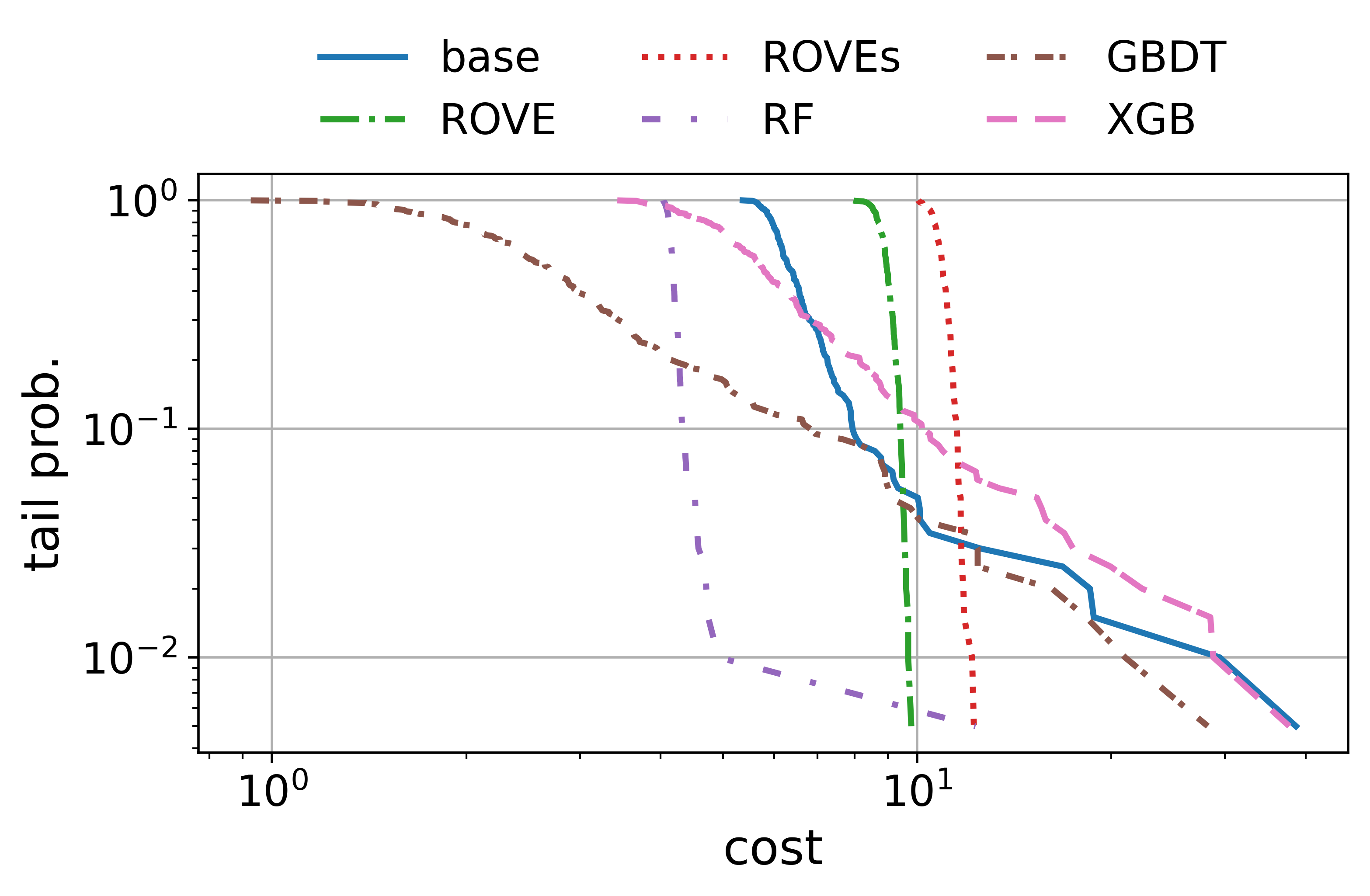}
\caption{Pareto shape = $2.0,n=2^{15}$.\label{subfig: sine tree methods tail shape 2.0}}
\end{subfigure}
\hspace{-6pt}
\begin{subfigure}{0.49\textwidth}
\includegraphics[width = \linewidth]{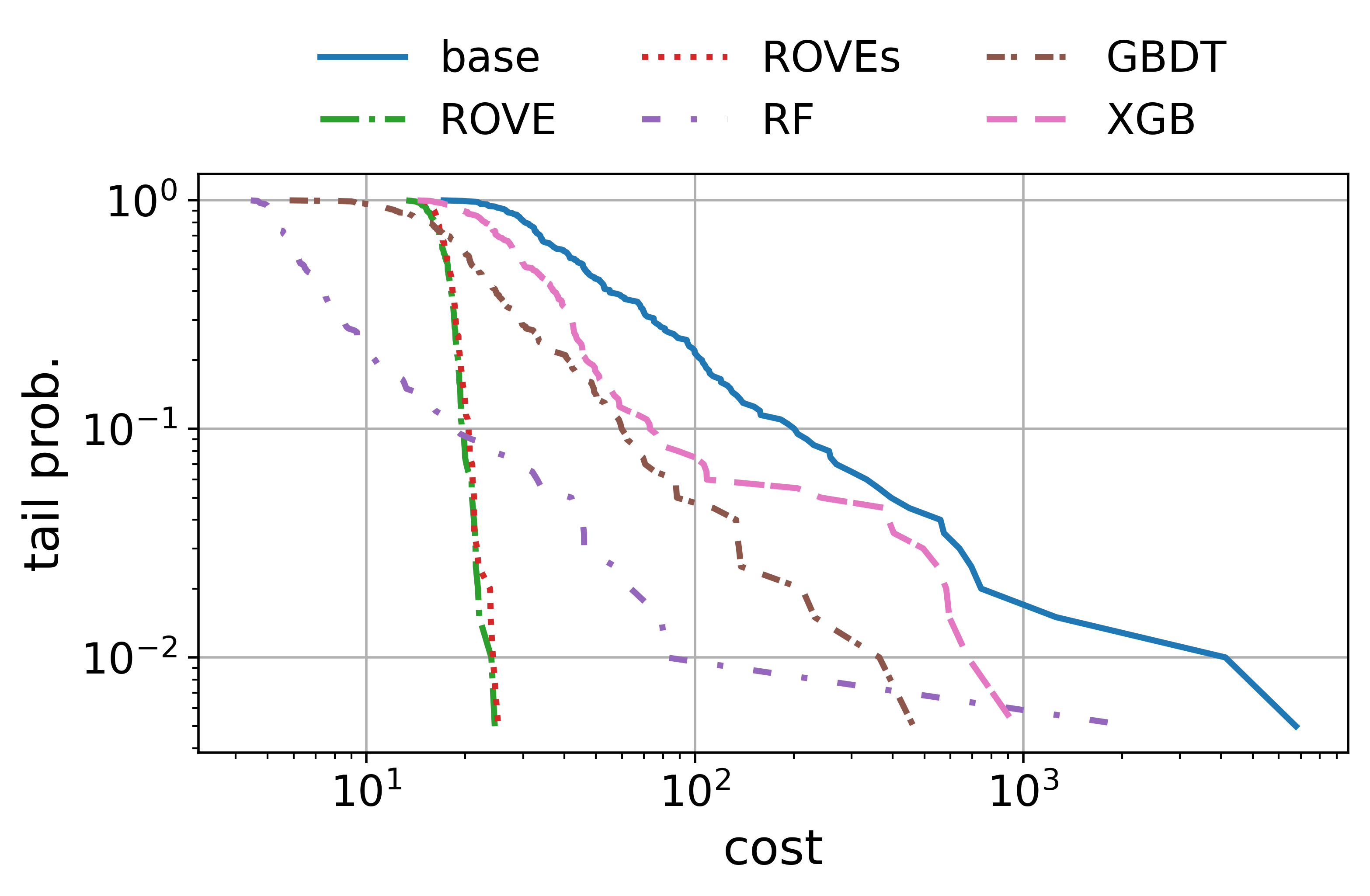}
\caption{Pareto shape = $1.5,n=2^{15}$.\label{subfig: sine tree methods tail shape 1.5}}
\end{subfigure}
\caption{Results of decision trees in terms of tail probabilities of out-of-sample costs (MSE). The data generation is $Y=10\sum_{i=0}^{9}2^{-i}\sin(2\pi X_i)+\epsilon_1-\epsilon_2$, where each $X_i$ is independently and uniformly drawn from $[0,1]$, and $\epsilon_1,\epsilon_2$ are independent Pareto variables of the same parameters. Hyperparameters: $k_1=\max(30,n/10),k_2=\max(30,n/200),B_1=50,B_2=200$.\label{fig: tree methods comparison appendix}}
\end{figure*}

\begin{figure*}[!htbp]
\centering
\hspace{-12pt}
\begin{subfigure}{0.33\textwidth}
\includegraphics[width = \linewidth]{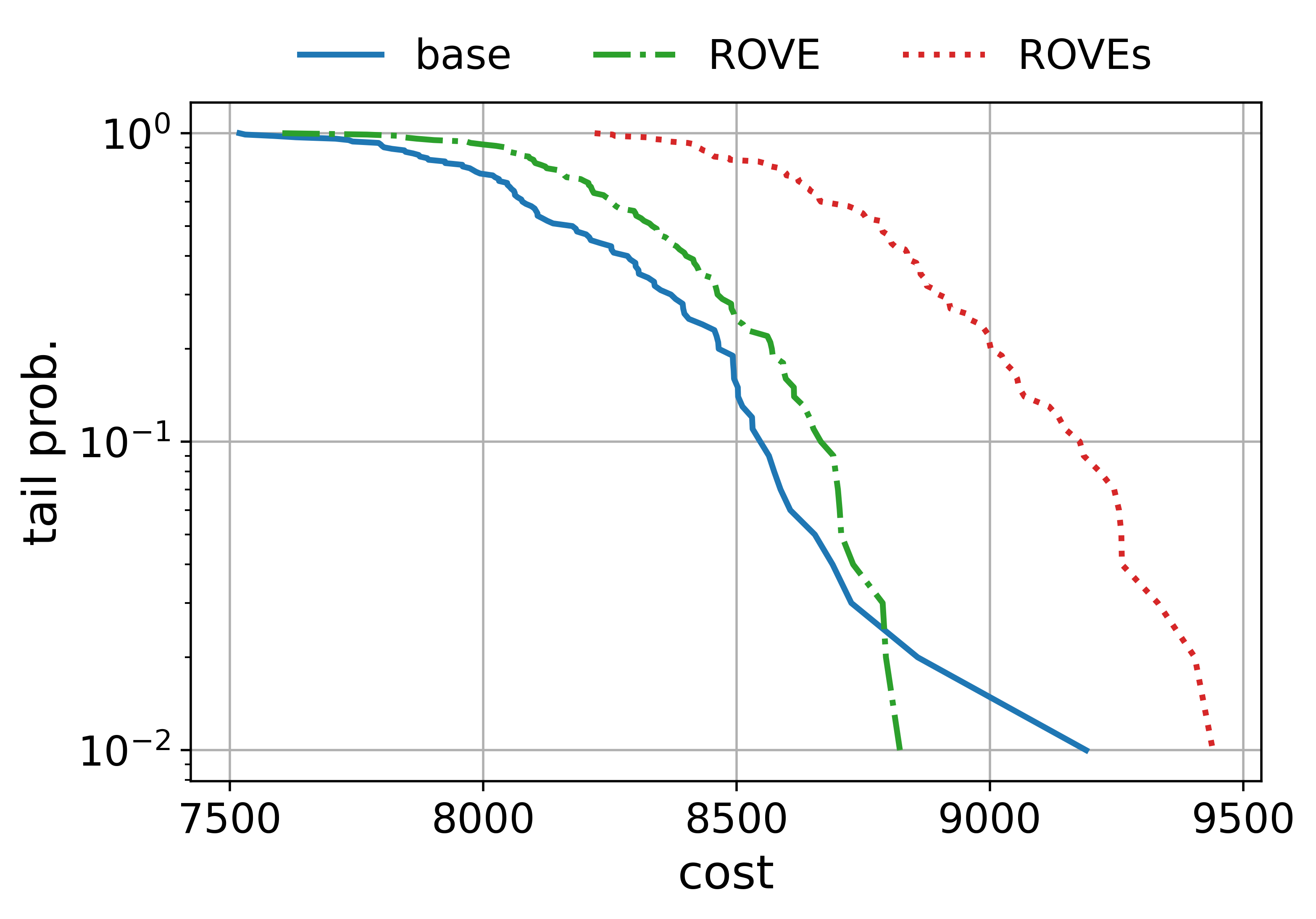}
\caption{\textit{Appliances Energy}.\label{subfig: appliances energy tail}}
\end{subfigure}
\hspace{-6pt}
\begin{subfigure}{0.33\textwidth}
\includegraphics[width = \linewidth]{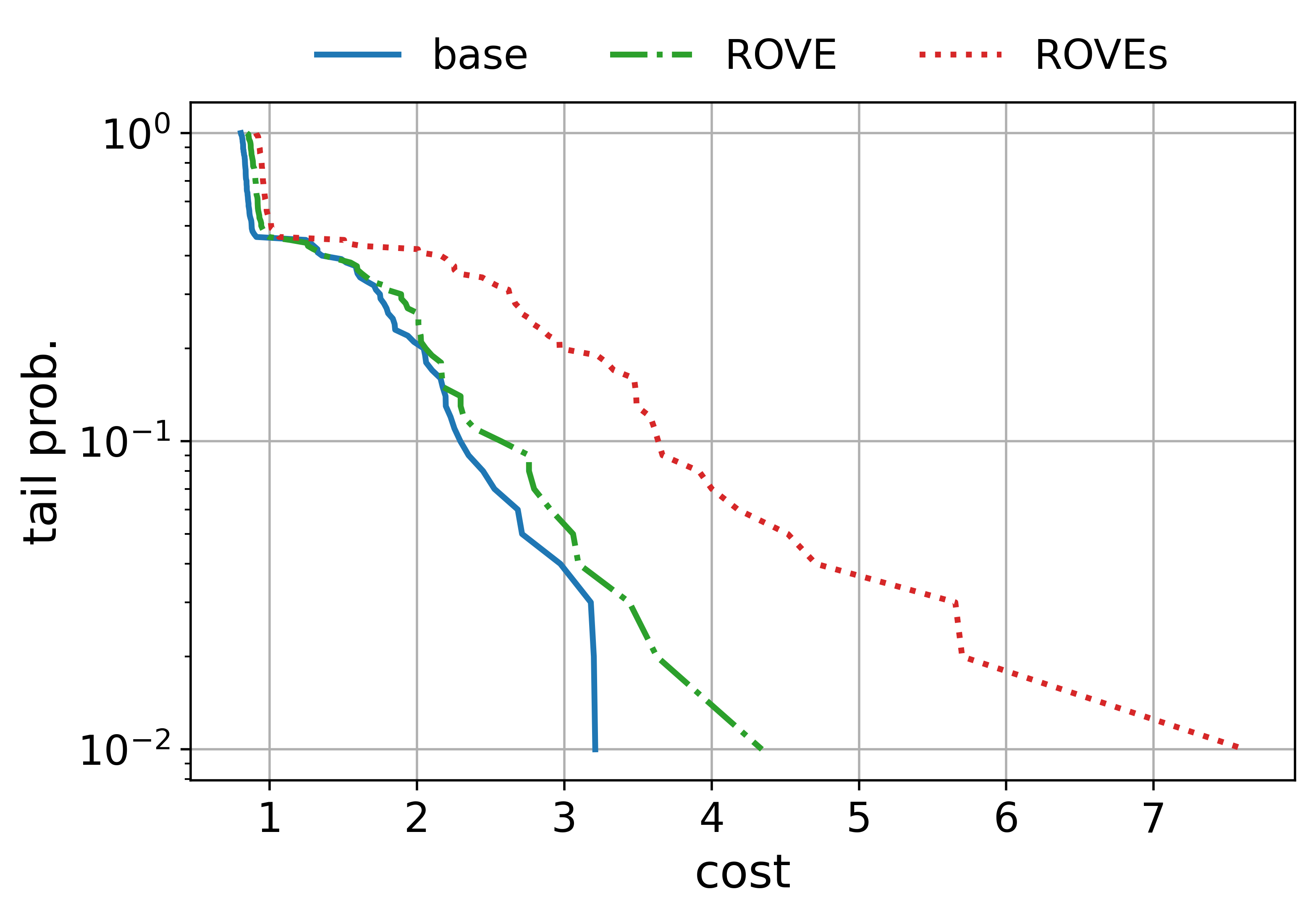}
\caption{\textit{Online News}.\label{subfig: online news tail}}
\end{subfigure}
\hspace{-6pt}
\begin{subfigure}{0.33\textwidth}
\includegraphics[width = \linewidth]{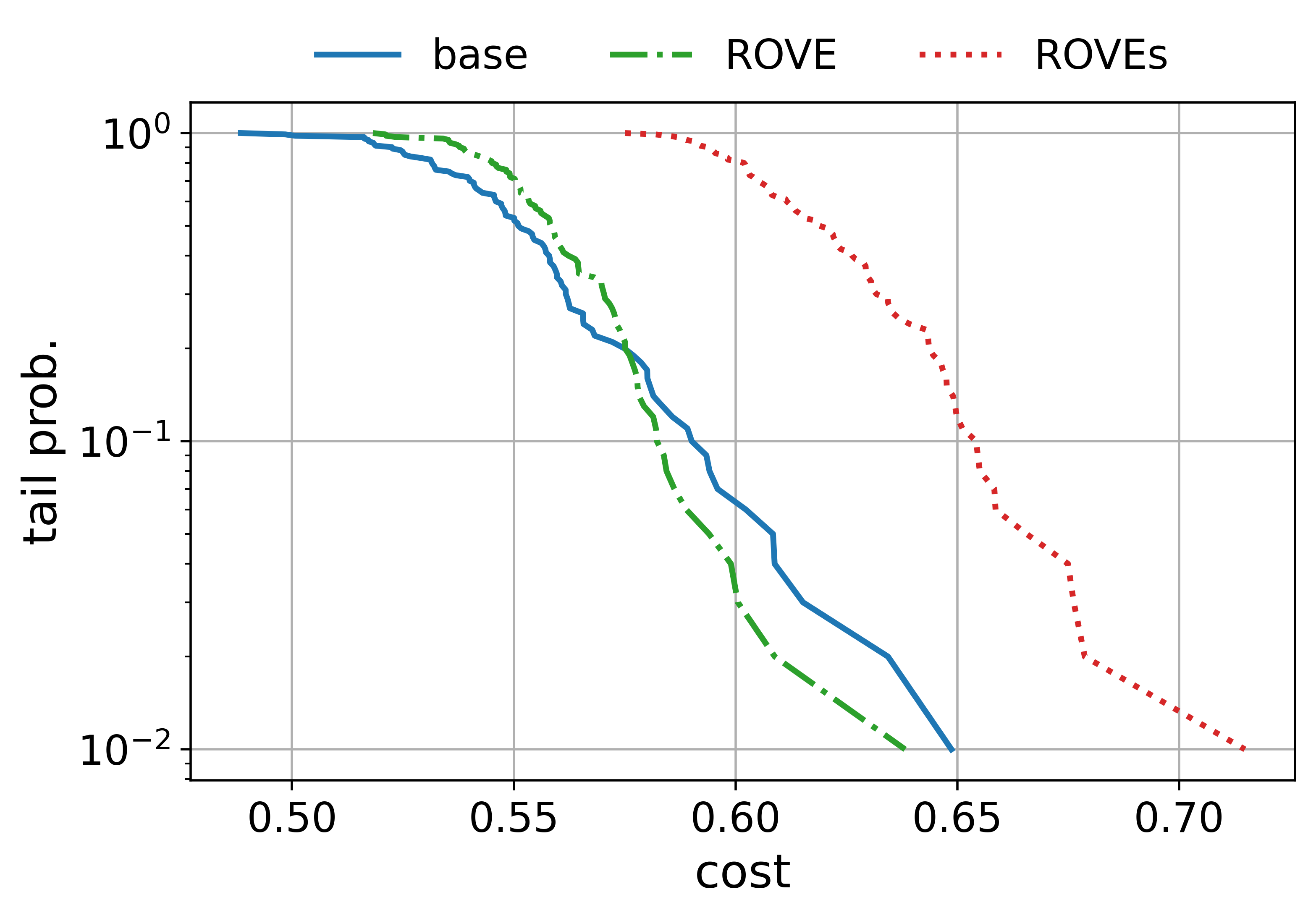}
\caption{\textit{Wine Quality}.\label{subfig: wine quality tail}}
\end{subfigure}
\caption{Results of neural networks with $4$ hidden layers on another three real datasets, in terms of tail probabilities of out-of-sample costs (MSE).\label{fig: MLP L=4 public data plots appendix}}
\end{figure*}

\begin{figure}[!htbp]
\centering
\begin{subfigure}{0.49\textwidth}
\includegraphics[width = \linewidth]{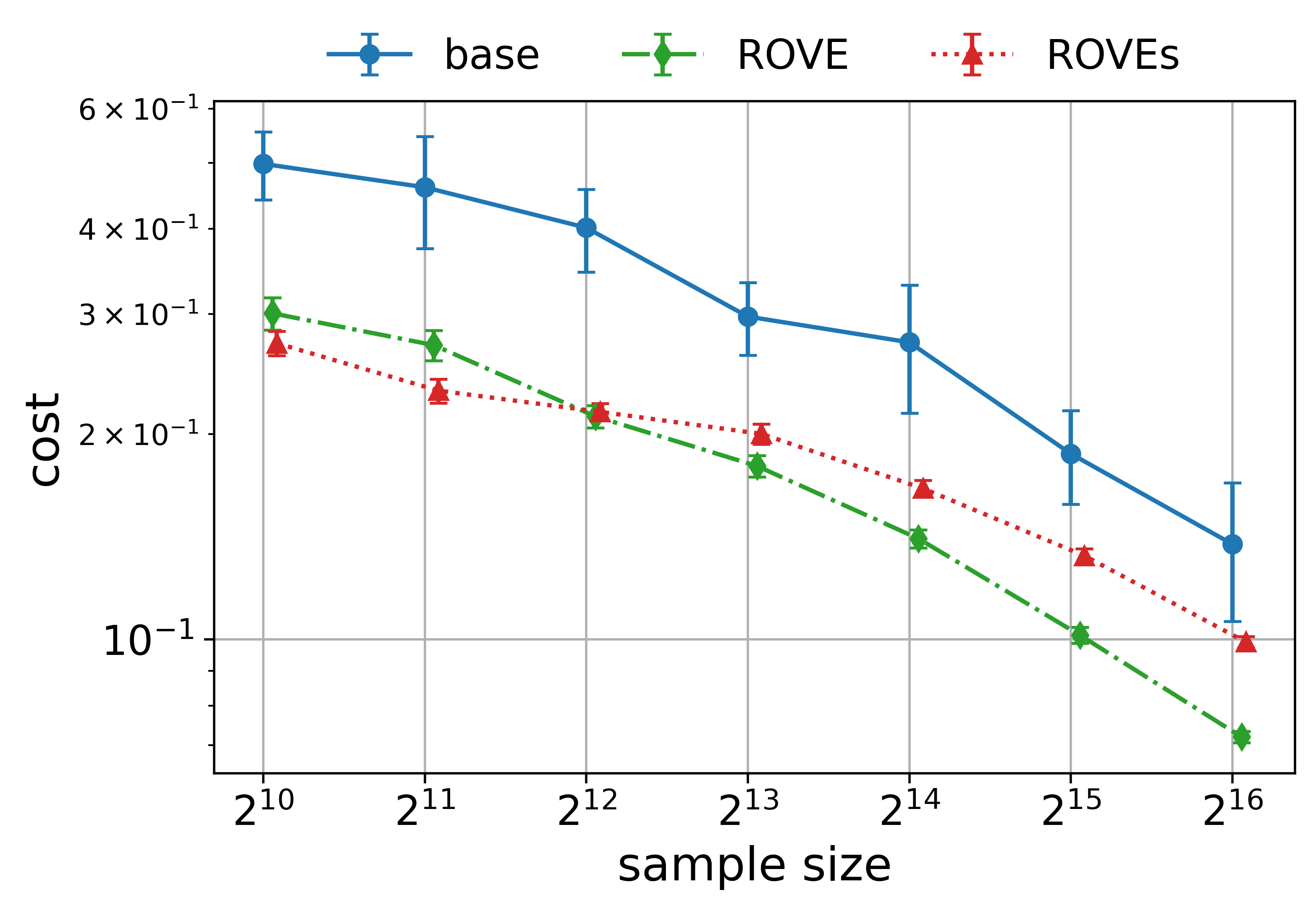}
\caption{Average performance with $95\%$ CIs.\label{subfig: MLP avg d=30 L=4}}
\end{subfigure}
\begin{subfigure}{0.49\textwidth}
\includegraphics[width = \linewidth]{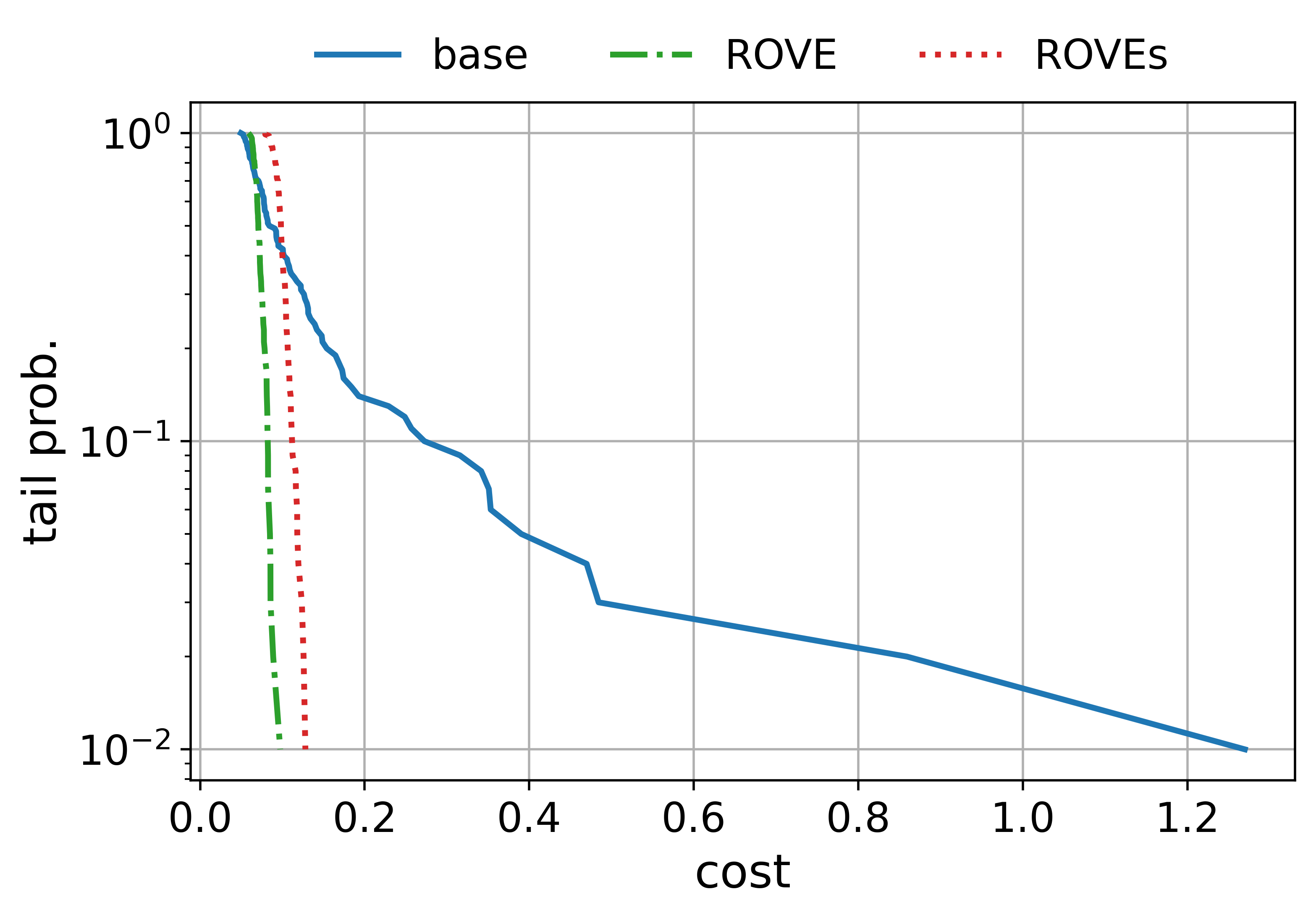}
\caption{Tail performance for $n=2^{16}$.\label{subfig: MLP tail d=30 L=4 n=2^16}}
\end{subfigure}
\caption{Results with an MLP of $H=4$ hidden layers. The setup is the same as in Section \ref{subsec:neural network experiment} except that the dimension of $X$ is now $30$ and the data generation becomes $Y = (1/30)\cdot \sum_{j=1}^{30} \log(X_j + 1) + \varepsilon$, where each \(X_j\) is drawn independently from \(\mathrm{Unif}(0, 2 + 198(j-1)/29)\).\label{fig: MLP d=30 L=4 plots appendix}}
\end{figure}

\begin{figure}[!htbp]
\centering
\begin{subfigure}{0.49\textwidth}
\includegraphics[width = \linewidth]{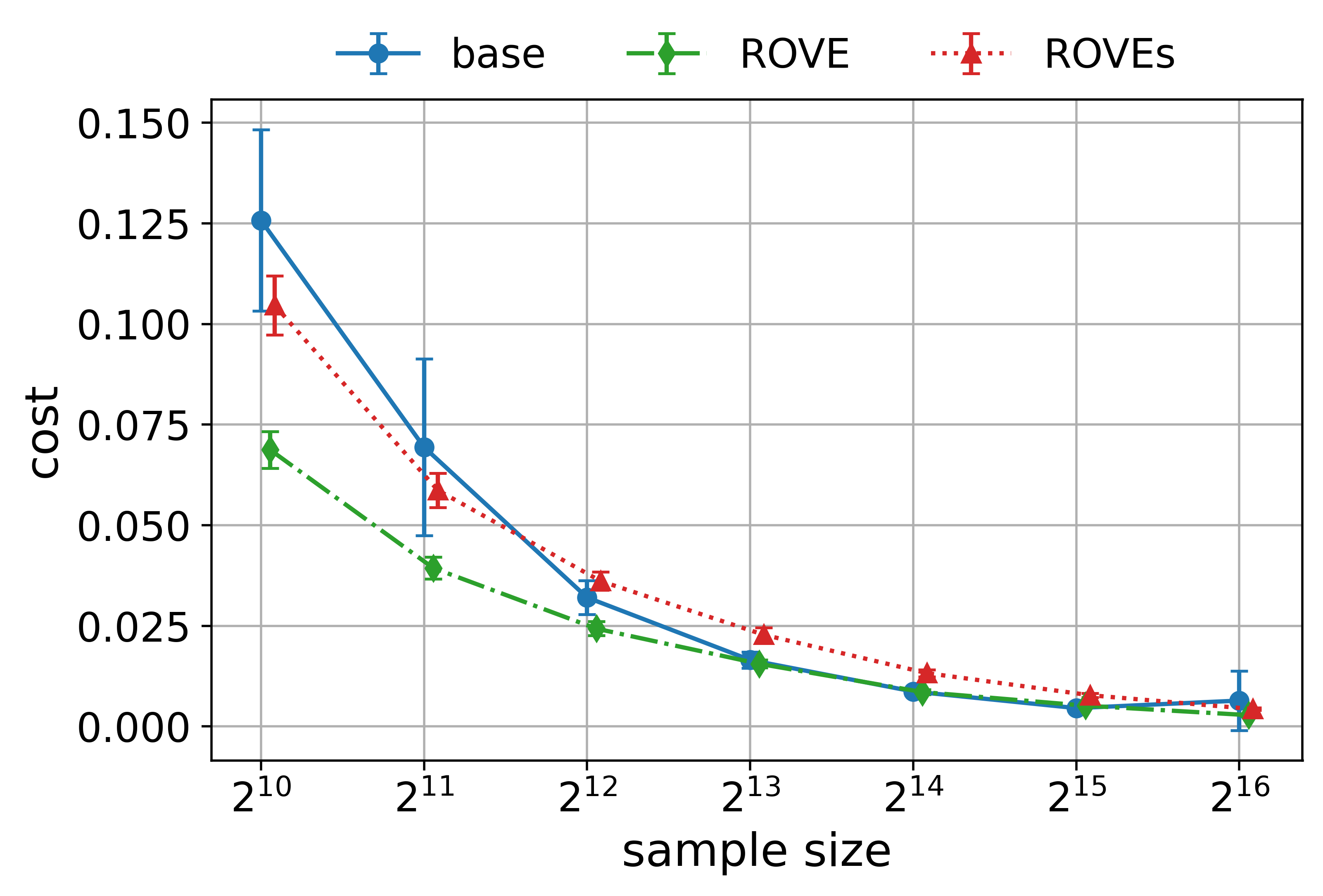}
\caption{$d=10$.\label{subfig: linear regression avg d=10}}
\end{subfigure}
\begin{subfigure}{0.49\textwidth}
\includegraphics[width = \linewidth]{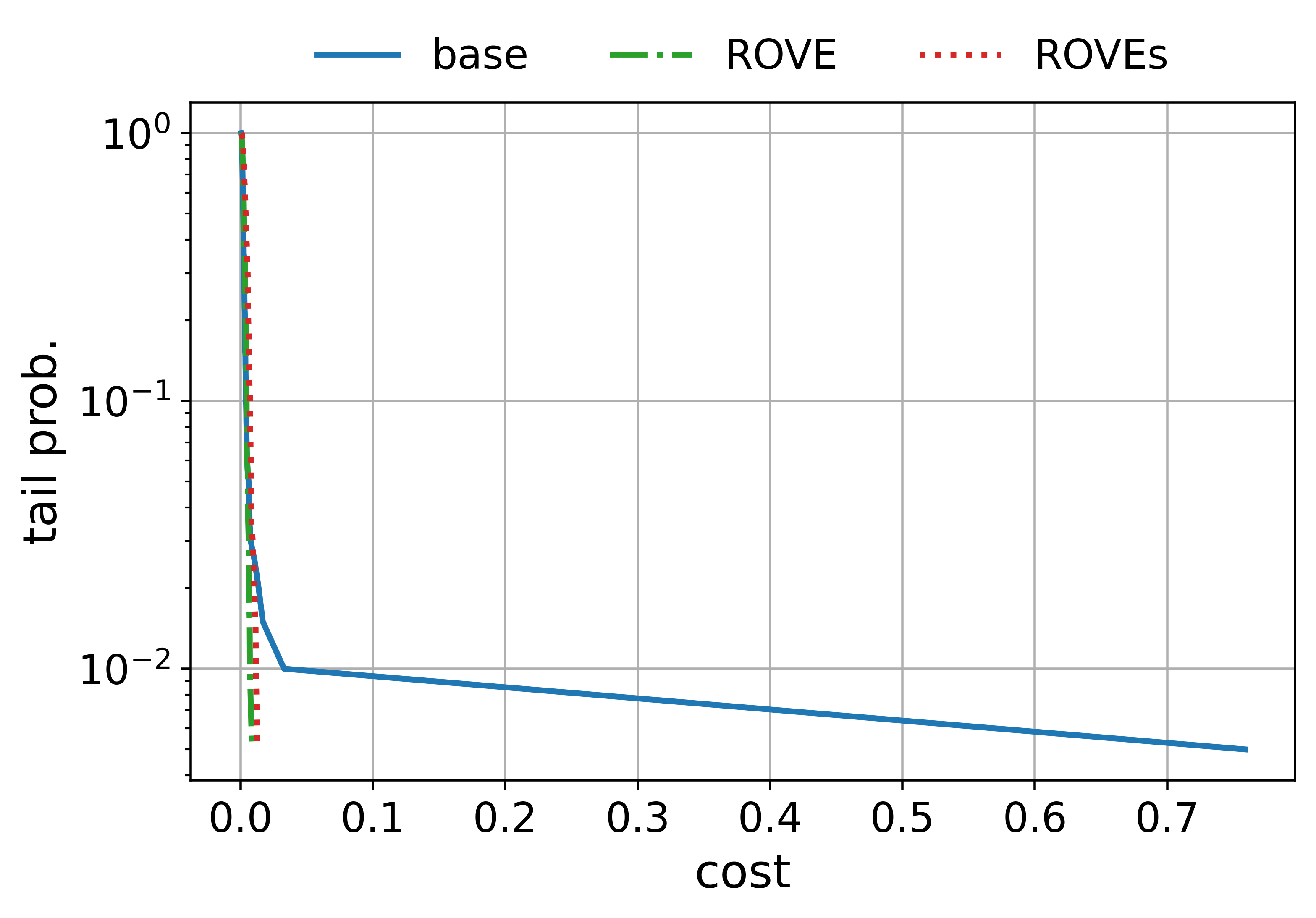}
\caption{Tail performance. $d=10,n=2^{16}$.\label{subfig: linear regression tail d=10 n=2^16}}
\end{subfigure}\\
\begin{subfigure}{0.49\textwidth}
\includegraphics[width = \linewidth]{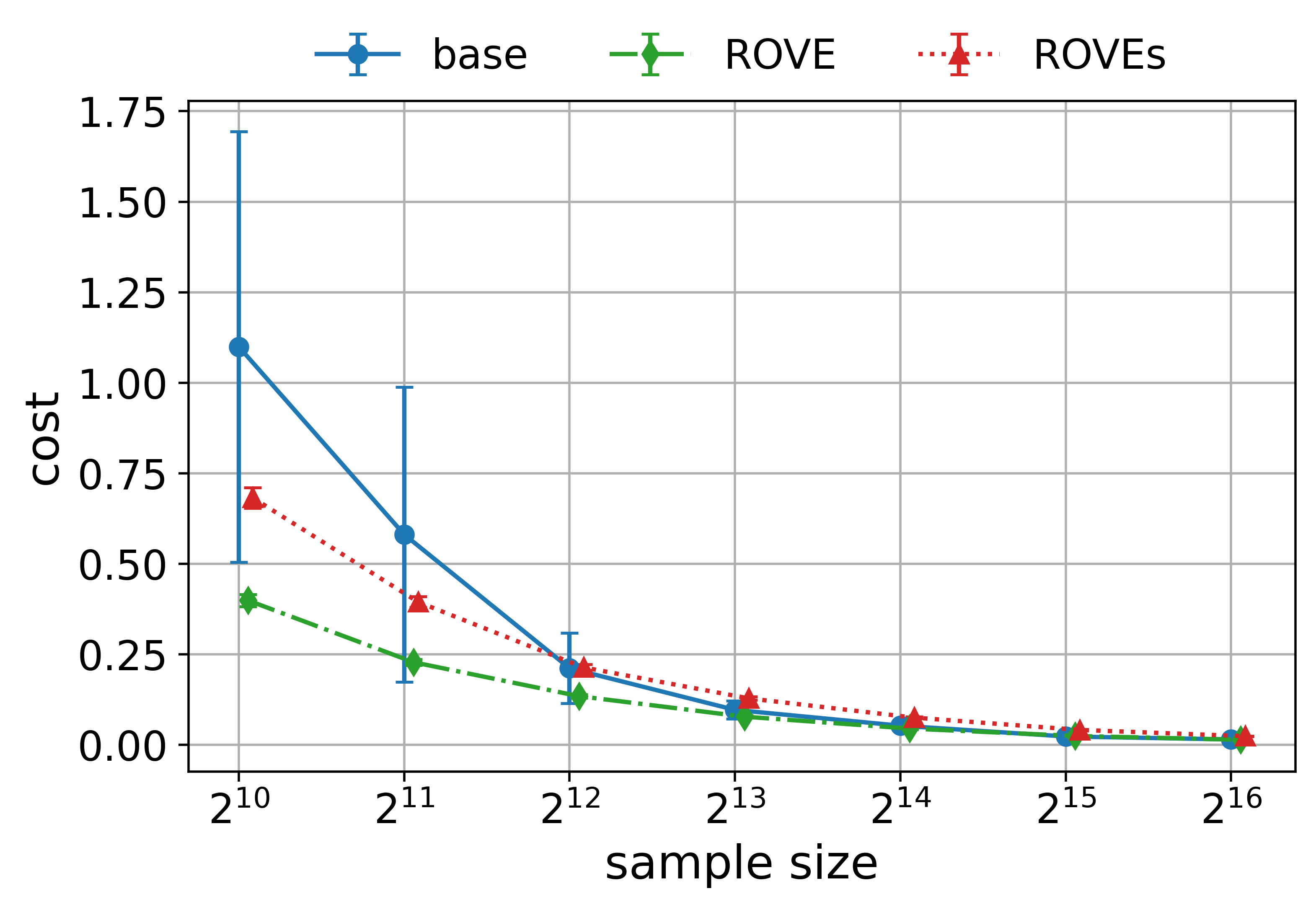}
\caption{$d=50$.\label{subfig: linear regression avg d=50}}
\end{subfigure}
\begin{subfigure}{0.49\textwidth}
\includegraphics[width = \linewidth]{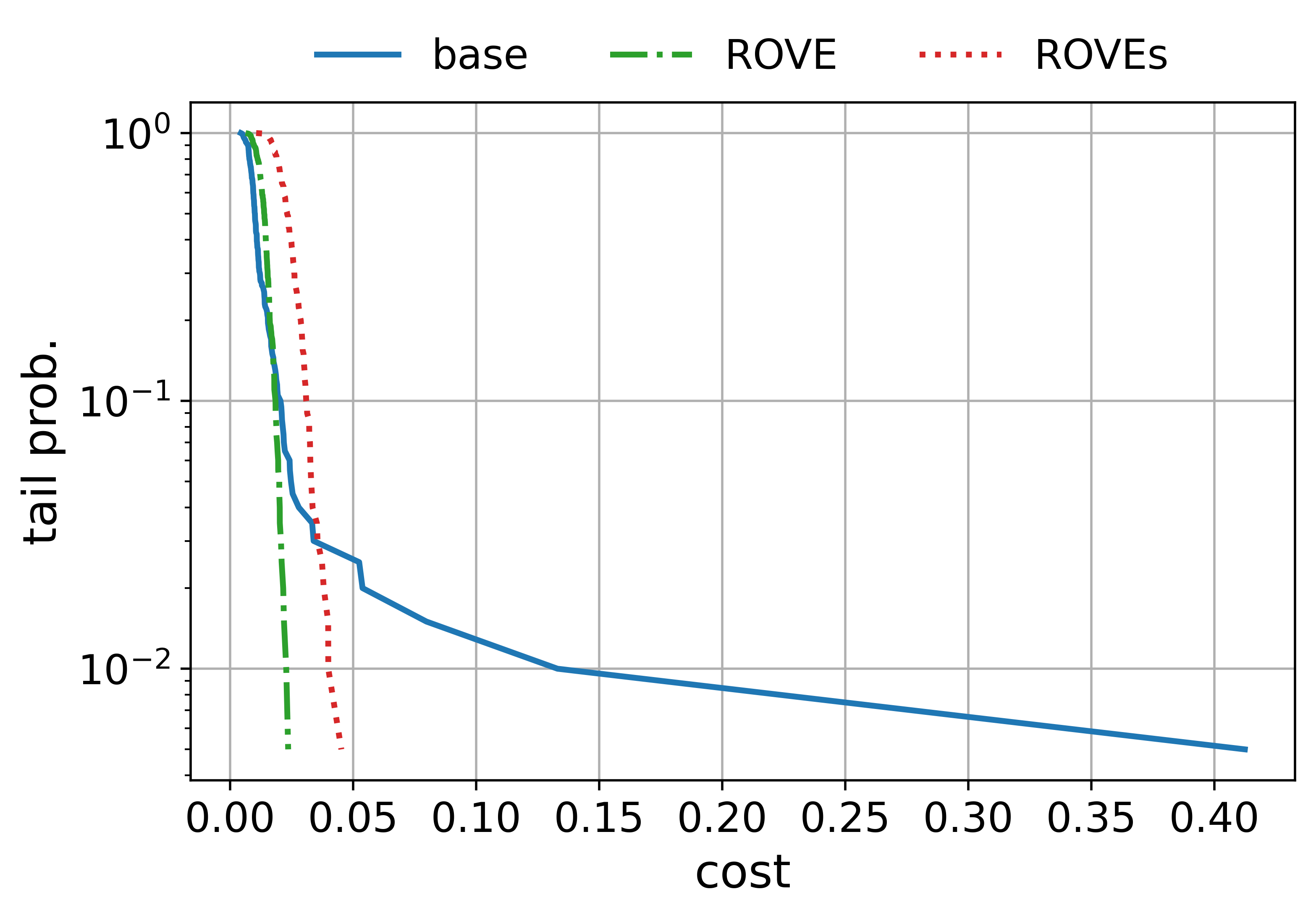}
\caption{Tail performance. $d=50,n=2^{16}$.\label{subfig: linear regression tail d=50 n=2^16}}
\end{subfigure}
\caption{Linear regression with least squares regression as the base learning algorithm. Given the input dimension $d$, the data generation is $Y=\sum_{i=1}^d(-10 + 20(i-1)/(d-1))X_i+\varepsilon_1-\varepsilon_2$ where each $X_i$ is independent $\mathrm{Unif}(0,2+18(i-1)/(d-1))$ and each $\varepsilon_j,j=1,2$ is $\mathrm{Pareto}(2.1)$ and independent of $X$. (a) and (c): Expected out-of-sample error with $95\%$ confidence interval. (b) and (d): Tail probabilities of out-of-sample errors.\label{fig: linear regression plots appendix}}
\end{figure}

\begin{figure}[!htbp]
\centering
\begin{subfigure}{0.49\textwidth}
\includegraphics[width = \linewidth]{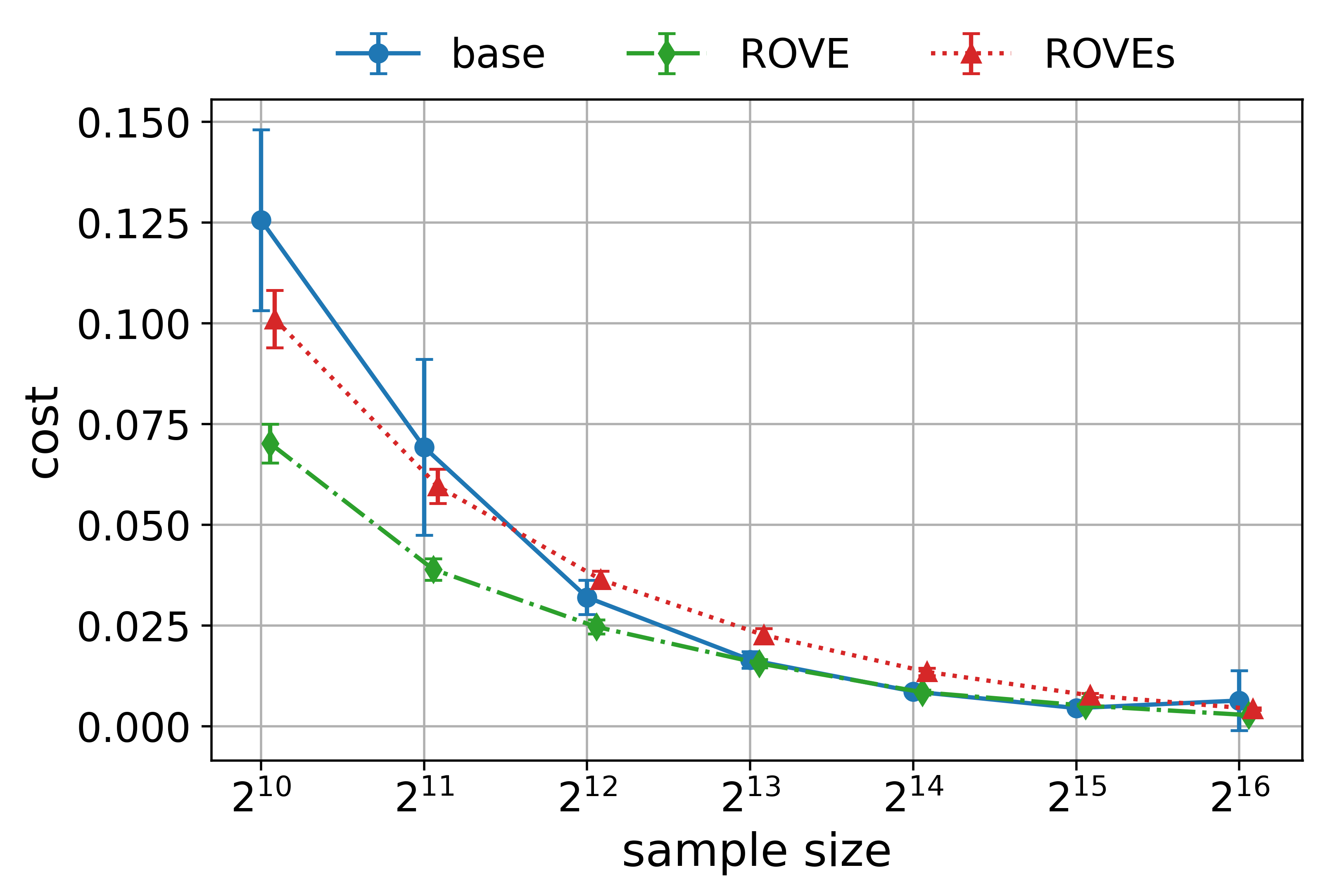}
\caption{$d=10$.\label{subfig: ridge regression avg d=10}}
\end{subfigure}
\begin{subfigure}{0.49\textwidth}
\includegraphics[width = \linewidth]{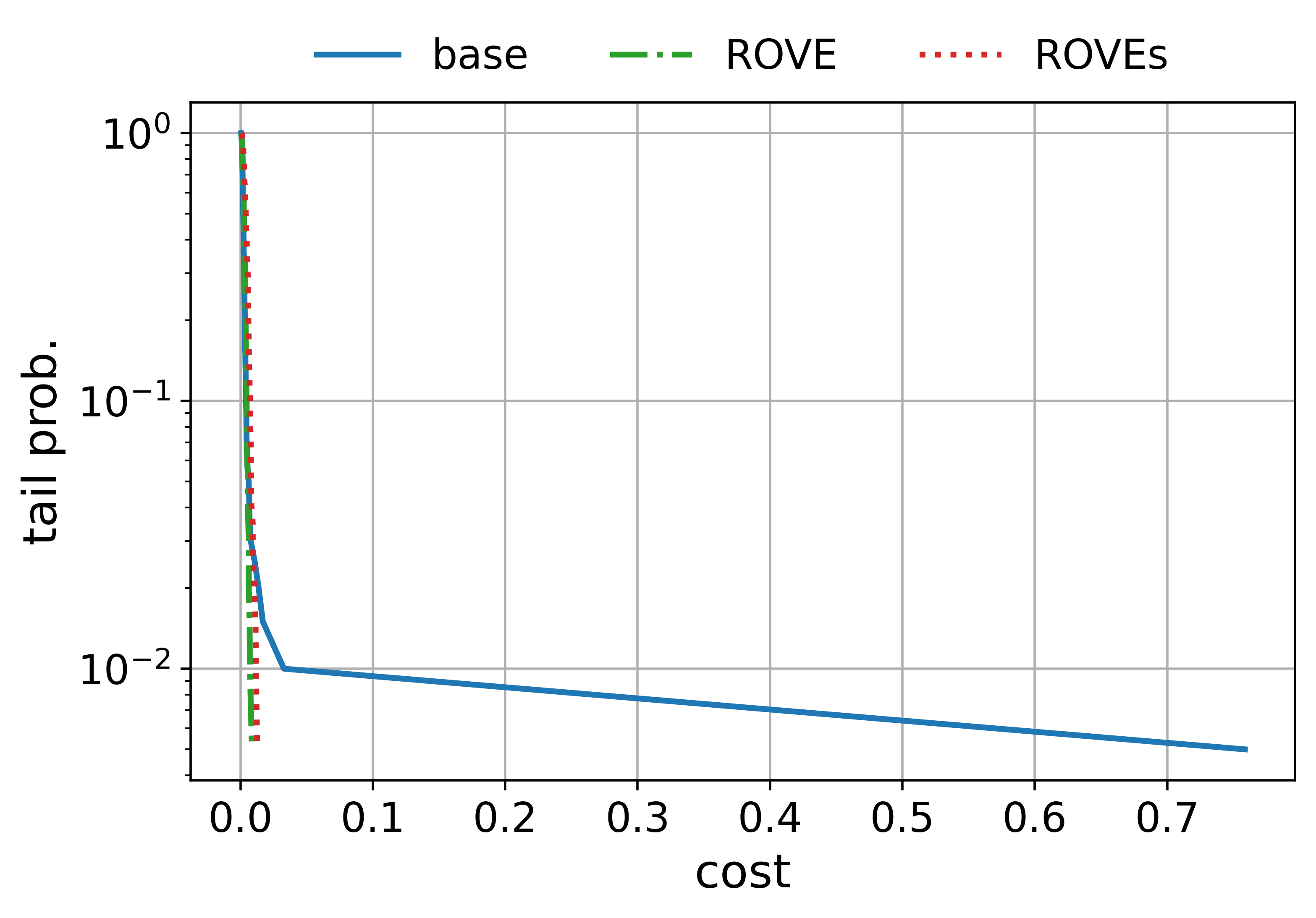}
\caption{Tail performance. $d=10,n=2^{16}$.\label{subfig: ridge regression tail d=10 n=2^16}}
\end{subfigure}\\
\begin{subfigure}{0.49\textwidth}
\includegraphics[width = \linewidth]{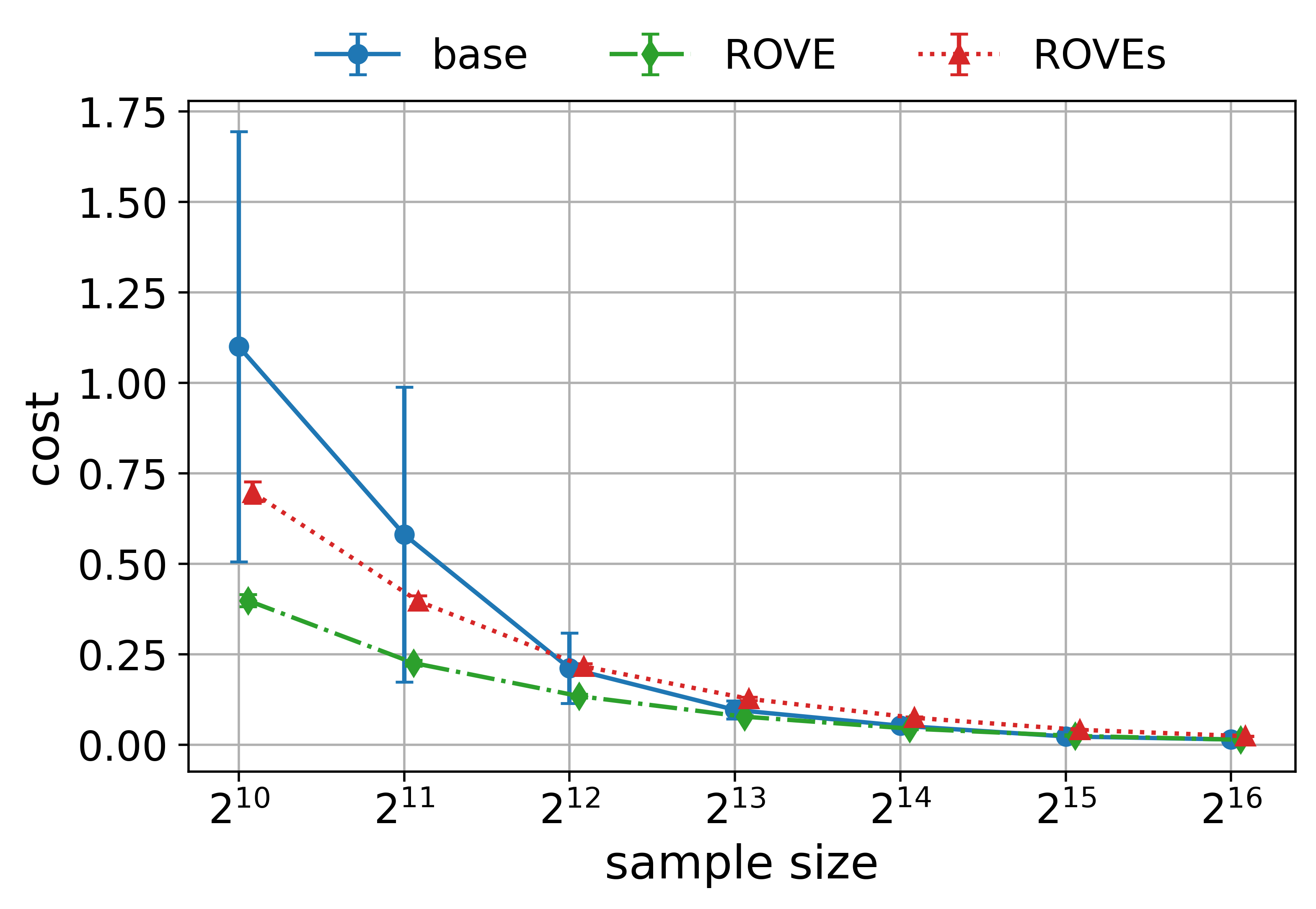}
\caption{$d=50$.\label{subfig: ridge regression avg d=50}}
\end{subfigure}
\begin{subfigure}{0.49\textwidth}
\includegraphics[width = \linewidth]{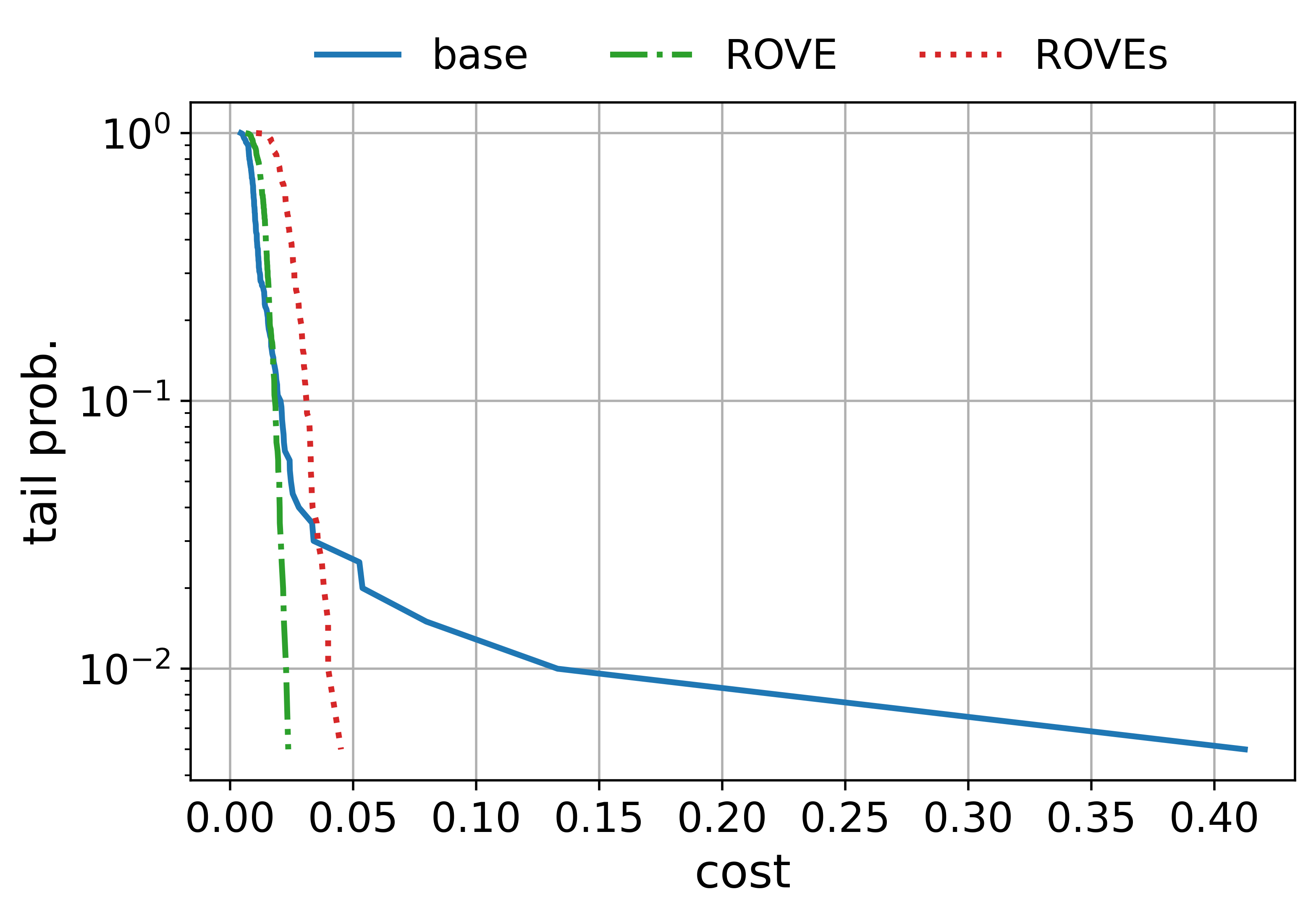}
\caption{Tail performance. $d=50,n=2^{16}$.\label{subfig: ridge regression tail d=50 n=2^16}}
\end{subfigure}
\caption{Linear regression with Ridge regression as the base learning algorithm. The same data generation as in Figure \ref{fig: linear regression plots appendix}. (a) and (c): Expected out-of-sample error with $95\%$ confidence interval. (b) and (d): Tail probabilities of out-of-sample errors.\label{fig: ridge regression plots appendix}}
\end{figure}

\begin{figure}[!htbp]
    \centering
\hspace{-12pt}
\begin{subfigure}{0.34\textwidth}
\includegraphics[width = \linewidth]{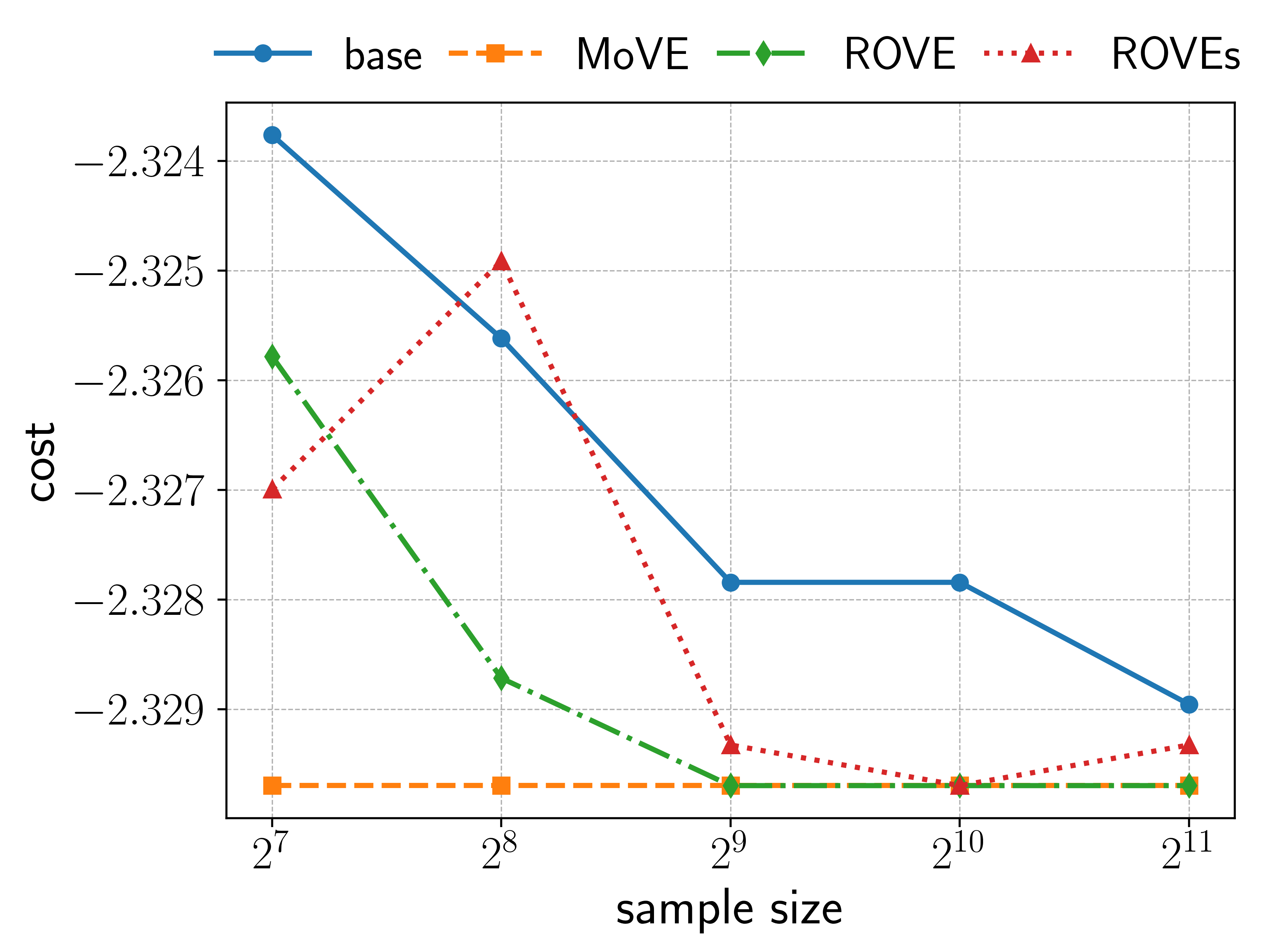}
\caption{Instance 1.\label{subfig: LP_light_1}}
\end{subfigure}%
\begin{subfigure}{0.34\textwidth}
\includegraphics[width = \linewidth]{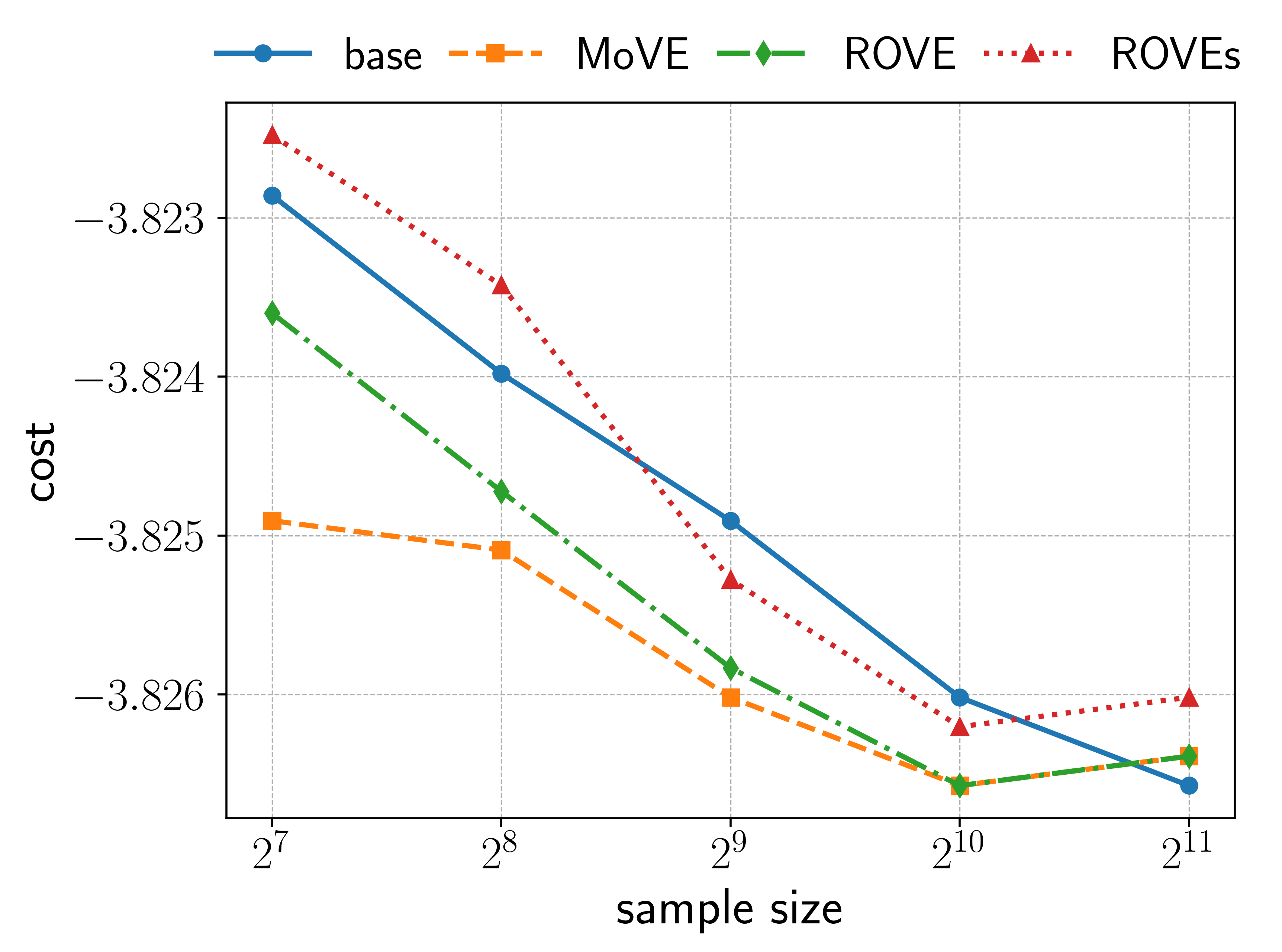}
\caption{Instance 2.\label{subfig: LP_light_2}}
\end{subfigure}%
\begin{subfigure}{0.34\textwidth}
\includegraphics[width = \linewidth]{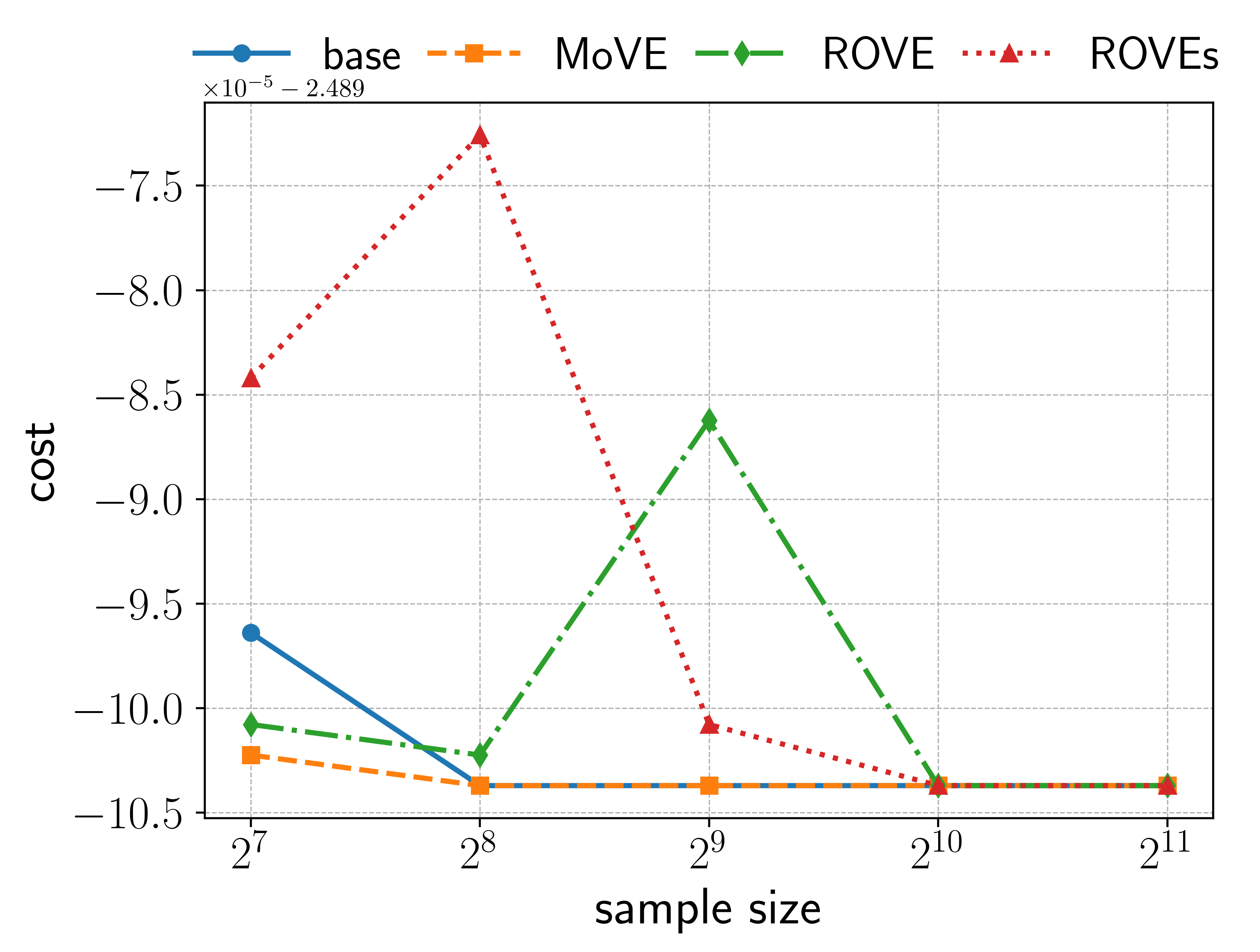}
\caption{Instance 3.\label{subfig: LP_light_3}}
\end{subfigure}
    \caption{Results for linear programs with light-tailed objectives. The base algorithm is SAA.}
    \label{fig: comparison_light_tail}
\end{figure}

\begin{figure}[!htbp]
    \centering
\begin{subfigure}{0.47\textwidth}
\includegraphics[width = \linewidth]{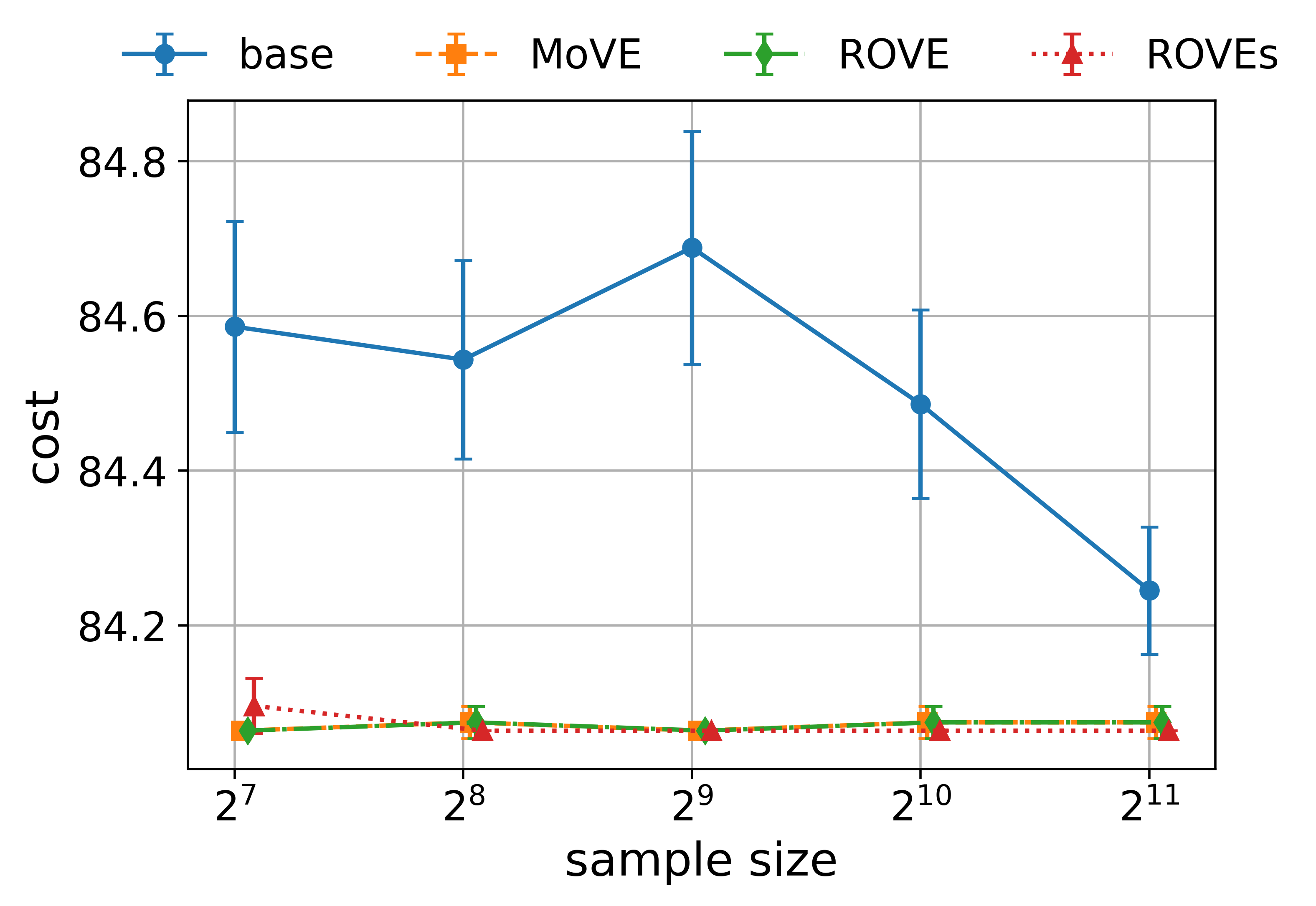}
\caption{$k=k_1=k_2=\max(10,n/10)$.\label{subfig: network design heavy tail k=(10,0.1)}}
\end{subfigure}%
\hfill
\begin{subfigure}{0.47\textwidth}
\includegraphics[width = \linewidth]{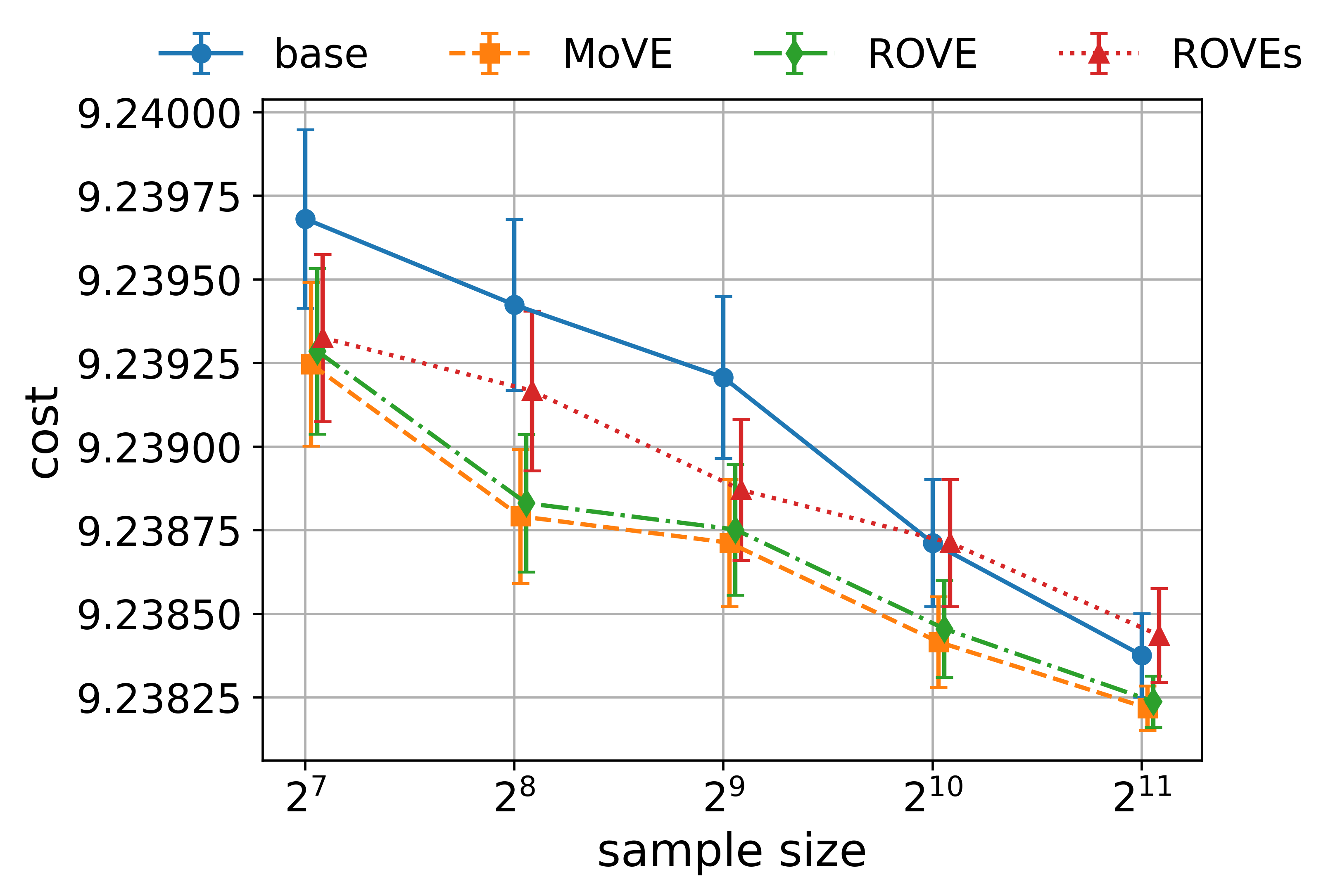}
\caption{A different instance with strong correlation.\label{subfig: network design strong correlation}}
\end{subfigure}%
    \caption{Results for supply chain network design. (a): The same problem instance as in Section \ref{subsec: alg_comparison} under a different hyperparameter choice: $k=\max(10,n/10),B=200$ for \move and $k_1=k_2=\max(10,n/10),B_1=20,B_2=200$ for \rove and \roves. (b): The same setup as in Section \ref{subsec: alg_comparison} but on a different problem instance for which the objectives under different solutions are strongly correlated. The strong correlation cancels out most of the heavy-tailed noise between solutions, making the base algorithm less susceptible to these noises, thus our ensemble methods appear less effective.}
    \label{fig: comparison_network appendix}
\end{figure}

\begin{figure}[!htbp]
    \centering
    \begin{subfigure}{0.47\textwidth}
    \includegraphics[width = \linewidth]{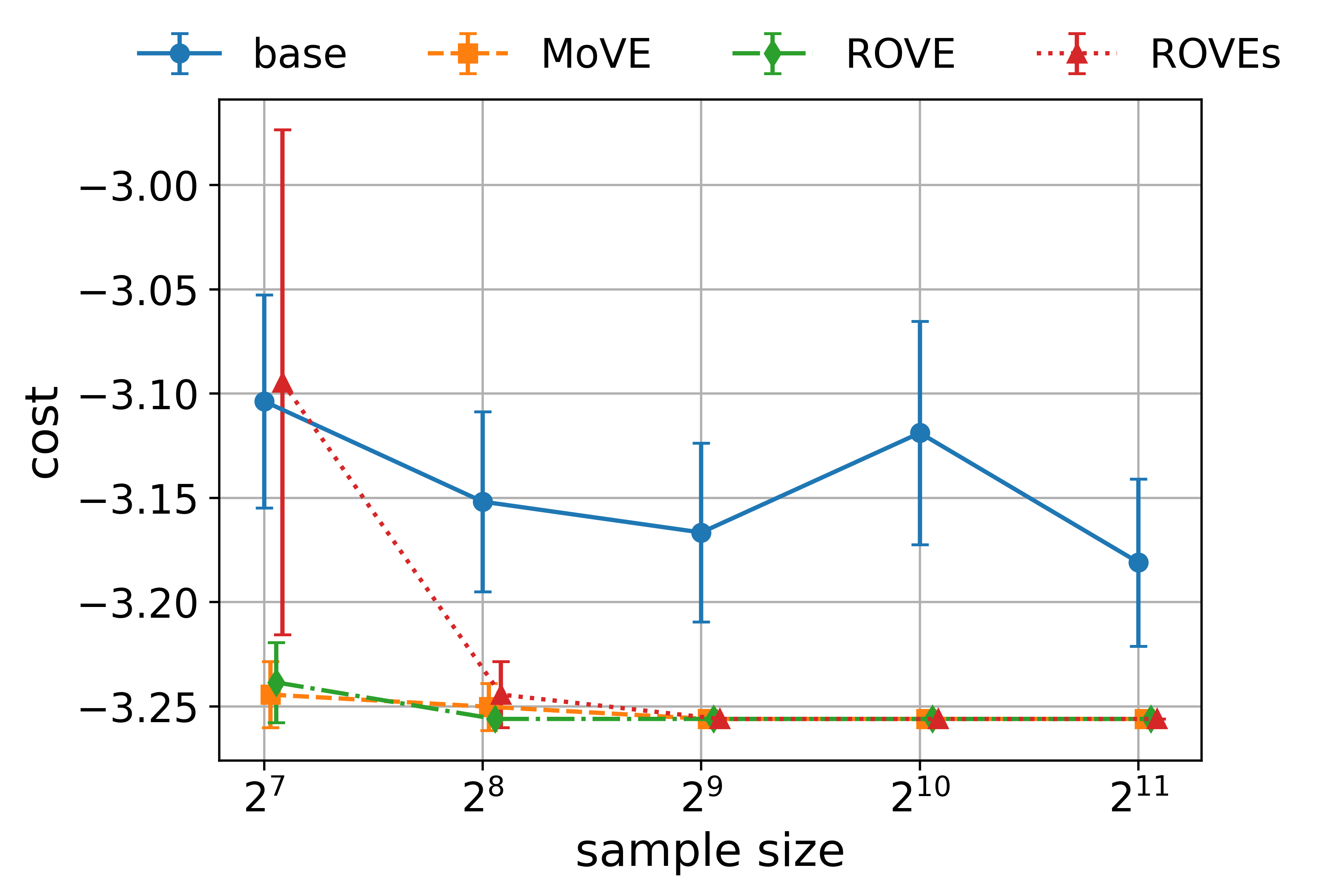}
    \caption{Resource allocation.\label{subfig: dro_bagging_sskp}}
    \end{subfigure}
    \hspace{10pt}
    \begin{subfigure}{0.47\textwidth}
    \includegraphics[width = \linewidth]{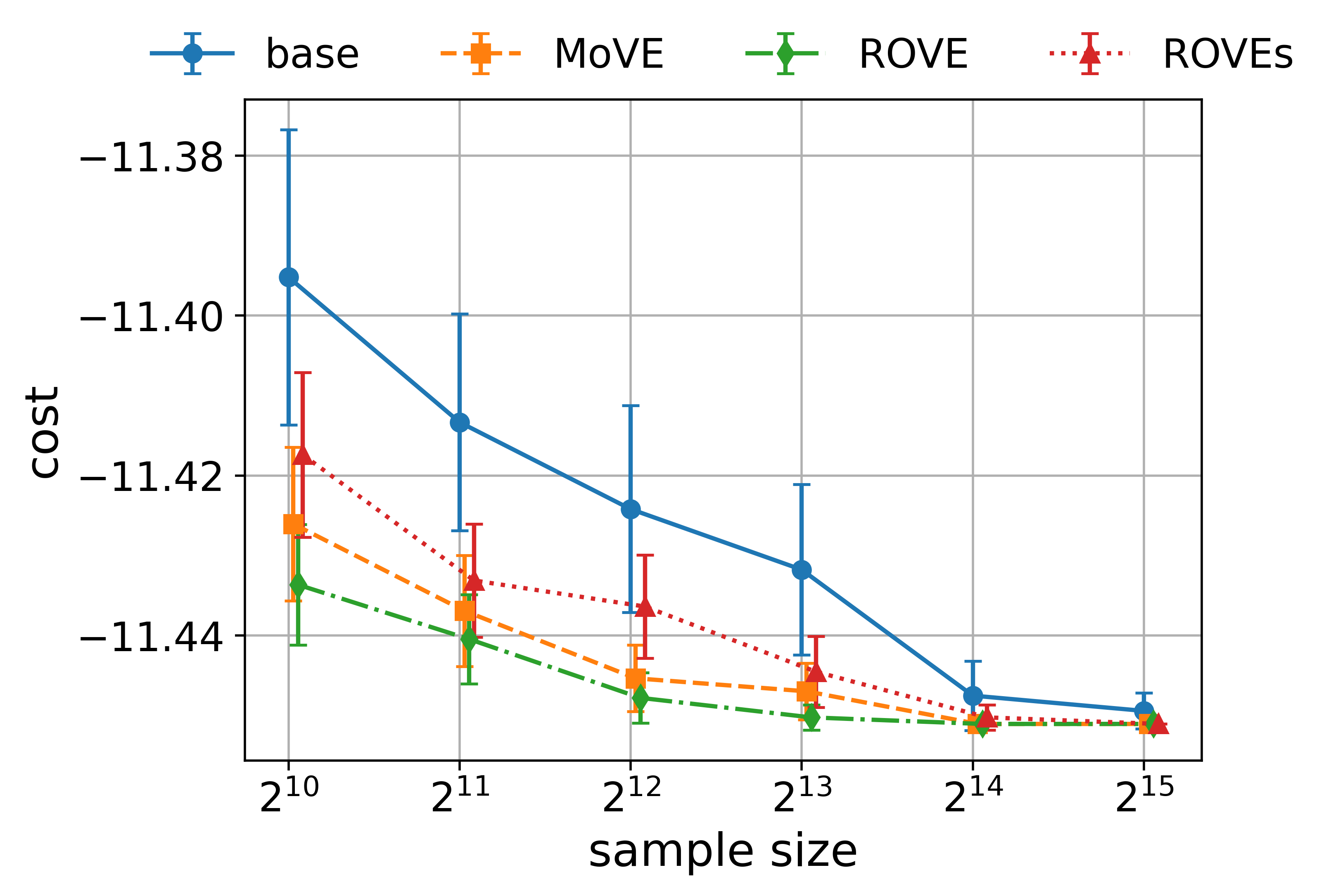}
    \caption{Maximum weight matching.\label{subfig: dro_bagging_matching}}
    \end{subfigure}
    \caption{Results for resource allocation and maximum weight matching when the base algorithm is DRO using 1-Wasserstein metric with the $l_{\infty}$ norm.
    \label{fig: bagging_dro}}
\end{figure}

\begin{figure}[!htbp]
    \centering
    \includegraphics[width = 0.7\linewidth]{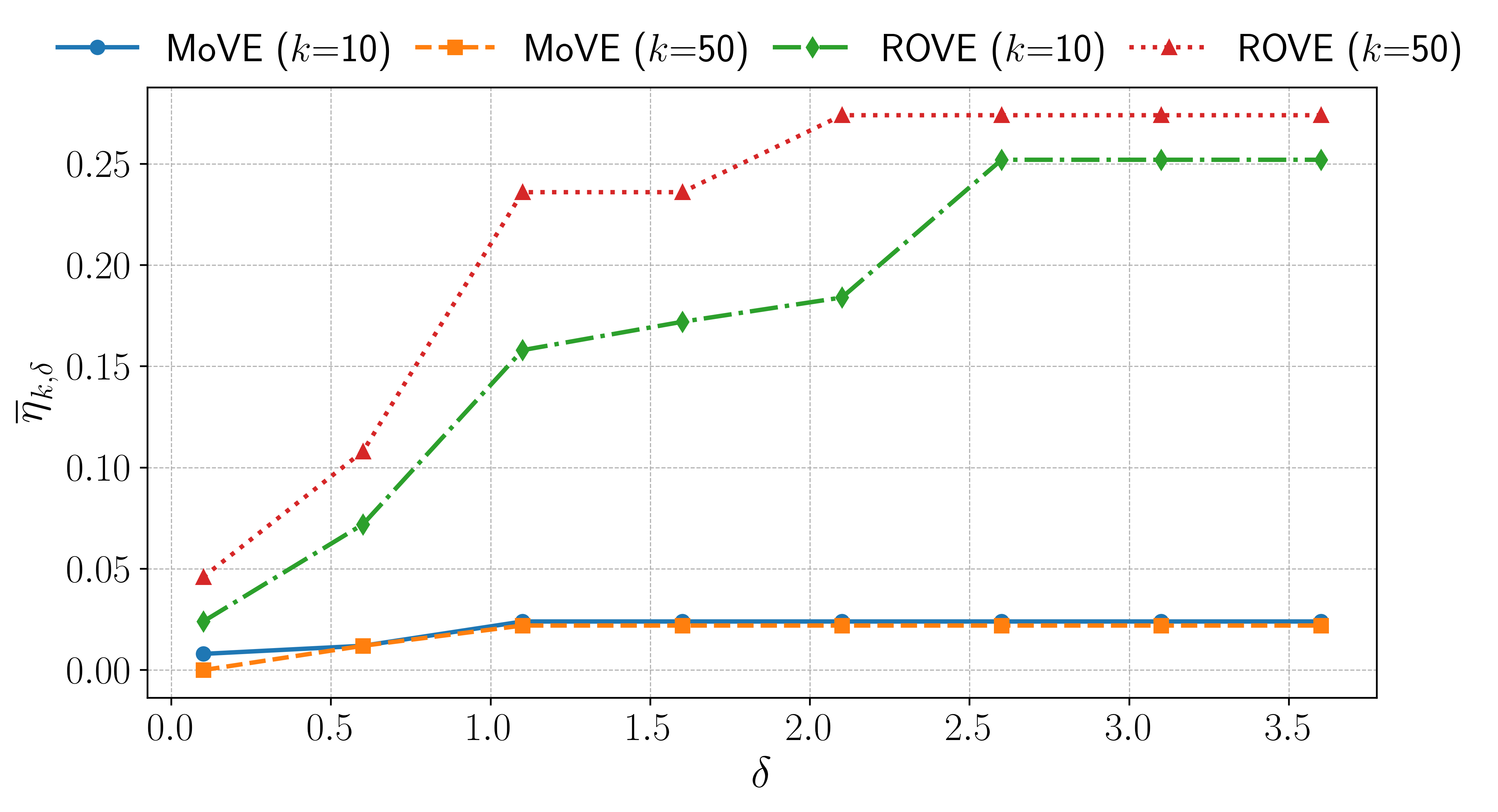}
    \caption{Comparison of $\bar{\eta}_{k,\delta}$ for \move and \rove in a linear program with multiple optima (corresponds to the instance in Figure \ref{subfig: saa_comparison_LP}). Threshold $\epsilon$ is chosen as $\epsilon = 4$ when $k=k_1=k_2=10$ and $\epsilon = 2.5$ when $k=k_1=k_2=50$, according to the adaptive strategy.
    Note that $\bar{\eta}_{k,\delta}=\max_{\theta\in \Theta}p_k(\theta) - \max_{\theta\in \Theta\backslash \Theta^{\delta}} p_k(\theta)$ by \eqref{SAA prob gap 2}, which measures the generalization sensitivity. 
    For \move, we have $p_k(\theta) = \mathbb P(\hat{\theta}_k^{SAA} =\theta)$; and for \rove, we have $p_k(\theta) = \mbb{P}(\theta\in  \widehat{\Theta}_k^{\epsilon})$, where $\widehat{\Theta}_k^{\epsilon}$ is the $\epsilon$-optimal set of SAA defined in \eqref{SAA nearly optima set}. From the figure, we can observe that the issue brought by the presence of multiple optimal solutions can be alleviated using the two-phase strategy in \rove.
    \label{fig: eta_comparison}}
\end{figure}

\begin{figure}[!htbp]
\centering
\includegraphics[width = 0.47\linewidth]{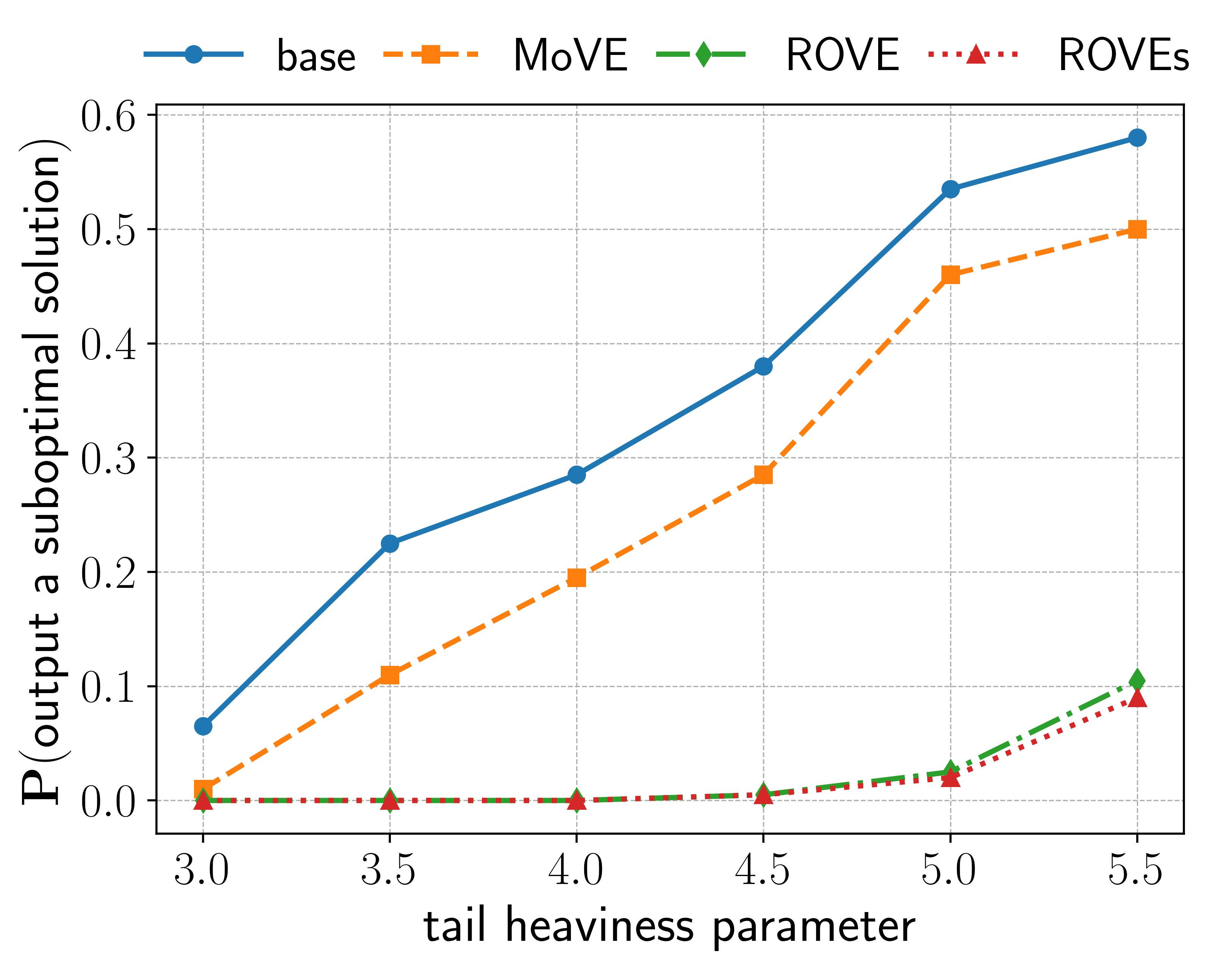}
\caption{Influence of tail heaviness in the stochastic linear program with multiple optima with $n=10^6$.
Hyperparameters: $k=50,B=2000$ for \move, $k_1=k_2=50,B_1=200,B_2 = 5000$ for \rove and \roves.
The tail heaviness parameter corresponds to the mean of the Pareto random coefficient.\label{fig: tail_heaviness}}
\end{figure}



\clearpage
\bibliographystyleAPX{plain}
\bibliographyAPX{neurips_2025}

\end{document}